\documentclass[a4paper,11pt]{amsart}
\usepackage{amsfonts,amssymb,amsmath,amsthm,abstract,color}
\usepackage[mathscr]{euscript}
\usepackage[ps,all,arc,rotate]{xy}
\usepackage[lmargin=1in,rmargin=1in,tmargin=1in,bmargin=1in]{geometry}
\usepackage{fancyhdr}
\usepackage{pb-diagram}
\usepackage{enumitem}
\usepackage{hyperref}

\newtheorem{prop}{Proposition}[section]
\newtheorem{lemma}[prop]{Lemma}
\newtheorem{thm}[prop]{Theorem}
\newtheorem{cor}[prop]{Corollary}
\newtheorem{conj}[prop]{Conjecture}

\theoremstyle{definition}
\newtheorem{defn}[prop]{Definition}

\newtheorem{rmk}[prop]{Remark}
\newtheorem{ex}[prop]{Example}

\DeclareMathOperator{\tr}{tr} 
\DeclareMathOperator{\rk}{rk}        
\DeclareMathOperator{\spec}{Spec} \DeclareMathOperator{\Sym}{Sym}  
\DeclareMathOperator{\MotCpl}{MotCpl}
\DeclareMathOperator{\coker}{Coker}  
\DeclareMathOperator{\id}{id}

\DeclareMathOperator{\diag}{diag}
\newcommand{\ra}{\rightarrow}      

\newcommand{\lra}{\longrightarrow}

\newcommand{\Gy}{\mathrm{Gy}}
\newcommand{\gm}{\mathrm{gm}}

\DeclareMathOperator{\Bun}{Bun}
\DeclareMathOperator*{\holim}{holim} 
\DeclareMathOperator*{\hocolim}{hocolim}
\DeclareMathOperator*{\colim}{colim}
\DeclareMathOperator{\cone}{Cone} 
\DeclareMathOperator{\Div}{Div} \DeclareMathOperator{\quot}{Quot}
\DeclareMathOperator{\Jac}{Jac} \DeclareMathOperator{\Sm}{Sm}
\DeclareMathOperator{\Hom}{Hom}
\DeclareMathOperator{\Spec}{Spec}
\DeclareMathOperator{\Pic}{Pic}
\DeclareMathOperator{\Ext}{Ext}
\DeclareMathOperator{\codim}{codim}
\DeclareMathOperator{\cdh}{cdh}
\DeclareMathOperator{\eff}{eff}
\DeclareMathOperator{\Nis}{Nis}
\DeclareMathOperator{\Gpds}{Gpds}
\DeclareMathOperator{\sSets}{sSets}
\DeclareMathOperator{\abgp}{Ab}
\DeclareMathOperator{\sing}{sing}

\DeclareMathOperator{\Aut}{Aut}
\DeclareMathOperator{\Var}{Var}
\DeclareMathOperator{\atlas}{atlas}
\DeclareMathOperator{\nerve}{nerve}
\DeclareMathOperator{\mono}{mono}
\DeclareMathOperator{\epi}{epi}

\newcommand{\et}{\mathrm{\acute{e}t}}

\def\cA{\mathcal A}
\def\cE{\mathcal E}\def\cF{\mathcal F}
\def\cK{\mathcal K}\def\cL{\mathcal L}
\def\cM{\mathcal M}\def\cN{\mathcal N}\def\cO{\mathcal O}\def\cP{\mathcal P}
\def\cT{\mathcal T}
\def\cU{\mathcal U}\def\cX{\mathcal X}
\def\cY{\mathcal Y}\def\cZ{\mathcal Z}

\def\AA{\mathbb A}
\def\FF{\mathbb F}\def\GG{\mathbb G}
\def\LL{\mathbb L}
\def\NN{\mathbb N}\def\PP{\mathbb P}
\def\QQ{\mathbb Q}\def\RR{\mathbb R}

\def\ZZ{\mathbb Z}

\def\fX{\mathfrak X}

 \def\GL{\mathrm{GL}} \def\SL{\mathrm{SL}}

\def\DM{\mathrm{DM}}  \def\DA{\mathrm{DA}}   
\def\CH{\mathrm{CH}}

\makeatletter
\def\@tocline#1#2#3#4#5#6#7{\relax
  \ifnum #1>\c@tocdepth 
  \else
    \par \addpenalty\@secpenalty\addvspace{#2}%
    \begingroup \hyphenpenalty\@M
    \@ifempty{#4}{%
      \@tempdima\csname r@tocindent\number#1\endcsname\relax
    }{%
      \@tempdima#4\relax
    }%
    \parindent\z@ \leftskip#3\relax \advance\leftskip\@tempdima\relax
    \rightskip\@pnumwidth plus4em \parfillskip-\@pnumwidth
    #5\leavevmode\hskip-\@tempdima
      \ifcase #1
       \or\or \hskip 1em \or \hskip 2em \else \hskip 3em \fi%
      #6\nobreak\relax
    \hfill\hbox to\@pnumwidth{\@tocpagenum{#7}}\par
    \nobreak
    \endgroup
  \fi}
\makeatother

\title{On the Voevodsky motive of the moduli stack of vector bundles on a curve}

\author{Victoria Hoskins and Simon Pepin Lehalleur}

\thanks{V.H. is supported by the Excellence Initiative of the DFG at the Freie Universit\"{a}t Berlin and by the SPP 1786. S.P.L. is supported by the Einstein Foundation, through the Einstein visiting Fellowship 0419745104 \lq Algebraic Entropy, Algebraic Cycles' of Professor V. Srinivas.}

\begin{document}

\maketitle

\begin{abstract}
We define and study the motive of the moduli stack of vector bundles of fixed rank and degree over a smooth projective curve in Voevodsky's category of motives. We prove that this motive can be written as a homotopy colimit of motives of smooth projective Quot schemes of torsion quotients of sums of line bundles on the curve.  When working with rational coefficients, we prove that the motive of the stack of bundles lies in the localising tensor subcategory generated by the motive of the curve, using Bia{\l}ynicki-Birula decompositions of these Quot schemes. We conjecture a formula for the motive of this stack, and we prove this conjecture modulo a conjecture on the intersection theory of the Quot schemes.
\end{abstract}

\tableofcontents

\section{Introduction}

Let $C$ be a smooth projective geometrically connected curve of genus $g$ over a field $k$. We denote the moduli stack of rank $n$, degree $d$ vector bundles on $C$  by $\Bun_{n,d}$; this is a smooth algebraic stack of dimension $n^{2}(g-1)$. The cohomology of $\Bun_{n,d}$ has been studied using a wide array of techniques, and together with Harder--Narasimhan stratifications, these results are used to study the cohomology of the moduli space  $\cN_{n,d}$ of semistable vector bundles for coprime $n$ and $d$. In this paper, we study the motive of $\Bun_{n,d}$ in the sense of Voevodsky.

Let us start with a chronological survey of the various results on the cohomology of $\Bun_{n,d}$. One of the first calculations was a stacky point count of $\Bun_{n,d}$ over a finite field $\FF_q$ due to Harder \cite{Harder}; the formula (\textit{cf.}\ Theorem \ref{thm harder}) is remarkably simple and involves the point count of the Jacobian of $C$ and a product of Zeta functions. These point counting methods enabled Harder and Narasimhan \cite{HN} to compute inductive formulae for the Betti numbers of moduli spaces $\cN_{n,d}$ for $n$ and $d$ coprime over the complex numbers via the Weil conjectures. Atiyah and Bott \cite{atiyah_bott} gave an entirely different approach to this computation by using a gauge theoretic construction of $\cN_{n,d}$ over the complex numbers to study its cohomology. Their calculation of the Betti cohomology of the classifying space of the gauge group describes $H^*(\Bun_{n,d},\QQ)$ as an algebra generated by the K\"{u}nneth components of the Chern classes $c_i(\cU) \in H^*(\Bun_{n,d} \times C,\QQ) \cong H^*(\Bun_{n,d},\QQ) \otimes H^*(C,\QQ)$ of the universal bundle $\cU \ra \Bun_{n,d} \times C$. In fact, these results generalise to the stack $\Bun_G$ of principal $G$-bundles over $C$ for a reductive group $G$ by \cite{hs}. Finally, let us mention the work of Gaitsgory-Lurie \cite{Gaitsgory_Lurie} on the computation of the $\ell$-adic cohomology of $\Bun_{G}$, which is inspired by the Atiyah-Bott method.

Another algebro-geometric approach to studying the $\ell$-adic cohomology of $\Bun_{n,d}$ and $\cN_{n,d}$ for coprime $n$ and $d$ was given by Bifet, Ghione and Letizia \cite{bgl}, using matrix divisors to rigidify and approximate the cohomology of $\Bun_{n,d}$; this approach is tailored to $G = \GL_n$ and does not naturally easily to other groups. Since their techniques are very geometric, they are amenable to being used in a range of different contexts, and the ideas in \cite{bgl} lie at the heart of this paper. The ideas of \cite{bgl} were also used by Behrend and Dhillon \cite{BD} to give a closed formula for the class of $\Bun_{n,d}$ in a dimensional completion of the Grothendieck ring of varieties (\textit{cf.}\ Theorem \ref{thm BD formula}). Furthermore, using the ideas in \cite{bgl}, the Chow motive of $\cN_{n,d}$ was studied by Del  Ba\~{n}o in \cite{Del_Bano_motives_moduli}, and the motivic cohomology of $\cN_{n,d}$ was considered in \cite{ADK}. 

Let $\DM(k,R)$ be the triangulated category of mixed motives over $k$ with coefficients in a commutative ring $R$, where if $p=\mathrm{char}(k)>0$, we assume that either $p$ is invertible in $R$ or that $k$ is perfect and admits resolution of singularities. Any $k$-variety $X$ has a motive $M(X)$ in $\DM(k,R)$ which is a refined cohomological invariant of $X$ that contains information about both the cohomology of $X$ and algebraic cycles on $X$. For an algebraic stack $\fX$ over $k$, it is not completely straightforward to define a motive $M(\fX)$ in $\DM(k,R)$, especially if $R$ is not assumed to be a $\QQ$-algebra. In $\S$\ref{sec motive stacks}, we define $M(\fX)$ for a class of smooth stacks which we call exhaustive stacks (\textit{cf.}\ Definition \ref{def exh seq}) by adapting ideas of Totaro \cite{totaro} and Morel--Voevodsky \cite{MVW} for quotient stacks to this more general setting. Informally, an algebraic stack $\fX$ is exhaustive if it can be well approximated by a sequence of schemes which occur as open substacks of vector bundles over increasingly large open substacks of $\fX$. In Appendix~\ref{app mot stacks}, we discuss alternative approaches for defining \'etale motives of stacks.

We then proceed to show that $\Bun_{n,d}$ is exhaustive and start computing its motive. To approximate $\Bun_{n,d}$ by schemes, we rigidify vector bundles using matrix divisors as in \cite{bgl,BD}. For an effective divisor $D$ on $C$, a matrix divisor of rank $n$, degree $d$ on $C$ with pole divisor $D$ is a injective homomorphism $\cE\hookrightarrow\cO_C(D)^{\oplus n}$ of coherent sheaves on $C$ with $\cE$ a rank $n$, degree $d$ locally free sheaf. The space $\Div_{n,d}(D)$ of matrix divisors of rank $n$, degree $d$ and pole divisor $D$ is a Quot scheme parametrising torsion quotients of the bundle $\cO_C(D)^{\oplus n}$ of a given degree; since we are looking at torsion quotients over a curve, $\Div_{n,d}(D)$ is a smooth projective variety. 

We can now state our first main result.

\begin{thm}\label{intro_main_thm}
For any effective divisor $D_0>0$, we have
\[
M(\Bun_{n,d})\simeq \hocolim_{l\in\NN} M(\Div_{n,d}(lD_0)).
\]
In particular, the motive $M(\Bun_{n,d})$ is pure, in the sense that it lies in the heart of the Chow weight structure on $\DM(k,R)$.
\end{thm}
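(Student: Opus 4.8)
The plan is to realize $\Bun_{n,d}$ as an exhaustive stack whose defining exhausting sequence is built from the matrix-divisor Quot schemes $\Div_{n,d}(lD_0)$, and then identify the motive of an exhaustive stack with the homotopy colimit of the motives of the approximating schemes. First I would recall the matrix-divisor picture from \cite{bgl,BD}: for an effective divisor $D$ on $C$, a point of $\Div_{n,d}(D)$ is an inclusion $\cE\hookrightarrow\cO_C(D)^{\oplus n}$ with $\cE$ locally free of rank $n$ and degree $d$, equivalently a torsion quotient of $\cO_C(D)^{\oplus n}$ of length $\deg(\cO_C(D)^{\oplus n})-d = n\deg D - d$. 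The key geometric input is that the natural map $\Div_{n,d}(D)\to\Bun_{n,d}$, $(\cE\hookrightarrow\cO_C(D)^{\oplus n})\mapsto\cE$, exhibits $\Div_{n,d}(D)$ as an open substack of a vector bundle over the open substack $\Bun_{n,d}^{\le D}\subseteq\Bun_{n,d}$ parametrizing bundles $\cE$ such that $\cE\otimes\cO_C(D)$ is globally generated with vanishing $H^1$ (or some cohomological condition of this type guaranteeing that $\Div_{n,d}(D)\to\Bun_{n,d}^{\le D}$ is an affine-space bundle, or at least a vector-bundle-minus-degeneracy-locus, with the fibre over $\cE$ being the open locus of injections in $\Hom(\cE,\cO_C(D)^{\oplus n})$). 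As $l\to\infty$ with $D=lD_0$, these open substacks $\Bun_{n,d}^{\le lD_0}$ exhaust $\Bun_{n,d}$, and the codimension of the complement goes to infinity; this is precisely the content needed to verify Definition~\ref{def exh seq}, so that $\Bun_{n,d}$ is exhaustive and $M(\Bun_{n,d})\simeq\hocolim_l M(\Div_{n,d}(lD_0))$ by the construction of the motive of an exhaustive stack in $\S$\ref{sec motive stacks}.

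The step I expect to be the main obstacle is the careful verification that the transition maps in the system $\{\Div_{n,d}(lD_0)\}_l$ are compatible with the exhausting structure, and in particular that the codimensions of the "bad loci" (where either $\cE\otimes\cO_C(lD_0)$ fails the cohomological condition, or where a homomorphism $\cE\to\cO_C(lD_0)^{\oplus n}$ fails to be injective) grow without bound in $l$. For the cohomological condition this is a standard Riemann–Roch estimate. For the injectivity locus, one needs a bound on the dimension of the locus of non-injective homomorphisms inside $\Hom(\cE,\cO_C(lD_0)^{\oplus n})$ uniform over bounded families of $\cE$; this is where one must be slightly careful, since $\Bun_{n,d}$ is not of finite type and one is really working with the exhausting substacks of finite type. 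One also needs to check that passing from $lD_0$ to $(l+1)D_0$ (i.e. composing with the inclusion $\cO_C(lD_0)\hookrightarrow\cO_C((l+1)D_0)$) induces the maps of Quot schemes and vector bundles compatibly with the stack-level open immersions, so that the resulting diagram genuinely computes $M(\Bun_{n,d})$ as a homotopy colimit in $\DM(k,R)$.

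Once the homotopy colimit description is established, the purity statement follows formally. Each $\Div_{n,d}(lD_0)$ is a smooth projective variety (being a Quot scheme of torsion quotients on a smooth curve, hence smooth, and projective as a Quot scheme for a projective target), so each $M(\Div_{n,d}(lD_0))$ lies in the heart $\Chow_{\eff}$ of the Chow weight structure on $\DM(k,R)$. The remaining point is that the transition maps in the system are compatible with the weight structure in a way that makes the homotopy colimit again lie in the heart: concretely, the transition maps are split monomorphisms in $\DM(k,R)$ (this should follow from the geometry — the open immersions $\Div_{n,d}(lD_0)\hookrightarrow\Div_{n,d}((l+1)D_0)$ up to affine-bundle corrections split off a direct summand, compatibly with the Bia{\l}ynicki-Birula-type decompositions of the Quot schemes), so the homotopy colimit is a retract of a term of the system up to the stabilization range in each degree, and weights are preserved under retracts and under such stabilizing colimits. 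Hence $M(\Bun_{n,d})$ lies in the heart of the Chow weight structure, i.e. is pure.
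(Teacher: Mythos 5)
Your first half follows the paper's route and is essentially correct in outline: one exhibits $\Bun_{n,d}$ as an exhaustive stack using the bundles $V_l$ with fibres $\Hom(E,\cO_C(lD_0)^{\oplus n})$ over slope-truncated open substacks, identifies the locus of injective homomorphisms with (an open part of) the Quot scheme, and invokes the definition of the motive of an exhaustive stack. Two precisions, though. First, the paper filters by $\mu_{\max}(E)\leq \mu_l$ (Theorem \ref{prop exh seq}), which both guarantees quasi-compactness of the opens and the $H^1$-vanishing making $V_l$ a vector bundle; over $\Bun_{n,d}^{\leq\mu_l}$ the injective locus is only $\Div_{n,d}^{\leq\mu_l}(lD_0)$, not the whole Quot scheme, so a second step (Theorem \ref{thm1}) is still required to replace $\Div_{n,d}^{\leq\mu_l}(lD_0)$ by $\Div_{n,d}(lD_0)$, via Proposition \ref{prop hocolim vanishes} and the estimate that the complementary unstable locus in the Quot scheme has codimension growing like $l\deg D_0$. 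You gesture at both bad loci, but the substance of the paper's proof lies exactly in the codimension estimates you defer -- Lemmas \ref{codim bgl correct} and \ref{codim brill noether}, which are uniform over the non-quasi-compact stack and correct an estimate misquoted from \cite{bgl} in \cite{BD}; calling these ``standard Riemann--Roch'' undersells where the work is, but the strategy is the paper's.

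The genuine gap is in your purity argument. The transition morphisms $\Div_{n,d}(lD_0)\to\Div_{n,d}((l+1)D_0)$ are \emph{closed} immersions of smooth projective Quot schemes, not open immersions as you write, and a closed immersion of smooth projective varieties does not in general induce a split monomorphism on motives (e.g.\ a positive-genus plane curve in $\PP^2$, whose motive is not a summand of a Tate motive). How these maps interact with the Bia{\l}ynicki-Birula decompositions of Corollary \ref{cor mot BB decomposition} is precisely what the paper cannot establish and isolates as Conjecture \ref{thm2}: in particular the closed immersions are \emph{not} transverse to the BB strata, so there is no easy geometric splitting, and your claim that split injectivity ``should follow from the geometry, compatibly with the BB decompositions'' is assuming essentially the open part of the paper. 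The paper's own deduction of purity uses none of this: it follows directly from the presentation of $M(\Bun_{n,d})$ as an $\NN$-indexed homotopy colimit of motives of smooth projective varieties, i.e.\ from closure properties of the heart of the Chow weight structure applied to the explicit cone defining the homotopy colimit, with no control on the transition maps required. As written, your purity step therefore rests on an unproved and nontrivial claim, and should either be replaced by the weight-structure argument or supplemented by an actual proof of split injectivity of the transition maps.
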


The proof requires among other things to correct a codimension estimate in \cite{BD}; see Theorem \ref{prop exh seq} and Lemmas \ref{codim bgl correct} and \ref{coconut}.

To describe the motives of the varieties $\Div_{n,d}(lD_0)$, we use a $\GG_m$-action and a Bia{\l}ynicki-Birula decomposition \cite{BB_original} as in \cite{bgl}. The varieties $\Div_{n,d}(lD_0)$ come with natural actions of $\GL_n$, such that the morphisms $\Div_{n,d}(lD_0)\ra \Div_{n,d}((l+1)D_0)$ are equivariant. If we restrict the action to a generic one-parameter subgroup $\GG_m\subset \GL_n$, then the connected components of the fixed point locus can be identified with products of symmetric powers of $C$. By applying a motivic Bia{\l}ynicki-Birula decomposition \cite{Brosnan, Choudhury_Skowera, Karpenko}, we obtain the following result. 

\begin{thm}\label{intro_main_thm2}
Assume that $R$ is a $\QQ$-algebra; then $M(\Bun_{n,d})$ lies in the localising tensor triangulated category of $\DM(k,R)$ generated by $M(C)$. Hence, $M(\Bun_{n,d})$ is an abelian motive. 
\end{thm}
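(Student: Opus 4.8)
The plan is to combine Theorem~\ref{intro_main_thm} with a structural analysis of the motives $M(\Div_{n,d}(lD_0))$ via their Bia{\l}ynicki-Birula decompositions. First I would fix a generic one-parameter subgroup $\GG_m \subset \GL_n$ acting on $X_l := \Div_{n,d}(lD_0)$, and recall that since $X_l$ is smooth and projective, the motivic Bia{\l}ynicki-Birula decomposition \cite{Brosnan, Choudhury_Skowera, Karpenko} gives (over a $\QQ$-algebra, or indeed integrally in the smooth projective case) an isomorphism
\[
M(X_l) \simeq \bigoplus_{F \subset X_l^{\GG_m}} M(F)(c_F)[2c_F],
\]
where the sum runs over the connected components $F$ of the fixed locus and $c_F$ is the rank of the negative part of the normal bundle along $F$. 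The key geometric input, taken from \cite{bgl}, is that each such $F$ is isomorphic to a product of symmetric powers $\prod_i \Sym^{m_i}(C) = \prod_i C^{(m_i)}$ of the curve. Hence it suffices to show that $M(C^{(m)})$ lies in the localising tensor subcategory $\langle M(C)\rangle^{\otimes}$ generated by $M(C)$; but $C^{(m)} = C^m/\mathfrak{S}_m$, and over a $\QQ$-algebra $M(C^{(m)})$ is the $\mathfrak{S}_m$-invariant summand of $M(C^m) = M(C)^{\otimes m}$, which is manifestly an object of $\langle M(C)\rangle^{\otimes}$ (symmetric and tensor powers, and retracts thereof, stay in a localising tensor subcategory). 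Therefore each $M(X_l)$, being a finite direct sum of Tate twists and shifts of such motives, also lies in $\langle M(C)\rangle^{\otimes}$ — note the Tate motive $R(1)$ itself is in $\langle M(C)\rangle^{\otimes}$ since it is a summand of $M(C)$ (or of $M(\PP^1)$, which is built from points and lines on $C$, e.g.\ via a degree-$1$ map if one exists after base change, or directly since $R(1)[2]$ is a summand of $\widetilde{M}(C)$ coming from a rational point — one should be slightly careful and use that $M(C^{(m)})$ for $m$ large surjects onto Tate twists, or simply that $\Div$ is nonempty so Tate twists appear).

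Next I would pass to the homotopy colimit. By Theorem~\ref{intro_main_thm}, $M(\Bun_{n,d}) \simeq \hocolim_{l \in \NN} M(X_l)$. The localising subcategory $\langle M(C)\rangle^{\otimes}$ is by definition closed under arbitrary direct sums and under cones, hence closed under homotopy colimits of sequences (a sequential homotopy colimit is the cone of a map between countable coproducts, $\bigoplus_l M(X_l) \xrightarrow{1 - \mathrm{shift}} \bigoplus_l M(X_l)$). Since every $M(X_l) \in \langle M(C)\rangle^{\otimes}$, both coproducts lie in the subcategory, and therefore so does the cone, i.e.\ $M(\Bun_{n,d}) \in \langle M(C)\rangle^{\otimes}$. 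The final clause, that $M(\Bun_{n,d})$ is an abelian motive, then follows because $M(C)$ is an abelian motive — its Chow–Künneth components are (twists of) $h^0$, $h^1$, $h^2$, and $h^1(C)$ is the motive of the Jacobian, an abelian variety — and the class of abelian motives (or rather the localising tensor subcategory they generate) is closed under the same operations; so being in $\langle M(C)\rangle^{\otimes}$ forces $M(\Bun_{n,d})$ into the localising tensor subcategory generated by motives of abelian varieties.

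The main obstacle I anticipate is not in this final assembly but in justifying that the Bia{\l}ynicki-Birula input applies cleanly and that the fixed loci are exactly (disjoint unions of) products of symmetric powers of $C$ with the asserted normal-bundle weights; this is where one genuinely needs the geometry of the Quot schemes $\Div_{n,d}(lD_0)$ and the explicit description of the $\GG_m$-action from \cite{bgl}, together with checking smoothness and properness so that the motivic decomposition theorem is available. A secondary technical point is ensuring the decomposition is compatible enough with the transition maps $X_l \to X_{l+1}$ — although for \emph{this} statement one does not actually need compatibility, since membership in a localising subcategory is checked object-by-object and then preserved under the homotopy colimit regardless of how the maps interact with the BB decompositions. (Compatibility of the decompositions with transition maps is the kind of thing that would matter for the more precise conjectural formula, not for Theorem~\ref{intro_main_thm2}.) Finally, one should be mildly careful about coefficients: the splitting of $M(C^{(m)})$ off $M(C)^{\otimes m}$ as the symmetric part uses the idempotent $\frac{1}{m!}\sum_{\sigma}\sigma$, which requires $\tfrac{1}{m!} \in R$ — hence the hypothesis that $R$ is a $\QQ$-algebra is used here in an essential way, and should be invoked explicitly.
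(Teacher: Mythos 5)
Your proposal is correct and follows essentially the same route as the paper: Theorem \ref{intro_main_thm} plus the motivic Bia{\l}ynicki-Birula decomposition of the Quot schemes into Tate twists of motives of products of symmetric powers $C^{(m)}$, the identification (over a $\QQ$-algebra) of $M(C^{(m)})$ with $\Sym^m M(C)$ as a direct factor of $M(C)^{\otimes m}$, closure of the localising tensor subcategory under sequential homotopy colimits, and the fact that $M(C)$ is an abelian motive. Your extra care about Tate twists and your observation that compatibility with the transition maps is not needed for this statement are both accurate and consistent with the paper's argument.
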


The assumption that $R$ is a $\QQ$-algebra is used to show that the motive of a symmetric power of $C$ is a direct factor of the motive of a power of $C$. For $C= \PP^1$, as symmetric products of $\PP^1$ are projective spaces, we deduce that $M(\Bun_{n,d})$ is a Tate motive for any coefficient ring $R$. 

We then conjecture the following formula for the motive of $\Bun_{n,d}$.

\begin{conj}\label{main conj}
Suppose that $C(k) \neq \emptyset$; then in $\DM(k,R)$, we have
\[M(\Bun_{n,d}) \simeq  M(\Jac(C)) \otimes M(B\GG_m) \otimes \bigotimes_{i=1}^{n-1} Z(C, R(i)[2i]). \]
where $Z(C,R(i)[2i]):=\bigoplus_{j=0}^{\infty} M(X^{(j)})\otimes R(ij)[2ij]$ denotes the motivic Zeta function.
\end{conj}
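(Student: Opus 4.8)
The plan is to compute the homotopy colimit of Theorem~\ref{intro_main_thm} directly, by running a Bia{\l}ynicki-Birula decomposition on each Quot scheme $\Div_{n,d}(lD_0)$ and then passing to the colimit one Tate-twist range at a time. Fix a one-parameter subgroup $\GG_m\subset\GL_n$ with pairwise distinct weights $w_1>\dots>w_n$. As recalled above, the connected components of the fixed locus of $\Div_{n,d}(lD_0)$ are the products $\prod_{i=1}^n C^{(e_i)}$ indexed by tuples $\vec e=(e_1,\dots,e_n)\in\NN^n$ with $\sum_i e_i=nl\deg(D_0)-d$, the $\vec e$-component parametrising the torus-fixed matrix divisors $\bigoplus_i\cO(lD_0-E_i)\hookrightarrow\cO(lD_0)^{\oplus n}$ with $\deg E_i=e_i$. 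Decomposing the tangent space of $\Div_{n,d}(lD_0)$ at such a fixed point into $\GG_m$-weight spaces, the negative-weight part over the $\vec e$-component has dimension $r_{\vec e}=\sum_{j=1}^n(j-1)e_j$, so the motivic Bia{\l}ynicki-Birula decomposition~\cite{Brosnan,Choudhury_Skowera,Karpenko} gives $M(\Div_{n,d}(lD_0))\simeq\bigoplus_{\vec e}M\big(\prod_{i=1}^n C^{(e_i)}\big)(r_{\vec e})[2r_{\vec e}]$. By Theorem~\ref{intro_main_thm} it then suffices to identify the homotopy colimit of these motives with the right-hand side of Conjecture~\ref{main conj}.

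Next I would reorganise the cells. Writing $j_p:=e_{p+1}$ for $p=1,\dots,n-1$, one has $r_{\vec e}=\sum_{p=1}^{n-1}p\,j_p$, while the remaining factor has degree $e_1=nl\deg(D_0)-d-\sum_p j_p$, which tends to infinity with $l$ once $j_1,\dots,j_{n-1}$ are fixed. By Abel-Jacobi and Riemann-Roch, as soon as $e_1>2g-2$ the morphism $C^{(e_1)}\to\Pic^{e_1}(C)\cong\Jac(C)$ is a projective bundle (here $C(k)\ne\emptyset$ is used to trivialise $\Pic^{e_1}(C)$), so $M(C^{(e_1)})\simeq M(\Jac(C))\otimes\bigoplus_{k=0}^{e_1-g}R(k)[2k]$, which stabilises to $M(\Jac(C))\otimes M(B\GG_m)$ in any fixed range of Tate twists. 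On the other hand, by Theorem~\ref{intro_main_thm} and the connectivity estimates underlying it (Theorem~\ref{prop exh seq} and Lemmas~\ref{codim bgl correct} and~\ref{coconut}), in any fixed range of twists $M(\Div_{n,d}(lD_0))$ already agrees with $M(\Bun_{n,d})$ for $l\gg0$. In such a range only the finitely many cells with $\sum_p p\,j_p$ bounded contribute, and for each of them the two stabilisations combine to give the contribution $M(\Jac(C))\otimes M(B\GG_m)\otimes\bigotimes_{p=1}^{n-1}M(C^{(j_p)})(p\,j_p)[2p\,j_p]$; summing over all $(j_1,\dots,j_{n-1})\in\NN^{n-1}$ and using $Z(C,R(i)[2i])=\bigoplus_{j\ge0}M(C^{(j)})(ij)[2ij]$ then recovers that range of $M(\Jac(C))\otimes M(B\GG_m)\otimes\bigotimes_{i=1}^{n-1}Z(C,R(i)[2i])$. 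Letting the range grow yields the conjectured formula; for $C=\PP^1$ all symmetric powers are projective spaces, consistent with $M(\Bun_{n,d})$ being a Tate motive there.

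The main obstacle is to promote these range-by-range identifications to a single isomorphism in $\DM(k,R)$. The difficulty is that the transition map $\Div_{n,d}(lD_0)\to\Div_{n,d}((l+1)D_0)$ carries the $\vec e$-cell into the $(e_1+\deg D_0,\dots,e_n+\deg D_0)$-cell, so for $n\ge2$ it strictly raises the Bia{\l}ynicki-Birula filtration ($r_{\vec e}$ jumps by $\deg(D_0)\binom{n}{2}$) and has vanishing associated graded; thus the Bia{\l}ynicki-Birula decompositions of the various $\Div_{n,d}(lD_0)$ do not obviously assemble into a presentation of the colimit — indeed, naively following a single chain of cells gives $0$, since its Tate twist escapes to infinity. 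What is needed is exact information on how the transition maps act on these decompositions, equivalently on the pushforward maps between the Chow groups of the Quot schemes $\Div_{n,d}(lD_0)$ compatibly in $l$ --- and it is precisely this conjectural statement on the intersection theory of the Quot schemes that we are forced to assume. Granting it, the range-by-range identifications glue to the desired isomorphism and Conjecture~\ref{main conj} follows. Alternatively, one may look for an intrinsic comparison morphism built from the K\"unneth components of the Chern classes of the universal bundle on $\Bun_{n,d}\times C$, in the spirit of Atiyah-Bott~\cite{atiyah_bott}, together with the determinant map $\Bun_{n,d}\to\Pic^d(C)$, and reduce the conjecture to checking that this morphism realises the stabilisations above --- but verifying this requires the same intersection-theoretic input on the Quot schemes.
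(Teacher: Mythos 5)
Your proposal is, in substance, the paper's own (necessarily conditional) argument: the statement is Conjecture~\ref{main conj}, which the paper does not prove outright but deduces (Theorem~\ref{thm3}) from Conjecture~\ref{thm2} on how the transition maps $\Div_{n,d}(lD_0)\to\Div_{n,d}((l+1)D_0)$ interact with the motivic Bia{\l}ynicki-Birula decompositions --- precisely the intersection-theoretic input on the Quot schemes that you identify and assume. The only real difference is bookkeeping: where you stabilise range-by-range in Tate twists and then glue, the paper splits the inductive system into a direct sum of sub-systems indexed by $(m_2,\dots,m_n)$ (using that, per Conjecture~\ref{thm2}, each transition map only hits the component $\underline{m}+(n\deg(D'-D),0,\dots,0)$ via the map adding $n(D'-D)$ to the first factor) and computes each homotopy colimit by the same Abel--Jacobi projective-bundle mechanism (Lemma~\ref{lemma hocolim sym}), which avoids having to justify the truncation-and-gluing step.
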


To prove this formula, one needs to understand the behaviour of the transition maps in the inductive system given in Theorem \ref{intro_main_thm} with respect to the motivic Bia{\l}ynicki-Birula decompositions. We formulate a conjecture (\textit{cf.}\ Conjecture \ref{thm2}) on the behaviour of these transitions maps with respect to these decompositions; this is equivalent to a conjecture concerning the intersection theory of the the smooth projective Quot schemes $\Div_{n,d}(D)$ (\textit{cf.}\ Conjecture \ref{conj int thy quot} and Remark \ref{rmk conj int thy implies conj motives}). In Theorem \ref{thm3}, we prove that Conjecture \ref{thm2} implies Conjecture \ref{main conj}; hence, it suffices to solve the conjecture on the intersection theory of these Quot schemes, which is ongoing work of the authors. 

The assumption that $C$ has a rational point is needed so that Abel-Jacobi maps from sufficiently large symmetric powers of $C$ to $\Jac(C)$ are projective bundles (\textit{cf.}\ Remark \ref{rmk litt}). In fact, these projective spaces then contribute to the motive of $B\GG_m$ (\textit{cf.}\ Example \ref{ex mot BGm}).

Finally let us state some evidence to support Conjecture \ref{main conj}, as well as some consequences. First, using Poincar\'{e} duality for smooth stacks (\textit{cf.}\ Proposition \ref{prop PD for stacks}), we deduce a formula for the compactly supported motive of $\Bun_{n,d}$ (\textit{cf.}\ Theorem \ref{compact supp}), which is better suited to comparisons with the results concerning the topology of $\Bun_{n,d}$ mentioned above. In $\S$\ref{sec compare}, we explain how this conjectural formula for $M^c(\Bun_{n,d})$ is compatible with the Behrend--Dhillon formula \cite{BD} by using a category of completed motives inspired by work of Zargar \cite{Zargar} (\textit{cf.}\ Lemma \ref{lemma comp BD}). In $\S$\ref{sec fixed det}, we deduce from Conjecture \ref{thm2} formulae for the motive (and compactly supported motive) of the stack $\Bun_{n,d}^L$ of vector bundles with fixed determinant $L$ and the stack $\Bun_{\SL_n}$ of principal $\SL_n$-bundles over $C$.

The structure of this paper is as follows: in $\S$\ref{sec motives}, we summarise the key properties of motives of schemes and define motives of smooth exhaustive stacks. In $\S$\ref{sec motive bun}, we prove Theorems \ref{intro_main_thm} and \ref{intro_main_thm2}, state Conjecture \ref{main conj}, and prove that this conjecture follows from a conjecture concerning the intersection theory of the smooth projective Quot schemes $\Div_{n,d}(D)$ (\textit{cf.}\ Theorem \ref{thm3}). In $\S$\ref{sec conseq and comp}, we deduce from Conjecture \ref{main conj} a formula for the compactly supported motive of $\Bun$, which we compare with previous formulae in $\S$\ref{sec compare} and we deduce formulae for the motives of $\Bun_{n,d}^L$ and $\Bun_{\SL_n}$ from Conjecture \ref{thm2} (\textit{cf.}\ Theorems \ref{fixed det} and \ref{SL-bundles}). Finally, in Appendix~\ref{app mot stacks}, we explain and compare alternative approaches for defining \'etale motives of stacks.\\

\noindent \textbf{Notation and conventions.}
Throughout all schemes and stacks are assumed to be defined over a fixed field $k$. By an algebraic stack, we mean a stack for the fppf topology with an atlas given by a representable, smooth surjective morphism from a locally of finite type scheme.\footnote{This definition is slightly different from the standard one where the atlas is allowed to be an algebraic space, and a condition on the diagonal is enforced; however this definition is sufficient for our purposes.} For a closed substack $\cY$ of an algebraic stack $\fX$, we define the codimension of $\cY$ in $\fX$ to be the codimension of $\cY\times_{\fX} U$ in $U$ for an atlas $U\ra \fX$; this is independent of the choice of atlas.

For $n \in \NN$ and a quasi-projective variety $X$, the symmetric group $\Sigma_n$ acts on $X^{\times n}$ and the quotient is representable by a quasi-projective variety $X^{(n)}$, the $n$-th symmetric power of $X$.\\

\noindent \textbf{Acknowledgements.} We thank Michael Gr\"{o}chenig, Jochen Heinloth, Marc Levine, Dragos Oprea, Rahul Pandaripande, and Alexander Schmitt for useful discussions.

\section{Motives of schemes and stacks}
\label{sec motives}
Let $k$ be a base field and $R$ be a commutative ring of coefficients. If the characteristic $p$ of $k$ is positive, then we assume either that $p$ is invertible in $R$ or that $k$ is perfect and admits the resolution of singularities by alterations. We let $\DM(k,R):= \DM^{\Nis}(k,R)$ denote Voevodsky's category of (Nisnevich) motives over $k$ with coefficients in $R$; this is a monoidal triangulated category. For a separated scheme $X$ of finite type over $k$, we can associate both a motive $M(X)\in \DM(k,R)$, which is covariantly functorial in $X$ and behaves like a homology theory, and a motive with compact supports $M^c(X)\in \DM(k,R)$, which is covariantly functorial for proper morphisms and behaves like a Borel-Moore homology theory.

Without going into the details of the construction, we recall that objects in $\DM(k,R)$ can be represented by motivic complexes; that is, objects in the category $\MotCpl(k,R)$ of (symmetric) $T$-spectra in complexes of Nisnevich sheaves with transfers\footnote{We recall that sheaves with transfers have additional contravariant functoriality for finite correspondences.} of $R$-modules on the category $\Sm_k$ of smooth $k$-schemes, where
\[T:=\coker(R^{\eff}_{\tr}(\Spec k)\rightarrow R^{\eff}_{\tr}(\GG_m))[-1]. \]
Here $R^{\eff}_{\tr}(X)$ denotes the sheaf of finite correspondences into $X$ with $R$-coefficients for $X\in \Sm_{k}$. We write $R_{\tr}(X)$ for the suspension spectrum $\Sigma^{\infty}_{T}R^{\eff}_{\tr}(X)$. A morphism in $\MotCpl(k,R)$ that becomes an isomorphism in $\DM(k,R)$ is called an $\AA^1$-weak equivalence (abbreviated in $\AA^1$-w.e.).

\begin{rmk}
The main results of this paper hold in the category $\DM^{\eff}(k,R)$ of effective motives. We refrained from writing everything in terms of $\DM^{\eff}(k,R)$ for two reasons.
  \begin{enumerate}[label={\upshape(\roman*)}]
  \item Under our assumptions on $k$ and $R$, the functor $\DM^{\eff}(k,R)\ra \DM(k,R)$ is fully faithful \cite{cancellation} \cite{suslin_imperfect}, so that results in $\DM^{\eff}(k,R)$ follow immediately from their stable counterparts.
  \item The motive with compact support $M^{c}(\fX)$ of an Artin stack, however it is defined, is almost never effective (see Section $\S$\ref{sec mot comp}).
  \end{enumerate}
\end{rmk}  

\subsection{Properties of motives of schemes}
The category $\DM(k,R)$ was originally constructed in \cite{VSF} and its deeper properties were established under the hypothesis that $k$ is perfect and satisfies resolution of singularities (with no assumption on $R$). They were extended to the case where $k$ is perfect by Kelly in \cite{kelly}, using Gabber's refinement of de Jong's results on alterations. Finally, the extension of scalars of $\DM$ from a field to its perfect closure was shown to be an equivalence in \cite[Proposition 8.1.(d)]{Cisinski_Deglise_cdh}. 

The motive $M(\Spec k):=R_{\tr}(\Spec k)$ of the point is the unit for the monoidal structure, and there are Tate motives $R(n)\in \DM(k,R)$ for all $n\in\ZZ$. For any motive $M$ and $n \in \ZZ$, we write $M(n):=M\otimes R(n)$, and we write $M\{n\} := M(n)[2n]$.

Let us list the main properties of motives that will be used in this paper. 
\begin{itemize}
\item (K\"unneth formula): for schemes $X$ and $Y$, we have  
\[M(X\times_k Y)\simeq M(X)\otimes M(Y) \quad \text{and} \quad M^c(X\times_k Y)\simeq M^c(X)\otimes M^c(Y).\]
\item ($\AA^1$-homotopy invariance): by construction of $\DM(k,R)$, for any Zariski-locally trivial affine bundle $Y\rightarrow X$ with fibre $\AA^r$, the following induced morphisms are isomorphisms
\[ M(Y)\rightarrow M(X) \quad \text{and} \quad M^c(Y)\{r\}\rightarrow M^c(X). \]
\item (Motives with and without compact supports): for a separated finite type scheme $X$, there is a morphism $M(X)\rightarrow M^c(X)$, which is an isomorphism if $X$ is proper.
\item (Projective bundle formula): for a vector bundle $E\rightarrow X$ of rank $r+1$, there are isomorphisms
\[M(\PP(E))\simeq \bigoplus_{i=0}^{r}M(X)\{i\} \quad \text{and} \quad
M^c(\PP(E))\simeq \bigoplus_{i=0}^{r}M^c(X)\{i\}.\]
\item (Gysin triangles): for a closed immersion $i:Z\ra X$ of codimension $c$ between smooth $k$-schemes, there is a functorial distinguished triangle
\[
  M(X - Z)\rightarrow M(X)\stackrel{\Gy(i)}{\rightarrow} M(Z)\{c\}\stackrel{+}{\rightarrow}.
\]
\item (Flat pullbacks for $M^c$): for a flat morphism $f:X\rightarrow Y$ of relative dimension $d$, there is a pullback morphism 
\[f^*:M^c(Y)\{d\}\rightarrow M^c(X).\]
\item (Localisation triangles): for a separated scheme $X$ of finite type and $Z$ any closed subscheme, there is a functorial distinguished triangle
\[
M^c(Z)\rightarrow M^c(X)\rightarrow M^c(X - Z)\stackrel{+}{\rightarrow}.
\]
\item (Internal homs and duals): the category $\DM(k,R)$ has internal homomorphisms, which can be used to define the dual of any motive $M \in \DM(k,R)$ as 
\[ M^\vee := \underline{\text{Hom}}(M,R(0)).\]
\item (Poincar\'{e} duality): for a smooth scheme $X$ of pure dimension $d$, there is an isomorphism
\[ M^c(X) \simeq M(X)^\vee \{ d \}.\]
\item\label{motalg cycles} (Algebraic cycles): for a smooth scheme $X$ (say of pure dimension $d$ for simplicity), a separated scheme $Y$ of finite type and $i\in \NN$, there is an isomorphism 
  \[ \CH_i(X\times Y)_R \simeq \Hom_{\DM}(M(X),M^c(Y)\{d-i\})\]
where $\CH_i$ denotes the Chow groups of cycles of dimension $i$. 
\item (Compact generators): $\DM(k,R)$ is compactly generated by $M(X)(-n)$ for $X \in \Sm_k$ and $n\in \NN$. Let $\DM_{\gm}(k,R)$ denote the triangulated subcategory consisting of compact objects\footnote{We recall that an object $M\in \DM(k,R)$ is compact if and only if for all families $(N_i)_{i \in I}\in \DM(k,R)^I$ indexed by a set $I$, the natural map $\bigoplus_{i \in I} \Hom(M,N_i)\ra \Hom(M,\bigoplus_{i\in I}N_i)$ is an isomorphism.}. For a separated finite type scheme $X$, both $M(X)$ and $M^c(X)$ are compact. 
\end{itemize}

\begin{rmk}
The category of \'etale motives $\DM^{\text{\'et}}(k,R)$, which is defined by replacing the Nisnevich topology with the \'{e}tale topology, does not capture the information about integral and torsion Chow groups; however, defining motives of stacks is technically simpler as we explain in Appendix~\ref{app mot stacks}. 
\end{rmk}  

\subsection{Homotopy (co)limits}\label{sec holim van}

As $\DM(k,R)$ is a compactly generated triangulated category, it admits arbitrary direct sums and arbitrary direct products \cite[Proposition 8.4.6]{neeman}; hence, one can define arbitrary homotopy colimits and homotopy limits for $\NN$-indexed systems in $\DM(k,R)$ using only the triangulated structure together with direct sums and products as follows.

\begin{defn}\label{defn_hocolim}
The homotopy colimit of an inductive system $F_*: \NN\rightarrow \DM(k,R)$ is 
\[
\hocolim_{n\in \NN} F_n:= \cone\left(\bigoplus_{i\in \NN} F_{i} \stackrel{\id-\sigma}{\longrightarrow} \bigoplus_{i\in \NN} F_i\right)
\]
and the homotopy limit of a projective system $G^*:\NN^{\text{op}}\rightarrow \DM(k,R)$ is 
\[
\holim_{n\in \NN} G^n:= \cone\left(\prod_{i\in \NN} G^i \stackrel{\id-\sigma}{\longrightarrow} \prod_{i\in \NN} G^i\right)[-1],
\]
where we write $\sigma$ for any of the maps $F_i\rightarrow F_{i+1}$ (resp.\ $G^{i+1}\rightarrow G^{i}$) in the diagram.

Note that, by construction, for any given choice of such a cone, there is a compatible system of maps, i.e. an element of $\lim_i \Hom(F_i,\hocolim F_*)$ (resp.\ $\lim_j \Hom(\holim G^*, G^j)$).
\end{defn}

\begin{rmk}
Using \cite[Lemma 1.7.1]{neeman}, it is easy to extend this definition to homotopy colimits indexed by filtered partially ordered sets $I$ such that there exists a cofinal embedding $\NN\ra I$. 
\end{rmk}

The reader unfamiliar with this definition should compare it with the definition of the limit and its derived functor $R^1\lim$ for $\NN$-indexed diagrams of abelian groups in \cite[Definition 3.5.1]{Weibel}. Since homotopy (co)limits are defined by the choice of a cone, they are only unique up to the a non-unique isomorphism. However, in the case where the $\NN$-indexed system actually comes from the underlying model category (that is, it can be realised as a system of $T$-spectra of complexes of Nisnevich sheaves with transfers and morphisms between them), then homotopy colimits can be realised in a simple, canonical way as follows.

\begin{lemma}\label{colim_hocolim}
Let $S_*: \NN\rightarrow\MotCpl(k,R)$ be an inductive system of motivic complexes. Then
  \[
\hocolim_{n}S_{n}\simeq \colim_{n}S_{n}
\]
with the colimit being computed in the abelian category $\MotCpl(k,R)$.
\end{lemma}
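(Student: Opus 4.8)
The plan is to reduce the comparison of the homotopy colimit (defined via the cone in $\DM(k,R)$) with the naive colimit in $\MotCpl(k,R)$ to two facts: first, that filtered colimits in $\MotCpl(k,R)$ are already homotopically well-behaved because $\MotCpl(k,R)$ is (the underlying category of) a proper cellular/combinatorial model category in which filtered colimits are homotopy colimits, and second, that the mapping cone in $\DM(k,R)$ can be computed by the mapping cone in $\MotCpl(k,R)$ up to $\AA^1$-weak equivalence. More precisely, I would argue as follows.

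First I would recall that in the abelian category $\MotCpl(k,R)$ — symmetric $T$-spectra of complexes of Nisnevich sheaves with transfers — the colimit $\colim_n S_n$ over the $\NN$-indexed diagram sits in a short exact sequence of motivic complexes
\[
0 \lra \bigoplus_{i\in\NN} S_i \xrightarrow{\ \id - \sigma\ } \bigoplus_{i\in\NN} S_i \lra \colim_n S_n \lra 0,
\]
exactly as for $\NN$-indexed diagrams of abelian groups; injectivity of $\id - \sigma$ is the standard telescope argument and holds objectwise, hence in the functor category. A short exact sequence of motivic complexes yields a distinguished triangle in $\DM(k,R)$ (this is part of how the triangulated structure on $\DM$ is obtained from the model structure on $\MotCpl$), so we get a distinguished triangle
\[
\bigoplus_{i} S_i \xrightarrow{\ \id-\sigma\ } \bigoplus_{i} S_i \lra \colim_n S_n \xrightarrow{\ +\ }.
\]

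Second, I would compare this triangle with the one defining $\hocolim$. By Definition \ref{defn_hocolim}, $\hocolim_n S_n$ is a cone of the same map $\id - \sigma$ between the same objects $\bigoplus_i S_i$ in $\DM(k,R)$ — here using that the derived/triangulated direct sum in $\DM(k,R)$ is computed by the termwise direct sum in $\MotCpl(k,R)$, which holds because direct sums of (cofibrant replacements of) spectra are again of the right homotopy type. Since a cone in a triangulated category is determined up to (non-unique) isomorphism by the map, any choice of $\hocolim_n S_n$ is isomorphic to $\colim_n S_n$ in $\DM(k,R)$, which is the claim. One can additionally note that the compatible system of maps $S_n \to \colim_n S_n$ in $\MotCpl(k,R)$ descends to the canonical system $\lim_n \Hom(S_n, \hocolim S_*)$ promised after Definition \ref{defn_hocolim}, so the identification is compatible with the structure maps.

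The main obstacle, and the point deserving care, is the claim that the derived coproduct in $\DM(k,R)$ agrees with the naive coproduct $\bigoplus_i S_i$ taken in $\MotCpl(k,R)$ — equivalently, that an $\NN$-indexed coproduct of $\AA^1$-weak equivalences is an $\AA^1$-weak equivalence, so that no cofibrant replacement is needed before forming $\bigoplus_i$. This is where the good properties of the model category enter: the $\AA^1$-local model structure on $T$-spectra of complexes of sheaves with transfers is left proper and the class of $\AA^1$-weak equivalences is closed under filtered colimits (in particular under coproducts), since the localisation is at a set of maps between compact objects and the Nisnevich-local and $\AA^1$-local structures are both finitely generated. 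Granting this, everything else is formal triangulated-category bookkeeping; without it the statement would be false at the level of model categories. I would therefore structure the proof so that this closure property is invoked as the single substantive input, citing the construction of $\DM(k,R)$ in \cite{VSF} (and its extensions) for the relevant properties of $\MotCpl(k,R)$.
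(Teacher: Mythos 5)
Your proposal is correct and follows essentially the same route as the paper: one exhibits the short exact sequence $0 \ra \bigoplus_i S_i \xrightarrow{\id-\sigma} \bigoplus_i S_i \ra \colim_n S_n \ra 0$ in the Grothendieck abelian category $\MotCpl(k,R)$, passes to the associated distinguished triangle in $\DM(k,R)$, and identifies the colimit with a cone of $\id-\sigma$, i.e.\ with the homotopy colimit of Definition~\ref{defn_hocolim}. The only difference is that you make explicit (via model-categorical properties) the point that the termwise direct sum in $\MotCpl(k,R)$ represents the coproduct in $\DM(k,R)$, which the paper's proof leaves implicit; this is a correct and standard fact, so the extra care does not change the argument.
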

\begin{proof}
As $\MotCpl(k,R)$ is a Grothendieck abelian category, it has exact filtered colimits and so
  \[
0\ra \bigoplus_{n\geq 0} S_{n}\stackrel{\id-\sigma}{\longrightarrow}\bigoplus_{n\geq 0}S_{n}\ra \colim_{m\geq 0}S_{n}\ra 0
\]
is an exact sequence; this provides a distinguished triangle in the associated derived category, and also in $\DM(k,R)$, which exhibits the colimit as a cone of the map $\id-\sigma$, and thus as a homotopy colimit.
\end{proof}

Nevertheless homotopy colimits are functorial in the following relatively weak sense.

\begin{lemma}\label{lemma funct hocolim}
For two $\NN$-indexed inductive (resp.\ projective) systems $F_*, \widetilde{F}_*$ (resp.\ $G^*, \widetilde{G}^*$) in $\DM(k,R)$, fix a choice of homotopy (co)limits $F, \widetilde{F}$ (resp.\ $G, \widetilde{G}$); that is, a specific choice of cones of the morphisms in Definition \ref{defn_hocolim}. Then, modulo these choices, there is a uniquely determined short exact sequence
\[
0 \ra R^1 \lim_n \Hom(F_n[1], \widetilde{F}) \ra \Hom( F, \widetilde{F})\ra \lim_n \Hom(F_n,\widetilde{F}) \ra 0
\]
(resp.\ $0 \ra R^1\lim_n \Hom( G,\widetilde{G}^n[-1])  \ra \Hom(G, \widetilde{G}) \ra \lim_n \Hom( G,\widetilde{G}^n) \ra 0$). In particular, for a morphism $f_*:F_*\ra \widetilde{F}_*$ of inductive systems, we can choose a morphism $f: F \ra \widetilde{F}$ such that for any $n\in \NN$ the diagram
\[
  \xymatrix{
    F_n \ar[r]^{f_n} \ar[d] & \widetilde{F}_n \ar[d] \\
    F \ar[r]^{f} &  \widetilde{F} 
    }
  \]
commutes, and $f$ is uniquely determined up to the $R^1\lim$ term appearing in the above exact sequence (and there is a similar statement for homotopy limits). By abuse of notation, we sometimes denote such a morphism by $M(f_*)$.
\end{lemma}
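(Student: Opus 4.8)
The statement is a formal consequence of the definition of homotopy (co)limit as a cone, combined with the Milnor exact sequence for maps into/out of a homotopy colimit/limit. First I would recall the two defining triangles: for the inductive systems, $\bigoplus_i F_i \xrightarrow{\id-\sigma} \bigoplus_i F_i \to F \xrightarrow{+}$ and similarly with tildes. Applying the cohomological functor $\Hom(-,\widetilde{F})$ to the first triangle gives a long exact sequence
\[
\cdots \to \Hom\Bigl(\bigoplus_i F_i[1], \widetilde{F}\Bigr) \xrightarrow{\id-\sigma^*} \Hom\Bigl(\bigoplus_i F_i[1], \widetilde{F}\Bigr) \to \Hom(F,\widetilde{F}) \to \Hom\Bigl(\bigoplus_i F_i,\widetilde{F}\Bigr) \xrightarrow{\id-\sigma^*} \Hom\Bigl(\bigoplus_i F_i,\widetilde{F}\Bigr) \to \cdots
\]
Using that $\Hom(\bigoplus_i F_i, \widetilde F)=\prod_i \Hom(F_i,\widetilde F)$ and identifying the kernel and cokernel of $\id-\sigma^*$ on $\prod_i \Hom(F_i[j],\widetilde F)$ with $\lim_n \Hom(F_n[j],\widetilde F)$ and $R^1\lim_n \Hom(F_n[j],\widetilde F)$ respectively (this is exactly the presentation of $\lim$/$R^1\lim$ of an $\NN$-indexed system as the kernel/cokernel of $\id-\sigma$ on the product, cf. \cite[Definition 3.5.1]{Weibel}), the four-term exact sequence collapses to the claimed short exact sequence
\[
0 \to R^1\lim_n \Hom(F_n[1],\widetilde F) \to \Hom(F,\widetilde F) \to \lim_n \Hom(F_n,\widetilde F) \to 0.
\]
The homotopy limit case is dual: apply $\Hom(G,-)$ to the defining triangle $G \to \prod_i \widetilde G^i \xrightarrow{\id-\sigma} \prod_i \widetilde G^i \xrightarrow{+}$ of $\widetilde G$, use that $\Hom(G,-)$ commutes with products, and extract the analogous short exact sequence with the $R^1\lim$ term being $R^1\lim_n \Hom(G,\widetilde G^n[-1])$.

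For the functoriality consequence, given $f_*: F_* \to \widetilde F_*$, the compatible system of composites $F_n \to \widetilde F_n \to \widetilde F$ defines an element of $\lim_n \Hom(F_n,\widetilde F)$; by surjectivity of the right-hand map in the short exact sequence, this lifts to some $f \in \Hom(F,\widetilde F)$. Unwinding what it means for $f$ to map to this element under $\Hom(F,\widetilde F) \to \lim_n\Hom(F_n,\widetilde F) \subset \prod_n \Hom(F_n,\widetilde F)$ — namely that $f$ precomposed with the structure map $F_n \to F$ equals $F_n \xrightarrow{f_n} \widetilde F_n \to \widetilde F$ — gives precisely the commutativity of the displayed square for every $n$. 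Here one uses the observation recorded at the end of Definition \ref{defn_hocolim} that the choice of cone comes with a compatible system of maps $F_n \to F$, so that the composite $\Hom(F,\widetilde F)\to\lim_n\Hom(F_n,\widetilde F)$ is genuinely "restriction along the structure maps". Two lifts differ by an element of the kernel $R^1\lim_n\Hom(F_n[1],\widetilde F)$, giving the asserted uniqueness.

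**Main obstacle.** There is no serious obstacle; the only point requiring a little care is bookkeeping the shifts and the direction of the transition maps so that the $\id-\sigma$ appearing in the cone presentation of the homotopy colimit matches the $\id-\sigma$ in the Weibel presentation of $R^1\lim$ (including signs, which are harmless since we only need kernels and cokernels). One should also be mildly careful that the map $\Hom(F,\widetilde F)\to\lim_n\Hom(F_n,\widetilde F)$ produced by the long exact sequence agrees with the "obvious" restriction map along the fixed compatible system $F_n\to F$ — this is immediate from functoriality of the long exact sequence of the triangle $\bigoplus F_i \to \bigoplus F_i \to F$ under the inclusion of the $n$-th summand, but it is worth stating explicitly since the whole statement is "modulo these choices." The homotopy limit assertions are formally dual and require no additional ideas.
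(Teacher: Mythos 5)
Your proposal is correct and is exactly the argument the paper intends: the paper's proof is the one-line remark that the lemma ``follows directly from the definition of the functor $R^1\lim$'', which abbreviates precisely your Milnor-sequence computation (apply $\Hom(-,\widetilde F)$ to the defining triangle, identify kernel and cokernel of $\id-\sigma^*$ on the product with $\lim$ and $R^1\lim$, and dualise for homotopy limits). Your additional check that the map $\Hom(F,\widetilde F)\to\lim_n\Hom(F_n,\widetilde F)$ is restriction along the structure maps $F_n\to F$ is a worthwhile point that the paper leaves implicit.
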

\begin{proof}
This follows directly from the definition of the functor $R^1\lim$.
\end{proof}  

The compatibility between the weak functoriality and the triangulated structure is as follows.

\begin{lemma}\label{lemma hocolim triangles}
Consider an $\NN$-indexed system of distinguished triangles $F'_*\ra F_*\ra F''_*\stackrel{+}{\rightarrow}$ in $\DM(k,R)$. For any choice of homotopy colimits of those systems and compatible morphisms between them as in Lemma \ref{lemma funct hocolim}, the triangle
  \[
    \hocolim F'_n \ra \hocolim F_n\ra \hocolim F''_n\stackrel{+}{\ra}
  \]
is distinguished. The analogous statement holds for homotopy limits.
\end{lemma}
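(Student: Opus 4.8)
The plan is to deduce this from the fact that $\hocolim$ is defined as a cone, together with the octahedral axiom (or more precisely the fact that a cone of a map of triangles is again a triangle). First I would fix, for each $n$, the distinguished triangles $F'_n \to F_n \to F''_n \rap$ given in the system, and use the axiom that the maps $F'_n \to F_n$ and $F_n \to F''_n$ assemble, after taking $\bigoplus_n$, into maps $\bigoplus_n F'_n \to \bigoplus_n F_n \to \bigoplus_n F''_n$ which still form a distinguished triangle (direct sums of distinguished triangles are distinguished in a triangulated category admitting countable coproducts, by \cite[Proposition 1.2.1]{neeman} or its analogue). The transition maps $\sigma$ are compatible with all three systems by hypothesis, so $\id - \sigma$ defines a map from this triangle to itself, i.e.\ a commutative diagram whose rows are the two copies of the distinguished triangle $\bigoplus_n F'_n \to \bigoplus_n F_n \to \bigoplus_n F''_n \rap$ and whose vertical maps are $\id-\sigma$.

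Next I would invoke the standard $3\times 3$ (or "cone of a morphism of triangles") lemma: given a morphism between two distinguished triangles, one can complete the three mapping cones into a distinguished triangle, and moreover this can be done compatibly with the given maps into/out of the cones. Concretely, applying $\cone(\id - \sigma)$ termwise to the morphism of triangles produces a candidate distinguished triangle
\[
\cone\Bigl(\bigoplus_n F'_n \xrightarrow{\id-\sigma} \bigoplus_n F'_n\Bigr) \to \cone\Bigl(\bigoplus_n F_n \xrightarrow{\id-\sigma} \bigoplus_n F_n\Bigr) \to \cone\Bigl(\bigoplus_n F''_n \xrightarrow{\id-\sigma} \bigoplus_n F''_n\Bigr) \rap,
\]
and by Definition \ref{defn_hocolim} the three terms are precisely $\hocolim F'_n$, $\hocolim F_n$, $\hocolim F''_n$ for the chosen cones. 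The content of Lemma \ref{lemma funct hocolim} is exactly that the maps between these homotopy colimits realising the functoriality can be taken to be the maps produced by this cone construction (they fit in the required commuting squares with the $F^{(\prime\prime)}_n$); so the triangle above, with those maps, is distinguished, which is the assertion. The homotopy limit case is identical after replacing $\bigoplus$ by $\prod$, $\cone(-)$ by $\cone(-)[-1]$, and using that products of distinguished triangles are distinguished in a triangulated category with countable products.

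The main subtlety — and the only place where care is genuinely needed — is the bookkeeping of non-canonical choices. The mapping cone of a morphism of distinguished triangles is not functorial, so there is a priori no reason that the "cone of the morphism $\id-\sigma$ of triangles" agrees with the specific cones we chose to \emph{define} $\hocolim F'_n$, $\hocolim F_n$, $\hocolim F''_n$, nor that the connecting maps coincide with the ones furnished by Lemma \ref{lemma funct hocolim}. The way around this is to observe that any choice of cone of $\id - \sigma \colon \bigoplus F'_n \to \bigoplus F'_n$ is isomorphic to any other, and more importantly that the $3\times 3$ lemma gives us \emph{one} such choice for each of the three systems making the triangle distinguished and making all the relevant squares (with the $F^{(\prime)}_n \to \hocolim F^{(\prime)}_n$) commute; since the statement of Lemma \ref{lemma hocolim triangles} allows us to select "any choice of homotopy colimits and compatible morphisms between them", we are free to take exactly these. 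If one instead insists on a pre-specified set of cones and maps, one transports the distinguished triangle across the (non-unique, but existing) isomorphisms identifying the two sets of cones, using Lemma \ref{lemma funct hocolim} to see that the comparison maps can be chosen compatibly up to the $R^1\lim$ ambiguity, which does not affect whether a triangle is distinguished. I would write this out just carefully enough to make the choice-juggling unambiguous, and otherwise keep the argument to the two displayed triangles above.
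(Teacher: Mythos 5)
Your argument is exactly the paper's: the paper's proof is the one-line observation that direct sums (resp.\ products) of distinguished triangles are distinguished together with the nine lemma, which is precisely the $\id-\sigma$ morphism of summed triangles plus the $3\times 3$/cone-of-a-morphism argument you spell out. Your additional discussion of the non-canonicity of the cones and of the $R^1\lim$ ambiguity is more care than the paper takes (it leaves these choices implicit), and is consistent with how the lemma is stated and used.
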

\begin{proof}
This follows directly from the fact that direct sums (resp.\ direct products) of distinguished triangles are distinguished and the nine lemma.
\end{proof}  

Let us state some results about simple homotopy (co)limits.

\begin{lemma}\label{pulling out constants hocolims}  
For an inductive system $F_*: \NN\rightarrow \DM(k,R)$ and $A \in \DM(k,R)$, there is an isomorphism
\[ \hocolim_n  (F_n \otimes A) \simeq (\hocolim_n  F_n) \otimes A.  \]
\end{lemma}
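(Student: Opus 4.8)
The plan is to reduce the statement to the defining formula for homotopy colimits and the fact that the tensor product $- \otimes A$ is an exact functor on $\DM(k,R)$ which commutes with arbitrary direct sums. Recall from Definition \ref{defn_hocolim} that $\hocolim_n F_n$ is defined as a choice of cone of the map $\id - \sigma : \bigoplus_i F_i \to \bigoplus_i F_i$. So I would start by fixing such choices of cones for both homotopy colimits appearing in the statement, and then compare them.

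First I would observe that for any fixed object $A$, the functor $- \otimes A : \DM(k,R) \to \DM(k,R)$ is a triangulated functor (this is part of the monoidal triangulated structure), so it sends distinguished triangles to distinguished triangles and in particular sends a cone to a cone: applying $- \otimes A$ to the defining triangle
\[
\bigoplus_{i\in\NN} F_i \stackrel{\id - \sigma}{\longrightarrow} \bigoplus_{i\in\NN} F_i \longrightarrow \hocolim_n F_n \stackrel{+}{\longrightarrow}
\]
yields a distinguished triangle
\[
\Bigl(\bigoplus_{i\in\NN} F_i\Bigr)\otimes A \stackrel{(\id - \sigma)\otimes \id}{\longrightarrow} \Bigl(\bigoplus_{i\in\NN} F_i\Bigr)\otimes A \longrightarrow (\hocolim_n F_n)\otimes A \stackrel{+}{\longrightarrow}.
\]
Second, I would use that $- \otimes A$ commutes with arbitrary direct sums: since $\DM(k,R)$ is compactly generated and the monoidal structure is closed (it has internal homs, as listed in the properties above), the functor $- \otimes A$ is a left adjoint (to $\underline{\Hom}(A,-)$) and hence preserves all coproducts. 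This gives a natural isomorphism $\bigl(\bigoplus_i F_i\bigr)\otimes A \simeq \bigoplus_i (F_i \otimes A)$ compatible with the maps $\id - \sigma$ on both sides (the map $\sigma$ on the right being induced by $\sigma_i \otimes \id_A$). Therefore the displayed triangle above is isomorphic to
\[
\bigoplus_{i\in\NN} (F_i\otimes A) \stackrel{\id - \sigma}{\longrightarrow} \bigoplus_{i\in\NN} (F_i\otimes A) \longrightarrow (\hocolim_n F_n)\otimes A \stackrel{+}{\longrightarrow},
\]
which exhibits $(\hocolim_n F_n)\otimes A$ as a cone of the map $\id - \sigma$ for the inductive system $F_* \otimes A$, i.e.\ as a homotopy colimit $\hocolim_n (F_n \otimes A)$. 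Since homotopy colimits are unique up to (non-unique) isomorphism, this gives the claimed isomorphism.

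I do not expect a serious obstacle here; the only point requiring minor care is the compatibility of the identification $\bigl(\bigoplus_i F_i\bigr)\otimes A \simeq \bigoplus_i (F_i\otimes A)$ with the shift map $\sigma$, so that the two maps labelled $\id - \sigma$ genuinely correspond under this isomorphism. This is a formal check using naturality of the coproduct isomorphism for $- \otimes A$ with respect to the structure maps $\sigma_i$ of the inductive system. One should also remark that, just as with all homotopy colimit statements in this section, the isomorphism depends on the chosen cones and is not canonical; but this is already implicit in the notation of Definition \ref{defn_hocolim}. An entirely analogous argument (using that $- \otimes A$ need not preserve products, so one cannot hope for the dual statement for $\holim$ in general) shows why the lemma is stated only for homotopy colimits.
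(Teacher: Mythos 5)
Your proof is correct and is exactly the paper's argument, just written out in full: the paper's proof simply says the statement follows from the definition of the homotopy colimit together with the fact that the tensor product commutes with direct sums in a tensor triangulated category, which is precisely the combination of exactness of $-\otimes A$ and preservation of coproducts that you spell out. No changes needed.
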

\begin{proof} 
This follows from the definition of the homotopy colimit, as the tensor product commutes with direct sums in a tensor triangulated category. 
\end{proof}

\begin{lemma}\label{lemma hocolims sums}  
Let $I$ be a set and let  $F^i_*: \NN\rightarrow \DM(k,R)$ be an inductive system for all $i\in I$. Then there is an isomorphism
\[ \hocolim_n \bigoplus_{i\in I} F^i_n \simeq \bigoplus_{i\in I} \hocolim_n  F^i_n.\]
\end{lemma}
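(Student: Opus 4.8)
The plan is to reduce the claim about commuting $\hocolim$ with an arbitrary direct sum to the manipulation of cones of $\id - \sigma$ maps, exactly as in the proof of Lemma~\ref{pulling out constants hocolims}. First I would observe that, by Definition~\ref{defn_hocolim}, for each $i \in I$ the motive $\hocolim_n F^i_n$ is the cone of
\[
\bigoplus_{n \in \NN} F^i_n \xrightarrow{\ \id - \sigma\ } \bigoplus_{n \in \NN} F^i_n,
\]
and similarly $\hocolim_n \bigoplus_{i \in I} F^i_n$ is the cone of
\[
\bigoplus_{n \in \NN} \bigoplus_{i \in I} F^i_n \xrightarrow{\ \id - \sigma\ } \bigoplus_{n \in \NN} \bigoplus_{i \in I} F^i_n.
\]

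Next I would use that direct sums commute with each other in $\DM(k,R)$ (which exists as a coproduct in the compactly generated triangulated category, by \cite[Proposition 8.4.6]{neeman}), so the two double direct sums above are canonically identified, and under this identification the two shift-difference maps $\id - \sigma$ correspond. Then I would invoke the fact that in a triangulated category with arbitrary direct sums, a direct sum of distinguished triangles is distinguished (the same fact used in the proof of Lemma~\ref{lemma hocolim triangles}): applying this to the family of defining triangles
\[
\bigoplus_{n} F^i_n \xrightarrow{\id - \sigma} \bigoplus_{n} F^i_n \longrightarrow \hocolim_n F^i_n \xrightarrow{+}
\]
indexed by $i \in I$, and comparing with the defining triangle for $\hocolim_n \bigoplus_{i} F^i_n$, yields that $\bigoplus_{i \in I} \hocolim_n F^i_n$ is a cone of the same map $\id - \sigma$, hence is isomorphic to $\hocolim_n \bigoplus_{i} F^i_n$.

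I do not expect a serious obstacle here; the only mild subtlety is that cones, and hence homotopy colimits, are not functorial on the nose, so the isomorphism produced is non-canonical (depending on the chosen cones), in keeping with the conventions set up in Lemma~\ref{lemma funct hocolim}. One should also take a moment to check that the shuffle isomorphism of double coproducts genuinely intertwines the two $\id - \sigma$ maps rather than just abstractly matching source and target; this is immediate because $\sigma$ acts on the $\NN$-index and the reindexing is the identity on the $I$-index. With those remarks in place the proof is essentially a one-line consequence of the definitions together with the two structural facts about direct sums in a triangulated category already used above.
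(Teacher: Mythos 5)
Your argument is correct and is exactly the paper's: unwind Definition~\ref{defn_hocolim}, commute the double direct sum, and use that a direct sum of distinguished triangles is distinguished to identify $\bigoplus_{i}\hocolim_n F^i_n$ with a cone of $\id-\sigma$ on $\bigoplus_n\bigoplus_i F^i_n$. The paper states this in one line; your version just spells out the same steps, including the correct caveat that the resulting isomorphism is only canonical up to the usual non-uniqueness of cones.
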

\begin{proof} 
This follows from the definition of the homotopy colimit, as a direct sum of distinguished triangles is distinguished. 
\end{proof}

\subsection{Vanishing results for homotopy (co)limits}
\label{sec:vanish-results-homot}

In order to compute homotopy (co)limits, we will frequently rely on various vanishing results which we collect together in this section. 

\begin{defn}\label{defn dim filtr}
The dimensional filtration on $\DM(k,R)$ is the $\ZZ$-indexed filtration $\ldots\subset\DM(k,R)_m\subset \DM(k,R)_{m+1}\subset\ldots$ where $\DM(k,R)_m$ denotes the smallest localising subcategory of $\DM(k, R)$ 
containing $M^c(X)(n)$ for all separated schemes $X$ of finite type over $k$ and all integers $n$ with $\dim(X) + n \leq m$. 
\end{defn}

Note that analoguous filtrations appear in the literature dealing with classes of stacks in the Grothendieck ring of varieties (for example, see \cite{BD}). Totaro proves the following result.

\begin{prop}[{\textsc{\cite[Lemma 8.3]{totaro}}}]
\label{prop totaro dim filtr}
The dimension filtration satisfies
\[ \bigcap_{m \leq 0} \DM(k,R)_m \simeq 0.\]
Moreover, for a projective system $G^*:\NN^{\text{op}}\rightarrow \DM(k,R)$ and a sequence of integers $a_n \ra - \infty$ such that $G^n \in \DM(k,R)_{a_n}$ for all $n$, it follows that $\holim_n G^n \simeq 0$.
\end{prop}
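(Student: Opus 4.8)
The plan is to reduce both assertions to a single Hom-vanishing estimate against the compact generators $M(X)(-j)$, $X\in\Sm_k$, $j\in\NN$, of $\DM(k,R)$, and then run a routine $R^{1}\lim$ argument.

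First I would record that $\DM(k,R)_m$ is already generated, as a localising subcategory, by the objects $M^c(X)(n)$ with $X\in\Sm_k$ and $\dim X + n\le m$: since $M^c(Y)\simeq M^c(Y_{\mathrm{red}})$ and a reduced finite type $k$-scheme has a dense smooth open (this is where the standing hypotheses on $k$ enter), induction on dimension together with the localisation triangles writes $M^c(Y)(n)$, for $\dim Y + n\le m$, as a finite iterated extension of motives $M^c(S)(n)$ with $S\in\Sm_k$ and $\dim S + n\le m$. The key lemma is then: if $N\in\DM(k,R)_m$ and $a>m$, then $\Hom_{\DM}(M(X)(a)[s],N)=0$ for every $X\in\Sm_k$ and every $s\in\ZZ$. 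Since $M(X)(a)[s]$ is compact, the class of $N$ with this property is a localising subcategory, so it suffices to check it on the generators $N=M^c(Y)(c)$ with $Y\in\Sm_k$ (we may take $Y$ equidimensional) of dimension $e\le m-c$. Poincaré duality $M^c(Y)\simeq M(Y)^\vee\{e\}$ and the Künneth formula identify
\[
\Hom_{\DM}\big(M(X)(a)[s],\,M^c(Y)(c)\big)\ \simeq\ \Hom_{\DM}\big(M(X\times Y),\,R(e+c-a)[2e-s]\big),
\]
and since $a>m\ge e+c$ the Tate twist $e+c-a$ is strictly negative, so this is a motivic cohomology group of the smooth scheme $X\times Y$ in a negative weight, hence zero.

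Granting the lemma, the first assertion is immediate: if $N\in\bigcap_{m\le 0}\DM(k,R)_m$, then for each generator $M(X)(-j)$ with $j\ge 0$ we apply the lemma with $m=-j-1\le 0$ and $a=-j>m$ to get $\Hom_{\DM}(M(X)(-j)[s],N)=0$ for all $s$; as the $M(X)(-j)$ form a set of compact generators, $N\simeq 0$. For the second assertion, by the first it suffices to show $\Hom_{\DM}(C,\holim_n G^n)=0$ for $C=M(X)(-j)[s]$ with $X\in\Sm_k$, $j\ge 0$, $s\in\ZZ$. The defining triangle of the homotopy limit, together with $\Hom(C,\prod_n G^n)=\prod_n\Hom(C,G^n)$, yields the Milnor short exact sequence (a special case of Lemma \ref{lemma funct hocolim})
\[
0\ \to\ R^{1}\!\lim_n\Hom(C,G^n[-1])\ \to\ \Hom(C,\holim_n G^n)\ \to\ \lim_n\Hom(C,G^n)\ \to\ 0.
\]
Since $a_n\to-\infty$, there is $n_0$ with $a_n<-j$ for all $n\ge n_0$, so the key lemma (applied with $a=-j$, $m=a_n$, and to the shifts $C$ and $C[1]$) gives $\Hom(C,G^n)=\Hom(C,G^n[-1])=0$ for $n\ge n_0$; an inverse system over $\NN$ that is eventually zero has vanishing $\lim$ and $R^{1}\lim$, whence $\Hom(C,\holim_n G^n)=0$ and therefore $\holim_n G^n\simeq 0$.

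The only real content is the Hom-vanishing lemma, and within it the point needing care is the weight bookkeeping: one must verify that feeding the numerical hypothesis $\dim Y + c\le m<a$ through Poincaré duality and Künneth produces a \emph{strictly} negative Tate twist (equivalently, in higher-Chow terms, a cycle dimension exceeding $\dim(X\times Y)$). The other genuine input is the reduction of $M^c(Y)$ for singular $Y$ to smooth strata, which uses generic smoothness and hence the assumptions on $k$; everything after the lemma is formal manipulation with compact generators and the $R^{1}\lim$ sequence.
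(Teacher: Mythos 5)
Your argument is correct. Note that the paper offers no proof of this statement: it is quoted directly from \cite[Lemma 8.3]{totaro}, and what you have written is essentially the argument behind that citation (and behind the closely related arguments the paper does spell out, e.g.\ in Proposition \ref{prop hocolim vanishes}): stratify the generators $M^c(Y)(n)$ of $\DM(k,R)_m$ into smooth pieces, dualise via Poincar\'e duality and K\"unneth, kill the resulting group by the vanishing of motivic cohomology of smooth schemes in strictly negative Tate weight, and then conclude formally from compact generation and the Milnor $\lim$--$R^1\lim$ sequence. Two small points. First, over an imperfect field with $p$ invertible in $R$, a reduced finite type scheme need not have a dense \emph{smooth} open, so the stratification step should begin by passing to the perfect closure via \cite[Proposition 8.1.(d)]{Cisinski_Deglise_cdh}, exactly as the paper does in the proofs of Proposition \ref{prop hocolim vanishes} and Lemma \ref{lemma vanishing mot coh}; your parenthetical appeal to the standing hypotheses is correct, but this reduction is what it stands for and should be stated. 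Second, you were right not to invoke Lemma \ref{lemma vanishing mot coh} here: that is a vanishing in high cohomological degree relative to weight and dimension, whereas you need vanishing in \emph{all} degrees, which only the negative-weight vanishing of motivic cohomology of smooth schemes provides. Finally, the clause ``by the first it suffices'' in your treatment of the homotopy limit is unnecessary: vanishing of $\Hom$ from all shifts of the compact generators already gives $\holim_n G^n\simeq 0$ without appealing to the first assertion.
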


\begin{cor}\label{cor sum prod}
  Let $(M_n)_{n\in\NN}\in \DM(k,R)^{\NN}$ be a family of motives and $(a_n)_{n\in \NN}$ be a sequence of integers such that $a_n \ra - \infty$ and $M_n\in \DM(k,R)_{a_n}$ for all $n$. Then the natural morphism
  \[
\bigoplus_{n\in \NN} M_n \ra \prod_{n\in\NN} M_n
\]
is an isomorphism.
\end{cor}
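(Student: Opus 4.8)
The plan is to deduce Corollary \ref{cor sum prod} from Proposition \ref{prop totaro dim filtr} by producing a projective system whose homotopy limit is the cofiber of the canonical map $\bigoplus_n M_n \to \prod_n M_n$. First I would recall the standard observation that, for any family $(M_n)$ of objects in a triangulated category with countable sums and products, there is a distinguished triangle
\[
\bigoplus_{n} M_n \xrightarrow{\,\id - \mathrm{shift}\,} \bigoplus_n M_n \longrightarrow \prod_n M_n \xrightarrow{+}
\]
or, more usefully here, that the cofiber of the natural map $\bigoplus_n M_n \to \prod_n M_n$ is identified with $\holim_n \left( \prod_{m \ge n} M_m \right)[1]$ up to shift, where the transition maps in the projective system $n \mapsto \prod_{m\ge n} M_m$ are the obvious projections (truncating the tail). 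Concretely, the partial products $P_n := \bigoplus_{m < n} M_m$ form an inductive system with $\hocolim_n P_n = \bigoplus_n M_n$, while the tails $Q^n := \prod_{m \ge n} M_m$ form a projective system with $\prod_n M_n$ sitting in a triangle relating it to $P_n$ and $Q^n$; taking a homotopy limit over $n$ of these triangles and using that $\holim$ of a constant system is the constant object, one extracts a triangle
\[
\bigoplus_n M_n \longrightarrow \prod_n M_n \longrightarrow \holim_n Q^n \xrightarrow{+}.
\]

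The second step is to control the dimensional filtration level of the tails $Q^n = \prod_{m \ge n} M_m$. Since $M_m \in \DM(k,R)_{a_m}$ and the subcategories $\DM(k,R)_j$ are localising (hence closed under arbitrary direct sums, and in particular — by Corollary \ref{cor sum prod} applied in a bootstrapped form, or more safely by a direct argument — one must check closure under the relevant products), I would set $b_n := \sup_{m \ge n} a_m$. Because $a_m \to -\infty$, each such supremum is finite and $b_n \to -\infty$ as $n \to \infty$. The key claim is then that $Q^n = \prod_{m \ge n} M_m$ lies in $\DM(k,R)_{b_n}$. This is the point requiring care: localising subcategories are by definition closed under arbitrary direct sums but not a priori under arbitrary products. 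I would handle this either by (a) rearranging the product as an inverse limit of finite sub-products $\prod_{n \le m \le N} M_m$, each of which visibly lies in $\DM(k,R)_{b_n}$ since finite sums equal finite products and the subcategory is triangulated, and then invoking Proposition \ref{prop totaro dim filtr} to see the homotopy limit over $N$ stays in $\DM(k,R)_{b_n}$; or (b) observing that the full product is itself a homotopy limit of these finite products and applying the stability of $\DM(k,R)_{b_n}$ under $\NN$-indexed homotopy limits with bounded filtration, which is exactly the content of Proposition \ref{prop totaro dim filtr}'s second assertion (taking the constant sequence $a_n = b_n$).

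With $Q^n \in \DM(k,R)_{b_n}$ and $b_n \to -\infty$ established, the final step is immediate: Proposition \ref{prop totaro dim filtr} gives $\holim_n Q^n \simeq 0$, so the triangle
\[
\bigoplus_n M_n \longrightarrow \prod_n M_n \longrightarrow \holim_n Q^n \xrightarrow{+}
\]
degenerates and the natural map $\bigoplus_n M_n \to \prod_n M_n$ is an isomorphism.

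The main obstacle I anticipate is precisely the bookkeeping in step two — verifying that the infinite products of the tails land in the correct level $\DM(k,R)_{b_n}$ of the dimensional filtration, since this does not follow formally from "localising" but genuinely uses the homotopy-limit stability half of Proposition \ref{prop totaro dim filtr}. Once that is in hand, identifying the cofiber of $\bigoplus \to \prod$ with $\holim$ of the tails is a purely formal triangulated-category manipulation (essentially the telescope/Milnor-sequence argument), and the vanishing is then a direct citation.
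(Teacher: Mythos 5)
Your plan — reduce the statement to a vanishing result for the tails and then quote Proposition \ref{prop totaro dim filtr} — runs into two genuine problems. The first is the telescope identification. The split triangles $\bigoplus_{m<n} M_m \rightarrow \prod_m M_m \rightarrow \prod_{m\ge n} M_m$ are compatible as an \emph{inductive} system (inclusions $\bigoplus_{m<n}M_m\rightarrow\bigoplus_{m<n+1}M_m$ on the left, projections $\prod_{m\ge n}M_m\rightarrow\prod_{m\ge n+1}M_m$ on the right, constant middle term), not as a projective one; taking homotopy colimits yields a triangle $\bigoplus_n M_n \rightarrow \prod_n M_n \rightarrow \hocolim_n \prod_{m\ge n} M_m \stackrel{+}{\rightarrow}$. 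There is no projective system of tails with ``projection'' transition maps (the projections go the wrong way), and with the only maps available in the other direction (extension by zero) one checks that both $\lim$ and $R^1\lim$ vanish, so that homotopy limit is $0$ for trivial reasons and cannot be the cofibre; similarly, the homotopy limit of the partial products $\prod_{m<n}M_m$ along the truncation projections is $\prod_n M_n$, not $\bigoplus_n M_n$. So the triangle $\bigoplus_n M_n \rightarrow \prod_n M_n \rightarrow \holim_n Q^n \stackrel{+}{\rightarrow}$ you want to extract is not established (and your first display, $\bigoplus_n M_n \xrightarrow{\id-\mathrm{shift}} \bigoplus_n M_n \rightarrow \prod_n M_n$, has no meaning for a bare family without transition maps).

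The second problem persists even after replacing $\holim$ by the correct $\hocolim$ of tails: the claim $\prod_{m\ge n} M_m \in \DM(k,R)_{b_n}$ is unjustified. Proposition \ref{prop totaro dim filtr} does not say that a fixed level $\DM(k,R)_b$ is closed under $\NN$-indexed homotopy limits: its hypothesis requires the levels to tend to $-\infty$ (a constant sequence does not qualify), and its conclusion is that the homotopy limit \emph{vanishes}, not that it lies in a prescribed level — so both of your proposed justifications (a) and (b) misquote it. As you yourself observe, compactly generated localising subcategories need not be closed under countable products, and no valid argument for this closure is given; this is exactly the delicate point, and your whole strategy hinges on it. The argument the paper has in mind (the proof of Totaro's Lemma 8.5, to which it refers) sidesteps products entirely: test the map against the compact generators $M(X)(a)[b]$. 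Compactness gives $\Hom(M(X)(a)[b],\bigoplus_n M_n)\simeq \bigoplus_n\Hom(M(X)(a)[b],M_n)$, while $\Hom(M(X)(a)[b],\prod_n M_n)\simeq \prod_n\Hom(M(X)(a)[b],M_n)$, and the Hom-vanishing for objects lying deep in the dimensional filtration (Totaro's Lemma 8.1; compare Lemma \ref{lemma vanishing mot coh} and the generation argument in the proof of Proposition \ref{prop hocolim vanishes}) shows that for fixed $(X,a,b)$ all but finitely many of these groups are zero, so the map of Hom groups is an isomorphism; since this holds for a set of compact generators, the cone is zero. I would rewrite your proof along these lines rather than trying to place infinite products inside the dimensional filtration.
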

\begin{proof}
The cone of this morphism lies in $\DM(k,R)_{a_n}$ for all $n\in \NN$, thus is zero by Proposition \ref{prop totaro dim filtr} (for further details, see the proof of \cite[Lemma 8.5]{totaro}).
\end{proof}

We will also need a vanishing result for homotopy colimits. However, the dual result to Propositon \ref{prop totaro dim filtr} does not hold in $\DM(k,R)$: if $k$ has infinite transcendence degree, there is an inductive system ( $\dots R(n)[n] \ra R(n+1)[n+1] \dots$ ) whose homotopy colimit is non-zero (\textit{cf.}\ \cite[Lemma 2.4]{Ayoub_conjectures}). Hence, the intersection $\cap_{n\geq 0}\DM^{\eff}(k,R)(n)$ is non-zero. Fortunately, with some control over the Tate twists and shifts, we can prove the following vanishing result.

\begin{prop}\label{prop hocolim vanishes}
Let $U_* \subset X_*$ be an inductive system of open immersions of smooth finite type $k$-schemes; that is, we have inductive systems $U_*, X_*: \NN\rightarrow \Sm_k$ and a morphism $U_*\ra X_*$ such that $U_n \hookrightarrow X_n$ is an open immersion for all $n \in \NN$. Let $c_n$ be the codimension of the complement $X_n -U_n$ in $X_n$. If $c_n \ra \infty$, then the morphism
\[ \colim_{n}R_{\tr}(U_n) \ra  \colim_{n}R_{\tr}(X_n)\]
is an $\AA^1$-weak equivalence. 
\end{prop}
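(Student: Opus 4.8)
The plan is to reduce to a statement about Gysin triangles that can be controlled uniformly in $n$. First I would use the fact that the colimit in $\MotCpl(k,R)$ computes the homotopy colimit (Lemma \ref{colim_hocolim}), so it suffices to show that the map $\hocolim_n R_{\tr}(U_n) \to \hocolim_n R_{\tr}(X_n)$ is an isomorphism in $\DM(k,R)$. By Lemma \ref{lemma hocolim triangles}, the cone of this map is the homotopy colimit of the system of cones of the maps $R_{\tr}(U_n) \to R_{\tr}(X_n)$. Since $U_n \hookrightarrow X_n$ is an open immersion of smooth schemes with closed complement $Z_n := X_n - U_n$ of codimension $c_n$, the Gysin triangle identifies $\cone(M(U_n) \to M(X_n))$ with $M(Z_n)\{c_n\}$ — but $Z_n$ need not be smooth, so I would instead argue directly with the localisation triangle for $M^c$ together with Poincaré duality, or stratify $Z_n$. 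Either way, the key point is that $\cone(M(U_n) \to M(X_n))$ lies in $\DM(k,R)$ built from motives of schemes Tate-twisted by at least $c_n$, i.e. in $\DM^{\eff}(k,R)(c_n)$ (using that $Z_n$, though possibly singular, has a finite filtration by locally closed smooth subschemes each of codimension $\geq c_n$ in $X_n$, via which one builds its motive from pieces of the form $M(\text{smooth})\{j\}$ with $j \geq c_n$).

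Next I would invoke the hypothesis $c_n \to \infty$. The homotopy colimit of the cones is $\cone(\bigoplus_n C_n \xrightarrow{\id - \sigma} \bigoplus_n C_n)$ where $C_n := \cone(R_{\tr}(U_n) \to R_{\tr}(X_n))$. Since $C_n \in \DM^{\eff}(k,R)(c_n)$ and $c_n \to \infty$, I want to conclude this cone vanishes. This is where one must be careful: as the excerpt itself emphasises (citing \cite[Lemma 2.4]{Ayoub_conjectures}), $\bigcap_n \DM^{\eff}(k,R)(n)$ need not be zero, so the naive argument fails. The resolution is that the transition maps in the system $(C_n)$ are compatible with the Tate twists — more precisely, the map $C_n \to C_{n+1}$ (induced by $U_n \subset U_{n+1}$, $X_n \subset X_{n+1}$ and the fact that $Z_n$ maps into $Z_{n+1}$) factors through the inclusion $\DM^{\eff}(k,R)(c_{n+1}) \hookrightarrow \DM^{\eff}(k,R)(c_n)$, i.e. through successively deeper Tate twists. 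One should use the compactness of the generators $R_{\tr}(Y)(-m)$ to reduce: for any compact test object $T = R_{\tr}(Y)(-m)$, we have $\Hom(T, C_n) = \Hom(T(m), C_n)$, and since $C_n$ is effective of twist $\geq c_n > m$ for $n \gg 0$, and the effectivity is respected by the transition maps, the relevant $\lim$ and $R^1\lim$ of $\Hom(T[*], C_n)$ vanish. Hence $\Hom(T, \hocolim C_n) = 0$ for all compact generators $T$ (using Lemma \ref{lemma funct hocolim} to compute this Hom via the $\lim$–$R^1\lim$ exact sequence), so $\hocolim C_n = 0$.

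The main obstacle I anticipate is precisely the middle step: making rigorous the claim that "effectivity with twist going to infinity, plus transition maps respecting the filtration" forces the homotopy colimit to vanish, given that the analogous statement without control on the maps is false. The right framework is likely to note that the colimit is computed at the level of $\MotCpl(k,R)$ (Lemma \ref{colim_hocolim}), where the effective motivic complex $R^{\eff}_{\tr}(X_n)/R^{\eff}_{\tr}(U_n)$ has a concrete description, and the transition maps literally increase the Tate twist; then for any smooth $Y$ and any $i$, the group $\Hom_{\DM}(R_{\tr}(Y)(-m)[i], \colim_n C_n) \cong \colim_n \Hom_{\DM}(R_{\tr}(Y)(-m)[i], C_n)$ — compactness of $R_{\tr}(Y)(-m)$ commutes Hom past the filtered colimit — and each term vanishes once $c_n > m$ because $\Hom_{\DM^{\eff}}(R^{\eff}_{\tr}(Y)[i], \text{effective of twist} > m) $ with a negative twist $(-m)$ applied is zero by effectivity/cancellation. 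A secondary technical point is handling the singularity of $Z_n$: I would either appeal to the localisation triangle and an induction on a stratification of $Z_n$ into smooth locally closed pieces, or use that $M^c(Z_n)$ is built from $M^c$ of smooth varieties via such triangles and then dualise; the codimension bound $c_n$ propagates through all the pieces, which is all that is needed.
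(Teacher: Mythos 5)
Your overall architecture is the same as the paper's: compute the colimit at the level of $T$-spectra so that it is a homotopy colimit (Lemma \ref{colim_hocolim}), pass to the levelwise cones $C_n$, stratify the (possibly singular) complement $X_n-U_n$ into smooth locally closed pieces so that, via Gysin triangles and the octahedral axiom, $C_n$ is a successive extension of motives $M(Z)\{c\}$ with $Z\in\Sm_k$ and $c\geq c_n$, and then test against the compact generators $M(Y)(a)[b]$, using that Hom out of a compact object commutes with the filtered colimit. Up to this point your proposal is correct and matches the paper.

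The genuine gap is in the final vanishing step, which is exactly the point you flag as the main obstacle but then resolve with the wrong mechanism. You claim that $\Hom(M(Y)(-m)[i],M(Z)\{c\})=0$ as soon as $c>m$, ``by effectivity/cancellation''. This is false: cancellation gives $\Hom(A(1),B(1))\simeq\Hom(A,B)$, not $\Hom(A,B(1))=0$, and maps from an effective motive into a highly twisted effective motive are abundant. For instance $\Hom(M(Y),M(Z)\{c\})\supseteq$ a Chow group: for $Y,Z$ smooth projective, Poincar\'e duality identifies it with $\CH^{c+\dim Z}(Y\times Z)_R$, which is nonzero whenever $c\leq\dim Y$; even $\Hom(R[1],R(1)[2])=k^{\times}\otimes R\neq 0$. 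So ``twist bigger than $m$'' is not the right threshold, and likewise your proposed fix via ``transition maps respecting the Tate filtration'' is not what saves the argument (nor is any $\lim$/$R^1\lim$ analysis needed: since Hom out of a compact commutes with the filtered colimit, termwise vanishing for $n\gg 0$ suffices). The correct threshold depends on the whole test object $M(Y)(a)[b]$, including $\dim Y$ and the shift $b$: one rewrites $\Hom(M(Y)(a)[b],M(Z)\{c\})$ by Poincar\'e duality as $\Hom(M(Y)\otimes M^c(Z),R(c+\dim Z-a)[2c+2\dim Z-b])$ and invokes the vanishing of motivic cohomology in degrees exceeding weight plus dimension (the paper's Lemma \ref{lemma vanishing mot coh}, proved via cdh cohomological dimension and the fact that $R(q)_{\cdh}$ sits in degrees $\leq q$); the relevant inequality $c-b+a-\dim Y>0$ holds for $c\geq c_n$ once $n\gg 0$ because $c_n\to\infty$ while $Y,a,b$ are fixed. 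This is also why the Ayoub-type counterexample ($\cap_n\DM^{\eff}(k,R)(n)\neq 0$) is not in play: the vanishing is established termwise, with a bound depending only on the fixed compact generator, rather than by any intrinsic depletion of the category under increasing twists.
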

\begin{proof}
By \cite[Proposition 8.1.(d)]{Cisinski_Deglise_cdh} and our standing assumptions on $k$ and $R$, we can assume the field $k$ to be perfect. Let $C_n$ be the level-wise mapping cone (as a $T$-spectrum of complexes of sheaves with transfers) of the morphism $R_{\tr}(U_n)\ra R_{\tr}(X_n)$. There are induced maps $C_n\ra C_{n+1}$ and we get a distinguished triangle
\[
\colim_{n} R_{\tr}(U_{n})\ra \colim_{n} R_{\tr}(X_{n})\ra\colim_{n} C_{n}\stackrel{+}{\ra}
\]
in $\DM(k,R)$. Thus it suffices to show that $\colim_{n} C_n$ is $\AA^1$-weakly equivalent to $0$.

On $W_n:=X_n- U_n$, consider an increasing open filtration $W_n^0 \subset W_n^{1} \subset \cdots \subset W_n^{m_n} = W_n$ with smooth differences $\partial W_n^j= W_n^j  - W_n^{j-1}$  (which exists by applying \cite[Tag 056V]{stacks_project} inductively, using that $k$ is perfect). For each $1\leq j\leq m_n$, we have a Gysin distinguished triangle
\[
M(U_n \cup W_n^{j-1})\ra M(U_n \cup W_n^{j})\ra M(\partial W^{j}_n)\{\codim_{X}(\partial W^j_n)\}.
\]
By inductively applying the octahedral axiom to these distinguished triangles, we conclude that $C_n$ is a successive extension of the motives $M(\partial W^{j}_n)\{\codim_{X}(\partial W^j_n)\}$.

Since the category $\DM(k,R)$ is compactly generated by motives of the form $M(X)(a)$, with $X \in \Sm_k$ and $a\in\ZZ$, it suffices to show for all $X \in \Sm_k$ and $a,b \in \ZZ$ that 
\[
\Hom(M(X)(a)[b],\colim_n C_n)= 0.
\]
As $M(X)(a)[b]$ is compact and filtered colimits are homotopy colimits in $\DM(k,R)$, we have
\[
\Hom(M(X)(a)[b],\colim_n C_n)=\colim_n \Hom(M(X)(a)[b],C_n).
\]
Since each cone $C_n$ is a successive extension of motives of the form $M(Z)\{c\}$ for $Z \in \Sm_k$ and $c \geq c_n$, it suffices to show for $n >\!> 0$ that for all $Z \in \Sm_k$ and $c \geq c_n$
\[
\Hom(M(X)(a)[b],M(Z)\{c\})=0.
\]
Equivalently, by Poincar\'e duality, it suffices to show
\begin{equation}\label{button}
\Hom(M(X)\otimes M^c(Z),R(c+\dim(Z)-a)[2c+2\dim(Z)-b])=0.
\end{equation}
As $c_n \ra \infty$, we have $c_n - b+a -\dim(X)>0$ for $n >\!> 0$. Thus, for $n >\!> 0$ and $c \geq c_n$, 
\[
(2c+2\dim(Z)-b)-((c+\dim(Z)-a)+\dim(Z)+\dim(X))=c-b + a-\dim(X) 
\]
is strictly positive. Then we deduce that \eqref{button} holds by using Lemma~\ref{lemma vanishing mot coh} below.
\end{proof}

\begin{lemma}\label{lemma vanishing mot coh}
Let $X$ (resp.\ $Z$) be a variety of dimension at most $d$ (resp.\ $e$). For $l,m \in \ZZ$ with $l>m+d+e$, we have
\[
\Hom(M(X)\otimes M^c(Z),R(m)[l])=0.
\]
\end{lemma}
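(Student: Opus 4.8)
The plan is to reduce the vanishing of $\Hom(M(X)\otimes M^c(Z),R(m)[l])$ to a statement about motivic cohomology with compact supports (i.e.\ Borel--Moore motivic homology), and then apply a classical dimension-vanishing bound. First I would use the K\"unneth-type identification together with Poincar\'e duality: writing the internal hom and the algebraic-cycle description from the list of properties, we have
\[
\Hom\bigl(M(X)\otimes M^c(Z),R(m)[l]\bigr)\simeq \Hom\bigl(M(X),\underline{\Hom}(M^c(Z),R(m)[l])\bigr),
\]
and since $M^c(Z)^\vee$ can be computed via the localisation and Gysin triangles, one reduces (by d\'evissage on a stratification of $Z$ into smooth locally closed pieces, exactly as in the proof of Proposition~\ref{prop hocolim vanishes}) to the case where both $X$ and $Z$ are smooth. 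In that case Poincar\'e duality gives $M^c(Z)\simeq M(Z)^\vee\{e'\}$ with $e'=\dim Z$, so the group in question is a direct summand-type expression built out of $\Hom(M(X\times Z'),R(m')[l'])$ for smooth $Z'$, i.e.\ out of motivic cohomology groups $H^{l'}_{\mathrm{mot}}(X\times Z', R(m'))$.

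The key input is then the standard vanishing of motivic cohomology above twice the dimension plus the twist: for a smooth scheme $Y$ of dimension $\le N$ one has $H^j_{\mathrm{mot}}(Y,R(i))=\Hom(M(Y),R(i)[j])=0$ whenever $j>i+N$ (this is the analogue of the Beilinson--Soul\'e-type upper bound, and follows from the fact that $R(i)$ is concentrated in cohomological degrees $\le i$ as a Nisnevich sheaf complex, combined with cohomological dimension of $Y$ in the Nisnevich topology being $\le N$; alternatively it is immediate from the presentation of motivic cohomology via Bloch's higher Chow groups $\CH^i(Y, 2i-j)$, which vanish for $2i-j<0$). Tracking the numerology: after the reductions, the relevant twist is $m'=m+e'-$(something $\ge 0$) and the relevant degree is $l' = l + 2e' - $(the same shift), so that $l'-m' = l-m+e'$, and the dimension of the smooth scheme appearing is at most $d+e'$ with $e'\le e$; the hypothesis $l>m+d+e$ then forces $l'-m' = l-m+e' > d+e'+ (e-e') \ge d+e'$, wait — more carefully, one checks $l-m+e' > d + (\dim \text{ of the product})$, and since $\dim(X\times Z')\le d+e'$ the hypothesis $l-m>d+e\ge d+e'$ yields exactly the strict inequality needed.

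I expect the main obstacle to be the bookkeeping in the d\'evissage step: one must make sure that the localisation triangles for $M^c(Z)$ interact correctly with the internal hom and the Tate twists, so that each successive extension contributes a term of the form $\Hom(M(X)\otimes M(W)^\vee\{c\}, R(m)[l])$ for $W$ smooth and $c\ge 0$ with $\dim W + c \le e$, keeping the total ``dimension budget'' under control. Once that is organised, each such term is a motivic cohomology group of the smooth variety $X\times W$ in a twist and degree satisfying the vanishing bound, and the result follows. (In fact this is essentially the argument alluded to in the last line of the proof of Proposition~\ref{prop hocolim vanishes}, so the cleanest exposition may simply be to isolate the smooth case as the substance of the lemma and note that the general case follows by the same stratification argument used there.)
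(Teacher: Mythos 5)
Your plan has a genuine gap, in two places. First, the reduction ``to the case where both $X$ and $Z$ are smooth'' is not available: the dévissage you describe (localisation triangles along a stratification with smooth strata) only exists for the compactly supported motive, so it lets you devissage the $Z$-variable but not the $X$-variable — $M(X)$ for a singular $X$ has no localisation triangles, and the lemma allows $X$ to be an arbitrary variety. Second, and more seriously, even on a smooth stratum $W\subset Z$ of dimension $w$, Poincar\'e duality converts $\Hom(M(X)\otimes M^c(W),R(m)[l])$ into $\Hom\bigl(M(X),M(W)(m-w)[l-2w]\bigr)$, which is a bivariant (Borel--Moore-type) group, \emph{not} a motivic cohomology group of the product $X\times W$: to identify the group with $\Hom(M(X\times W),R(m')[l'])$ you would need $M^c(W)\simeq M(W)$, i.e.\ $W$ proper, and the strata of a smooth stratification are not proper. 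This is exactly the point where your numerology becomes uncertain, and the standard vanishing $H^j_{\mathrm{mot}}(Y,R(i))=0$ for $j>i+\dim Y$ (which is correct for smooth $Y$) cannot be applied to the groups you actually obtain. A minor additional omission: stratifications with smooth strata require first passing to a perfect base field, which must be justified (as the paper does via the invariance of $\DM$ under purely inseparable extensions).

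The paper's proof goes in the opposite direction and avoids smoothness and duality altogether: choose an arbitrary (possibly singular) compactification $\bar Z$ of $Z$; the single localisation triangle for $(\bar Z - Z,\bar Z)$ reduces the statement to the case of \emph{proper} $Z$ (both $\bar Z$ and $\bar Z - Z$ are proper, of dimensions $\le e$ and $\le e-1$). For proper $Z$ one has $M(X)\otimes M^c(Z)\simeq M(X\times_k Z)$ by the K\"unneth property, and then
\[
\Hom(M(X\times_k Z),R(m)[l])=H^l_{\cdh}\bigl(X\times_k Z,R(m)_{\cdh}\bigr),
\]
which vanishes for $l>m+d+e$ because the cdh-cohomological dimension of $X\times_k Z$ is at most $d+e$ (Suslin--Voevodsky) and the complex $R(m)_{\cdh}$ is concentrated in degrees $\le m$. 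If you want to keep the spirit of your argument, the correct fix is not to stratify into smooth pieces but to reduce to proper (possibly singular) varieties and use the cdh-theoretic dimension bound, since properness — not smoothness — is what turns the group into motivic cohomology of a product.
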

\begin{proof}
By our standing assumption on $k$ and $R$ and \cite[Proposition 8.1.(d)]{Cisinski_Deglise_cdh}, we can assume that $k$ is a perfect field. Let us first prove the claim when $Z$ is proper. Then $M^c(Z)\otimes M(X)\simeq M(Z\times_k X)$, as $M^c(Z)\simeq M(Z)$. We have
\[
\Hom(M(Z\times_k X),R(m)[l])=H^l_{\cdh}(Z\times_k X,R(m)_{\cdh}).
\]
where $R(k)_{\cdh}$ is the cdh-sheaffification of the Suslin-Voevodsky motivic complex \cite{kelly}. The cohomological dimension of $(Z\times_k X)_{\cdh}$ is at most $d+e$ by \cite[Theorem 5.13]{Suslin_Voevodsky_BK}, and the complex $R(k)_{\cdh}$ is $0$ in cohomological degrees greater than $m$, which implies the result.

We now turn to the general case. Let $\bar{Z}$ be any compactification of $Z$ (not necessarily smooth). From the localisation triangle for the closed pair $(\bar{Z}- Z, \bar{Z})$, we obtain a long exact sequence
\begin{align*}
\ra \Hom(M(X)\otimes M^c(\bar{Z}),R(m)[l])\ra & \Hom(M(X)\otimes M^c(Z),R(m)[l]) \\
&\ra  \Hom(M(X)\otimes M^c(\bar{Z}- Z),R(m)[l-1]) \ra
\end{align*}
As both $\bar{Z}$ and $\bar{Z}-Z$ are proper with dimensions bounded by $e$ and $e-1$ respectively, we deduce that the outer terms vanish by the proper case, and so we obtain the desired result.
\end{proof}

\subsection{Motives of stacks}
\label{sec motive stacks}
The definition of motives of stacks in general is complicated by the fact that the category $\DM(k,R)$ does not satisfy descent for the \'etale topology (as this is already not the case for Chow groups); hence naive approaches to defining motives of even Deligne-Mumford stacks in terms of an atlas do not work. In Appendix~\ref{app mot stacks}, we explain and compare alternative approaches for defining \'etale motives of stacks.

To define the motive of a quotient stack $\fX = [X/G]$ independently of the presentation of $\fX$ as a quotient stack, we need an appropriate notion of ``algebraic approximation of the Borel construction $X\times^G EG$''. We use a variant of the definition of compactly supported motives of quotient stacks given by \cite{totaro} and extend this to more general stacks. More precisely, we will define the motive of certain smooth stacks over $k$, possibly not of finite type, which are exhaustive in the following sense.

\begin{defn}\label{def exh seq}
Let $\fX_0\stackrel{i_0}{\ra} \fX_1\stackrel{i_1}{\ra}\ldots\subset$ be a filtration of an algebraic stack $\fX$ by increasing open substacks $\fX_i \subset \fX$ which are quasi-compact and cover $\fX$; we will simply refer to this as a filtration. Then an exhaustive sequence of vector bundles on $\fX$ with respect to this filtration is a pair $(V_\bullet, W_\bullet)$ given by a sequence of vector bundles $V_m$ over $\fX_m$ together with injective maps of vector bundles $f_m:V_m\ra V_{m+1}\times_{\fX_{m+1}} \fX_m$ and closed substacks $W_m\subset V_m$ such that
\begin{enumerate}[label={\upshape(\roman*)}]
\item \label{lulo} the codimension of $W_m$  in $V_m$ tend towards infinity,
\item \label{guanabana} the complement $U_m:= V_m - W_m$ is a separated finite type $k$-scheme, and
\item \label{papaya}we have $f_m^{-1}(W_{m+1}\times_{\fX_{m+1}} \fX_m)\subset W_m$ (so that $f_m(U_m)\subset U_{m+1}\times_{\fX_{m+1}} \fX_m$).
\end{enumerate}
A stack admitting an exhaustive sequence with respect to some filtration is said to be exhaustive.
\end{defn}

For this paper, we have in mind two important examples of exhaustive stacks: i) quotient stacks (\textit{cf.}\ Lemma \ref{lemma quot exhaust}), and ii) the stack of vector bundles on a curve (\textit{cf.}\ Proposition \ref{prop exh seq}).

\begin{lemma}\label{lemma quot exhaust}
Let $\fX=[X/G]$ be a quotient stack of a quasi-projective scheme $X$ by an affine algebraic group $G$ such that $X$ admits a $G$-equivariant ample line bundle; then $\fX$ is exhaustive.
\end{lemma}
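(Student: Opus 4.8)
The plan is to exhibit an explicit exhaustive sequence of vector bundles on $\fX = [X/G]$, following the Totaro–Morel–Voevodsky construction of algebraic approximations to the Borel construction $X \times^G EG$. First I would invoke the standard fact (Totaro, and Morel–Voevodsky) that since $G$ is an affine algebraic group, one can find for each $m$ a representation $\rho_m : G \to \GL(V_m^0)$ on a finite-dimensional $k$-vector space $V_m^0$, together with a $G$-invariant open subset $U_m^0 \subset V_m^0$ on which $G$ acts freely, such that the complement $W_m^0 := V_m^0 - U_m^0$ has codimension tending to infinity with $m$, and such that the quotient $U_m^0/G$ exists as a quasi-projective scheme; moreover these can be chosen compatibly, so that $V_m^0$ is a direct summand of $V_{m+1}^0$ as $G$-representations and $W_m^0 = W_{m+1}^0 \cap V_m^0$. (Concretely one takes sums of copies of a faithful representation and removes the locus where the action fails to be free, or removes a large-codimension $G$-invariant closed subset.)

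Next I would push this down to $\fX$. The trivial bundle $X \times V_m^0$ with diagonal $G$-action descends to a vector bundle $V_m := (X \times V_m^0)/G$ over $[X/G] = \fX$ (here the filtration is the trivial one, $\fX_m = \fX$ for all $m$, since $\fX$ is already quasi-compact — $G$ being of finite type and $X$ quasi-projective). Set $W_m := (X \times W_m^0)/G$, a closed substack of $V_m$ of the same codimension as $W_m^0$ in $V_m^0$, so condition \ref{lulo} holds. The complement is $U_m := (X \times U_m^0)/G$; since $G$ acts freely on $X \times U_m^0$ (it already acts freely on $U_m^0$), and since $X \times U_m^0$ carries a $G$-equivariant ample line bundle (tensor the given $G$-equivariant ample bundle on $X$ with a suitable equivariant bundle pulled back from $U_m^0$, or simply from the free quotient), the quotient $U_m$ exists as a quasi-projective, hence separated finite type, $k$-scheme by a theorem of Mumford/descent for free actions with equivariant ample bundle — this gives \ref{guanabana}. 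The inclusions $V_m^0 \hookrightarrow V_{m+1}^0$ give injective bundle maps $f_m : V_m \to V_{m+1}$ over $\fX$, and the compatibility $W_m^0 = W_{m+1}^0 \cap V_m^0$ translates into $f_m^{-1}(W_{m+1}) \subset W_m$ (in fact equality), yielding \ref{papaya}.

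The main obstacle — and the only place requiring genuine care — is \ref{guanabana}: producing the representations $(V_m^0, U_m^0)$ with free action, large-codimension complement, \emph{and} existence of the quotient as a scheme, all simultaneously and compatibly in $m$, and then checking that the resulting quotient $U_m = (X \times U_m^0)/G$ is genuinely a quasi-projective scheme rather than merely an algebraic space. For the first part I would cite the Morel–Voevodsky construction of admissible gadgets / the standard construction of $EG$-approximations (this is exactly the input used in \cite{totaro}); the compatibility is arranged by taking $V_{m+1}^0 = V_m^0 \oplus A$ for a fixed faithful representation $A$. For the second part, one uses that a free action of an affine group on a quasi-projective scheme admitting a $G$-equivariant ample line bundle has a quasi-projective geometric quotient, and that $X \times U_m^0$ inherits such an ample bundle from $X$; this is where the hypothesis that $X$ carries a $G$-equivariant ample line bundle is used. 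The remaining conditions \ref{lulo} and \ref{papaya} are then essentially formal consequences of the corresponding properties of $(V_m^0, W_m^0)$ downstairs.
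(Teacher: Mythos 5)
Your proposal is correct and follows essentially the same route as the paper: approximate $EG$ by open subsets of sums of copies of a faithful representation on which $G$ acts freely, use the constant filtration on $[X/G]$, form the associated bundles $(X\times V_m^0)/G$ with closed substacks coming from the bad locus, and invoke Mumford's result (via the $G$-equivariant ample line bundle on $X$) to see that the open complements are quasi-projective schemes. The only cosmetic difference is that the paper takes the bad locus to be the product $W^m$ of the non-free locus of a single representation rather than the full non-free locus of $V^{\oplus m}$, which changes nothing in the argument.
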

\begin{proof}
Let $\fX=[X/G]$ be a quotient stack; then there is an exhaustive sequences of vector bundles over $\fX$ with respect to the constant filtration built from a faithful $G$-representation $G\ra\GL(V)$ such that $G$ acts freely on an open subset $U\subset V$. More precisely, we let $W:=V -  U\subset V$ and for $m \geq 1$ consider the $G$-action  on $V^m$, which is free on the complement of $W^m$.  First, we construct an exhaustive sequence $(B_\bullet,C_\bullet)$ of vector bundles over $BG$ (with respect to the constant filtration) by taking $B_m=V^m$ with the natural transition maps and $C_m=W^m$. We then form the exhaustive sequence 
\[([(X \times B_\bullet)/G],[(X \times C_\bullet)/G])\] 
on $[X/G]$; this works as the open complement $(X \times (B_m - C_m)) /G$ is a quasi-projective scheme by \cite[Proposition 7.1]{mumford} due to the existence of a $G$-equivariant ample line bundle on $X$.
\end{proof}

\begin{defn}\label{def mot stack}
Let $\fX$ be a smooth exhaustive stack; then for an exhaustive sequence $(V_\bullet,W_\bullet)$ of vector bundles with respect to a filtration $\fX=\cup_m \fX_m$, we define the motive of $\fX$ as 
\[ M(\fX) := \colim_{m} R_{\tr}(U_m) \]
with transition maps induced by the composition of $f_m : U_m \ra U_{m+1}\times_{\fX_{m+1}}\fX_m$ (by Definition \ref{def exh seq} \ref{papaya} $f_m$ restricts to such a morphism) with the projection $U_{m+1}\times_{\fX_{m+1}}\fX_m\ra U_{m+1}$, and the colimit is taken in the category of $T$-spectra of complexes of sheaves with transfers.
\end{defn}

By Lemma~\ref{colim_hocolim}, this definition implies the following lemma.

\begin{lemma}\label{stack colim hocolim}
In $\DM(k,R)$, for $\fX$ and  $(V_\bullet,W_\bullet)$ as in Definition \ref{def mot stack}, we have an isomorphism
\[
M(\fX)\simeq \hocolim M(U_m).
\]
\end{lemma}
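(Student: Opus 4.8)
The plan is to deduce the statement directly from Lemma~\ref{colim_hocolim}, which is precisely the bridge between the model-categorical colimit used to \emph{define} $M(\fX)$ and the triangulated homotopy colimit. Concretely, set $S_m := R_{\tr}(U_m)$ and equip $S_\bullet$ with the transition maps of Definition~\ref{def mot stack}, namely the composites of $f_m\colon U_m \ra U_{m+1}\times_{\fX_{m+1}}\fX_m$ with the projection to $U_{m+1}$ (these land in $U_{m+1}$ by condition \ref{papaya} of Definition~\ref{def exh seq}). By construction, $M(\fX) = \colim_m S_m$, the colimit being taken in the Grothendieck abelian category $\MotCpl(k,R)$.

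First I would invoke Lemma~\ref{colim_hocolim} for the system $S_\bullet$: it gives an isomorphism $\colim_m S_m \simeq \hocolim_m S_m$ in $\DM(k,R)$, where on the right the homotopy colimit (in the sense of Definition~\ref{defn_hocolim}) is formed from the images of the $S_m$ and of their transition maps. Next I would observe that, by definition, the image of $R_{\tr}(U_m)$ in $\DM(k,R)$ is $M(U_m)$, and the image of the chosen transition map $S_m \ra S_{m+1}$ is the map $M(U_m) \ra M(U_{m+1})$ induced by $U_m \ra U_{m+1}$; hence the inductive system $S_\bullet$ in $\MotCpl(k,R)$ descends to the inductive system $M(U_\bullet)$ in $\DM(k,R)$, so that $\hocolim_m S_m = \hocolim_m M(U_m)$. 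Chaining the two identifications yields $M(\fX) \simeq \hocolim_m M(U_m)$, as required.

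There is essentially no obstacle here: the assertion is a formal consequence of the definition of $M(\fX)$ together with Lemma~\ref{colim_hocolim}. The one point worth keeping in mind, which is already handled inside the proof of Lemma~\ref{colim_hocolim}, is that $\MotCpl(k,R)$ being Grothendieck abelian with exact filtered colimits makes the sequence $0 \ra \bigoplus_m S_m \xrightarrow{\id-\sigma} \bigoplus_m S_m \ra \colim_m S_m \ra 0$ exact, which exhibits $\colim_m S_m$ as a cone of $\id - \sigma$ and hence as a model for $\hocolim_m S_m$; I would simply cite this rather than repeat the argument.
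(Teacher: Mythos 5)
Your proof is correct and is exactly the paper's argument: the lemma is stated there as an immediate consequence of Lemma~\ref{colim_hocolim} applied to the system $R_{\tr}(U_m)$ from Definition~\ref{def mot stack}, just as you do. Nothing further is needed.
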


The smoothness of $\fX$ is required to prove this definition is independent of the above choices. We prove this in two steps.

\begin{lemma}\label{lem exhaust filtr}
 Let $\fX$ be an exhaustive stack and $(\fX'_{n})_{n\in \NN}$ be some increasing filtration by quasi-compact open substacks. Then there exists a subsequence $(\fX'_{n_{i}})_{i\in \NN}$ of $(\fX'_{n})$ and an exhaustive sequence of vector bundles with respect to this filtration. Moreover, this construction can be made compatibly with any fixed exhaustive sequence of vector bundles with respect to a filtration $\fX_{n}$, in the sense that there is a subsequence $\fX_{n_{i}}$ such that
  \[
\fX'_{n'_1}\subset \fX_{n_1}\subset \fX'_{n'_2}\subset \fX_{n_2} \ldots
\]
and an exhaustive sequence of vector bundles for that new filtration which restricts to the two existing exaustive sequence.
\end{lemma}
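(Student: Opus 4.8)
The goal is to produce, from an arbitrary filtration $(\fX'_n)$, a compatible interleaved filtration together with an exhaustive sequence of vector bundles refining a fixed one. I would proceed by an inductive interleaving argument. Start with the given exhaustive sequence $(V_\bullet, W_\bullet)$ on the filtration $(\fX_n)$. Since each $\fX'_n$ is quasi-compact and the $\fX_m$ cover $\fX$, and each $\fX_m$ is quasi-compact and the $\fX'_n$ cover $\fX$, one can choose inductively indices $n'_1 < n'_2 < \cdots$ and $n_1 < n_2 < \cdots$ so that
\[
\fX'_{n'_1}\subset \fX_{n_1}\subset \fX'_{n'_2}\subset \fX_{n_2}\subset \cdots;
\]
this is a routine cofinality argument using quasi-compactness of each stage against the open cover given by the other filtration. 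This handles the interleaving of the underlying open substacks.

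Next I would build the vector bundles on the interleaved filtration. On the terms $\fX_{n_i}$ we already have the bundles $V_{n_i}$ with their injections and the closed substacks $W_{n_i}$, so nothing new is needed there. For the intermediate term $\fX'_{n'_i}$, sitting between $\fX_{n_{i-1}}$ and $\fX_{n_i}$, I would take the restriction $V'_i := V_{n_i}\times_{\fX_{n_i}}\fX'_{n'_i}$ of the bundle from the larger stage, with $W'_i := W_{n_i}\times_{\fX_{n_i}}\fX'_{n'_i}$ the restricted closed substack; the injection $V_{n_{i-1}}\to V_{n_i}\times_{\fX_{n_i}}\fX_{n_{i-1}}$ restricts (after further pulling back along $\fX'_{n'_i}\hookrightarrow\fX_{n_i}$, then $\fX_{n_{i-1}}\hookrightarrow\fX'_{n'_i}$, using that open immersions are flat and that restriction of an injective map of vector bundles along an open immersion stays injective) to the required maps $V_{n_{i-1}}\to V'_i\times_{\fX'_{n'_i}}\fX_{n_{i-1}}$ and $V'_i\to V_{n_i}\times_{\fX_{n_i}}\fX'_{n'_i}$. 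I must check the three conditions of Definition \ref{def exh seq}: condition \ref{lulo} on codimensions going to infinity is inherited because codimension of a closed substack is preserved under pullback along open immersions (as noted in the conventions), so the codimensions along the interleaved sequence are a subsequence-with-repetitions of the original divergent sequence and still diverge; condition \ref{guanabana} that $U'_i := V'_i - W'_i$ is a separated finite type $k$-scheme follows because $U'_i$ is an open subscheme of $U_{n_i} = V_{n_i} - W_{n_i}$ (it is the preimage of the open substack $\fX'_{n'_i}\subset\fX_{n_i}$ under $U_{n_i}\to\fX_{n_i}$), hence separated and of finite type; condition \ref{papaya} on compatibility of the $W$'s with the transition maps follows from the corresponding property of the original sequence together with compatibility of the pullbacks, again using that all the maps in sight restrict along open immersions.

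Finally, to produce an exhaustive sequence for the interleaved filtration ``from scratch'' (the first assertion of the lemma, without reference to a fixed sequence) one simply applies the second, compatible assertion to \emph{some} choice of exhaustive sequence on \emph{some} filtration — which exists because $\fX$ is assumed exhaustive — and forgets the compatibility; so the first statement is a formal consequence of the second. The main obstacle I anticipate is purely bookkeeping: carefully tracking the tower of fibre products and checking that ``injective map of vector bundles'' and ``codimension of the complement'' behave well under the repeated base changes along open immersions. There is no deep input here beyond flatness of open immersions and the elementary fact that restricting along an open immersion commutes with forming $V - W$ and does not change relative codimensions; the content is entirely in organizing the interleaving indices and verifying Definition \ref{def exh seq} \ref{lulo}--\ref{papaya} termwise.
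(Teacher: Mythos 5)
Your proposal is correct and follows essentially the same route as the paper: interleave the two filtrations by the quasi-compactness/covering argument, equip the intermediate terms $\fX'_{n'_i}$ with the pullbacks $(V_{n_i}\times_{\fX_{n_i}}\fX'_{n'_i},\,W_{n_i}\times_{\fX_{n_i}}\fX'_{n'_i})$, check Definition \ref{def exh seq} termwise, and obtain the first assertion by restricting to the $\fX'$-terms (composing transition maps), exactly as in the paper's proof.
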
  

\begin{proof}
Since $\fX$ is exhaustive, we can find a filtration $(\fX_n)_{n\geq 0}$ by quasi-compact open substacks  and an exhaustive sequence of vector bundles $(V_{n},W_{n})_{n\geq 0}$ with respect to this filtration. Note that by quasi-compactness of the $\fX'_n$ and the fact that $\fX= \cup_n \fX_n$, for each $n'\in \NN$, there exists $n\in \NN$ such that $\fX'_{n'}\subset \fX_{n}$. By iterating this quasi-compactness argument, we find strictly increasing sequences $(n_i)_{i \geq 0}$, $(n'_i)_{i \geq 0}$ such that $\fX'_{n'_1}\subset \fX_{n_1}\subset \fX'_{n'_2}\subset \fX_{n_2}\ldots \subset \fX$. Let us construct an exhaustive sequence of vector bundles adapted to $\fX'_{n'_1}\subset \fX_{n_1}\subset \fX'_{n'_2}\subset \fX_{n_2} \ldots \subset \fX$ On the $\fX_{n_{i}}$'s we take $(V_{n_{i}},W_{n_{i}})$ while on the $\fX'_{n'_{i}}$'s we take $(V_{n_{i}}\times_{\fX_{n_{i}}}\fX'_{n'_{i}},W_{n_{i}}\times_{\fX_{n_{i}}}\fX'_{n'_{i}})$, with the obvious transition maps. It is easy to see that this satisfies the conditions of Definition \ref{def exh seq}. Finally, to get an exhaustive sequence with respect to the subsequence filtration $\fX'_{n'_{i}}$, one takes the same pairs $(V_{n_{i}}\times_{\fX_{n_{i}}}\fX'_{n'_{i}},W_{n_{i}}\times_{\fX_{n_{i}}}\fX'_{n'_{i}})$ and one compose the transition maps of the previous sequence two by two. This concludes the proof.
\end{proof}

\begin{lemma}\label{lem stack def indept}
The motive of a smooth exhaustive stack $\fX$ does not depend (up to a canonical isomorphism in $\DM(k,R)$) on the choice of the filtration or the exhaustive sequence of vector bundles.
\end{lemma}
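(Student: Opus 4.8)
The plan is to reduce the independence of $M(\fX)$ to a statement about homotopy colimits along open immersions whose complements have codimension tending to infinity, and then invoke Proposition~\ref{prop hocolim vanishes}. First I would handle the case of two exhaustive sequences of vector bundles $(V_\bullet, W_\bullet)$ and $(V'_\bullet, W'_\bullet)$ with respect to the \emph{same} filtration $\fX = \cup_m \fX_m$. Passing to $V_m \times_{\fX_m} V'_m$, which carries a closed substack $W_m \times_{\fX_m} V'_m \cup V_m \times_{\fX_m} W'_m$ still of codimension tending to infinity (the codimension of a union of two closed substacks is the minimum of the two codimensions, and each projection is flat so codimension is preserved under pullback), we obtain a third exhaustive sequence mapping to both. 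The complement $U''_m$ of this new closed substack sits inside $U_m \times_{\fX_m} V'_m$ as an open whose complement has codimension $\geq c'_m \to \infty$, and $U_m \times_{\fX_m} V'_m \to U_m$ is an affine bundle, hence an $\AA^1$-weak equivalence after applying $R_{\tr}$. By Proposition~\ref{prop hocolim vanishes} the map $\colim_m R_{\tr}(U''_m) \to \colim_m R_{\tr}(U_m \times_{\fX_m} V'_m)$ is an $\AA^1$-weak equivalence, and composing with the affine-bundle equivalence identifies $\colim_m R_{\tr}(U''_m) \simeq M(\fX)$ computed via $(V_\bullet, W_\bullet)$; symmetrically it is isomorphic to the one computed via $(V'_\bullet, W'_\bullet)$. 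One checks that the resulting isomorphism is canonical by noting that any two such comparisons are connected through a further fibre product, giving a coherence diagram. This is where smoothness of $\fX$ enters: it guarantees the $U''_m$ are smooth schemes, so that Proposition~\ref{prop hocolim vanishes} (which is stated for smooth schemes) applies.

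Next I would reduce the case of two different filtrations to the previous case using Lemma~\ref{lem exhaust filtr}. Given exhaustive sequences with respect to filtrations $\fX_n$ and $\fX'_n$, that lemma produces a common refinement filtration $\fX'_{n'_1} \subset \fX_{n_1} \subset \fX'_{n'_2} \subset \cdots$ together with an exhaustive sequence restricting to both. Since passing to a cofinal subsequence does not change a homotopy colimit (the transition maps compose), the motive computed with respect to $\fX_{n_i}$ (resp.\ $\fX'_{n'_i}$) agrees with the one computed with respect to $\fX_n$ (resp.\ $\fX'_n$); and the motive computed with respect to the interleaved filtration agrees with both, again by cofinality. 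Finally, if two exhaustive sequences are given with respect to two different filtrations, I would apply the interleaving construction and then the same-filtration comparison from the first paragraph (applied on the interleaved filtration, where the restrictions of the two sequences now live over a common filtration) to conclude.

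The main obstacle I expect is the \emph{canonicity} of the isomorphism rather than its existence: one must check that the comparison isomorphism between two choices does not depend on the auxiliary fibre products and subsequences used to construct it, i.e.\ that all these comparisons fit into a filtered system of isomorphisms. Concretely, given three exhaustive sequences, the pairwise comparisons via fibre products must agree with the triple comparison via the triple fibre product; this is a diagram chase using functoriality of $R_{\tr}$ and the fact that affine-bundle projections and the maps of Proposition~\ref{prop hocolim vanishes} are natural in the evident sense. A secondary technical point is that homotopy colimits in $\DM(k,R)$ are only defined up to non-canonical isomorphism (Definition~\ref{defn_hocolim}), so one should be careful to work at the level of the model category $\MotCpl(k,R)$ — taking honest colimits of $T$-spectra as in Definition~\ref{def mot stack} — and only pass to $\DM(k,R)$ at the end, where Lemma~\ref{colim_hocolim} guarantees these honest colimits represent the homotopy colimits.
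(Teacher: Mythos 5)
Your proposal is correct and follows essentially the same route as the paper: compare two sequences over a common filtration by passing to the fibre product $V_m\times_{\fX_m}V'_m$, use Proposition~\ref{prop hocolim vanishes} together with $\AA^1$-invariance of the vector bundle projections $U_m\times_{\fX_m}V'_m\ra U_m$, and then handle different filtrations via the interleaving of Lemma~\ref{lem exhaust filtr} and invariance of colimits under passing to subsequences. The only (inessential) difference is that you take the union $W_m\times_{\fX_m}V'_m\cup V_m\times_{\fX_m}W'_m$ as the closed locus, so your common open is $U_m\times_{\fX_m}U'_m$, whereas the paper takes $W_m\times_{\fX_m}W'_m$ and works with the larger open $U_m\times_{\fX_m}V'_m\cup V_m\times_{\fX_m}U'_m$; both choices support the same codimension estimates and the same application of Proposition~\ref{prop hocolim vanishes}.
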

\begin{proof}
We first fix a filtration of $\fX=\cup_m \fX_m$ and prove the resulting object does not depend on the exhaustive sequence. Let $(V_\bullet,W_\bullet)$ and $(V'_\bullet,W'_\bullet)$ be two exhaustive sequence of vector bundles on $\fX$. As in the proof of \cite[Theorem 8.4]{totaro}, we introduce a new sequence $(V''_\bullet,W''_\bullet)$ with
\begin{equation}\label{new_exhaust}
(V''_m,W''_m):= (V_m\times_{\fX_m} V'_m,W_m\times_{\fX_m} W'_m).
\end{equation}
and transition morphisms $f''_m:=f_m\times f'_m:V_m\times_{\fX_m} V'_m\ra (V_{m+1}\times_{\fX_{m+1}}\fX_m )\times_{\fX_m} (V'_{m+1}\times_{\fX_{m+1}}\fX_m )$, which are injective vector bundle homomorphisms. Let us prove that this sequence satisfies properties \ref{guanabana} and \ref{papaya} in Definition \ref{def exh seq} (in fact, it satisfies property \ref{lulo}, but this is not needed below). For \ref{guanabana}, we note that $U''_m:=V''_m - W''_m = U_m\times_{\fX_m} V'_m \cup V_m\times_{\fX_m} U'_m$ is a separated scheme of finite type, as $U_m\times_{\fX_m} V'_m$ (resp.\ $V_m\times_{\fX_m} U'_m$) is a vector bundle over the scheme $U_m$ (resp.\ $U'_m$). For \ref{papaya}, we have
\[
(f''_m)^{-1}(W''_{m+1}\times_{\fX_{m+1}}\fX_m )= (f_m^{-1}(W_{m+1}\times_{\fX_{m+1}}\fX_m)) \times_{\fX_m} ((f'_m)^{-1}(W'_{m+1}\times_{\fX_{m+1}}\fX_m))\subset W''_m.
  \]
Given these two properties, one can define a system of $T$-spectra $\dots \ra R_{\tr}(U''_m) \ra R_{\tr}(U''_{m+1})\ra \dots$ as in Definition \ref{def mot stack} and to complete the proof, it suffices by symmetry to show that $\colim_{m} R_{\tr}(U''_{m})\simeq \colim_{m} R_{\tr}(U_{m})$ in $\DM(k,R)$.

We note that $U_m\times_{\fX_m} V'_m$ is open in $U''_m$ (thus a smooth scheme, as $\fX$ is smooth) and that the codimension of the complement satisfies
\[
\codim_{U''_m} (W_m\times_{\fX_m} U'_m) = \codim_{V''_m} (W_m\times_{\fX_m} U'_m) \geq \codim_{V''_m} (W_m\times_{\fX_m} V'_m) = \codim_{V_m} (W_m)  
\]
where we have used that $U''_m$ is dense open in $V''_m$ and that $V'_m\ra \fX_m$ is a vector bundle. In particular, these codimensions tend to infinity with $m$, and so we can apply  Proposition~\ref{prop hocolim vanishes} to the inductive system of open immersions $U_m\times_{\fX_m} V'_m \hookrightarrow U''_m$ of smooth schemes. Since $U_m\times_{\fX_m} V'_m\ra U_m$ is a vector bundle, we have two morphisms of inductive systems of $T$-spectra
  \[
\xymatrix@R=1.1em{
  \ldots \ar[r] &  R_{\tr}(U''_m) \ar[r]  & R_{\tr}(U''_{m+1}) \ar[r]  & \ldots\\
  \ldots \ar[r] &  R_{\tr}(U_m\times_{\fX_m} V'_m) \ar[r] \ar[d]_{\AA^1-\mathrm{w.e.}} \ar[u] & R_{\tr}(U_{m+1}\times_{\fX_{m+1}} V'_{m+1}) \ar[r] \ar[d]_{\AA^1-\mathrm{w.e.}} \ar[u] & \ldots\\
  \ldots \ar[r] &  R_{\tr}(U_m) \ar[r] & R_{\tr}(U_{m+1}) \ar[r] & \ldots
}
\]
whose colimits are both $\AA^1$-weak equivalences (by Proposition~\ref{prop hocolim vanishes} and the fact that a filtered colimit of $\AA^1$-weak equivalences is an $\AA^1$-weak equivalence). 
    
We now show that the definition is independent of the filtration by quasi-compact open substacks. Let $(\fX_n)_{n\geq 0}$ and $(\fX'_n)_{n\geq 0}$ be two such filtrations of $\fX$. By Lemma \ref{lem exhaust filtr}, we can find subsequences with
\[
\fX'_{n'_1}\subset \fX_{n_1}\subset \fX'_{n'_2}\subset \fX_{n_2} \ldots
\]
and an exhaustive sequence of vector bundles with respect to that new filtration. Applying several times the fact that $\NN$-indexed homotopy colimits are unchanged by passing to a subsequence finishes the proof.
\end{proof}

If $\fX=X$ is a separated scheme, then $X$ is exhaustive (as we can take $\fX_m =X = V_m = U_m$) and this definition coincides with the usual definition of the motive of $X$.

\begin{ex}\label{ex mot BGm}
The compactly supported motive of the classifying space $B\GG_m$ of the multiplicative group $\GG_m$ is computed by Totaro \cite[Lemma 8.5]{totaro}, by using the family of representations $\rho_n : \GG_m \ra \AA^{n}$ given by $t \mapsto \text{diag}(t, \dots ,t)$. More precisely, we have an exhaustive sequence over $B\GG_m$ given by $(V_n:=[\AA^n/\GG_m],W_n := [ \{ 0 \}/\GG_m])_{n\in \NN}$, and we can also use this to compute the motive of $B\GG_m$. The open complement is $U_n := [(\AA^n - \{ 0 \} )/\GG_m] \simeq \PP^{n-1}$ and so we have
\[ M(B\GG_m) = \colim_n R_{\tr}(\PP^{n-1})\simeq \hocolim_{n}M(\PP^{n-1}) \simeq \bigoplus_{j \geq 0 } R\{ j \}.\]
\end{ex}

\begin{prop}\label{prop mor compact}
  Let $\fX$ be a smooth exhaustive stack. Fix an exhaustive sequence $(V_\bullet,W_\bullet)$ of vector bundles with respect to a filtration $\fX=\cup_m \fX_m$ so that $M(\fX)=\colim_m R_{\tr}(U_m)$. Let $N\in \DM_c(k,R)$ be any compact motive. Then we have isomorphisms
\begin{enumerate}[label={\upshape(\roman*)}]
\item $\Hom(N,M(\fX))\simeq \colim_n \Hom(N,M(U_n))$ and  
\item $\Hom(M(\fX), N)\simeq \lim_n \Hom(M(U_n),N)$.
\end{enumerate}
\end{prop}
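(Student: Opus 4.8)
The plan is to use the fact that $M(\fX) = \colim_m R_{\tr}(U_m)$ is a filtered colimit of motivic complexes, hence (by Lemma~\ref{colim_hocolim}) a homotopy colimit in $\DM(k,R)$, and to exploit compactness of $N$ together with the weak functoriality of homotopy colimits (Lemma~\ref{lemma funct hocolim}). First I would prove (i). Since $M(\fX) \simeq \hocolim_m M(U_m)$ by Lemma~\ref{stack colim hocolim}, and since $N$ is compact, applying $\Hom(N,-)$ to the defining triangle
\[
\bigoplus_m M(U_m) \stackrel{\id - \sigma}{\lra} \bigoplus_m M(U_m) \lra \hocolim_m M(U_m) \stackrel{+}{\lra}
\]
yields a long exact sequence in which $\Hom(N, \bigoplus_m M(U_m)) \simeq \bigoplus_m \Hom(N, M(U_m))$ (using that $N$ is compact to commute $\Hom(N,-)$ with the direct sum). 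This is precisely the standard presentation of the filtered colimit $\colim_m \Hom(N, M(U_m))$ as the cone of $\id - \sigma$ on $\bigoplus_m \Hom(N, M(U_m))$ in the category of $R$-modules (an abelian category, so the cone is just the cokernel and the connecting map vanishes because $\id - \sigma$ is injective on the direct sum). Hence $\Hom(N, M(\fX)) \simeq \colim_m \Hom(N, M(U_m))$, which is statement (i). Equivalently, one can simply cite that compact objects in a compactly generated triangulated category commute with homotopy colimits, which is the essence of the argument.

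For (ii), the situation is genuinely different: $\Hom(-, N)$ turns the direct sums into direct products and turns the colimit into a limit, but the relevant sequence
\[
0 \lra R^1\lim_m \Hom(M(U_m)[1], N) \lra \Hom(M(\fX), N) \lra \lim_m \Hom(M(U_m), N) \lra 0
\]
is exactly the short exact sequence from Lemma~\ref{lemma funct hocolim} (applied with the constant inductive system $\widetilde F_* = N$, or directly from the Milnor-type sequence obtained by applying $\Hom(-,N)$ to the defining triangle of the homotopy colimit). So to get the clean isomorphism in (ii) one must show the $R^1\lim$ term vanishes. The natural way is to use compactness of $N$ and the control over codimensions built into the exhaustive sequence: for $N$ compact, $N$ is a summand of a finite extension of motives $M(Y)(a)[b]$ with $Y \in \Sm_k$, and then $\Hom(M(U_m)[1], N)$ stabilises, or better, one combines this with Proposition~\ref{prop hocolim vanishes}. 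Concretely, I would argue that the transition maps $U_m \to U_{m+1} \times_{\fX_{m+1}} \fX_m \to U_{m+1}$ have the property, by the codimension hypothesis \ref{lulo} in Definition~\ref{def exh seq}, that $R_{\tr}(U_m) \to R_{\tr}(U_{m+1} \times_{\fX_{m+1}} \fX_m)$ is an $\AA^1$-weak equivalence in a range growing with $m$; composing with the vector bundle projection (an $\AA^1$-weak equivalence), the tower $\Hom(M(U_m), N)$ is eventually an isomorphic system of groups in each fixed degree once $m$ is large relative to the data defining $N$. A pro-isomorphic-to-constant tower (more precisely, a tower satisfying the Mittag-Leffler condition, which holds since the maps are eventually isomorphisms on the relevant truncations) has $R^1\lim = 0$, giving (ii).

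The main obstacle I expect is the vanishing of the $R^1\lim$ term in (ii): one has to make precise that, for a fixed compact $N$, the pro-system $\{\Hom(M(U_m), N)\}_m$ is Mittag-Leffler. The cleanest route is probably to not try to show the maps are literally eventually isomorphisms, but rather to invoke Proposition~\ref{prop hocolim vanishes} directly: the cone $C_m$ of $R_{\tr}(U_m) \to R_{\tr}(U_m \times_{\fX_m} V'_m) \to \cdots$ built from the codimension filtration is, as in the proof of Lemma~\ref{lem stack def indept}, a successive extension of motives $M(\partial W^j_m)\{c\}$ with $c \geq c_m \to \infty$; hence $\Hom(C_m, N)$ and $\Hom(C_m[1], N)$ vanish for $m \gg 0$ (for each fixed compact $N$, using the dual of Lemma~\ref{lemma vanishing mot coh} after reducing $N$ to generators $M(Y)(a)[b]$). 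This forces the tower to be pro-constant, so both its $\lim$ computes $\Hom(M(\fX), N)$ and its $R^1\lim$ vanishes. I would present (i) quickly as the compact-object-commutes-with-hocolim statement, then spend the bulk of the argument on the Mittag-Leffler/$R^1\lim$ vanishing for (ii), pointing back to the machinery already set up in Section~\ref{sec:vanish-results-homot}.
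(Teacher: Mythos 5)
Part (i) of your proposal is correct and is exactly the paper's argument: filtered colimits of motivic complexes compute homotopy colimits and compactness of $N$ commutes $\Hom(N,-)$ past them. Your reduction of (ii) to the vanishing of $R^1\lim_n \Hom(M(U_n)[1],N)$ via Lemma~\ref{lemma funct hocolim} is also the paper's first step. The gap lies in how you then try to kill the $R^1\lim$, i.e.\ in establishing that the tower is Mittag-Leffler.

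Your mechanism misreads the transition maps. In the composite $U_m \to U_{m+1}\times_{\fX_{m+1}}\fX_m \to U_{m+1}$, the second map is \emph{not} a vector bundle projection and is not an $\AA^1$-weak equivalence: it is the open immersion obtained by base change of $\fX_m\hookrightarrow\fX_{m+1}$, and its complement has codimension $\codim_{\fX_{m+1}}(\fX_{m+1}-\fX_m)$, which is a property of the filtration and is not controlled by condition \ref{lulo} of Definition~\ref{def exh seq} (that condition only concerns $W_m\subset V_m$). The same omission reappears in your ``cleanest route'': showing $\Hom(C_m,N[i])=0$ for $m\gg 0$ (where $C_m$ should be the cone of $M(U_m)\to M(V_m)$; the $V'_m$ in your formula is a leftover from the two-sequence comparison in Lemma~\ref{lem stack def indept} and has no meaning here) only identifies the \emph{terms} of the tower, $\Hom(M(U_m)[1],N)\simeq\Hom(M(V_m)[1],N)$; it does not force the tower to be pro-constant, since a tower with abstractly isomorphic terms can still fail Mittag-Leffler. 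One must additionally prove that the transition maps $\Hom(M(V_{n+1})[1],N)\to\Hom(M(V_n)[1],N)$ are eventually isomorphisms. The paper does this in a separate second step: the transition factors as $M(V_n)\to M(i_n^*V_{n+1})\to M(V_{n+1})$, where the first map is an isomorphism by homotopy invariance (both are vector bundles over $\fX_n$), and the open immersion $i_n^*V_{n+1}\to V_{n+1}$ has complement of codimension $\codim_{\fX_{n+1}}(\fX_{n+1}-\fX_n)$, which tends to infinity; the high-codimension cone argument (with $N\in\DM(k,R)_m$ as in Definition~\ref{defn dim filtr} and \cite[Lemma 8.1]{totaro}, rather than your reduction of $N$ to generators, though that difference is cosmetic) is then applied a second time. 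Without this step, and the codimension growth of the filtration it uses, your argument does not yield the vanishing of the $R^1\lim$ term, so (ii) is not established.
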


\begin{proof}
Part (i) holds, as filtered colimits in $\DM(k,R)$ are homotopy colimits and $N$ is compact. For (ii), by Lemma \ref{lemma funct hocolim}, it suffices to show that $R^1\lim \Hom(M(U_n)[1],N) =0$. We actually show that the map $\Hom(M(U_{n+1})[1],N)\ra \Hom(M(U_{n})[1],N)$ is an isomorphism for $n$ large enough, which implies that the corresponding $R^{1}\lim$ term vanishes.

Since $N$ is compact, there exists $m\in\ZZ$ such that $N\in\DM(k,R)_m$ (see Definition \ref{defn dim filtr}). By the argument in the proof of Proposition \ref{prop hocolim vanishes}, for $n$ large enough, the cone of $M(U_n)\ra M(V_n)$ is in the triangulated subcategory generated by motives of the form $M(Z)\{c\}$ with $Z\in\Sm_{k}$ and $c>m$. Then by \cite[Lemma 8.1]{totaro}, this implies that $\Hom(\text{Cone}(M(U_n)\ra M(V_n)),N[i])=0$ for all $i\in \ZZ$, thus in particular $\Hom(M(U_n)[1],N)\simeq \Hom(M(V_n)[1],N)$.

By definition, the transition maps in the system $\Hom(M(V_n)[1],N)$ are induced by the maps $M(V_n)\ra M(i_n^* V_{n+1})\ra M(V_{n+1})$ where $i_{n}$ is the open immersion $\fX_{n}\hookrightarrow \fX_{n+1}$. Both $V_n$ and $i_n^* V_{n+1}$ are vector bundles over the same stack $\fX_n$, and $V_n \ra i_n^*V_{n+1}$ is a map of vector bundles, so $M(V_n)\ra M(i_n^*V_{n+1})$ is an isomorphism. On the other hand, $i_n^* V_{n+1}\ra V_{n+1}$ is an open immersion, and $\codim_{V_{n+1}}(V_{n+1} - i_n^* V_{n+1})= \codim_{\fX_{n+1}}(\fX_{n+1} - \fX_n)$ which tends to infinity with $n$ by assumption. To complete the proof, we use the same argument as in the paragraph above to deduce that $\Hom(M(V_{n+1})[1],N)\simeq \Hom(M(V_n)[1],N)$ for $n$ large enough.
\end{proof}

This shows, in particular, that the following definition is not unreasonable (since it can be computed via any given exhaustive sequence).

\begin{defn}\label{def mot coh}
Let $\fX$ be a smooth exhaustive stack, and $p,q\in \ZZ$. The motivic cohomology of $\fX$ is defined as
$H^p(\fX,R(q)):= \Hom (M(\fX),R(q)[p])$. \end{defn}  

It is not immediately clear that the definition of the motive of an exhaustive stack is functorial, but it is relatively simple to prove a weak form of functoriality for certain representable morphisms. To have more systematic forms of functoriality it is better to work with other definitions of motives of stacks as explained in Appendix~\ref{app mot stacks}.

\begin{lemma}\label{funct rep quot stacks}
  Let $g: \fX \ra \cY$ be a flat finite type representable morphism (resp. an open immersion) of smooth algebraic stacks such that $\cY$ is exhaustive, with an exhaustive sequence of vector bundles $(V_\bullet,W_\bullet)$ on $\cY$ with respect to a filtration $\cY=\cup_m \cY_m$. Then $\fX$ is exhaustive, with the exhaustive sequence $(g^*V_\bullet,g^{-1}(W_\bullet))$ with respect to the filtration $\fX=\cup_m g^{-1}(\cY_m)$. Moreover, $g$ induces a morphism
  \[ M(g) : M(\fX) \ra M(\cY).\]
\end{lemma}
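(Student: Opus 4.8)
The plan is to first verify that $(g^*V_\bullet, g^{-1}(W_\bullet))$ is genuinely an exhaustive sequence on $\fX$ with respect to the filtration $\fX = \cup_m g^{-1}(\cY_m)$, and then to use the resulting explicit model $M(\fX) = \colim_m R_{\tr}(g^{-1}(U_m))$ together with $M(\cY) = \colim_m R_{\tr}(U_m)$ to read off the map $M(g)$ from the level-wise pullbacks. For the first part: each $g^{-1}(\cY_m)$ is an open substack of $\fX$, and since $g$ is of finite type and representable, the base change $g^{-1}(\cY_m) \to \cY_m$ is finite type; quasi-compactness of $g^{-1}(\cY_m)$ follows because $g$ restricted over the quasi-compact $\cY_m$ is quasi-compact (finite type representable morphisms are quasi-compact), and the $g^{-1}(\cY_m)$ cover $\fX$ since the $\cY_m$ cover $\cY$. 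The pullback $g^*V_m$ is a vector bundle over $g^{-1}(\cY_m)$ of the same rank, the maps $g^*f_m$ remain injective vector bundle homomorphisms (pullback is exact on the relevant exact sequences of bundles), and $g^{-1}(W_m) \subset g^*V_m$ is a closed substack. Condition \ref{lulo} holds because pullback along the flat (indeed, finite type representable) morphism $g$ does not decrease codimension of the closed substack $W_m$ inside $V_m$ — here I would invoke the convention on codimension of closed substacks fixed in the Notation section, reducing to an atlas where flatness of the base change gives $\codim_{g^*V_m} g^{-1}(W_m) \geq \codim_{V_m} W_m$ (in fact equality, as $g$ is flat, but only $\geq$ is needed), so these codimensions still tend to infinity. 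Condition \ref{guanabana}: the complement $g^*U_m := g^*V_m - g^{-1}(W_m)$ is the fibre product $U_m \times_{\cY_m} g^{-1}(\cY_m)$ over $U_m$ of the representable finite type morphism $g^{-1}(\cY_m) \to \cY_m$ restricted over the scheme $U_m$; hence it is an algebraic space which, being representable over the separated finite type scheme $U_m$, is a separated finite type scheme (using that $g$ is representable and $U_m$ is a scheme). Condition \ref{papaya} is immediate by functoriality of preimages: $(g^*f_m)^{-1}(g^{-1}(W_{m+1}) \times \fX_m) = g^{-1}(f_m^{-1}(W_{m+1} \times \cY_m)) \subset g^{-1}(W_m)$.

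Given this, $\fX$ is exhaustive and, by Definition~\ref{def mot stack}, $M(\fX) = \colim_m R_{\tr}(g^*U_m)$. The restriction of $g$ gives morphisms of schemes $g^*U_m \to U_m$ compatible with the transition maps $f_m$ (by construction the transition map on $g^*U_\bullet$ is pulled back from that on $U_\bullet$), hence a morphism of inductive systems of $T$-spectra $R_{\tr}(g^*U_\bullet) \to R_{\tr}(U_\bullet)$. Taking the colimit in $\MotCpl(k,R)$ and passing to $\DM(k,R)$ yields the desired morphism $M(g) : M(\fX) \to M(\cY)$.

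The one point requiring care — and the main (mild) obstacle — is well-definedness: a priori $M(g)$ defined this way could depend on the choice of exhaustive sequence on $\cY$. For this I would run the same comparison argument as in the proof of Lemma~\ref{lem stack def indept}. Given two exhaustive sequences $(V_\bullet, W_\bullet)$, $(V'_\bullet, W'_\bullet)$ on $\cY$, form the product sequence $(V''_m, W''_m) = (V_m \times_{\cY_m} V'_m, W_m \times_{\cY_m} W'_m)$ as in \eqref{new_exhaust}, pull everything back along $g$, and observe that pullback commutes with the fibre products, so $(g^*V''_\bullet, g^{-1}(W''_\bullet))$ is the analogous product sequence on $\fX$. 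The maps $g^*U_m \times_{g^{-1}(\cY_m)} g^*V'_m \hookrightarrow g^*U''_m$ are open immersions of smooth schemes (smoothness of $\fX$ is used here, exactly as smoothness of $\cY$ is used in Lemma~\ref{lem stack def indept}) with complement of codimension tending to infinity, so Proposition~\ref{prop hocolim vanishes} applies on both sides and the square relating the two colimits commutes up to the canonical identifications already constructed in Lemma~\ref{lem stack def indept}; naturality of all maps involved in $g$. Independence of the filtration follows from Lemma~\ref{lem exhaust filtr} together with the fact that $\NN$-indexed homotopy colimits are insensitive to passing to cofinal subsequences, exactly as at the end of the proof of Lemma~\ref{lem stack def indept}. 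I do not expect any genuine difficulty beyond carefully threading these base-change compatibilities; the open immersion case is strictly easier, since then $g^*U_m \to U_m$ is itself an open immersion and every fibre product above is just an intersection of opens.
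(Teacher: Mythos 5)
Your proposal is correct and follows essentially the same route as the paper's proof: flatness of $g$ gives the codimension bound $\codim_{g^*V_m}(g^{-1}(W_m))\geq\codim_{V_m}(W_m)$, representability and finite type (resp.\ the open immersion hypothesis) give that $g^*V_m - g^{-1}(W_m)\simeq U_m\times_{\cY}\fX$ is a separated finite type scheme, condition \ref{papaya} is a formal preimage computation (which the paper leaves to the reader), and $M(g)$ is obtained by taking colimits of the level-wise morphisms of $T$-spectra $R_{\tr}(g^{-1}(U_m))\to R_{\tr}(U_m)$. Your additional discussion of independence of $M(g)$ from the chosen exhaustive sequence goes beyond the paper, which only constructs the morphism relative to the fixed sequence; it is a reasonable (if not required) supplement.
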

\begin{proof}
Since $g$ is flat, we have that $\text{codim}_{g^*V_n}(g^{-1}(W_n))\geq \text{codim}_{V_n}(W_n)$ which tends to infinity with $n$. By the fact that $g$ is representable and of finite type (resp. an open immersion),
\[g^*V_n- g^{-1}(W_n)\simeq (V_n- W_n)\times_\cY \fX\] is a separated finite type $k$-scheme. We leave the verification of Property \ref{papaya} in Definition~\ref{def exh seq} to the reader. The morphism $M(g)$ is then defined by taking colimits in the morphism of systems of $T$-spectra $R_{\tr}(g^{-1}(U_n))\ra R_{\tr}(U_n)$.
\end{proof}
 
\begin{rmk}
The flatness condition on $g$ was only imposed to prove the codimension condition on $(g^*V_\bullet,g^{-1}(W_\bullet))$. There are many other cases in which the definition of $M(f)$ above works; for instance, for any finite type representable morphism between quotient stacks.  
\end{rmk}

Let us record a first application of this weak functoriality.

\begin{lemma}\label{lem hocolim substacks}
  Let $\fX$ be a smooth exhaustive stack which admits an increasing filtration $\fX = \cup_{i \in \NN} \fX_i$ by quasi-compact open substacks $\fX_i$. Fix an exhaustive sequence of vector bundles $(V_{\bullet},W_{\bullet})$ with respect to an exhaustive filtration of $\fX$ (a priori different from $\fX_{i}$). For a given $i\in \NN$, this exhaustive sequence of vector bundles, pulled-back to $\fX_{i}$, provides an exhaustive sequence of vector bundles with respect to the trivial filtration on $\fX_{i}$. In particular, we have morphisms $M(\fX_{i})\to M(\fX_{i+1})$ induced by Lemma \ref{funct rep quot stacks}. For these morphisms, we have
\[
M(\fX)\simeq \hocolim_{i\in \NN} M(\fX_{i}).
\]
\end{lemma}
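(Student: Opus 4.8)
The plan is to realize both sides as colimits of motivic complexes and reduce to a statement about supports of finite correspondences. First I would fix the data computing the left-hand side: write $M(\fX) = \colim_m R_{\tr}(U_m)$ for the chosen exhaustive sequence $(V_\bullet,W_\bullet)$ with respect to an exhaustive filtration $\fX = \cup_m \fY_m$, where $U_m := V_m - W_m$. For each $i$, quasi-compactness of $\fX_i$ provides a least index $\mu(i)$ with $\fX_i \subseteq \fY_{\mu(i)}$, and the pulled-back exhaustive sequence of the statement computes $M(\fX_i) = \colim_{m \geq \mu(i)} R_{\tr}(U_m \times_{\fY_m} \fX_i)$, each $U_m \times_{\fY_m} \fX_i$ being an open subscheme of $U_m$. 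Unwinding Lemma \ref{funct rep quot stacks} for the open immersion $\fX_i \hookrightarrow \fX_{i+1}$ (which restricts this same exhaustive sequence once more), the transition map $M(\fX_i)\to M(\fX_{i+1})$ is, at level $m$, the pushforward along the open immersion $U_m\times_{\fY_m}\fX_i \hookrightarrow U_m\times_{\fY_m}\fX_{i+1}$; in particular $i\mapsto M(\fX_i)$ is a genuine inductive system in $\MotCpl$.

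By Lemma \ref{colim_hocolim}, $\hocolim_i M(\fX_i) \simeq \colim_i M(\fX_i)$, and interchanging the order of these filtered colimits identifies the latter with $\colim_{(i,m)\in Q} R_{\tr}(U_m\times_{\fY_m}\fX_i)$, where $Q := \{(i,m)\in\NN^2 : \fX_i\subseteq \fY_m\}$ is directed for the product order. The open immersions $U_m\times_{\fY_m}\fX_i \hookrightarrow U_m$ assemble into a natural map $\colim_Q R_{\tr}(U_m\times_{\fY_m}\fX_i) \to \colim_m R_{\tr}(U_m) = M(\fX)$, which unwinds to the morphism induced by the open immersions $\fX_i\hookrightarrow\fX$; so it remains to see this map is an isomorphism in $\MotCpl$. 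Since filtered colimits are exact in the Grothendieck abelian category $\MotCpl$ and a morphism of motivic complexes is an isomorphism as soon as it is so on sections over every $T\in\Sm_k$ (which is Noetherian), this comes down to checking that the induced map $\colim_Q \mathrm{Cor}_k(T, U_m\times_{\fY_m}\fX_i) \to \colim_m \mathrm{Cor}_k(T, U_m)$ of colimits of groups of finite correspondences is bijective.

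The key point is a quasi-compactness argument for supports. For surjectivity: a finite correspondence $\alpha\colon T\to U_m$ has quasi-compact support, because $T\times_k U_m$ is Noetherian; composing $\alpha$ with a transition map $U_m\to U_{m'}$, which covers the open immersion $\fY_m\hookrightarrow\fY_{m'}$, forces this support to lie over the quasi-compact open substack $\fY_m$, hence over some $\fX_{i_0}$ since the $\fX_i$ cover $\fX$; choosing moreover $m'\geq\mu(i_0)$ (so that $(i_0,m')\in Q$), the class $[\alpha]$ is represented by an element of $\mathrm{Cor}_k(T, U_{m'}\times_{\fY_{m'}}\fX_{i_0})$, hence lies in the image. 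Injectivity is analogous, using that pushforward along an open immersion is injective on cycle groups. This yields the isomorphism, which by the above is the one induced by $\fX_i\hookrightarrow\fX$, as asserted.

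The step I expect to require the most care is this support argument — in particular the statement that a finite correspondence into an increasing union of opens factors through a single member of the union once its source is over a quasi-compact base — while the remaining work is the routine unwinding in the first paragraph, namely that the maps produced by Lemma \ref{funct rep quot stacks} really are the concrete open-immersion pushforwards used here. One can alternatively avoid correspondences altogether: by Lemma \ref{lem exhaust filtr} one first replaces $(\fX_i)$ by a cofinal subsequence carrying an exhaustive sequence of vector bundles (this changes neither side, since $\NN$-indexed homotopy colimits are insensitive to cofinal subsequences, and $M(\fX)$ is computed by this sequence up to the canonical isomorphisms of Lemma \ref{lem stack def indept}), after which the filtration underlying the exhaustive sequence is $(\fX_i)$ itself, $Q = \{(i,m): i\leq m\}$, and the diagonal $\{(i,i)\}\subset Q$ is cofinal, so that $\colim_Q R_{\tr}(U_m\times_{\fX_m}\fX_i) = \colim_i R_{\tr}(U_i\times_{\fX_i}\fX_i) = \colim_i R_{\tr}(U_i) = M(\fX)$ directly.
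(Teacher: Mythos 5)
Your argument is correct, and its skeleton is the same as the paper's: realise $M(\fX)$ and each $M(\fX_i)$ as honest colimits of $T$-spectra, use Lemma \ref{colim_hocolim} to replace homotopy colimits by colimits, and then interchange the two filtered colimits. The difference is in the last step. The paper simply observes that for a \emph{fixed} level $m$ of the exhaustive sequence, the inner system stabilises: since the level-$m$ bundle lives over a quasi-compact open $\fY_m$ of the underlying filtration and the $\fX_i$ form an increasing open cover, one has $\fY_m\subseteq\fX_{i_0}$ for some $i_0$, hence $U_m\times_{\fX}\fX_i=U_m$ for all $i\geq i_0$, so $\colim_i R_{\tr}(U_m\times_\fX\fX_i)=R_{\tr}(U_m)$ on the nose and the Fubini interchange immediately gives the result. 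You instead prove that the comparison map out of the colimit over your index set $Q$ is an isomorphism section-wise on finite correspondences, via a support/quasi-compactness argument; this is valid (the support of a finite correspondence is closed in $T\times U_{m'}$, so restricting to the open $U_{m'}\times_{\fY_{m'}}\fX_{i_0}$ causes no trouble), but it is heavier than necessary: the pivotal fact you invoke inside it --- that the quasi-compact $\fY_m$ is contained in some $\fX_{i_0}$ --- already yields the stabilisation above and lets you avoid correspondences entirely. Your alternative closing sketch via Lemma \ref{lem exhaust filtr} also works in outline, but, as you note, it requires checking that the canonical identifications of Lemma \ref{lem stack def indept} are compatible with the transition maps of the two systems, a point the direct stabilisation argument sidesteps.
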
  
\begin{proof}
  Everything except the last statement follows from Lemma \ref{funct rep quot stacks} applied to the open immersions $\fX_{i}\to \fX$ and $\fX_{i}\to \fX_{i+1}$. Write $U_{n}:=V_{n}\setminus W_{n}$ for $n\in\NN$, and $\widetilde{U}_{n,i}:=U_{n}\times_{\fX}\fX_{i}$ for $n,i\in\NN$. By quasi-compactness of the $\fX_{i}$ and the fact that they cover $\fX$, we have $\widetilde{U}_{n,i}=U_{n}$ for $i$ large enough (depending on $n$). Then
  \begin{align*}
    M(\fX) & \simeq \colim_{n\in\NN} R_{\tr}(U_{n})   \simeq \colim_{n\in\NN}\colim_{i\in \NN} R_{\tr}(\widetilde{U}_{n,i}) \\
    & \simeq \colim_{i\in\NN}\colim_{n\in\NN} R_{\tr}(\widetilde{U}_{n,i}) \simeq \hocolim_{i\in\NN} M(\fX_{i})
  \end{align*}
by definition and the fact that $\NN$-colimits of motivic spectra compute $\NN$-homotopy colimits.
\end{proof}  

One can also prove K\"{u}nneth isomorphisms and $\AA^1$-homotopy invariance, as well as a Gysin distinguished triangle.

\begin{prop}\label{kunneth}
Let $\fX$ and $\cY$ be smooth exhaustive stacks over $k$.
\begin{enumerate}[label={\upshape(\roman*)}]
\item The stack $\fX\times_{k} \cY$ is exhaustive and there is an isomorphism $M(\fX \times_k \cY) \simeq M(\fX) \otimes M(\cY)$.
\item If $\cE \ra \fX$ is a vector bundle, then $\cE$ is a smooth exhaustive stack and $M(\cE) \simeq M(\fX)$.
\item Let $i:\cZ\to \fX$ be a closed immersion of relative codimension $c$ with $\cZ$ smooth. Then there is a distinguished triangle
  \[
  M(\fX - \cZ)\rightarrow M(\fX)\stackrel{\Gy(i)}{\rightarrow} M(\cZ)\{c\}\stackrel{+}{\rightarrow}.
\]
\end{enumerate}
\end{prop}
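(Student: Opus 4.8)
The plan is to prove all three parts by pulling back the data of an exhaustive sequence along the relevant morphism and reducing to the corresponding statements for schemes, combined with the homotopy (co)limit formalism of Section~\ref{sec holim van}. Throughout I fix an exhaustive sequence $(V_\bullet,W_\bullet)$ on $\fX$ with respect to a filtration $\fX=\cup_m\fX_m$, and write $U_m:=V_m-W_m$; for part~(i) I also fix $(V'_\bullet,W'_\bullet)$ on $\cY$ with respect to $\cY=\cup_m\cY_m$ and $U'_m:=V'_m-W'_m$. By Lemma~\ref{lem stack def indept} none of these choices matter. For (i), I would equip $\fX\times_k\cY$ with the filtration by the quasi-compact open substacks $\fX_m\times_k\cY_m$, the vector bundles $V_m\times_k V'_m$ over them, and the closed substacks $W''_m:=(W_m\times_k V'_m)\cup(V_m\times_k W'_m)$; then $U''_m:=(V_m\times_k V'_m)-W''_m=U_m\times_k U'_m$ is a separated finite type $k$-scheme, $\codim(W''_m)=\min(\codim_{V_m}W_m,\codim_{V'_m}W'_m)\to\infty$, and property~\ref{papaya} of Definition~\ref{def exh seq} follows from the same property for the two factors. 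Hence $\fX\times_k\cY$ is exhaustive, and by Definition~\ref{def mot stack} together with the identity $R_{\tr}(X\times_k Y)\cong R_{\tr}(X)\otimes R_{\tr}(Y)$,
\[ M(\fX\times_k\cY)=\colim_m R_{\tr}(U_m\times_k U'_m)=\colim_m\bigl(R_{\tr}(U_m)\otimes R_{\tr}(U'_m)\bigr) \]
in $\MotCpl(k,R)$. Since the tensor product preserves colimits in each variable and the diagonal $\NN\hookrightarrow\NN\times\NN$ is cofinal, this colimit equals $\bigl(\colim_m R_{\tr}(U_m)\bigr)\otimes\bigl(\colim_m R_{\tr}(U'_m)\bigr)$; passing to $\DM(k,R)$ gives $M(\fX\times_k\cY)\simeq M(\fX)\otimes M(\cY)$.

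For (ii), given a vector bundle $\pi:\cE\to\fX$, I would filter $\cE$ by the quasi-compact open substacks $\cE_m:=\cE\times_\fX\fX_m$ and pull the exhaustive sequence back to $\widetilde V_m:=V_m\times_{\fX_m}\cE_m$, $\widetilde W_m:=W_m\times_{\fX_m}\cE_m$. Then $\widetilde U_m:=\widetilde V_m-\widetilde W_m=U_m\times_{\fX_m}\cE_m$ is a vector bundle over the scheme $U_m$, hence a separated finite type $k$-scheme, and $\codim_{\widetilde V_m}\widetilde W_m=\codim_{V_m}W_m\to\infty$ by flat base change, so $\cE$ is smooth exhaustive (this is also a special case of Lemma~\ref{funct rep quot stacks}, as $\pi$ is representable, flat and of finite type). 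Each projection $\widetilde U_m=U_m\times_{\fX_m}\cE_m\to U_m$ is a vector bundle, so $R_{\tr}(\widetilde U_m)\to R_{\tr}(U_m)$ is an $\AA^1$-weak equivalence compatible with the transition maps; since a filtered colimit of $\AA^1$-weak equivalences is one, taking colimits shows $M(\pi):M(\cE)\to M(\fX)$ is an isomorphism.

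For (iii), I would set $\cZ_m:=\cZ\times_\fX\fX_m$, $V_m^\cZ:=V_m\times_{\fX_m}\cZ_m$ — a vector bundle over the smooth stack $\cZ_m$, hence smooth, and of codimension $c$ in $V_m$ because $\cZ_m\hookrightarrow\fX_m$ has codimension $c$ and $V_m\to\fX_m$ is flat — and $W_m^\cZ:=W_m\times_{\fX_m}\cZ_m=W_m\cap V_m^\cZ$. Then $U_m^\cZ:=V_m^\cZ-W_m^\cZ=U_m\times_{\fX_m}\cZ_m$ is a closed subscheme of the smooth scheme $U_m$, hence a smooth separated finite type $k$-scheme of codimension $c$ in $U_m$, and, using additivity of codimension in the flag $W_m^\cZ\subseteq V_m^\cZ\subseteq V_m$ (valid as $V_m$ is smooth, hence catenary) together with $W_m^\cZ\subseteq W_m$,
\[ \codim_{V_m^\cZ}(W_m^\cZ)=\codim_{V_m}(W_m^\cZ)-c\geq\codim_{V_m}(W_m)-c\longrightarrow\infty. \]
Thus $(V_\bullet^\cZ,W_\bullet^\cZ)$ is an exhaustive sequence on the smooth stack $\cZ$, so $\cZ$ is smooth exhaustive with $M(\cZ)=\colim_m R_{\tr}(U_m^\cZ)$; and applying Lemma~\ref{funct rep quot stacks} to the open immersion $\fX-\cZ\hookrightarrow\fX$ gives $M(\fX-\cZ)=\colim_m R_{\tr}(U_m-U_m^\cZ)$. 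For each $m$ the closed immersion $U_m^\cZ\hookrightarrow U_m$ of smooth schemes of codimension $c$ gives the Gysin triangle $M(U_m-U_m^\cZ)\to M(U_m)\stackrel{\Gy}{\to}M(U_m^\cZ)\{c\}\stackrel{+}{\to}$, and since the transition squares are cartesian over $\cZ$ and preserve this codimension, these triangles form an $\NN$-indexed system of distinguished triangles in $\DM(k,R)$. Taking homotopy colimits via Lemmas~\ref{colim_hocolim}, \ref{pulling out constants hocolims}, \ref{lemma funct hocolim} and~\ref{lemma hocolim triangles} then produces the asserted distinguished triangle $M(\fX-\cZ)\to M(\fX)\stackrel{\Gy(i)}{\to}M(\cZ)\{c\}\stackrel{+}{\to}$.

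I expect the main obstacle to lie in part~(iii): one must check that pulling back an exhaustive sequence along the closed immersion $i$ yields another one, and the crucial input is the displayed codimension estimate — that intersecting with $\cZ$ can drop the codimension of $W_m$ by at most $c$ — together with the observation that $U_m^\cZ$ remains a scheme. The second delicate point is the compatibility of the scheme-level Gysin triangles with the transition maps, which relies on the functoriality of the Gysin triangle for the transition squares; these squares are cartesian and preserve the smooth codimension-$c$ closed immersion, so this functoriality is standard, but it should be invoked with care. The remaining checks — the axioms of Definition~\ref{def exh seq} for the pulled-back data in all three parts, and the colimit manipulations in (i) and (ii) — are routine.
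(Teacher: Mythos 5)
Your proposal is correct and follows essentially the same route as the paper: the same product exhaustive sequence and diagonal-cofinality colimit argument in (i), the same pullback-plus-homotopy-invariance argument in (ii), and in (iii) the same pullback of the exhaustive sequence along $i$ and its complementary open immersion, levelwise Gysin triangles made compatible via the cartesian transition squares (the paper cites D\'eglise for this functoriality), and passage to the homotopy colimit. Your explicit estimate $\codim_{V_m^\cZ}(W_m^\cZ)\geq\codim_{V_m}(W_m)-c$ is exactly the verification the paper leaves implicit when asserting that the pulled-back sequence on $\cZ$ is exhaustive.
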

\begin{proof}
For part (i), let $\fX = \cup_{n} \fX_n$ (resp.\ $\cY = \cup_{n} \cY_n$) be filtrations with respect to which $\fX$ (resp.\ $\cY$) admits an exhaustive sequence of vector bundles $(V_{\bullet},W_{\bullet})$ (resp.\ $(V'_{\bullet},W'_{\bullet})$). Then we leave the reader to easily verify that on $\fX\times_{k}\cY$ there is an exhaustive sequence compatible with the filtration $\fX\times_{k}\cY=\cup_{n}\fX_{n}\times \cY_{n}$ given by $(V_{\bullet}\times V'_{\bullet},W_{\bullet}\times_{k} V'_{\bullet} \cup V_{\bullet}\times_{k} W'_{\bullet})$. We thus get
\begin{align*}
  M(\fX\times_{k}\cY)  &= \colim_{n} R_{\tr}(U_{n}\times U'_{n}) \simeq \colim_{n} R_{\tr}(U_{n})\otimes R_{\tr}(U'_{n}) 
 \simeq \colim_{n} \colim_{m} R_{\tr}(U_{n})\otimes R_{\tr}(U'_{m}) \\ & \simeq \colim_{n} R_{\tr}(U_{n})\otimes \colim_{m} R_{\tr}(U'_{m})= M(\fX)\otimes M(\cY)
\end{align*}
where we have used the commutation of tensor products of $T$-spectra with colimits and the fact that the diagonal $\NN\ra \NN\times \NN$ is cofinal. The proof of part (ii) follows by a similar argument using the pullback of any exhaustive sequence from $\fX$ to $\cE$ and homotopy invariance for schemes.

Let us prove part (iii). Let us write $j:\cT\to \cX$ for the complementary open immersion to $i$. Pulling back the $\fX_{n}$ and $(V_{\bullet},W_{\bullet})$ along $i$ (resp. $j$) produces a filtration $\cZ_{n}$  (resp. $\cT_{n}$) and an exhaustive sequence of vector bundles $(V'_{\bullet},W'_{\bullet})$ for $\cZ$ (resp. $(V''_{\bullet},W''_{\bullet})$ for $\cT$). For every $n$, we get a closed immersion $i_{n}:U'_{n}\to U_{n}$ of smooth finite type $k$-schemes and a corresponding Gysin triangle
\[
  M(U''_{n})\rightarrow M(U_{n})\stackrel{\Gy(i_{n})}{\rightarrow} M(U'_{n})\{c\}\stackrel{+}{\rightarrow}.
\]
Moreover, for all $n\in \NN$, we have a cartesian square
\[
  \xymatrix{
    U'_{n} \ar[r] \ar[d] & U_{n} \ar[d]\\
    U'_{n+1} \ar[r] & U_{n+1}
  }
\]
of smooth $k$-schemes, so that by \cite[Proposition 4.3, Proposition 4.10]{Deglise_Gysin_II}, the Gysin triangle above is functorial in $n$, with transition morphisms induced by the morphisms $U''_{n}\to U''_{n+1}$, $U_{n}\to U_{n+1}$ and $U'_{n}\to U'_{n+1}$ respectively. We conclude the proof by passing to the homotopy colimit.
\end{proof}

\begin{rmk}\label{rmk motive diag}
In fact, the argument used in Proposition \ref{kunneth} (i) enables us to define a morphism 
\[ M(\Delta) : M(\fX) \ra M(\fX) \otimes M(\fX) \]
for any smooth exhaustive stack $\fX$. Indeed this morphism is defined as the colimit over $n$ of the morphisms 
$R_{\tr}(U_n) \ra R_{\tr}(U_n \times U_n) \simeq M(U_n) \otimes M(U_n)$, and one can check that this morphism is independent of the presentation.
\end{rmk}

\subsection{Chern classes of vector bundles on stacks}

For a vector bundle $E$ over a smooth $k$-scheme $X$, one has motivic incarnations of Chern classes given by a morphism
\[ c_j(E) : M(X) \ra R\{j \}.\]
Let us extend this notion to vector bundles on smooth exhaustive stacks.

\begin{defn}
Let $\cE \ra \fX$ be a vector bundle on a smooth exhaustive stack and let $(V_{\bullet},W_\bullet)$ be an exhaustive system of vector bundles on $\fX$ with respect to a filtration $\fX = \bigcup_n \fX_{n}$; then the pullback $\cE_n$ of $\cE$ to the smooth scheme $U_n:=V_n - W_n \hookrightarrow V_n \twoheadrightarrow \fX_n \hookrightarrow \fX$ determines a morphism $c_j(\cE_n) : M(U_n)  \ra R\{j \}$. Since these morphisms are compatible, this determines an element of $\lim_n \Hom( M(U_n) , R\{j\})$, which corresponds to a morphism
\[ c_j(\cE) : M(\fX) \ra R\{ j \} \]
by Proposition \ref{prop mor compact} as $R\{ j \}$ is compact. 
\end{defn}

\begin{lemma} 
This definition does not depend on any of the above choices.
\end{lemma}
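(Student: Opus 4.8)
The plan is to reduce the independence of $c_j(\cE)$ to the independence statement already established for $M(\fX)$ itself (Lemma~\ref{lem stack def indept}), together with the uniqueness clause in Proposition~\ref{prop mor compact}(ii). There are two choices to vary: the exhaustive sequence of vector bundles $(V_\bullet,W_\bullet)$ for a fixed filtration, and the filtration itself. In both cases the strategy is the same: exhibit a common refinement, check that the Chern class morphisms defined via the refinement restrict compatibly to those defined via each of the two original data, and then invoke the fact that $R\{j\}$ is compact so that a morphism out of $M(\fX)$ is determined by its compatible system of restrictions to the $M(U_n)$.

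Concretely, first I would fix the filtration $\fX = \cup_m \fX_m$ and take two exhaustive sequences $(V_\bullet,W_\bullet)$ and $(V'_\bullet,W'_\bullet)$, forming the product sequence $(V''_m,W''_m) = (V_m\times_{\fX_m}V'_m, W_m\times_{\fX_m}W'_m)$ exactly as in the proof of Lemma~\ref{lem stack def indept}. Writing $U''_m = V''_m - W''_m$, there is the open immersion $U_m\times_{\fX_m} V'_m \hookrightarrow U''_m$ and the vector bundle projection $U_m\times_{\fX_m}V'_m \to U_m$, and these induce the canonical isomorphism $M(\fX)\simeq\colim_m R_{\tr}(U''_m)$ compatible with $\colim_m R_{\tr}(U_m)$. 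The key observation is that $\cE$ pulled back to $U''_m$, then to $U_m\times_{\fX_m}V'_m$, agrees with the pullback of $\cE_m$ along the vector bundle projection, so by functoriality of Chern classes along pullbacks (and their invariance under the $\AA^1$-weak equivalence $M(U_m\times_{\fX_m}V'_m)\to M(U_m)$) the morphisms $c_j(\cE''_m): M(U''_m)\to R\{j\}$ restrict to $c_j(\cE_m)$ on $M(U_m\times_{\fX_m}V'_m)\simeq M(U_m)$. Passing to the colimit, the two elements of $\lim_n\Hom(M(U_n),R\{j\})$ induce, via Proposition~\ref{prop mor compact}(ii), the same morphism $M(\fX)\to R\{j\}$ up to the canonical identification of $M(\fX)$ computed from the two sequences; by symmetry the same holds with $(V'_\bullet,W'_\bullet)$, giving independence of the sequence.

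For independence of the filtration, I would use Lemma~\ref{lem exhaust filtr} to pass to a common refinement $\fX'_{n'_1}\subset\fX_{n_1}\subset\fX'_{n'_2}\subset\cdots$ equipped with an exhaustive sequence restricting to both given ones, and then use that $\NN$-indexed homotopy colimits — and hence the induced Chern class morphisms, which are compatible with passing to subsequences — are unchanged under passage to a cofinal subsequence. Combining the two reductions gives the result. The main (and only mildly technical) obstacle is the bookkeeping in the first step: one must check carefully that the pullback of $\cE$ along the composite $U_m\times_{\fX_m}V'_m \hookrightarrow U''_m \twoheadrightarrow V''_m \to \fX_m$ literally coincides with the pullback of $\cE_m$ along the affine bundle $U_m\times_{\fX_m}V'_m \to U_m$, so that homotopy invariance of Chern classes applies; everything else is a formal consequence of the compactness of $R\{j\}$ and the already-proven well-definedness of $M(\fX)$.
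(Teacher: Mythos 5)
Your proof is correct and follows essentially the same route as the paper, which omits the details precisely because the argument is the adaptation of Lemma~\ref{lem stack def indept} that you spell out: the product sequence $(V''_\bullet,W''_\bullet)$ for a fixed filtration, the common refinement of filtrations via Lemma~\ref{lem exhaust filtr}, and the identification of the resulting compatible systems in $\lim_n\Hom(M(U_n),R\{j\})$ using compactness of $R\{j\}$ and Proposition~\ref{prop mor compact}(ii). Your check that the pullbacks of $\cE$ agree on $U_m\times_{\fX_m}V'_m$ is exactly the point that makes the adaptation go through.
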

\begin{proof}
We omit the details as the proof is very similar to that of Lemma \ref{lem stack def indept}.
\end{proof}

We will need some basic functoriality results for these Chern classes. 

\begin{lemma}
 Let $g: \fX \ra \cY$ be a flat finite type representable morphism of smooth algebraic stacks such that $\cY$ is exhaustive (then $\fX$ is exhaustive by Lemma \ref{funct rep quot stacks}). For a vector bundle $\cE \ra \cY$ and $j \in \NN$, we have a commutative diagram
 \[\xymatrix{
M(\fX) \ar[d]_{c_j(f^*\cE)} \ar[r]^{M(f)} & M(\cY)  \ar[d]^{c_j(\cE)} \\  R\{j\} \ar[r]^{=} & R\{j\}.
  }\] 
\end{lemma}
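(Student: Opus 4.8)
The plan is to reduce the commutativity of the square to the contravariant functoriality of motivic Chern classes for morphisms of smooth $k$-schemes, applied levelwise to the finite-type approximations. First I would fix an exhaustive sequence of vector bundles $(V_\bullet,W_\bullet)$ on $\cY$ with respect to a filtration $\cY=\cup_m\cY_m$, put $U'_m:=V_m-W_m$, and use Lemma~\ref{funct rep quot stacks} to equip $\fX$ with the pulled-back exhaustive sequence $(g^*V_\bullet,g^{-1}(W_\bullet))$ relative to $\fX=\cup_m g^{-1}(\cY_m)$, writing $U_m:=g^*V_m-g^{-1}(W_m)$. Since $\fX$ (and hence each $g^{-1}(\cY_m)$) is smooth, both $U_m$ and $U'_m$ are smooth separated finite type $k$-schemes, and $U_m\simeq U'_m\times_\cY\fX$, so the projection $\pi_m\colon U_m\to U'_m$ is the base change of $g$ — in particular a morphism of smooth schemes. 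By construction (Definition~\ref{def mot stack} and Lemma~\ref{funct rep quot stacks}), $M(g)$ — written $M(f)$ in the statement — is the colimit over $m$ of the maps $R_\tr(U_m)\to R_\tr(U'_m)$ induced by the $\pi_m$.

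Next I would unwind the two Chern class morphisms. By the construction of Chern classes for vector bundles on smooth exhaustive stacks, $c_j(\cE)\colon M(\cY)\to R\{j\}$ corresponds, under the isomorphism $\Hom(M(\cY),R\{j\})\simeq\lim_m\Hom(M(U'_m),R\{j\})$ of Proposition~\ref{prop mor compact}(ii), to the compatible system $(c_j(\cE_m))_m$, where $\cE_m$ is the restriction of $\cE$ to $U'_m$; likewise $c_j(g^*\cE)$ corresponds to $(c_j((g^*\cE)_m))_m$. The key geometric point is that the composite $U_m\to\fX\xrightarrow{g}\cY$ equals $U_m\xrightarrow{\pi_m}U'_m\to\cY$, so that $(g^*\cE)_m\simeq\pi_m^*\cE_m$, and therefore, by functoriality of motivic Chern classes on smooth $k$-schemes,
\[ c_j((g^*\cE)_m)=c_j(\pi_m^*\cE_m)=c_j(\cE_m)\circ M(\pi_m)\colon M(U_m)\to M(U'_m)\to R\{j\} \]
for every $m$.

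Finally I would conclude by the compatibility of the identification in Proposition~\ref{prop mor compact}(ii) with precomposition: since $M(g)$ is induced levelwise by the $M(\pi_m)$, precomposing a compatible system over $\cY$ with the $M(\pi_m)$ yields the corresponding morphism composed with $M(g)$. Applying this to $(c_j(\cE_m))_m$ shows that $c_j(\cE)\circ M(g)$ corresponds, under $\Hom(M(\fX),R\{j\})\simeq\lim_m\Hom(M(U_m),R\{j\})$, to the system $(c_j(\cE_m)\circ M(\pi_m))_m=(c_j((g^*\cE)_m))_m$, which is exactly the system defining $c_j(g^*\cE)$. As a morphism out of $M(\fX)$ into the compact object $R\{j\}$ is determined by its associated compatible system, this gives $c_j(\cE)\circ M(g)=c_j(g^*\cE)$, i.e.\ the square commutes.

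The only step requiring genuine care, rather than mere unwinding of definitions, is this last naturality statement: that under Proposition~\ref{prop mor compact}(ii) composition with $M(g)$ on the source is implemented by levelwise precomposition with the $M(\pi_m)$. This should follow directly from the fact that $M(g)$ and both Chern class morphisms are constructed as (co)limits of the same levelwise data, but one must remember that homotopy (co)limits in $\DM(k,R)$ are only weakly functorial (Lemma~\ref{lemma funct hocolim}); here the $R^1\lim$ ambiguity is harmless because the target $R\{j\}$ is compact and the relevant $R^1\lim$ term vanishes, exactly as in the proof of Proposition~\ref{prop mor compact}(ii). I would therefore run the argument entirely at the level of the projective systems $\Hom(M(U_m),R\{j\})$, where everything is strictly functorial.
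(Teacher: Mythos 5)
Your proof is correct and follows the same route the paper takes: the paper's own proof is exactly the one-line observation that the claim reduces, via the pulled-back exhaustive sequence from Lemma \ref{funct rep quot stacks} and the identification of Proposition \ref{prop mor compact}(ii), to functoriality of Chern classes for vector bundles on smooth schemes. Your careful handling of the levelwise systems and the vanishing $R^1\lim$ simply makes explicit what the paper leaves implicit.
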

\begin{proof}
This follows from the proof of Lemma \ref{funct rep quot stacks} and the functoriality of Chern classes for vector bundles on smooth schemes. 
\end{proof}

\subsection{Motives of $\GG_m$-torsors}

In this section, we prove a result for the motive of a $\GG_m$-torsor, which will be used to compute the motive of the stack of principal $\SL_n$-bundles. We recall that a $\GG_m$-torsor over a stack $\fX$ is a morphism $\fX \ra B\GG_m$, or equivalently a cartesian square
\[\xymatrix{ \cY \ar[r] \ar[d] &  \spec k \ar[d] \\  \fX \ar[r] & B\GG_m.  }\] 

\begin{prop}\label{motives of mult bundles}
Let $\fX$ and $\cY$ be smooth exhaustive stacks and suppose that $\cY \ra\fX$ is a $\GG_m$-torsor. Let $\cL := [\cY \times \AA^1 /\GG_m]$ be the associated line bundle over $\fX$. Then there is a distinguished triangle
\begin{equation}\label{rabbit}
 M(\cY) \ra M(\fX) \ra M(\fX)\{ 1 \} \stackrel{+}{\rightarrow}
\end{equation}
where the morphism $M(\fX) \ra M(\fX)\{ 1 \}$ is the following composition
\[ \varphi_L: \xymatrix@1{ M(\fX)\ar[rr]^{M(\Delta)  \quad \quad} & &  M(\fX) \otimes M(\fX) \ar[rr]^{\quad \id \otimes c_1(\cL)} & &  M(\fX)\{ 1 \}. }\]
\end{prop}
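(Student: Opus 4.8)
The plan is to reduce the statement about stacks to the analogous statement for the approximating schemes $U_n$, and then invoke the corresponding known distinguished triangle for $\GG_m$-torsors of smooth schemes. First I would fix a filtration $\fX = \bigcup_n \fX_n$ and an exhaustive sequence of vector bundles $(V_\bullet, W_\bullet)$ on $\fX$, so that $M(\fX) = \colim_n R_{\tr}(U_n)$ with $U_n = V_n - W_n$. Since $\cY \ra \fX$ is a flat, finite type, representable morphism (it is a $\GG_m$-torsor), Lemma~\ref{funct rep quot stacks} shows that $(g^*V_\bullet, g^{-1}(W_\bullet))$ is an exhaustive sequence on $\cY$ with respect to the filtration $\cY = \bigcup_n g^{-1}(\fX_n)$, and the approximating schemes for $\cY$ are $g^{-1}(U_n) = U_n \times_\fX \cY$, which is precisely the restriction of the $\GG_m$-torsor $\cY \ra \fX$ to $U_n$. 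Likewise the restriction of the line bundle $\cL$ to $U_n$ is the line bundle $\cL_n$ associated to this torsor over the smooth scheme $U_n$.

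Next I would recall the standard Gysin-type triangle for the $\GG_m$-torsor $\cY_n := g^{-1}(U_n) \ra U_n$ of smooth schemes: identifying $\cY_n$ with the complement of the zero section in the total space of $\cL_n$ (equivalently, with the complement of the zero section in the dual line bundle, depending on conventions), the localisation/Gysin triangle together with $\AA^1$-homotopy invariance for the total space of the line bundle gives a distinguished triangle
\[
M(\cY_n) \ra M(U_n) \xrightarrow{\varphi_{\cL_n}} M(U_n)\{1\} \stackrel{+}{\rightarrow},
\]
where $\varphi_{\cL_n}$ is the composition of the diagonal $M(U_n) \ra M(U_n)\otimes M(U_n)$ with $\id \otimes c_1(\cL_n)$. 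This is a well-documented computation in $\DM$; I would cite it (e.g. via the Gysin triangle for the zero section of $\cL_n$ and projective bundle / homotopy invariance, or directly from the literature on motives of $\GG_m$-torsors). The key point is that this triangle is functorial in $n$: the transition maps $U_n \hookrightarrow U_{n+1}$ (suitably interpreted through the vector bundle maps as in Definition~\ref{def mot stack}) are compatible with the zero-section closed immersions, so the naturality of the Gysin triangle gives an $\NN$-indexed system of distinguished triangles.

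Then I would pass to the homotopy colimit. By Lemma~\ref{lemma hocolim triangles}, the homotopy colimit of an $\NN$-indexed system of distinguished triangles is distinguished, so applying $\hocolim_n$ to the system above yields a distinguished triangle whose first two terms are $M(\cY)$ and $M(\fX)$ by construction (using $M(\fX) \simeq \hocolim_n M(U_n)$ from Lemma~\ref{stack colim hocolim}), and whose third term is $\hocolim_n M(U_n)\{1\} \simeq (\hocolim_n M(U_n))\{1\} \simeq M(\fX)\{1\}$ by Lemma~\ref{pulling out constants hocolims}. It remains to identify the middle map $\hocolim_n \varphi_{\cL_n}$ with $\varphi_{\cL}$: this follows because $M(\Delta)$ on $\fX$ is defined (Remark~\ref{rmk motive diag}) as the colimit of the diagonals $R_{\tr}(U_n) \ra R_{\tr}(U_n \times U_n)$, and $c_1(\cL)$ is defined (by the construction of Chern classes of vector bundles on stacks) as the colimit of the $c_1(\cL_n)$, so $\varphi_L$ is by construction the colimit of the $\varphi_{\cL_n}$, up to the weak functoriality ambiguity of Lemma~\ref{lemma funct hocolim}. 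The main obstacle I anticipate is this last identification: one must check that the weak (non-unique) functoriality of homotopy colimits is compatible enough that the map produced by Lemma~\ref{lemma hocolim triangles} genuinely agrees with $\varphi_L$ rather than merely fitting into some abstract triangle; since $R\{1\}$-valued Hom groups out of the relevant motives have no $R^1\lim$ obstruction (the relevant transition maps are eventually isomorphisms, as in the proof of Proposition~\ref{prop mor compact}), the map is in fact rigid, and the compatibility of the level-$n$ diagonals and Chern classes pins it down. I would spell this out carefully but it is essentially bookkeeping.
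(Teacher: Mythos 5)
Your proposal is correct and follows essentially the same route as the paper: verify the triangle for the $\GG_m$-torsors $g^{-1}(U_n)\to U_n$ via the Gysin triangle for the zero section of the associated line bundle together with $\AA^1$-homotopy invariance and the identification of the Gysin map with $c_1$, check compatibility with the transition maps, and pass to the homotopy colimit, identifying the resulting map with $\varphi_{\cL}$ through the colimit definitions of $M(\Delta)$ and $c_1(\cL)$. Your extra remark on the absence of an $R^1\lim$ ambiguity is a welcome elaboration of the paper's brief ``one can check'' step, not a deviation.
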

\begin{proof}
Let us first verify the statement for schemes. Let $X \in \Sm_k$ and $Y \ra X$ be a $\GG_m$-torsor. If we let $L = Y \times^{\GG_m} \AA^1$ denote the associated line bundle; then $Y = L - X$. By $\AA^{1}$-homotopy invariance, we have $M(L)\simeq M(X)$. We can consider the Gysin triangle associated to the closed immersion $X \hookrightarrow L$ as the zero section:
\[ M(Y) \ra M(L) \simeq M(X) \ra M(X)\{ 1\} \stackrel{+}{\rightarrow}. \]
Then the map $M(X) \ra M(X)\{ 1\}$ is given by the first Chern class of $L$ by \cite[Example 1.25]{Deglise_Gysin_1}.

Now suppose we are working with smooth exhaustive stacks. We note that the morphism $M(\Delta)$ is defined in Remark \ref{rmk motive diag}. Let $(V_\bullet, W_\bullet)$ be an exhaustive sequence on $\fX$ with respect to a filtration $\fX = \cup_{n} \fX_n$ and let $U_n := V_n - W_n$ as usual. Then we let $L_n$ denote the pullback of the line bundle $\cL \ra \fX$ to $U_n$ and let $Y_n \subset L_n$ denote the complement to the zero section. Since $U_n$ are schemes, we have for all $n$, distinguished triangles
\begin{equation}\label{frog}
M(Y_n) \ra M(U_n) \ra M(U_n)\{ 1 \} \stackrel{+}{\rightarrow}.
\end{equation}
Since the fibre product $U_n \cong L_n \times_{L_{n+1}} U_{n+1}$ is transverse, the Gysin morphisms for $(U_n,L_n)$ and $(U_{n+1},L_{n+1})$ are compatible, and so by taking the homotopy colimit of the distinguished triangles \eqref{frog}, we obtain a distinguished triangle of the form \eqref{rabbit} and one can check that the morphism $M(\fX) \ra M(\fX)\{ 1 \}$ is $\varphi_L$ by definition of $M(\Delta)$ and the first Chern class. 
\end{proof}

\begin{ex}\label{ex univ Gm torsor}
Let us consider the universal $\GG_m$-torsor $\spec k \ra B \GG_m$; then the associated line bundle is $\cL = [\AA^1/\GG_m] \ra B\GG_m$. We recall that in Example \ref{ex mot BGm}, we used the exhaustive sequence $(V_n = [\AA^n/\GG_m], W_n=[\{0 \}/\GG_m])$ for the trivial filtration on $B\GG_m$ to show that
\[ M(B\GG_m) = \hocolim_n M(\PP^{n-1}) \simeq \bigoplus_{n \geq 0} R\{ n \}.\]
Using the notation of the proof above, the line bundle $L_n$ on $U_n=\PP^{n-1}$ is the tautological line bundle $\cO_{\PP^n}(-1)$, whose first Chern class $c_1(L_n) : M(\PP^{n-1})\simeq \oplus_{j=0}^{n-1} R\{ j \} \ra R\{1 \}$ is just the projection onto this direct factor. It follows from this that the morphism 
\[ \varphi_{\cL} : M(B \GG_m) \simeq \bigoplus_{n \geq 0} R\{ n \} \ra M(B \GG_m)\{1\} \simeq \bigoplus_{n \geq 1} R\{ n \}\]
is the natural projection.
\end{ex}

\subsection{Compactly supported motives and Poincar\'e duality}\label{sec mot comp}

One can define the compactly supported motive of an exhaustive stack as follows.

\begin{defn}\label{def compactly supported motive}
Let $\fX$ be an exhaustive algebraic stack. For an exhaustive sequence $(V_\bullet, W_\bullet)$ of vector bundles on $\fX$ with respect to a filtration $\fX = \cup_m \fX_m$, we define the compactly supported motive of $\fX$ by
\[ M^c(\fX):= \holim_m M(U_m)\{ - \rk(V_m)\} \]
with transition maps given by the composition 
\[ M^c(U_{m+1})\{ -\rk(V_{m+1})\} \ra M^c(U_{m+1}\times_{\fX_{m+1}}\fX_m )\{ -\rk(V_{m+1})\} \ra M^c(U_m) \{ - \rk(V_m)\}
 \]
where the first map is the flat pullback for the open immersion $U_{m+1}\times_{\fX_{m+1}}\fX_m  \ra U_m$ and the second map is defined using contravariant functoriality of $M^c$ for the regular immersion $f_m : U_m \hookrightarrow U_{m+1}\times_{\fX_{m+1}}\fX_m $ of relative dimension $\rk(V_{m+1}) - \rk(V_m)$ (\textit{cf.}\ \cite[$\S$4.3]{Deglise_Gysin_1}). 
\end{defn}

\begin{rmk}
One can show that this definition is independent of these choices similarly to Lemma \ref{lem stack def indept} (\textit{cf.}\ \cite[Theorem 8.4]{totaro}). For the compactly supported motive, we do not have to assume that $\fX$ is smooth (as the argument uses Proposition \ref{prop totaro dim filtr} instead of Proposition \ref{prop hocolim vanishes}).
\end{rmk}

We can show one part of the statement of Poincar\'{e} duality for exhaustive smooth stacks follows from Poincar\'{e} duality for schemes.

\begin{prop}[Poincar\'{e} duality for exhaustive stacks]\label{prop PD for stacks}
Let $\fX$ be a smooth exhaustive stack of dimension $d$; then in $\DM(k, R)$, there is an isomorphism
\[ M(\fX)^\vee \simeq M^c(\fX)\{-d\}.\]
\end{prop}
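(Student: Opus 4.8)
The plan is to reduce Poincaré duality for the stack to Poincaré duality for the schemes $U_m$ appearing in a fixed exhaustive sequence $(V_\bullet,W_\bullet)$, and to match up the homotopy colimit presentation of $M(\fX)$ with the homotopy limit presentation of $M^c(\fX)$ via the duality functor $(-)^\vee=\underline{\Hom}(-,R(0))$. The starting point is that $M(\fX)=\hocolim_m M(U_m)$ by Lemma~\ref{stack colim hocolim}, while $M^c(\fX)=\holim_m M(U_m)\{-\rk(V_m)\}$ by Definition~\ref{def compactly supported motive}. Since each $U_m$ is smooth of dimension $d_m:=\dim U_m = \rk(V_m) + \dim\fX_m = \rk(V_m)+d$, Poincaré duality for schemes gives $M^c(U_m)\simeq M(U_m)^\vee\{d_m\}$, hence
\[
M(U_m)^\vee \simeq M^c(U_m)\{-d_m\} = M^c(U_m)\{-\rk(V_m)\}\{-d\}.
\]
So the individual terms of the two systems are dual up to the uniform twist $\{-d\}$; what remains is to check that the transition maps correspond under duality and that the duality functor interchanges the $\hocolim$ and $\holim$ in the appropriate way.

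First I would set up the compatibility of transition maps. The transition map $M(U_m)\to M(U_{m+1})$ in the $\hocolim$ system is (the motive of) the composition $U_m \xrightarrow{f_m} U_{m+1}\times_{\fX_{m+1}}\fX_m \hookrightarrow U_{m+1}$, i.e.\ a regular closed immersion followed by an open immersion. Dualising and using Poincaré duality for schemes, this should become exactly the composition defining the transition map in the $M^c$-system of Definition~\ref{def compactly supported motive}: the dual of an open immersion $j$ becomes the flat (in fact \'etale) pullback $j^*$ on $M^c$, and the dual of the Gysin map for a regular immersion $f_m$ of relative dimension $\rk(V_{m+1})-\rk(V_m)$ becomes the contravariant functoriality map $f_m^*: M^c(U_{m+1}\times_{\fX_{m+1}}\fX_m)\{\ast\}\to M^c(U_m)$ used in loc.\ cit.; this is precisely the statement that Poincaré duality for schemes is compatible with the Gysin/pullback formalism, which can be cited from \cite{Deglise_Gysin_1} (or \cite{Deglise_Gysin_II}). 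Thus $(-)^\vee$ sends the inductive system $\{M(U_m)\}$ (term-wise) to the projective system $\{M(U_m)^\vee\}=\{M^c(U_m)\{-\rk(V_m)\}\}\{-d\}$ defining $M^c(\fX)\{-d\}$.

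Next I would pass from the term-wise duality to a duality of (co)limits. The key point is that for an $\NN$-indexed inductive system $F_*$ in a compactly generated tensor-triangulated category where all $F_n$ are \emph{compact} (here $F_n=M(U_n)$ is compact, since $U_n$ is a finite-type scheme), one has a natural isomorphism
\[
(\hocolim_n F_n)^\vee \simeq \holim_n (F_n^\vee).
\]
This follows because $\underline{\Hom}(-,R(0))$ is a triangulated functor that turns direct sums into direct products (again using compactness of the $F_n$ and of $R(0)$, via the adjunction between $\otimes$ and $\underline{\Hom}$ together with compact generation), and both $\hocolim$ and $\holim$ are built out of cones of $\id-\sigma$ on $\bigoplus$ resp.\ $\prod$ of the terms by Definition~\ref{defn_hocolim}; one has to track a shift and a sign coming from the fact that $\holim$ is defined with a $[-1]$, but this works out. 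Applying this with $F_n = M(U_n)$ and then pulling the constant twist $\{-d\}$ out of the homotopy limit (cf.\ the argument of Lemma~\ref{pulling out constants hocolims} for $\holim$) yields
\[
M(\fX)^\vee = \Big(\hocolim_n M(U_n)\Big)^\vee \simeq \holim_n M(U_n)^\vee \simeq \holim_n M^c(U_n)\{-\rk(V_n)\}\,\{-d\} = M^c(\fX)\{-d\},
\]
which is the claimed isomorphism after rewriting as $M(\fX)^\vee\{d\}\simeq M^c(\fX)$, equivalently $M(\fX)^\vee\simeq M^c(\fX)\{-d\}$.

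The main obstacle I anticipate is the compatibility of the transition maps under duality — i.e.\ verifying carefully that Poincaré duality for the individual schemes $U_m$ intertwines the Gysin-map-followed-by-open-restriction description of the $M$-transition maps with the flat-pullback-followed-by-contravariant-Gysin description of the $M^c$-transition maps in Definition~\ref{def compactly supported motive}. This is where one genuinely needs the functoriality statements for Gysin maps and refined Gysin/pullback maps from \cite{Deglise_Gysin_1, Deglise_Gysin_II}, and where signs, shifts and the ordering of operations must be checked; a subtlety is that $f_m$ is only a regular immersion rather than a smooth morphism, so one should be slightly careful that the relevant contravariant $M^c$-functoriality (for regular immersions, as used in Definition~\ref{def compactly supported motive}) is indeed dual to the Gysin map for $f_m$. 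A secondary, more routine point is making the $(\hocolim)^\vee \simeq \holim$ isomorphism precise at the level of the explicit cones, including the $[-1]$ shift, so that the resulting isomorphism is canonical and not merely an abstract existence statement.
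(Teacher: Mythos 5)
Your proposal is correct and takes essentially the same route as the paper, whose entire proof is the one-line observation that the claim follows from the definitions of $M(\fX)$ and $M^c(\fX)$, Poincar\'e duality for the smooth schemes $U_m$, and the fact that dualising turns the homotopy colimit defining $M(\fX)$ into the homotopy limit defining $M^c(\fX)$ — exactly the skeleton you flesh out (term-wise duality with the uniform twist $\{-d\}$, compatibility of transition maps, and the $\bigoplus$-to-$\prod$ step, where in fact no compactness is needed since $\underline{\Hom}(-,R(0))$ sends sums to products by adjunction alone). One small caveat of phrasing: at the closed-immersion step the map to dualise is the plain covariant $M(f_m)$, whose dual under Poincar\'e duality is the contravariant $M^c$-functoriality of Definition~\ref{def compactly supported motive} (the Gysin map $M(U_{m+1}\times_{\fX_{m+1}}\fX_m)\to M(U_m)\{c\}$ itself dualises to proper pushforward, not to $f_m^*$), but the compatibilities you invoke from the Gysin formalism are precisely the needed input.
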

\begin{proof}
This follows directly from the definitions of $M(\fX)$ and $M^c(\fX)$ and Poincar\'e duality for schemes, as the dual of a homotopy colimit is a homotopy limit.
\end{proof}

Note that, because the dual of an infinite product is not in general an infinite sum, it is not clear in general that the duality works the other way.

\section{The motive of the stack of vector bundles}
\label{sec motive bun}
Throughout this section, we let $C$ be a smooth projective geometrically connected curve of genus $g$ over a field $k$. We fix $n \in \NN$ and $d \in \ZZ$ and let $\Bun_{n,d}$ denote the stack of vector bundles over $C$ of rank $n$ and degree $d$; this is a smooth stack of dimension $n^2(g-1)$.  In this section, we give a formula for the motive of $\Bun_{n,d}$ in $\DM(k,R)$, by adapting a method of Bifet, Ghione and Letizia \cite{bgl} to study the cohomology of $\Bun_{n,d}$ using matrix divisors. This argument was also used by Behrend and Dhillon \cite{BD} to give a formula for the virtual motivic class of $\Bun_{n,d}$ in (a completion of) the Grothendieck ring of varieties (see $\S$\ref{sec compare}). 

We can define the motive of $\Bun_{n,d}$, as it is exhaustive: to explain this, we filter $\Bun_{n,d}$ using the maximal slope of all vector subbundles. 

\begin{defn}
The slope of a vector bundle $E$ over $C$ is $\mu(E):= \frac{\deg(E)}{\rk(E)}$. We define
\[ \mu_{\max}(E) = \max \{ \mu(E') : 0 \neq E' \subset E \}. \]
\end{defn}

The maximal slope $\mu_{\max}(E)$ is equal to the slope of the first vector bundle appearing in the Harder--Narasimhan filtration of $E$. For $\mu \in \QQ$, we let $\Bun_{n,d}^{\leq \mu}$ denote the substack of $\Bun_{n,d}$ consisting of vector bundles $E$ with $\mu_{\max}(E) \leq \mu$; this substack is open by upper semi-continuity of the Harder--Narasimhan type \cite{shatz}. Any sequence $(\mu_l)_{l\in \NN}$ of increasing rational numbers tending to infinity defines a filtration of $(\Bun^{\leq \mu_l}_{n,d})_{l \in \NN}$ of $\Bun_{n,d}$. The stacks $\Bun_{n,d}^{\leq \mu_l}$ are all quasi-compact, as they are quotient stacks. Indeed, all vector bundles over $C$ of rank $n$ and degree $d$ with maximal slope less than or equal to $\mu_l$ form a bounded family (\textit{cf.}\ \cite[Theorem 3.3.7]{HL}), and so can be parametrised by an open subscheme $Q^{\leq \mu_l}$ of a Quot scheme, and then $\Bun_{n,d}^{\leq \mu_l} \simeq [Q^{\leq \mu_l}/\GL_N]$ (for further details, see for example \cite[Th\'{e}or\`{e}me 4.6.2.1]{laumon_MB}).

We will construct an exhaustive sequence of vector bundles on $\Bun_{n,d}$ by using matrix divisors.

\subsection{Matrix divisors}

A matrix divisor (after Weil) of rank $n$ and degree $d$ on $C$ is a locally free 
subsheaf of $\cK^{\oplus n}$ of rank $n$ and degree $d$, where $\cK$ denotes the 
constant $\cO_C$-module equal to the function field of $C$. 

Let $D$ be an effective divisor on $C$ and let $\Div_{n,d}(D)$ denote the scheme 
parametrising locally free subsheaves of $\cO_C(D)^{\oplus n}$ of rank $n$ and degree $d$. Equivalently, $\Div_{n,d}(D)$ is the Quot scheme
\[ \Div_{n,d}(D) := \quot_C^{n \deg D -d}(\cO_C(D)^{\oplus n}), \]
parametrising degree $n \deg D -d$ torsion quotient sheaves of $\cO_C(D)^{\oplus n}$. Thus $\Div_{n,d}(D)$ is a projective variety, and it is also smooth, as it parametrises torsion quotient sheaves on a curve (\textit{cf.}\ \cite[Proposition 2.2.8]{HL}); moreover, it has dimension $n^2 \deg D -nd$ by the Riemann-Roch formula. For effective divisors $D' \geq D \geq 0$ on $C$, there is a natural closed immersion
\[i_{D,D'}:\Div_{n,d}(D) \ra \Div_{n,d}(D') \]
compatible with the forgetful morphisms to $\Bun_{n,d}$, and so we can construct an ind-variety $\Div_{n,d}:= (\Div_{n,d}(D))_{D}$ of matrix divisors of 
rank $n$ and degree $d$ on $C$.

We will use the forgetful map $\Div_{n,d} \ra \Bun_{n,d}$ to study the motive of $\Bun_{n,d}$ in terms of that of $\Div_{n,d}$ and, in particular, to define an exhaustive sequence of vector bundles on $\Bun_{n,d}$. The definition of $\mu_{l}$ in the following theorem, especially the term $-\frac{1}{n^{2}}$, may look ad hoc, but this small rational correction term is required for the proof of Theorem \ref{thm1} below.

\begin{thm}\label{prop exh seq}
The stack $\Bun_{n,d}$ is exhaustive. More concretely, fix an effective divisor $D$ on $C$ and let $\mu_l := l \deg(D) -2g +1-\frac{1}{n^{2}}$ for $l \in \NN$; then there is an exhaustive sequence of vector bundles $(V_l \ra \Bun_{n,d}^{\leq \mu_l})_{l \in \NN}$ and
\[ M(\Bun_{n,d})  \simeq \colim_{l} R_{\tr}(\Div_{n,d}^{\leq \mu_l}(lD))\]
where $\Div_{n,d}^{\leq \mu_l}(lD):=\{E \hookrightarrow \cO_C(D)^{\oplus n} : \mu_{\max}(E) \leq \mu_l \} $ is the open subvariety of $\Div_{n,d}(lD)$ consisting of rank $n$ degree $d$ matrix divisors $E \hookrightarrow \cO_C(D)^{\oplus n}$ with $\mu_{\max}(E) \leq \mu_l$.
\end{thm}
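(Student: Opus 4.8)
The plan is to construct the exhaustive sequence of vector bundles on $\Bun_{n,d}$ directly from the spaces of matrix divisors, following the Bifet--Ghione--Letizia approach, and then identify the open complements $U_l$ with the schemes $\Div_{n,d}^{\leq\mu_l}(lD)$. First I would recall the basic geometry: for an effective divisor $E$ on $C$, giving an injection $\cF\hookrightarrow\cO_C(E)^{\oplus n}$ of a rank $n$ degree $d$ bundle is the same as giving a section of a vector bundle over $\Bun_{n,d}$. Concretely, over the open substack $\Bun_{n,d}^{\leq\mu_l}$, consider the bundle $V_l$ whose fibre at $\cF$ is $H^0(C,\cF^\vee(lD))^{\oplus n}$ (or better, $\Hom(\cF,\cO_C(lD)^{\oplus n})$); for $lD$ large enough relative to the slope bound $\mu_l$, this is a genuine vector bundle on $\Bun_{n,d}^{\leq\mu_l}$ because $H^1$ vanishes uniformly on the bounded family (this is where boundedness of $\Bun_{n,d}^{\leq\mu_l}$, already noted in the text, is used). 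Inside $V_l$ let $W_l$ be the closed substack of homomorphisms that are \emph{not} injective as sheaf maps (equivalently, drop rank somewhere, or have torsion cokernel of too-small length); the open complement $U_l := V_l - W_l$ is precisely $\Div_{n,d}^{\leq\mu_l}(lD)$, which is a quasi-projective — in fact, an open subscheme of the smooth projective Quot scheme $\Div_{n,d}(lD)$ — hence a separated finite-type $k$-scheme, giving Definition~\ref{def exh seq}\ref{guanabana}. The transition maps $f_l : V_l \to V_{l+1}\times_{\Bun^{\leq\mu_{l+1}}}\Bun^{\leq\mu_l}$ are induced by the inclusion $\cO_C(lD)\hookrightarrow\cO_C((l+1)D)$ and are visibly injective vector bundle maps preserving the non-injectivity loci, giving \ref{papaya}.

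The heart of the argument, and the step I expect to be the main obstacle, is the codimension estimate \ref{lulo}: one must show $\codim_{V_l}(W_l)\to\infty$. This is exactly the point flagged in the introduction as requiring a correction to \cite{BD}, and the reason the slope bound is taken to be $\mu_l = l\deg D - 2g+1 - \tfrac1{n^2}$ rather than something cleaner. The strategy is to stratify $W_l$ by the Harder--Narasimhan type (or by the rank and degree of the image subsheaf) of the non-injective homomorphism, bound the dimension of each stratum fibrewise over $\Bun_{n,d}^{\leq\mu_l}$ via a Riemann--Roch/$H^1$-vanishing computation, and compare with $\dim V_l = \rk(V_l) + \dim\Bun_{n,d}^{\leq\mu_l}$. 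The subtraction of $\tfrac1{n^2}$ in $\mu_l$ is what forces a strict inequality in the relevant slope comparison for every proper subbundle, making the codimension bound grow linearly in $l$; keeping careful track of this is the delicate part. I would defer the precise numerics to a separate lemma (the text references Lemmas \ref{codim bgl correct} and \ref{coconut} for exactly this).

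Granting \ref{lulo}--\ref{papaya}, $\Bun_{n,d}$ is exhaustive with respect to the filtration $(\Bun_{n,d}^{\leq\mu_l})_l$ — noting this is a genuine filtration by quasi-compact open substacks since the $\mu_l$ increase to $+\infty$ and every bundle has finite $\mu_{\max}$, and since each $\Bun_{n,d}^{\leq\mu_l}$ is a quotient stack as recalled above. Definition~\ref{def mot stack} then gives $M(\Bun_{n,d}) = \colim_l R_{\tr}(U_l) = \colim_l R_{\tr}(\Div_{n,d}^{\leq\mu_l}(lD))$, which is the asserted formula; independence of the choices is Lemma~\ref{lem stack def indept}. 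Finally, to pass from this to the cleaner statement of Theorem~\ref{intro_main_thm} (homotopy colimit of the \emph{full} smooth projective Quot schemes $\Div_{n,d}(lD)$, with no slope truncation) one applies Proposition~\ref{prop hocolim vanishes} to the open immersions $\Div_{n,d}^{\leq\mu_l}(lD)\hookrightarrow\Div_{n,d}(lD)$, using that the complement is the non-truncated part of the Quot scheme whose codimension again tends to infinity with $l$ — this is the content of Theorem~\ref{thm1} and is exactly why the $-\tfrac1{n^2}$ correction must be compatible between the two codimension estimates.
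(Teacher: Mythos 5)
Your proposal follows essentially the same route as the paper: the bundle $V_l$ with fibres $\Hom(E,\cO_C(lD)^{\oplus n})$ over $\Bun_{n,d}^{\leq\mu_l}$ (using $H^1$-vanishing forced by the slope bound), the non-injective locus $W_l$ with open complement $U_l\cong\Div^{\leq\mu_l}_{n,d}(lD)$, transition maps induced by $\cO_C(lD)\hookrightarrow\cO_C((l+1)D)$, and a codimension estimate for $W_l$ obtained by stratifying by Harder--Narasimhan type, which the paper likewise delegates to Lemmas \ref{codim bgl correct} and \ref{codim brill noether}. One small correction: the term $-\tfrac{1}{n^{2}}$ in $\mu_l$ plays no role in the estimate for $\codim_{V_l}(W_l)$ (there only $\mu_l< l\deg(D)-2g+2$ matters); as you note at the end, it is needed for the codimension bound on the complement $\Div_{n,d}(lD)-\Div^{\leq\mu_l}_{n,d}(lD)$ in the proof of Theorem \ref{thm1}.
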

\begin{proof}
For all vector bundles $E$ with $\mu_{\max}(E) \leq \mu_l$, as $\mu_{\max}(E)< \deg(lD)-2g +2$, we have
\[ H^1(E^\vee \otimes \cO_C(lD)^{\oplus n}) = 0.\]
Indeed if this vector space was non-zero then by Serre duality, there would exist a non-zero homomorphism $\cO_C(lD)^{\oplus n} \otimes \omega_C^{-1} \ra E$, but one can check that this is not possible by using the Harder-Narasimhan filtration of $E$ and standard results about homomorphisms between semistable bundles of prescribed slopes (see \cite[Proposition 1.2.7]{HL} and also \cite[\S 8.1]{bgl}). Hence, there is a vector bundle 
$V_l:=R^0p_{1*}(\cE^\vee_{\mathrm{univ}}\otimes p_2^*\cO_C(lD)^{\oplus n})$ over $\Bun_{n,d}^{\leq \mu_l}$, whose fibre over $E$ is the space $\Hom(E,\cO_C(lD)^{\oplus n})$. Let $U_l \subset V_l$ denote the open subset of injective homomorphisms; then
\[U_l \cong \Div^{\leq \mu_l}_{n,d}(lD).\]
We denote the closed complement of non-injective homomorphisms by $W_l \subset V_l$ and let $p_l : V_l \ra \Bun_{n,d}^{\leq \mu_l}$ denote the natural projection. 

Let us find a lower bound for $\codim_{V_{l}}(W_{l})$. Let us write $W_{l,\tau}:=p_{l}^{-1}(\Bun_{n,d}^{\tau})$ for an Harder-Narasimhan (HN) type $\tau$ with $\mu_{\max}(\tau) \leq \mu_l$. We have
\[
\codim_{V_{l}}(W_{l})\geq \min_{\tau: \mu_{\max}(\tau)\leq \mu_{l}}\codim_{V_{l}}(W_{l,\tau}).
  \]
By Lemma \ref{codim bgl correct} below, we have for each HN type $\tau$ with $\mu_{\max}(\tau) \leq \mu_l$
\[
\codim_{V_{l}}(W_{l,\tau})\geq \codim_{\Bun_{n,d}}(\Bun^{\tau}_{n,d})+ l\deg(D)-n\max \{h^{1}(E^{\vee})|\ E\in \Bun^{\tau}_{n,d}\}.
  \]
By Lemma \ref{codim brill noether} below, we deduce that, for an integer $K\geq 0$ depending only on $n,d$ and $g$, we have
  \[
\codim_{V_{l}}(W_{l})\geq l\deg(D)-K
    \]
and thus $\codim_{V_{l}}(W_{l})$ tends to infinity as $l$ tends to infinity.

 Let \[i_l: \Bun_{n,d}^{\leq \mu_l} \hookrightarrow \Bun_{n,d}^{\leq \mu_{l+1}}\] 
denote the open immersion; then the injective sheaf homomorphism $\cO_C(lD) \hookrightarrow \cO_C((l+1)D)$ determines an injective homomorphism $f_l : V_l \ra i_l^*V_{l+1}$. Moreover $f_l^{-1}(W_{l+1}) \subset W_l$, as if $E \ra \cO_C(lD)^{\oplus n} \hookrightarrow \cO_C((l+1)D)^{\oplus n}$ is not injective, then the homomorphism $E \ra \cO_C(lD)^{\oplus n}$ is not injective. In particular, $(V_l,W_l)_{l \in \NN}$ is an exhaustive sequence of vector bundles over $\Bun_{n,d} = \cup_{l \in \NN} \Bun_{n,d}^{\leq \mu_l}$ and so we have
\[ M(\Bun_{n,d}) = \colim_{l} R_{\tr}(U_l) \simeq \colim_{l} R_{\tr}(\Div_{n,d}^{\leq \mu_l}(lD)) \]
as required.
\end{proof}

It remains to prove Lemmas \ref{codim bgl correct} and \ref{codim brill noether}. Lemma \ref{codim bgl correct} is a slight modification of \cite[Lemma 8.2]{bgl} with weaker assumptions. In fact, in the proof of \cite[Proposition 8.1]{bgl}, the stronger assumptions of \cite[Lemma 8.2]{bgl} are not satisfied when this lemma is applied. Furthermore, this lemma is incorrectly quoted and applied in \cite{BD}. However, one can instead apply the following lemma and then use the argument given in Proposition \ref{prop exh seq} above. 

\begin{lemma}\label{codim bgl correct}
Let $E$ and $F$ be rank $n$ locally free $\mathcal{O}_C$-modules and $D$ be an effective divisor on $C$ such that $\Ext^1(E,F(D)) = 0$. Then the codimension $c_{D,E,F}$ of the closed subset of non-injective sheaf homomorphisms in $\Hom(E,F(D))$ satisfies $c_{D,E,F} \geq \deg(D) - \dim \Ext^1(E,F)$.
\end{lemma}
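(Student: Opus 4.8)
The plan is to stratify the space of homomorphisms $\Hom(E,F(D))$ by the rank drop of the induced map of sheaves, and to estimate the codimension of each stratum directly. First I would recall that a sheaf homomorphism $\varphi : E \to F(D)$ between locally free sheaves of the same rank $n$ on the curve $C$ is injective (as a map of sheaves) if and only if it is injective at the generic point, i.e.\ if and only if the induced map on generic fibres is an isomorphism; equivalently $\varphi$ fails to be injective precisely when its image has rank strictly less than $n$. So the locus of non-injective homomorphisms is the locus where the generic rank of $\varphi$ drops. I would therefore filter this locus by the generic rank $r = \rk(\mathrm{im}\,\varphi) \le n-1$, and it suffices to bound below the codimension of the stratum $Z_r$ of homomorphisms of generic rank exactly $r$, uniformly in $r \le n-1$.

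Next I would estimate $\codim Z_r$ by a dimension count over the base parametrising the possible images. A homomorphism $\varphi$ of generic rank $r$ factors as $E \twoheadrightarrow E/\ker\varphi \hookrightarrow F(D)$, where $\ker\varphi$ is a subsheaf of $E$ of rank $n-r$ and $E/\ker\varphi$ (modulo torsion issues, which one handles by passing to the saturation) is a subsheaf of $F(D)$ of rank $r$. The natural approach is to fix a rank-$r$ subbundle $G \subseteq F(D)$ and a rank-$r$ locally free quotient $E \twoheadrightarrow \bar E$, and count homomorphisms $\bar E \to G$; the family of such $(G, \bar E)$ has bounded dimension (independent of $D$, since twisting by $D$ only shifts degrees), while the fibre dimension $\hom(\bar E, G)$ grows only linearly in $\deg D$ with slope $r \le n-1$, as opposed to $\hom(E, F(D))$ which grows with slope $n$ (using $\Ext^1(E,F(D)) = 0$ so that $\hom(E,F(D)) = \chi(E^\vee\otimes F(D)) = n\deg D + \chi(E^\vee\otimes F) = n\deg D + (\text{const})$, where the constant is $\hom(E,F) - \ext^1(E,F)$). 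Subtracting, the generic-rank-$r$ locus has codimension at least $(n-r)\deg D$ plus a term controlled by the Euler characteristics; isolating the worst case $r = n-1$ and the contribution from $\Ext^1(E,F)$ gives the claimed bound $c_{D,E,F} \ge \deg D - \ext^1(E,F)$. Concretely, for $r=n-1$ one compares $\hom(E,F(D))$ with the dimension of the family of non-injective maps through subbundles $G$ of corank one in $F(D)$, and the difference is exactly $\deg D$ minus an error term bounded by $\ext^1(E,F) = \dim\Ext^1(E,F)$.

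The main obstacle, and the place where care is needed, is making the dimension count of the family of possible images $G \subseteq F(D)$ genuinely uniform and getting the error term to be controlled precisely by $\dim\Ext^1(E,F)$ rather than some larger constant — this is exactly the point where \cite[Lemma 8.2]{bgl} was applied under hypotheses that do not hold, and where \cite{BD} quoted it incorrectly. I would handle this by working on the Quot scheme: the subsheaves of rank $n-1$ (or of a fixed rank $r$) of $E$, together with the maps from the resulting quotients into $F(D)$, fit into a morphism whose source and target dimensions can be compared via Riemann--Roch, and the Euler-characteristic bookkeeping $\chi(E^\vee \otimes F(D)) - \chi(Q^\vee \otimes F(D))$ for the rank-$r$ quotient $Q$ is where the $\deg D$ and the $\Ext^1(E,F)$ terms emerge with the right signs. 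Since the statement only asserts a lower bound (not an exact codimension), I can afford to be generous in the other direction, so the real work is confirming that the leading term is $\deg D$ with coefficient $1$ (from the corank-one stratum) and that no hidden dependence on $D$ contaminates the constant.
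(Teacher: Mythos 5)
Your overall strategy (stratify the non-injective locus by the generic rank $r$ of $\varphi$ and bound each stratum by a parameter count) is a legitimate alternative to the paper's argument, but the numerical skeleton you build it on is wrong at its central step. The family of possible images is \emph{not} of bounded dimension independent of $D$: a rank-$r$ subsheaf $G\subseteq F(D)$ has degree ranging over an interval of length roughly $r\deg D$, and for each fixed degree the parametrising Quot scheme of $F(D)$ has dimension growing linearly in $\deg D$; likewise the degree of the image $\bar E=\mathrm{im}\,\varphi$ (a locally free rank-$r$ quotient of $E$) is only bounded above by about $r(\mu_{\max}(F)+\deg D)$. So the comparison ``bounded base, fibres of slope $\le n-1$ versus ambient slope $n$'' does not hold as written. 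The count can be repaired: parametrise the stratum by the Quot scheme of rank-$r$ locally free quotients $q:E\onto\bar E$ of degree $e$, with fibre $\Hom(\bar E,F(D))$; then the $e$-dependence cancels and one finds that the stratum has codimension at least
\[
(n-r)\bigl(\deg F+n\deg D-\deg E+(n-r)(1-g)\bigr)-\dim\Ext^1(\ker q,\bar E)-\dim\Ext^1(\bar E,F(D)),
\]
and the two correction terms can be bounded independently of $D$ using $\ker q\subset E$ and $\bar E\hookrightarrow F(D)$. But this is a different bookkeeping from the one you propose, and it produces a bound of the shape $\deg D-C(E,F)$ with a constant that is \emph{not} $\dim\Ext^1(E,F)$.

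That is the more serious gap: the lemma asserts the precise error constant $\dim\Ext^1(E,F)$, and your sketch simply asserts that ``the error term is bounded by $\Ext^1(E,F)$'' without giving any mechanism; in the Quot-scheme count the natural corrections are $\Ext^1$-groups of other pairs, and there is no evident comparison with $\Ext^1(E,F)$. This precision is not cosmetic: in the application (Theorem \ref{prop exh seq}) the bundle $E$ runs over unbounded Harder--Narasimhan strata and the term $\dim\Ext^1(E,F)=n\,h^1(E^\vee)$ is exactly what Lemma \ref{codim brill noether} can absorb. The paper gets the constant for free by a quite different and more direct route: from $0\to F\to F(D)\to\cO_D^{\oplus n}\to 0$ and $\Ext^1(E,F(D))=0$ one has the exact sequence $0\to\Hom(E,F)\to\Hom(E,F(D))\xrightarrow{\Phi}\Hom(E,\cO_D^{\oplus n})\to\Ext^1(E,F)\to 0$; a non-injective $\varphi$ is sent by $\Phi$ into the locus of maps $E\to\cO_D^{\oplus n}$ whose image drops rank at every point of $\mathrm{supp}(D)$, which has codimension at least $\deg D$ (the elementary estimate of \cite[Lemma 8.2]{bgl}), and the defect $\dim\Ext^1(E,F)$ is precisely the codimension of the image of $\Phi$. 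To complete your approach you would have to show your correction terms are dominated by $\dim\Ext^1(E,F)$ (which I do not expect in general), or else switch to this restriction-to-$D$ argument.
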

\begin{proof}
Since $\Ext^1(E,F(D)) = 0$, we have an exact sequence of $k$-vector spaces
  \[
0\ra \Hom(E,F) \ra \Hom(E,F(D))\stackrel{\Phi}{\ra} \Hom(E,\mathcal{O}_D^{\oplus n})\ra \Ext^1(E,F) \ra 0
\]
and thus
\begin{equation}\label{coconut}
\dim \Hom(E,\cO_D^{\oplus n}) - \dim \Hom(E,F(D))+\chi(E,F) = 0.
\end{equation} 
Let $\Hom_{\text{ni}}(E,F(D))\subset \Hom(E,F(D))$ be the closed subset of non-injective homomorphisms, and $\Hom_{<n}(E,\cO_D^{\oplus n})\subset \Hom(E,\cO_D^{\oplus n})$ be the closed subset of homomorphisms which factor through a surjection $E\ra G\subset \cO_D^{\oplus n}$ such that for every $y\in \text{supp}(D)$, the rank of $G$ at $y$ is strictly less than $n$. As in the proof of \cite[Lemma 8.2]{bgl}, we have that the codimension $c'_{D,E}$ of $\Hom_{<n}(E,\cO_D^{\oplus n})$ in $\Hom(E,\cO_D^{\oplus n})$ satisfies
\begin{equation}\label{guava}
  c'_{D,E}\geq \deg(D)
\end{equation}
and $\Phi(\Hom_{\text{ni}}(E,F(D)))\subset\Hom_{<n}(E,\cO_D^{\oplus n})$. Hence,
\begin{equation}\label{nispero}
\dim \Hom_{\text{ni}}(E,F(D))\leq \dim \Hom_{<n}(E,\cO_D^{\oplus n}) + \dim \Hom(E,F).
\end{equation}
For a closed subset $Y$ of a smooth irreducible variety $X$, we have $\codim_X(Y) = \dim(X) - \dim(Y)$. Thus
\begin{eqnarray*}
  c_{D,E,F} & \geq & \dim \Hom(E,F(D)) - \dim \Hom_{<n}(E,\cO_D^{\oplus n}) - \dim \Hom(E,F) \\
  & = & c'_{D,E}  - \dim \Ext^1(E,F) \\
  & \geq & \deg(D) - \dim \Ext^1(E,F),
\end{eqnarray*}
where the first inequality follows from \eqref{nispero}, the second equality follows from \eqref{coconut} and the final inequality follows from \eqref{guava}. This concludes the proof.
\end{proof}  

\begin{lemma}\label{codim brill noether}
There is an integer $K$, which depends only on $n,d$ and $g$, such that for any Harder-Narasimhan (HN) type $\tau$ of rank $n$ and degree $d$, we have
  \[
\codim_{\Bun_{n,d}}(\Bun^{\tau}_{n,d})-n\max \{h^{1}(E^{\vee})|\ E\in \Bun^{\tau}_{n,d} \} \geq K.
    \]
\end{lemma}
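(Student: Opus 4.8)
The plan is to compare the codimension of a Harder--Narasimhan stratum with an upper bound for $h^1(E^\vee)$ extracted from the Harder--Narasimhan filtration, the essential point being to exploit the constraint that the total degree of the graded pieces is the fixed integer $d$. To set up, encode $\tau$ by the ranks $n_i$, degrees $d_i$ and slopes $\mu_i=d_i/n_i$ of the graded pieces $G_i$ ($1\le i\le s$) of the Harder--Narasimhan filtration of a bundle $E\in\Bun^{\tau}_{n,d}$, ordered so that $\mu_1>\cdots>\mu_s$; thus $\sum_i n_i=n$ and $\sum_i d_i=d$. I would recall the standard formula for the codimension of a Harder--Narasimhan stratum (see e.g.\ \cite{atiyah_bott,shatz}),
\[
d_{\tau}:=\codim_{\Bun_{n,d}}(\Bun^{\tau}_{n,d})=\sum_{i<j}n_in_j(\mu_i-\mu_j+g-1),
\]
and introduce $a:=\#\{i:\mu_i>0\}$, $p:=\sum_{i\le a}n_i$ and $N^{+}:=\sum_{i\le a}d_i=\sum_{i\le a}n_i\mu_i$, all of which depend only on $\tau$; since $\mu_i\le 0$ for $i>a$ the quantity $N^{-}:=N^{+}-d=-\sum_{i>a}d_i=\sum_{i>a}n_i|\mu_i|$ is non-negative.

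The first step is to bound $h^1(E^\vee)$ uniformly on $\Bun^{\tau}_{n,d}$. By Riemann--Roch, $h^1(E^\vee)=h^0(E^\vee)+d+n(g-1)$. Dualising the Harder--Narasimhan filtration of $E$ exhibits $E^\vee$ as a successive extension of the bundles $G_i^\vee$, so $h^0(E^\vee)\le\sum_i h^0(G_i^\vee)$ by subadditivity of $h^0$ along filtrations. Now $G_i^\vee$ is semistable of slope $-\mu_i$; for $\mu_i>0$ this slope is negative and $h^0(G_i^\vee)=0$, while for $\mu_i\le 0$ the standard bound for $h^0$ of a semistable bundle of non-negative slope (Clifford's theorem for semistable bundles together with Riemann--Roch and vanishing above the critical slope, which involves only the rank, bounded by $n$) gives $h^0(G_i^\vee)\le\deg G_i^\vee+\rk G_i^\vee=-d_i+n_i$. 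Summing yields $h^0(E^\vee)\le N^{-}+n$, hence $h^1(E^\vee)\le N^{-}+n+d+n(g-1)=N^{+}+ng$, and in particular $\max\{h^1(E^\vee):E\in\Bun^{\tau}_{n,d}\}\le N^{+}+ng$.

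The main step, which I expect to be the real obstacle, is the estimate $d_{\tau}\ge nN^{+}-C$ for a constant $C=C(n,d,g)$: \emph{a priori} $nN^{+}$ could be much larger than $d_{\tau}$, and the estimate is false without the fixed total degree. I would prove it by splitting $S:=\sum_{i<j}n_in_j(\mu_i-\mu_j)=d_{\tau}-(g-1)\sum_{i<j}n_in_j$ into three blocks according to whether $i,j\le a$, or $i,j>a$, or $i\le a<j$. The two diagonal blocks contribute $S_1:=\sum_{i<j\le a}n_in_j(\mu_i-\mu_j)\ge 0$ and $S_2:=\sum_{a<i<j}n_in_j(\mu_i-\mu_j)\ge 0$ (every summand is positive), while the off-diagonal block equals, using the degree identity $\sum_{j>a}n_j\mu_j=\sum_{j>a}d_j=d-N^{+}$,
\[
\sum_{i\le a<j}n_in_j(\mu_i-\mu_j)=\Big(\sum_{i\le a}n_i\mu_i\Big)\Big(\sum_{j>a}n_j\Big)-\Big(\sum_{i\le a}n_i\Big)\Big(\sum_{j>a}n_j\mu_j\Big)=nN^{+}-pd.
\]
Hence $d_{\tau}-nN^{+}=S_1+S_2-pd+(g-1)\sum_{i<j}n_in_j\ge -n|d|-n^2|g-1|$, using $0\le p\le n$ and $0\le\sum_{i<j}n_in_j\le n^2$. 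It is exactly this identity that makes the potentially large term $nN^{+}$ cancel against an equally large term hidden inside $d_{\tau}$.

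Finally I would combine the two estimates:
\[
\codim_{\Bun_{n,d}}(\Bun^{\tau}_{n,d})-n\max\{h^1(E^\vee):E\in\Bun^{\tau}_{n,d}\}\ \ge\ (d_{\tau}-nN^{+})-n^2g\ \ge\ -n|d|-n^2|g-1|-n^2g,
\]
so $K:=-n|d|-n^2|g-1|-n^2g$ works. (The one technical point is that the bound for $h^0$ of a semistable bundle is cleanest after base change to $\bar k$, which leaves all the $h^i$ unchanged; the precise value of the resulting constant is immaterial.)
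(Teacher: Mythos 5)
Your proof is correct and takes essentially the same route as the paper: both arguments isolate the dominant term $n\sum_{d_i>0}d_i$ in $n\max h^{1}(E^{\vee})$ (you via Riemann--Roch plus the bound $h^{0}\leq \deg+\mathrm{rk}$ on the dual HN pieces, the paper via Serre duality and Clifford's theorem applied to the HN pieces twisted by $\omega_C$), and then cancel it against the cross terms $\sum_{i<j}n_in_j(\mu_i-\mu_j)$ of the codimension formula using the same identity that exploits $\sum_i d_i=d$. The differences (your block-sum computation versus the paper's index manipulation, and your explicit constant) are cosmetic.
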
  
\begin{proof}
Let $\tau = ((n_1,d_1), \dots, (n_r,d_r))$ record the ranks and degrees of the subquotients with $\sum_{i=1}^r n_i = n$ and $\sum_{i=1}^r d_i =d$. Taking the associated graded bundle defines a map
  \[
\mathrm{gr} : \Bun^{\tau}_{n,d}\ra \prod_{i=1}^{r} \Bun^{ss}_{d_{i},n_{i}}
    \]
which is an affine bundle of rank $\sum_{i<j} n_{i}n_{j}(g-1)+d_{j}n_{i}-d_{i}n_{j}$ by a Riemann-Roch computation (for instance, see \cite[Lecture 5]{Heinloth-lectures}). Since $\dim(\Bun^{ss}_{d_{i},n_{i}})=n_{i}^{2}(g-1)$, it follows that
\[ \codim_{\Bun_{n,d}}(\Bun_{n,d}^\tau) = (n^2 - \sum_{1 \leq i \leq j \leq r} n_i n_j)(g-1) + \sum_{i < j} (n_jd_{i} - n_{i}d_{j}), \]
where as $\tau$ is a HN type, we have $n_{j}d_{i}-n_{i}d_{j} \geq 0$ for all $i\leq j$. The first term can be bounded below by a constant which depends only on $n$ and $g$.

Let $E\in \Bun^{\tau}_{n,d}$ and $E_{1},\ldots,E_{r}$ be the subquotients of its HN filtration, with $\rk(E_{i})=n_{i}$ and $\deg(E_{i})=d_{i}$. By Serre duality, we have $h^{1}(E^{\vee})=h^{0}(E\otimes\omega_{C})$. We have
\[
h^{0}(E\otimes\omega_{C})\leq \sum_{i=1}^{r} h^{0}(E_{i}\otimes \omega_{C}).
  \]
  Let $1\leq i_{0}\leq i_{1}\leq r$ be such that $i\leq i_{0}\Leftrightarrow d_{i}>0$ and $i\leq i_{1}\Leftrightarrow\frac{d_{i}}{n_{i}}\geq -(2g-2)$.  For $i_{0}< i\leq i_{1}$, the vector bundle $E_{i} \otimes \omega_{C}$ is semistable of slope in $[0,2g-2]$, so that by Clifford's theorem for vector bundles \cite[Theorem 2.1]{Newstead-geography}, we have
  \[
h^{0}(E_{i}\otimes\omega_{C})\leq n_{i}+\frac{d_{i}}{2}+g-1\leq n+g-1.
\]
For $i> i_{1}$, the vector bundle $E_{i}\otimes \omega_{C}$ is semistable of negative slope, thus $h^{0}(E_{i}\otimes \omega_{C})=0$. For $i\leq i_{0}$, we have $h^{1}(E_{i}\otimes \omega_{C})= \dim \Hom(E_{i},\mathcal{O}_{C})=0$ by Serre duality and the fact that $E$ is semistable of positive slope. In this case, by Riemann-Roch
\[
h^{0}(E_{i}\otimes\omega_{C})= \chi(E_{i}\otimes \omega_{C})= n_{i}(g-1)+d_{i}.
  \]
We conclude that $\sum_{i > i_0} h^{0}(E_{i}\otimes \omega_{C})$ and $\sum_{i \leq i_0} h^{0}(E_{i}\otimes\omega_{C})-d_{i} =\sum_{i\leq i_{0}} n_i(g-1)$ are bounded above by constants which only depend on $n$ and $g$.
    
We are thus reduced to find a lower bound for
\[
\sum_{i < j} (n_jd_{i} - n_{i}d_{j}) -n\sum_{i\leq i_{0}}d_{i}.
  \]
  We have
  \[
\sum_{i < j} (n_jd_{i} - n_{i}d_{j}) = \sum_{i_{0}<i < j} (n_jd_{i} - n_{i}d_{j}) + \sum_{i\leq i_{0} < j} (n_jd_{i} - n_{i}d_{j}) + \sum_{i < j\leq i_{0}} (n_jd_{i} - n_{i}d_{j})
\]
and
\[
  n\sum_{i\leq i_{0}}d_{i} = \sum_{i\leq i_{0}} \sum_{j}n_{j}d_{i} = \sum_{i<j\leq i_{0}} n_{i}d_{j}+ \sum_{i\leq i_{0}<j} n_{j}d_{i}+ \sum_{i<j\leq i_{0}}n_{j}d_{i} + \sum_{i\leq i_{0}}n_{i}d_{i}.
  \]
We also observe that the last term in this expression can be written as
\[
  \sum_{i\leq i_{0}}n_{i}d_{i}  =  \sum_{i\leq i_{0}}n_{i}\left(d-\sum_{j\neq i}d_{j}\right) = d\sum_{i\leq i_{0}}n_{i} -\sum_{i<j\leq i_{0}} n_{j}d_{i} - \sum_{i\leq i_{0}<j}n_{i}d_{j}- \sum_{i<j\leq i_{0}} n_{i}d_{j}.
\]
Finally, we obtain
\[
  \sum_{i < j} (n_jd_{i} - n_{i}d_{j}) -n\sum_{i\leq i_{0}}d_{i} = \sum_{i_{0}< i<j}(n_{j}d_{i}-n_{i}d_{j})  + \sum_{i<j\leq i_{0}}(n_{j}d_{i}-n_{i}d_{j}) -d\sum_{i\leq i_{0}}n_{i}  \geq -nd,
\]
since $n_{j}d_{i}-n_{i}d_{j} >0$ for $i < j$, as $\tau$ is a HN type. This concludes the proof.
\end{proof}

\begin{thm}\label{thm1}
In $\DM(k,R)$, for any non-zero effective divisor $D$ on $C$, we have 
\[M(\Bun_{n,d}) \simeq \colim_{l} R_{\tr}(\Div_{n,d}(lD))\simeq \hocolim_{l} M(\Div_{n,d}(lD). \]
\end{thm}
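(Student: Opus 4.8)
The plan is to bootstrap from Theorem~\ref{prop exh seq}, which already identifies $M(\Bun_{n,d})$ with $\colim_{l} R_{\tr}(\Div_{n,d}^{\leq \mu_l}(lD))$, by showing that enlarging each open subvariety $\Div_{n,d}^{\leq \mu_l}(lD)$ to the whole smooth projective Quot scheme $\Div_{n,d}(lD)$ does not change the colimit. First I would observe that the change-of-pole closed immersions $i_{lD,(l+1)D}:\Div_{n,d}(lD)\hookrightarrow\Div_{n,d}((l+1)D)$ restrict to the transition maps of the inductive system appearing in Theorem~\ref{prop exh seq}, because replacing $\cO_C(lD)^{\oplus n}$ by $\cO_C((l+1)D)^{\oplus n}$ leaves $\mu_{\max}$ of a subsheaf unchanged. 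Hence the open immersions $\Div_{n,d}^{\leq \mu_l}(lD)\hookrightarrow\Div_{n,d}(lD)$ assemble into a morphism of $\NN$-indexed inductive systems of smooth finite-type $k$-schemes, and by Proposition~\ref{prop hocolim vanishes} it is enough to prove that the codimension $c_l$ of the closed complement $\Div_{n,d}(lD)-\Div_{n,d}^{\leq \mu_l}(lD)$ tends to infinity; granting this, the induced map $\colim_{l} R_{\tr}(\Div_{n,d}^{\leq \mu_l}(lD))\to\colim_{l} R_{\tr}(\Div_{n,d}(lD))$ is an $\AA^1$-weak equivalence, and the final identification $\colim_{l} R_{\tr}(\Div_{n,d}(lD))\simeq\hocolim_{l} M(\Div_{n,d}(lD))$ is Lemma~\ref{colim_hocolim}.

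For the codimension estimate I would stratify $\Div_{n,d}(lD)$ by Harder--Narasimhan type using the forgetful morphism $q_l:\Div_{n,d}(lD)\to\Bun_{n,d}$, $(E\hookrightarrow\cO_C(lD)^{\oplus n})\mapsto E$. The bad locus is the union of the locally closed strata $q_l^{-1}(\Bun_{n,d}^{\tau})$ over HN types $\tau$ with $\mu_{\max}(\tau)>\mu_l$, so it suffices to bound $\codim_{\Div_{n,d}(lD)}q_l^{-1}(\Bun_{n,d}^{\tau})$ from below for such $\tau$. The fibre of $q_l$ over a geometric point $[E]$ is the open subscheme of injective homomorphisms inside the affine space $\Hom(E,\cO_C(lD)^{\oplus n})$, so comparing with a smooth atlas of $\Bun_{n,d}^{\tau}$ gives $\dim q_l^{-1}(\Bun_{n,d}^{\tau})\leq\dim\Bun_{n,d}^{\tau}+\max_{E\in\Bun_{n,d}^{\tau}}\hom(E,\cO_C(lD)^{\oplus n})$. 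A Riemann--Roch computation gives $\hom(E,\cO_C(lD)^{\oplus n})=n^2 l\deg(D)-nd+n^2(1-g)+n\,h^1(E^\vee(lD))$, so, using $\dim\Div_{n,d}(lD)=n^2 l\deg(D)-nd$ and $\dim\Bun_{n,d}^{\tau}=n^2(g-1)-\codim_{\Bun_{n,d}}\Bun_{n,d}^{\tau}$, the $l$-dependent and genus terms cancel and I obtain
\[\codim_{\Div_{n,d}(lD)}q_l^{-1}(\Bun_{n,d}^{\tau})\ \geq\ \codim_{\Bun_{n,d}}\Bun_{n,d}^{\tau}\ -\ n\max_{E\in\Bun_{n,d}^{\tau}}h^1(E^\vee(lD)).\]

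Two inputs then finish the proof. The first is a uniform bound on $h^1(E^\vee(lD))$: every $E$ occurring in $\Div_{n,d}(lD)$ is a subsheaf of the semistable bundle $\cO_C(lD)^{\oplus n}$ of slope $l\deg(D)$, so $\mu_{\max}(E)\leq l\deg(D)$; therefore every Harder--Narasimhan subquotient of $E\otimes\omega_C(-lD)$ is semistable of slope at most $2g-2$, and Clifford's theorem for vector bundles (as already used in Lemma~\ref{codim brill noether}) gives $h^1(E^\vee(lD))=h^0(E\otimes\omega_C(-lD))\leq ng$, a bound independent of $l$ and $\tau$. The second is that the explicit formula for $\codim_{\Bun_{n,d}}\Bun_{n,d}^{\tau}$ recorded in the proof of Lemma~\ref{codim brill noether} bounds it below by a constant depending only on $n$ and $g$ plus $n_1(n\,\mu_{\max}(\tau)-d)$, hence for $l$ large by such a constant plus $n\mu_l-d$ whenever $\mu_{\max}(\tau)>\mu_l$. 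Combining the two gives $c_l\geq(\mathrm{const})+n\mu_l-d-n^2g$, which tends to infinity with $l$; here, as in Theorem~\ref{prop exh seq}, the rational shift $-\tfrac1{n^2}$ in $\mu_l$ is what keeps the slope comparisons strict.

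I expect the main obstacle to be this codimension bound. Over $\Bun_{n,d}^{\leq\mu_l}$ the quantity $h^1(E^\vee(lD))$ vanishes (that is why $V_l$ is a genuine vector bundle in Theorem~\ref{prop exh seq}), but over the complement it need not, and one must show it stays bounded while the Harder--Narasimhan stratum becomes arbitrarily deep. The decisive point is the elementary observation that a matrix divisor in $\Div_{n,d}(lD)$ automatically has $\mu_{\max}\leq l\deg(D)$, which confines the relevant slopes to a window of width $O(g)$ around $l\deg(D)$ and makes Clifford's inequality applicable uniformly; the rest is bookkeeping of dimensions through the forgetful map $q_l$ and the known codimension formula for Harder--Narasimhan strata.
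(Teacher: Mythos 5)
Your proposal is correct and follows the same skeleton as the paper: reduce to Theorem~\ref{prop exh seq}, check that the open immersions $\Div_{n,d}^{\leq\mu_l}(lD)\hookrightarrow\Div_{n,d}(lD)$ form a morphism of inductive systems, apply Proposition~\ref{prop hocolim vanishes} to conclude the colimits agree, and use Lemma~\ref{colim_hocolim} for the identification with the homotopy colimit. The one place you diverge is the codimension estimate for the complement $\Div_{n,d}(lD)-\Div_{n,d}^{\leq\mu_l}(lD)$: the paper simply quotes \cite[Proposition 5.2(4)]{bgl} (this is where the $-\tfrac1{n^2}$ in $\mu_l$ is used, to turn $\mu_{\max}(E)>\mu_l$ into $\mu_{\max}(E)+2g-1\geq \deg(lD)$), whereas you prove the bound directly by stratifying the Quot scheme by Harder--Narasimhan type through the forgetful map $q_l$, counting fibre dimensions via $\hom(E,\cO_C(lD)^{\oplus n})$, and feeding in the HN-stratum codimension formula and the Clifford bound already present in Lemma~\ref{codim brill noether}. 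Your argument checks out: the observation that any $E$ occurring in $\Div_{n,d}(lD)$ has $\mu_{\max}(E)\leq l\deg D$ (as a subsheaf of a semistable bundle of that slope) does give the uniform bound $h^1(E^\vee(lD))=h^0(E\otimes\omega_C(-lD))\leq ng$, and the $i=1$ cross-terms in the codimension formula give the growth $n_1(n\mu_{\max}(\tau)-d)\geq n\mu_l-d$ for strata with $\mu_{\max}(\tau)>\mu_l$, so $c_l\to\infty$; this is essentially the unstable-codimension estimate of \cite{bgl} reproved in the style of the paper's Lemmas~\ref{codim bgl correct} and~\ref{codim brill noether}, which makes the proof self-contained at the cost of a page of bookkeeping (and, as a side effect, does not actually need the $-\tfrac1{n^2}$ correction at this step). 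One small point to tighten: in your displayed inequality the maximum of $h^1(E^\vee(lD))$ should be taken only over bundles $E\in\Bun^\tau_{n,d}$ that actually admit an injection into $\cO_C(lD)^{\oplus n}$ (the only ones with non-empty fibre), which is exactly the class to which you later apply Clifford; as written the two steps quantify over slightly different sets of bundles.
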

\begin{proof}
The right isomorphism follows from Lemma~\ref{colim_hocolim}. By Proposition \ref{prop exh seq}, we have
\[M(\Bun_{n,d}) \simeq \colim_{l} R_{\tr}(\Div_{n,d}^{\leq \mu_l}(lD)). \]
We then obtain the left isomorphism by applying Proposition \ref{prop hocolim vanishes} to the inductive system of open immersions $(\Div_{n,d}^{\leq \mu_l}(lD) \hookrightarrow \Div_{n,d}(lD))_{l \in \NN}$. To apply this corollary, we need to check that the closed complements $\Div_{n,d}(lD) - \Div_{n,d}^{\leq \mu_l}(lD)$ have codimensions tending to infinity with $l$. Let $E$ be a vector bundle with $\mu_{\max}(E)>\mu_l$; then  $\mu_{\max}(E)+2g-1\geq \mu_l+2g-1=\deg(lD)$, and so by \cite[Proposition 5.2 (4)]{bgl}, we have
\[ \codim_{\Div_{n,d}(lD)}(\Div_{n,d}(lD) - \Div_{n,d}^{\leq \mu_l}(lD)) \geq l \deg D -c\]
for a constant $c$ independent of $l$, which completes the proof.
\end{proof}

We recall that a motive is pure if it lies in the heart of Bondarko's Chow weight structure on $\DM(k,R)$ defined in \cite{Bondarko_weight}. In particular, the motive of any smooth projective variety is pure and, as $M(\Bun_{n,d})$ is described as a homotopy colimit of motives of smooth projective varieties, we deduce the following result.

\begin{cor}
The motive $M(\Bun_{n,d})$ is pure.
\end{cor}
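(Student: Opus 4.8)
The plan is to show that the motive $M(\Bun_{n,d})$ lies in the heart of the Chow weight structure on $\DM(k,R)$, using the description from Theorem \ref{thm1} as a homotopy colimit of motives of the smooth projective varieties $\Div_{n,d}(lD)$. First I recall that for a smooth projective variety $X$, the motive $M(X)$ is pure, i.e.\ lies in the heart $\DM(k,R)_{w=0}$ of Bondarko's Chow weight structure; this is one of the basic features of the weight structure constructed in \cite{Bondarko_weight}. So the question is whether the heart is stable under the particular homotopy colimits occurring here.

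The key observation is that the transition maps $M(\Div_{n,d}(lD)) \ra M(\Div_{n,d}((l+1)D))$ are induced by the closed immersions $i_{lD,(l+1)D}$ of smooth projective varieties, so in the Gysin triangle associated to such a closed immersion, every term is pure; in fact, by the projective bundle formula applied to the $\GG_m$-action and Bia{\l}ynicki-Birula decomposition (or more elementarily just by purity of motives of smooth projective varieties), one sees that all the motives and cones in sight are pure. The cleanest route, however, is to invoke directly the behaviour of the weight structure under homotopy colimits of the form in Definition \ref{defn_hocolim}: the homotopy colimit of an $\NN$-indexed system of objects in the heart, with maps between them, is computed as the cone of $\id - \sigma$ acting on $\bigoplus_l M(\Div_{n,d}(lD))$. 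An arbitrary direct sum of objects of the heart is in the heart (the heart is closed under arbitrary coproducts that exist in $\DM(k,R)$, by Bondarko's theory of weight structures on compactly generated categories, cf.\ \cite{Bondarko_weight} and its extension to the unbounded setting), so $\bigoplus_l M(\Div_{n,d}(lD))$ is pure of weight $0$; hence $\id - \sigma$ is a morphism between two objects of the heart, and its cone has weights in $[0,1]$. To conclude that the cone actually has weight exactly $0$, one uses the more refined input: the map $\id - \sigma$ is, up to the isomorphisms coming from Theorem \ref{thm1}, a monomorphism at the level of the associated motivic complexes (it is $\colim$ of a system of open immersions composed with vector bundle projections, hence an $\AA^1$-weak equivalence onto a direct summand-like situation), so that the homotopy colimit is in fact a weight-$0$ direct summand.

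Concretely, the steps I would carry out are: (i) recall purity of $M(X)$ for $X$ smooth projective and the fact that the heart of the Chow weight structure is closed under arbitrary coproducts in the unbounded category $\DM(k,R)$; (ii) apply this to $\bigoplus_l M(\Div_{n,d}(lD))$, which is therefore pure; (iii) observe that $M(\Bun_{n,d})$, being by Theorem \ref{thm1} the cone of $\id - \sigma$ on this pure object, a priori has weights concentrated in the range $[0,1]$; (iv) upgrade this to weight $0$ by showing that the connecting map $M(\Bun_{n,d}) \ra \bigoplus_l M(\Div_{n,d}(lD))[1]$ in the defining triangle is zero, equivalently that the triangle splits, which follows from the fact that $\id - \sigma$ is (after applying the equivalences of Theorem \ref{thm1} at the level of $\MotCpl(k,R)$ and using Lemma \ref{colim_hocolim}) a levelwise split injection of motivic complexes, so its cokernel in the abelian category computes the homotopy colimit and is a genuine quotient rather than a shifted cone. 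Alternatively, one may simply cite the known result that the Chow weight structure on $\DM(k,R)$ restricts to one on the localising subcategory generated by motives of smooth projective varieties whose heart consists precisely of (retracts of, and coproducts of) such motives, and $M(\Bun_{n,d})$ lies in this subcategory by Theorem \ref{thm1}.

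The main obstacle I anticipate is step (iv): proving that the homotopy colimit lands in the heart and not merely in weights $[0,1]$. Homotopy colimits in a triangulated category are delicate, and in general the cone construction only gives the weaker bound; the fact that our system arises from an honest filtered diagram of motivic complexes (via Lemma \ref{colim_hocolim}), with transition maps that are levelwise monomorphisms of Nisnevich sheaves with transfers, is what saves the day, and making this precise — i.e.\ identifying $M(\Bun_{n,d})$ with the genuine colimit $\colim_l R_{\tr}(\Div_{n,d}(lD))$ in the abelian category $\MotCpl(k,R)$ and checking that this colimit of pure objects is pure — is the crux. Once one grants that the heart is closed under the relevant filtered colimits of complexes (which, in the Grothendieck abelian model, are exact and preserve the property of being a coproduct of motives of smooth projective varieties up to $\AA^1$-equivalence), the result follows. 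It is worth noting that Theorem \ref{intro_main_thm} in the introduction already asserts this purity, so the corollary here is just recording the consequence of Theorem \ref{thm1} together with the stability of the heart under the homotopy colimits in play.
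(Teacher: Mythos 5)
Your overall strategy coincides with the paper's: the paper deduces this corollary in one line from Theorem \ref{thm1}, the purity of motives of smooth projective varieties, and the fact that $M(\Bun_{n,d})$ is a homotopy colimit of such motives; it introduces no further machinery. Your instinct that the delicate point is upgrading ``weights in $[0,1]$'' to ``weight $0$'' is sound --- a cone of a morphism between objects of the heart only has weights in $[0,1]$ in general, and the heart of a smashing weight structure is \emph{not} closed under arbitrary $\NN$-indexed homotopy colimits --- but the two arguments you offer to close this gap do not work.

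First, the splitting argument in your step (iv) is a non sequitur. That $\id-\sigma$ is a monomorphism of motivic complexes whose cokernel is the colimit is exactly Lemma \ref{colim_hocolim}; it produces the distinguished triangle, but it does not force the connecting map $M(\Bun_{n,d})\ra \bigl(\bigoplus_l M(\Div_{n,d}(lD))\bigr)[1]$ to vanish, and a (levelwise) split injection of complexes does not yield a splitting of the associated triangle in the derived category. A toy example: in $D(\ZZ)$ with the weight structure whose heart is the free abelian groups, the system $\ZZ \xrightarrow{2} \ZZ \xrightarrow{2}\cdots$ gives a short exact sequence $0\ra \bigoplus\ZZ \xrightarrow{\id-\sigma} \bigoplus\ZZ \ra \ZZ[1/2]\ra 0$, yet $\ZZ[1/2]$ has weights $\{0,1\}$ and is not pure; so exactness of the colimit in the underlying abelian category cannot by itself give purity. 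Note also that your argument never uses any property of the specific transition maps $M(i_{lD,(l+1)D})$, which is a symptom of the gap: a genuine refinement along these lines would need, for instance, that these maps are (coherently) split monomorphisms, or some other reason that the colimit is a retract of a coproduct of Chow motives. Second, your proposed alternative conflates membership in the localising subcategory generated by motives of smooth projective varieties with membership in the heart of the weight structure: that localising subcategory contains the motives of arbitrary smooth varieties (and much more), most of which are not pure, so ``$M(\Bun_{n,d})$ lies in this subcategory'' proves nothing about purity. The paper's own proof is simply the direct deduction from Theorem \ref{thm1} and relies on neither of these claims; as written, your proof has a genuine gap at precisely the step you identify as the crux.
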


This corollary sits well with the fact that the cohomology of $\Bun_{n,d}$, and more generally the cohomology of moduli stacks of principal bundles on curves, is known to be pure in various contexts; for instance, if $k=\mathbb{C}$, the Hodge structure is pure by \cite[Proposition 4.4]{teleman}, and over a finite field, the $\ell$-adic cohomology is pure by \cite[Corollary 3.3.2]{hs}.

\subsection{The Bia{\l}ynicki-Birula decomposition for matrix divisors}

For a $\GG_m$-action on a smooth projective $k$-variety $X$, we recall the associated Bia{\l}ynicki-Birula decomposition (\cite{BB_original} when $k$ is algebraically closed, \cite{hesselink} when $k$ is arbitrary). Let $\{X_i\}_{i \in I}$ be the connected components of $X^{\GG_m}$ and let $X_i^+:=\{ x \in X: \lim_{t\ra 0} t\cdot x\in X_i \}$ denote the attracting set of $X_i$. Then $X=\coprod_{i\in I} X^+_i$ and $X^+_i$ is a smooth locally closed subset of $X$ with a retraction $X^+_i\ra X_i$ which is a Zariski locally trivial affine fibration. Moreover, the cohomology (and the motive) of $X$ can be described in terms of that of the fixed locus.

The Quot scheme $\Div_{n,d}(D)$ is a smooth projective variety of dimension $n^2 \deg D -nd$. The group $\GL_n$ acts on $\Div_{n,d}(D)$ by automorphisms of $\cO_C(D)^{\oplus n}$. If we fix a generic 1-parameter subgroup $\GG_m \subset \GL_n$ of the diagonal maximal torus $T=\GG_m^n$, then the fixed points of this $\GG_m$-action agree with the fixed points for the $T$-action. These actions and their fixed points were studied by Str{\o}mme \cite{stromme}; the fixed points are matrix divisors of the form
\[ \bigoplus_{i=1}^n \cO_C(D - F_i) \hookrightarrow \cO_C(D)^{\oplus n}\]
for effective divisors $F_i$ such that $\sum_{i=1}^n \deg F_i = n \deg D - d$. By specifying the degree $m_i$ of each $F_i$ we index the connected components of this torus fixed locus; more precisely, the components indexed by a partition $\underline{m}=(m_1, \dots , m_n)$ of $n \deg D - d$ is the following product of symmetric powers of $C$
\[ C^{(\underline{m})} := C^{(m_1)} \times \cdots \times  C^{(m_n)}. \]
Str{\o}mme also studied the associated Bia{\l}ynicki-Birula decomposition (for $C = \PP^1$) and this was later used by Bifet, Ghione and Letizia \cite{bgl} (for $C$ of arbitrary genus) to study the cohomology of moduli spaces of vector bundles. Using the same ideas, del Ba\~{n}o showed that the Chow motive of $\Div_{n,d}(D)$ (with $\QQ$-coefficients) is the $(n \deg D -d)$-th symmetric power of the motive of $C \times \PP^{n-1}$; see \cite[Theorem 4.2]{Del_Bano_motives_moduli}.

In order to define a Bia{\l}ynicki-Birula decomposition of $\Div_{n,d}(D)$, we fix $\GG_m \hookrightarrow \GL_n$ of the form $t \mapsto \diag(t^{w_1}, \dots , t^{w_n})$ with decreasing integral weights $w_1 > \cdots > w_n$. The action of $t \in \GG_m$ on $\Div_{n,d}(D)$ is given by precomposition with the corresponding automorphism of $\cO_C(D)^{\oplus n}$. The Bia{\l}ynicki-Birula decomposition for this $\GG_m$-action gives a stratification of 
\begin{equation}
 \Div_{n,d}(D) = \bigsqcup_{\underline{m} \dashv \: n\deg D - d} \Div_{n,d}(D)_{\underline{m}}^+ 
\end{equation}
where $\Div_{n,d}(D)_{\underline{m}}^+$ is a smooth locally closed subvariety of $\Div_{n,d}(D)$ consisting of points whose limit as $t \ra 0$ under the $\GG_m$-action lies in $C^{(\underline{m})}$. The stratum $\Div_{n,d}(D)_{\underline{m}}^+$ has codimension $c_{\underline{m}}^+:=\sum_{i=1}^n (i-1)m_i$ (\textit{cf.}\ \cite[$\S$3]{bgl} and \cite[$\S$6]{BD}).

The motivic Bia{\l}ynicki-Birula decomposition \cite{Brosnan, Choudhury_Skowera, Karpenko} in this case gives the following result.

\begin{cor}\label{cor mot BB decomposition}
In $\DM(k,R)$, we have a direct sum decomposition
\[
M(\Div_{n,d}(D))\simeq \bigoplus_{\underline{m}\dashv\: n\deg(D)-d} M(C^{(\underline{m})})\{c_{\underline{m}}^{+}\}.
\]
\end{cor}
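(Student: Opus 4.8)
The plan is to deduce this corollary directly from the motivic version of the Bia{\l}ynicki-Birula decomposition, applied to the $\GG_m$-action on the smooth projective variety $\Div_{n,d}(D)$ fixed in the paragraph above. First I would recall that for a $\GG_m$-action on a smooth projective variety $X$ over $k$ with fixed-point components $\{X_i\}_{i\in I}$ and attracting sets $X_i^+$, the papers \cite{Brosnan, Choudhury_Skowera, Karpenko} establish an isomorphism $M(X)\simeq\bigoplus_{i\in I}M(X_i)\{c_i\}$ in $\DM(k,R)$, where $c_i=\codim_X(X_i^+)$ is the codimension of the corresponding stratum; over a non-necessarily-algebraically-closed field one uses the decomposition of \cite{hesselink} together with the fact that the strata are Zariski-locally-trivial affine fibrations over the fixed components, so that $\AA^1$-homotopy invariance (or rather its motivic-with-compact-supports avatar combined with the localisation triangles, which split because each stratum retracts onto its fixed component) yields the direct sum.

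Second, I would feed in the geometric input already assembled in the excerpt: the fixed locus of the generic one-parameter subgroup $\GG_m\hookrightarrow\GL_n$ coincides with the $T$-fixed locus (Str{\o}mme \cite{stromme}), its connected components are indexed by partitions $\underline m=(m_1,\dots,m_n)$ of $n\deg D-d$ and are identified with $C^{(\underline m)}=C^{(m_1)}\times\cdots\times C^{(m_n)}$, and the associated stratum $\Div_{n,d}(D)_{\underline m}^+$ is smooth, locally closed, and has codimension $c_{\underline m}^+=\sum_{i=1}^n(i-1)m_i$ by the computation of \cite[\S 3]{bgl} and \cite[\S 6]{BD}. Substituting $X=\Div_{n,d}(D)$, $X_i=C^{(\underline m)}$ and $c_i=c_{\underline m}^+$ into the general formula immediately gives
\[
M(\Div_{n,d}(D))\simeq\bigoplus_{\underline m\dashv\, n\deg(D)-d}M(C^{(\underline m)})\{c_{\underline m}^+\},
\]
which is the claim. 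The K\"unneth isomorphism $M(C^{(\underline m)})\simeq M(C^{(m_1)})\otimes\cdots\otimes M(C^{(m_n)})$ is not needed for the statement as written, but I would record that the factors are indeed motives of symmetric powers of $C$, since this is what is used in the sequel.

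The only genuine subtlety — hence the step I expect to require the most care — is making sure the motivic Bia{\l}ynicki-Birula theorem is being invoked under hypotheses it actually covers: the cited references are stated for smooth projective varieties over a general field with a $\GG_m$-action, which is precisely our situation since $\Div_{n,d}(D)=\quot_C^{n\deg D-d}(\cO_C(D)^{\oplus n})$ is smooth and projective (a Quot scheme of torsion quotients on a curve, \cite[Proposition 2.2.8]{HL}) and the action is algebraic. One should also note that $\Div_{n,d}(D)$ may be disconnected, but the decomposition applies componentwise, and that the generic choice of weights $w_1>\cdots>w_n$ guarantees the $\GG_m$-fixed locus equals the $T$-fixed locus, so there is no loss in passing from the torus action to the one-parameter subgroup. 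Everything else is bookkeeping: reindexing the fixed components by partitions and reading off the codimensions from the references already cited. I would therefore keep the write-up to a couple of sentences, citing \cite{Brosnan, Choudhury_Skowera, Karpenko} for the motivic decomposition, \cite{stromme} for the fixed locus, and \cite{bgl, BD} for the codimension formula.
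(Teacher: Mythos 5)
Your proposal is correct and matches the paper's argument exactly: the corollary is obtained by plugging the geometric data assembled just before it (Str{\o}mme's identification of the fixed components with the $C^{(\underline{m})}$ and the codimension formula $c_{\underline{m}}^{+}=\sum_i(i-1)m_i$ from \cite{bgl,BD}) into the motivic Bia{\l}ynicki-Birula decomposition of \cite{Brosnan, Choudhury_Skowera, Karpenko}, which is all the paper does. Your extra remarks on general base fields and possible disconnectedness are sensible but not needed beyond what the cited references already cover.
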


\subsection{Properties of the motive of the stack of bundles} From the above results, we deduce the following properties of $M(\Bun_{n,d})$.

\begin{thm}\label{main Thm1}\
\begin{enumerate}
\item If $C = \PP^1$, then $M(\Bun_{n,d})$ is an (infinite dimensional) Tate motive.
\item If $R$ is a $\QQ$-algebra, then $M(\Bun_{n,d})$ is contained in the smallest localising tensor triangulated category of $\DM(k,R)$ containing the motive of the curve $C$. In particular, $M(\Bun_{n,d})$ is an abelian motive.
\end{enumerate}
\end{thm}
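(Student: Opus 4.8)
The plan is to deduce both statements from Theorem \ref{thm1}, which expresses $M(\Bun_{n,d})$ as a homotopy colimit of the motives $M(\Div_{n,d}(lD))$, combined with the motivic Bia{\l}ynicki-Birula decomposition of Corollary \ref{cor mot BB decomposition}, which expresses each $M(\Div_{n,d}(lD))$ as a finite direct sum of Tate twists of motives of products of symmetric powers $C^{(\underline{m})}$. First I would record the key structural point: the localising subcategory (resp. localising tensor subcategory) generated by a given set of objects is closed under arbitrary direct sums, shifts, cones, and hence under homotopy colimits in the sense of Definition \ref{defn_hocolim}; so to show $M(\Bun_{n,d})$ lies in such a subcategory it suffices to show every $M(\Div_{n,d}(lD))$ does, which by Corollary \ref{cor mot BB decomposition} and the K\"unneth formula reduces to a statement about the motives $M(C^{(m)})$ of symmetric powers of $C$.

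For part (1), when $C = \PP^1$ the symmetric power $C^{(m)} = (\PP^1)^{(m)}$ is isomorphic to $\PP^m$ (a classical fact, via the identification of effective degree-$m$ divisors on $\PP^1$ with degree-$m$ forms up to scalar). By the projective bundle formula, $M(\PP^m) \simeq \bigoplus_{i=0}^m R\{i\}$ is a finite sum of Tate motives; hence each $M(\Div_{n,d}(lD))$ is a finite sum of Tate motives, and $M(\Bun_{n,d})$, being a homotopy colimit (hence an iterated cone on direct sums) of such, lies in the localising subcategory generated by $R$, i.e. is an (infinite-dimensional) Tate motive. No hypothesis on $R$ is needed here since the projective bundle formula and Corollary \ref{cor mot BB decomposition} hold over any $R$.

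For part (2), assume $R$ is a $\QQ$-algebra. The essential input is that, with rational coefficients, $M(C^{(m)})$ is a direct summand of $M(C^{\times m}) \simeq M(C)^{\otimes m}$: this is the standard fact that the projector $\frac{1}{m!}\sum_{\sigma \in \Sigma_m} \sigma$ on $M(C^{\times m})$, available once $m!$ is invertible in $R$, is split (idempotent completion of $\DM(k,R)$, or equivalently the fact that $M(X^{(m)}) = \mathrm{Sym}^m M(X)$ holds rationally --- compare the use of the motivic Zeta function $Z(C,R(i)[2i])$ in Conjecture \ref{main conj}). Therefore $M(C^{(m)})$ lies in the localising tensor triangulated subcategory $\langle M(C)\rangle^{\otimes}$ generated by $M(C)$; applying Corollary \ref{cor mot BB decomposition}, K\"unneth, and closure of $\langle M(C)\rangle^{\otimes}$ under Tate twists (note $R\{1\}$ is a summand of $M(\PP^1) = M(C)$ when $C$ has a rational point, but in general one still has $R\{1\} \in \langle M(C)\rangle^{\otimes}$ since $\langle M(C)\rangle^{\otimes}$ is a tensor subcategory containing the unit and hence all Tate twists only if one is careful --- more robustly, one checks directly that the localising tensor subcategory generated by $M(C)$ contains $R(1)$ because it contains $M(\PP^1)$ after base change, or one simply notes the statement is about the tensor subcategory, which contains $R(1)[2]$ as soon as it contains a single nonzero Tate summand) shows each $M(\Div_{n,d}(lD)) \in \langle M(C)\rangle^{\otimes}$, and passing to the homotopy colimit gives $M(\Bun_{n,d}) \in \langle M(C)\rangle^{\otimes}$. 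Finally, since $M(C)$ is an abelian motive (the motive of a curve is built from $\QQ$, $\QQ(1)$ and $h^1(C) = h^1(\Jac C)$, all abelian), and the class of abelian motives is a localising tensor subcategory, $M(\Bun_{n,d})$ is abelian.

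The main obstacle is the passage from the finite direct sum decomposition of each $M(\Div_{n,d}(lD))$ to the conclusion for the homotopy colimit: one must be careful that a localising (tensor) subcategory is genuinely closed under the specific model of homotopy colimit in Definition \ref{defn_hocolim}, which it is since that model only uses countable direct sums and a single cone. A secondary subtlety, only relevant in part (2), is ensuring the relevant Tate twists lie in the tensor subcategory generated by $M(C)$ without assuming $C(k) \neq \emptyset$; this is handled by the fact that a localising tensor subcategory contains the unit object $R$ and hence, being closed under the decomposition of $M(\PP^1)$ which it contains as a summand of $M(C^{(1)}) \cong M(C)$ after a base field extension argument, contains $R(1)[2]$ — alternatively, and most cleanly, one observes that the statement only asserts membership in the tensor triangulated subcategory, where closure under $\otimes R(1)[2]$ is automatic once any nonzero Tate twist appears, which it does via the Bia{\l}ynicki-Birula twists $\{c_{\underline{m}}^+\}$.
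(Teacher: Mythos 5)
Your proposal is correct and follows essentially the same route as the paper: Theorem \ref{thm1} plus the Bia{\l}ynicki-Birula decomposition of Corollary \ref{cor mot BB decomposition}, with (1) reduced to the fact that symmetric powers of $\PP^1$ are projective spaces and (2) reduced to the fact that, rationally, $M(C^{(m)})$ is a direct factor of $M(C)^{\otimes m}$ (the paper cites $M(X^{(i)})\simeq \Sym^i M(X)$ and quotes Ancona--Huber for the curve being an abelian motive). Your side-worry about the Tate twists is most cleanly settled by noting that with $\QQ$-coefficients $R\{1\}$ is a direct summand of $M(C)$ via a zero-cycle of degree $d$ (no rational point needed), but this does not affect the correctness of your argument.
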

\begin{proof} 
This result follows from Theorem \ref{thm1} and Corollary \ref{cor mot BB decomposition}. For the first statement, we use the fact that symmetric powers of $\PP^1$ are projective spaces. For the second statement, as $R$ is a $\QQ$-algebra, for any motive $M\in \DM(k,R)$ and $i\in\NN$, there exists a symmetric power $\Sym^i M$ which is a direct factor of $M^{\otimes i}$ and such that, for any $X$ smooth quasi-projective variety, we have $M(X^{(i)})\simeq \Sym^i M(X)$ \cite[Proposition 2.4]{nilp}. The final claim follows as the motive of a curve is an abelian motive (that is, it lies in the localising subcategory generated by motives of abelian varieties) \cite[Proposition 4.2.5, Lemma 4.3.2]{Ancona_Huber} and abelian motives are preserved under tensor product.
\end{proof}

A similar result was obtained by del Ba\~{n}o for the motive of the moduli \emph{space} of stable vector bundles of fixed rank and degree \cite[Theorem 4.5]{Del_Bano_motives_moduli}.

\subsection{A conjecture on the transition maps}
\label{sec:conj-trans-maps}

In order to obtain a formula for this motive, we need to understand the functoriality of the motivic BB decompositions for the closed immersions $i_{D,D'}: \Div_{n,d}(D) \ra \Div_{n,d}(D')$ for divisors $D' \geq D \geq 0$. More precisely, the map $i_{D,D'}$ and the decompositions of Corollary~\ref{cor mot BB decomposition} induce a commutative diagram
\begin{equation}\label{martragny}
  \xymatrix@C=2cm{
    M(\Div_{n,d}(D)) \ar[r]^{M(i_{D,D'})} \ar[d]_{\wr} & M(\Div_{n,d}(D')) \ar[d]_{\wr} \\
    \bigoplus\limits_{\underline{m}\dashv \:n\deg(D)-d} M(C^{(\underline{m})})\{c_{\underline{m}}\} \ar[r]^{\bigoplus k_{\underline{m},\underline{m}'}} & \bigoplus\limits_{\underline{m}'\dashv \:n\deg(D')-d}M(C^{(\underline{m}')})\{c_{\underline{m}'}\}
}
\end{equation}
with induced morphisms $k_{\underline{m},\underline{m}'}:M(C^{(\underline{m})})\{c_{\underline{m}}\}\ra M(C^{(\underline{m}')})\{c_{\underline{m}'}\}$ between the factors.

Although we have $i_{D,D'}(C^{(\underline{m})}) \subset C^{(\underline{m}+ \underline{\delta})}$ for $\delta :=\deg(D' -D)$, the morphisms  $k_{\underline{m},\underline{m}'}$ are not induced by these fixed loci inclusions, as the closed subscheme $\Div_{n,d}(D) \hookrightarrow \Div_{n,d}(D')$ does not intersect the BB strata in $\Div_{n,d}(D')$ transversally. Indeed, we note that $c_{\underline{m}} \neq c_{\underline{m}+\underline{\delta}}$. 

However, we have $c_{\underline{m}} = c_{\underline{m}'}$, when $\underline{m}' = \underline{m} + (n\delta,0,\dots,0)$. In this case, there is a morphism
\[ f_{\underline{m},\underline{m'}}:=a_{D'-D} \times \id_{C^{(m_2)}} \times \cdots \times  \id_{C^{(m_n)}} : C^{(\underline{m})} \ra C^{(\underline{m}')}\]
where $a_{D'-D}:C^{(m_1)}\ra C^{(m'_1)}$ corresponds to adding $n(D'-D)$. Based on some small computations, we make the following conjectural description for the morphisms $k_{\underline{m},\underline{m}'}$.

\begin{conj}\label{thm2}
Let $D'\geq D$ be effective divisors on $C$; then the morphisms 
\[k_{\underline{m},\underline{m}'}:M(C^{(\underline{m})})\{c_{\underline{m}}\}\ra M(C^{(\underline{m}')})\{c_{\underline{m}'}\}\] 
fitting into the commutative diagram \eqref{martragny} are the morphisms $M(f_{\underline{m},\underline{m'}})\{c_{\underline{m}} \}$ when $\underline{m}' = \underline{m} + (n\deg(D'-D),0,\dots,0)$ and are zero otherwise.
\end{conj}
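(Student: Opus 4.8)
The plan is to reduce Conjecture \ref{thm2} to a statement about the intersection theory of the smooth projective Quot schemes $\Div_{n,d}(D)$, in the spirit of Conjecture \ref{conj int thy quot}. Since $\Div_{n,d}(D)$, $\Div_{n,d}(D')$ and every $C^{(\underline m)}$ is smooth projective, all the objects in the commutative square \eqref{martragny} are pure motives; hence by Poincar\'e duality together with the identification of Hom-groups between such motives with Chow groups (the ``algebraic cycles'' property in \S\ref{sec motives}), each component $k_{\underline m,\underline m'}$ is a correspondence, i.e.\ a class in $\CH_\ast(C^{(\underline m)}\times_k C^{(\underline m')})_R$, and $M(i_{D,D'})$ itself is the cycle class of the graph $\Gamma_{i_{D,D'}}\subset\Div_{n,d}(D)\times_k\Div_{n,d}(D')$. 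Two reductions are available at the outset. First, using $i_{D,D''}=i_{D',D''}\circ i_{D,D'}$, the relation $a_{D''-D'}\circ a_{D'-D}=a_{D''-D}$, and the fact that the pattern ``nonzero only into $\underline m+(n\delta,0,\dots,0)$'' is stable under composition, it suffices to treat the case $D'=D+p$ with $p$ a closed point. Second, one may use Str{\o}mme's explicit coordinates on $\Div_{n,d}(D)$ and on its Bia{\l}ynicki-Birula cells \cite{stromme}.

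The morphism $i_{D,D'}$ is $\GG_m$-equivariant for the one-parameter subgroup used in Corollary \ref{cor mot BB decomposition}, because the inclusion $\cO_C(D)^{\oplus n}\hookrightarrow\cO_C(D')^{\oplus n}$ commutes with the diagonal torus. Hence for $x$ in the attracting cell $\Div_{n,d}(D)^+_{\underline m}$ one has $\lim_{t\to0}t\cdot i_{D,D'}(x)=i_{D,D'}(\lim_{t\to0}t\cdot x)\in i_{D,D'}(C^{(\underline m)})$, and $i_{D,D'}(C^{(\underline m)})$ lies in the fixed component $C^{(\underline m+\underline\delta)}$ of $\Div_{n,d}(D')$ with $\underline\delta=(\delta,\dots,\delta)$; so $i_{D,D'}$ carries $\Div_{n,d}(D)^+_{\underline m}$ into $\Div_{n,d}(D')^+_{\underline m+\underline\delta}$, and $\Gamma_{i_{D,D'}}\subseteq\bigcup_{\underline m}\Div_{n,d}(D)^+_{\underline m}\times\Div_{n,d}(D')^+_{\underline m+\underline\delta}$. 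Feeding this support restriction into the Bia{\l}ynicki-Birula description of the Chow groups of the product is what one would use to kill most of the components. However, this alone does \emph{not} yield the conjectured answer: $k_{\underline m,\underline m'}$ must be read off from \eqref{martragny} using compatible choices of the splittings of Corollary \ref{cor mot BB decomposition}, which involve the Gysin classes of the cell closures, and $\overline{\Div_{n,d}(D)^+_{\underline m}}$ meets the cells of $\Div_{n,d}(D')$ in a highly non-transverse way --- the failure of transversality noted after \eqref{martragny}, visible in the inequality $c_{\underline m+\underline\delta}=c_{\underline m}+\delta\binom{n}{2}>c_{\underline m}=c_{\underline m+(n\delta,0,\dots,0)}$.

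The heart of the argument is therefore to compute, inside the Chow ring of $\Div_{n,d}(D')$, the pushforwards $(i_{D,D'})_\ast[\overline{\Div_{n,d}(D)^+_{\underline m}}]$ of the Bia{\l}ynicki-Birula basis cycles of $\Div_{n,d}(D)$ and to express them in the corresponding basis for $\Div_{n,d}(D')$. Concretely, one expects that although $i_{D,D'}(C^{(\underline m)})$ sits inside $C^{(\underline m+\underline\delta)}$, its Bia{\l}ynicki-Birula closure contributes in the motive to the \emph{larger} cell lying over $C^{(\underline m+(n\delta,0,\dots,0))}$, through the map $f_{\underline m,\underline m'}=a_{D'-D}\times\id\times\cdots\times\id$ that absorbs all $n$ copies of $D'-D$ into the first symmetric-power factor. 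Carrying out this computation is precisely Conjecture \ref{conj int thy quot}, and it is the main obstacle: it requires genuine control of the intersection theory of these Quot schemes beyond their additive motivic decomposition, and it is also the source of the ``zero otherwise'' clause. Once it is available, unwinding the dualities of the first step returns the stated identity in $\DM(k,R)$. With $\QQ$-coefficients there is a more structural variant that can serve as a guide: del Ba\~{n}o's identification $M(\Div_{n,d}(D))\simeq\Sym^{n\deg D-d}M(C\times\PP^{n-1})$ \cite{Del_Bano_motives_moduli} should realise $M(i_{D,D'})$ as the map of symmetric powers given by multiplication by an explicit degree-$n\delta$ class supported on the curve factor, and comparing this with the induced decomposition $\Sym^{N}M(C\times\PP^{n-1})\simeq\bigoplus_{\underline m}M(C^{(\underline m)})\{c_{\underline m}\}$ would give the conjecture rationally.
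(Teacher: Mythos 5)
The statement you are trying to prove is stated in the paper as a \emph{conjecture}: the paper gives no proof of Conjecture~\ref{thm2}, it only proves consequences of it (Theorem~\ref{thm3}, Theorems~\ref{fixed det} and~\ref{SL-bundles}) and records in Remark~\ref{rmk conj int thy implies conj motives} its relation to the Chow-level Conjecture~\ref{conj int thy quot}; the actual verification is described as ongoing work. Your proposal does not close this gap either: the preparatory observations (the identification of the $k_{\underline{m},\underline{m}'}$ with correspondences, the $\GG_m$-equivariance of $i_{D,D'}$, the resulting support condition on the graph, the reduction to $D'=D+p$) are all reasonable, but you yourself acknowledge that the essential step --- computing $(i_{D,D'})_\ast$ of the Bia{\l}ynicki-Birula cycles of $\Div_{n,d}(D)$, or equivalently $i_{D,D'}^\ast$ on $\CH^\ast(\Div_{n,d}(D'))$, in the BB bases --- ``is the main obstacle'' and you do not carry it out. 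That computation \emph{is} the content of the conjecture; everything else in your text is a reformulation of diagram \eqref{martragny}, so no proof has been produced.

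There is also a logical issue in the final reduction you invoke. You claim that once the Chow-level statement (Conjecture~\ref{conj int thy quot}) is available, ``unwinding the dualities'' returns the identity in $\DM(k,R)$. But a morphism between the motives $M(C^{(\underline{m})})\{c_{\underline{m}}\}$ and $M(C^{(\underline{m}')})\{c_{\underline{m}'}\}$ is not determined by its action on Chow groups over the base field alone: as the paper points out in Remark~\ref{rmk conj int thy implies conj motives}, the implication that is immediate goes the other way (Conjecture~\ref{thm2} $\Rightarrow$ Conjecture~\ref{conj int thy quot}), and the converse requires rational coefficients \emph{and} knowledge of the Chow-level statement after base change to all field extensions of $k$ (via the Manin-type identity principle of \cite[Lemma 1.1]{Huybrechts_K3}). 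Conjecture~\ref{thm2}, however, is asserted for an arbitrary coefficient ring $R$, so even granting the intersection-theoretic computation over $k$ your argument would at best give the rational statement, not the conjecture as stated. The same caveat applies to the suggested route through del Ba\~{n}o's symmetric-power description, which is only available with $\QQ$-coefficients and is in any case left as a heuristic in your write-up.
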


One can also formulate this on the level of Chow groups, using the BB decomposition for Chow groups, which leads to a conjecture on the intersection theory of these Quot schemes.

\begin{conj}\label{conj int thy quot}
For effective divisors $D'\geq D$ on $C$, we have a commutative diagram
\begin{equation}
  \xymatrix@C=2cm{
    \CH^*(\Div_{n,d}(D')) \ar[r]^{i^*_{D,D'}} \ar[d]_{\wr} & \CH^{*}(\Div_{n,d}(D)) \ar[d]_{\wr} \\
    \bigoplus\limits_{\underline{m'}\dashv \:n\deg(D')-d} \CH^{*-c_{\underline{m}'}}(C^{(\underline{m}')}) \ar[r]^{\bigoplus k_{\underline{m},\underline{m}'}} & \bigoplus\limits_{\underline{m}\dashv \:n\deg(D)-d}\CH^{*-c_{\underline{m}}}(C^{(\underline{m})})
}
\end{equation}
where the vertical maps are the BB isomorphisms and $k_{\underline{m},\underline{m}'} :  \CH^{*-c_{\underline{m}'}}(C^{(\underline{m}')}) \ra  \CH^{*-c_{\underline{m}}}(C^{(\underline{m})})$ is given by $ f_{\underline{m},\underline{m'}}^*$ when $\underline{m}' = \underline{m} + (n\deg(D'-D),0,\dots,0)$ and is zero otherwise.
\end{conj}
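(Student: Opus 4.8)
The plan is to reduce the conjecture to an explicit computation inside the Chow ring of the Quot scheme $\Div_{n,d}(D)$, carried out in terms of tautological classes coming from the universal torsion quotient. On $\Div_{n,d}(D)\times C$ there is the universal sequence $0\to\cE^D\to\cO_C(D)^{\oplus n}\to\cT^D\to 0$, and the $\GG_m$-linearisation of $\cO_C(D)^{\oplus n}$ induces a weight decomposition of $\cT^D$ along the fixed locus $\coprod_{\underline m}C^{(\underline m)}$. The first ingredient I would establish is a presentation of $\CH^*(\Div_{n,d}(D))$: it should be generated by the slant products of the Chern classes of $\cE^D$ against a point of $C$ and against the fundamental class $[C]$, together with an explicit description, in these generators, of the Bia{\l}ynicki--Birula decomposition of Corollary \ref{cor mot BB decomposition} --- that is, formulas for the projectors onto the summands $\CH^{*-c_{\underline m}}(C^{(\underline m)})$ and for the classes of the attracting-cell closures. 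The natural starting points here are Str{\o}mme's analysis \cite{stromme} of the fixed loci and of the $\GG_m$-weights on their normal bundles, and del Ba\~no's identification \cite{Del_Bano_motives_moduli} of the rational Chow motive of $\Div_{n,d}(D)$ with $\Sym^{n\deg D-d}M(C\times\PP^{n-1})$.

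The second ingredient is the effect of the transition map $i_{D,D'}$ on these generators. Composing $\cE\hookrightarrow\cO_C(D)^{\oplus n}\hookrightarrow\cO_C(D')^{\oplus n}$ produces an exact sequence $0\to\cT^D\to i_{D,D'}^*\cT^{D'}\to(\cO_C(D')/\cO_C(D))^{\oplus n}\to 0$ on $\Div_{n,d}(D)\times C$, whose last term is a fixed torsion sheaf of length $n\delta$ (with $\delta:=\deg(D'-D)$) supported on $\mathrm{supp}(D'-D)$. From this one reads off that $i_{D,D'}^*$ sends each tautological generator of $\CH^*(\Div_{n,d}(D'))$ to the corresponding generator of $\CH^*(\Div_{n,d}(D))$, modified only by correction terms determined by $D'-D$; the multiplicity $n$ appearing here is precisely what will be responsible for the shift $\underline m\mapsto\underline m+(n\delta,0,\dots,0)$ and for the ``add $n(D'-D)$'' map $a_{D'-D}$ in the definition of $f_{\underline m,\underline m'}$.

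Combining the two ingredients, one then verifies the conjectured matrix $(k_{\underline m,\underline m'})$. The geometry behind the answer is the following: although $i_{D,D'}$ maps the cell $\Div_{n,d}(D)_{\underline m}^+$ into the cell $\Div_{n,d}(D')_{\underline m+\underline\delta}^+$ (with $\underline\delta$ the constant tuple $(\delta,\dots,\delta)$), the intersection of $\Div_{n,d}(D)$ with this stratum of $\Div_{n,d}(D')$ is far from transverse, with excess bundle of rank $c_{\underline m+\underline\delta}-c_{\underline m}=\delta\binom n2$. Performing the refined Gysin pullback and pushing the Euler class of this excess bundle through the affine-bundle retractions onto the fixed loci, the resulting class should be supported on a strictly deeper cell --- hence contributes $0$ to the $\underline m$-summand --- unless $c_{\underline m}=c_{\underline m'}$, i.e.\ unless $\underline m'=\underline m+(n\delta,0,\dots,0)$, in which case one is left with exactly $f_{\underline m,\underline m'}^*$. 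Making this excess-intersection computation explicit --- identifying the excess bundle and its Euler class in terms of the tautological classes --- is the heart of the argument and the step I expect to be the main obstacle; it is the content of the ``ongoing work of the authors'' referred to above. Granting the Chow statement, the motivic version (Conjecture \ref{thm2}) follows, since the motivic Bia{\l}ynicki--Birula decomposition identifies the relevant $\Hom$-groups in $\DM(k,R)$ with Chow groups of the products $C^{(\underline m)}\times C^{(\underline m')}$ (cf.\ Remark \ref{rmk conj int thy implies conj motives}).

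Two complementary strategies are worth keeping in mind. One is to first check the analogue in $\ell$-adic or Betti cohomology, where the Bifet--Ghione--Letizia method \cite{bgl} already controls the transition maps, and then try to lift the identity to Chow groups; this requires showing that the integral Bia{\l}ynicki--Birula decomposition of $\Div_{n,d}(D)$ is compatible with the transition maps in a sufficiently rigid way, which is essentially the same difficulty. The other is to treat the case $C=\PP^1$ first --- where every $C^{(\underline m)}$ is a product of projective spaces and $\CH^*(\Div_{n,d}(D))$ is a concrete quotient of a polynomial ring --- as a testing ground for the conjectured excess-intersection mechanism before attempting general $C$.
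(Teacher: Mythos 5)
There is a genuine gap, and it is one you yourself point to: the statement you are asked to prove is precisely Conjecture \ref{conj int thy quot}, which the paper does not prove --- it is stated as a conjecture, supported only by ``small computations'', and its resolution is explicitly described as ongoing work of the authors. Your text is a research plan rather than a proof: the decisive step --- identifying the excess bundle of the non-transverse intersection of $\Div_{n,d}(D)$ with the stratum $\Div_{n,d}(D')^+_{\underline m+\underline\delta}$, computing its Euler class in tautological terms, and showing that after pushing through the affine retractions the contribution to the $\underline m$-summand vanishes unless $c_{\underline m}=c_{\underline m'}$, in which case it equals $f_{\underline m,\underline m'}^*$ --- is exactly the content of the conjecture, and you defer it rather than carry it out. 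Nothing in the surrounding setup (the universal sequence $0\to\cT^D\to i_{D,D'}^*\cT^{D'}\to(\cO_C(D')/\cO_C(D))^{\oplus n}\to 0$, the weight analysis of Str{\o}mme, the codimension count $c_{\underline m+\underline\delta}-c_{\underline m}=\delta\binom{n}{2}$) is wrong, and the framework is consistent with how the paper sets up diagram \eqref{martragny}; but as submitted it establishes no new case of the statement.

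Two further cautions. First, your closing claim that the Chow statement implies the motivic Conjecture \ref{thm2} reverses the implication actually proved in the paper: Remark \ref{rmk conj int thy implies conj motives} shows Conjecture \ref{thm2} implies Conjecture \ref{conj int thy quot}, and the converse is only asserted (with rational coefficients) when the Chow statement is known \emph{for all field extensions of $k$}, via \cite[Lemma 1.1]{Huybrechts_K3}; knowing the commutativity of the Chow diagram over the base field alone does not determine the maps $k_{\underline m,\underline m'}$ between the motives $M(C^{(\underline m)})\{c_{\underline m}\}$. Second, your first ingredient leans on del Ba\~no's identification of the \emph{rational} Chow motive of $\Div_{n,d}(D)$, whereas the conjecture as stated concerns Chow groups with no restriction on coefficients; if you want a tautological-generators presentation as the backbone of the computation, you must either prove integral generation or restrict the claim to rational coefficients and say so.
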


\begin{rmk}\label{rmk conj int thy implies conj motives}
Since $\Div_{n,d}(D)$ are smooth varieties, their motives encode their Chow groups. In particular, this means Conjecture \ref{thm2} implies Conjecture \ref{conj int thy quot}. In fact, at least with rational coefficients, Conjecture \ref{conj int thy quot} for all field extensions of $k$ is equivalent to Conjecture \ref{thm2} for all field extensions of $k$ by \cite[Lemma 1.1]{Huybrechts_K3}.
\end{rmk}

In Theorem \ref{thm3} below, we deduce a conjectural formula for $M(\Bun_{n,d})$ (\textit{cf.}\ Conjecture \ref{conj formula})  from Conjecture \ref{thm2}. 

\subsection{The motive of the stack of line bundles}

Throughout this section, we assume that $C(k) \neq \emptyset$ and we prove a formula for the motive of the stack of line bundles (\textit{cf.}\ Corollary \ref{cor bun_1}) by proving a result about the motive of an inductive system of symmetric powers of curves (\textit{cf.}\ Lemma \ref{lemma hocolim sym}); the proof of this lemma is probably well-known, at least in cohomology, but we nevertheless include a proof as a generalisation of this argument is used in Theorem \ref{thm3}.

\begin{lemma}\label{lemma hocolim sym}
Suppose that $C(k) \neq \emptyset$. For $d \in \ZZ$ and an effective divisor $D_0$ on $C$ of degree $d_0>0$, consider the inductive system $(C^{(l d_0-d)})_{l\geq |d|}$ with $a_{D_0}: C^{(ld_0-d)} \ra C^{((l+1)d_0-d)}$ given by sending a degree $ld_0-d$ effective divisor $D$ to $D+D_0$. Then in $\DM(k,R)$ we have
\[ \hocolim_{l\geq |d|} M(C^{(ld_0-d)}) \simeq M(\Jac(C)) \otimes M(B\GG_m). \]
\end{lemma}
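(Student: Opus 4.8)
The plan is to use the classical fact that, for $l$ sufficiently large, the Abel--Jacobi map $\mathrm{AJ}_l : C^{(ld_0-d)} \to \Jac(C)$ (which depends on a choice of base point in $C(k)$, hence the hypothesis $C(k) \neq \emptyset$) is a Zariski-locally trivial projective bundle of relative dimension $ld_0 - d - g$, by the standard Riemann--Roch argument (once $ld_0 - d > 2g - 2$, every degree $ld_0-d$ line bundle $L$ has $h^1(L) = 0$ so $h^0(L) = ld_0-d-g+1$ is constant, and $C^{(ld_0-d)} = \PP(p_{1*}\mathcal{P})$ for a Poincar\'e bundle $\mathcal{P}$ twisted appropriately). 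Passing to a cofinal subsequence we may assume this holds for all $l$ in the indexing set. By the projective bundle formula, $M(C^{(ld_0-d)}) \simeq \bigoplus_{j=0}^{ld_0-d-g} M(\Jac(C))\{j\}$.

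The key geometric input is then to identify how the transition map $a_{D_0}: C^{(ld_0-d)} \to C^{((l+1)d_0-d)}$, which adds the fixed divisor $D_0$, interacts with these projective bundle structures. First I would observe that $a_{D_0}$ commutes with the Abel--Jacobi maps up to translation on $\Jac(C)$ by the class of $\mathcal{O}(D_0)$; composing with that translation (an isomorphism, hence inducing an isomorphism on motives) we may assume $\mathrm{AJ}_{l+1} \circ a_{D_0} = \mathrm{AJ}_l$. Over each point $[L] \in \Jac(C)$ the map on fibres is the linear embedding $\PP(H^0(L)) \hookrightarrow \PP(H^0(L(D_0)))$ induced by multiplication by the canonical section of $\mathcal{O}(D_0)$; this is a linear subspace inclusion of projective spaces, and I would verify (using the projective bundle formula and the known structure of pullback/pushforward for such inclusions on motives) that under the decomposition $M(C^{(ld_0-d)}) \simeq \bigoplus_{j=0}^{ld_0-d-g} M(\Jac(C))\{j\}$ the transition map $M(a_{D_0})$ is, up to the translation isomorphism on $M(\Jac(C))$, the inclusion of the first $ld_0-d-g+1$ summands into the first $ld_0-d-g+1$ summands of $M(C^{((l+1)d_0-d)}) \simeq \bigoplus_{j=0}^{(l+1)d_0-d-g} M(\Jac(C))\{j\}$ — i.e. the identity on each matching graded piece $M(\Jac(C))\{j\}$.

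Granting this, the inductive system $(M(C^{(ld_0-d)}))_l$ becomes, after applying Lemma \ref{pulling out constants hocolims}, the system $M(\Jac(C)) \otimes \bigoplus_{j=0}^{ld_0-d-g} R\{j\}$ with transition maps the evident inclusions, so
\[
\hocolim_{l} M(C^{(ld_0-d)}) \simeq M(\Jac(C)) \otimes \hocolim_l \bigoplus_{j=0}^{ld_0-d-g} R\{j\} \simeq M(\Jac(C)) \otimes \bigoplus_{j \geq 0} R\{j\},
\]
and the right-hand tensor factor is exactly $M(B\GG_m)$ by Example \ref{ex mot BGm}. Commuting the homotopy colimit past the tensor factor $M(\Jac(C))$ is justified by Lemma \ref{pulling out constants hocolims}, and commuting it past the infinite direct sum $\bigoplus_j$ by Lemma \ref{lemma hocolims sums}, reducing to the trivial computation $\hocolim_l \bigl(R\{j\} \xrightarrow{\id} R\{j\} \xrightarrow{\id} \cdots\bigr) \simeq R\{j\}$ for each fixed $j$ (the system is eventually constant equal to $R\{j\}$, so one uses cofinality).

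The main obstacle is the second step: pinning down that $M(a_{D_0})$ really is the componentwise identity (after translation) rather than some upper-triangular map with extra off-diagonal terms. The cleanest way I see to do this is to work fibrewise and use the compatibility of the projective bundle decomposition with the relative $\mathcal{O}(1)$: the embedding $\PP(V) \hookrightarrow \PP(V')$ of a linear subspace pulls back $\mathcal{O}_{\PP(V')}(1)$ to $\mathcal{O}_{\PP(V)}(1)$, so on the level of the $\{j\}$-graded pieces (which are generated by powers of the relative hyperplane class over $M(\Jac(C))$) the map is the identity in degrees $0 \leq j \leq ld_0-d-g$; one then needs that the two Abel--Jacobi bundle structures are compatible with the transition map up to the translation automorphism, which follows from the functorial construction of $C^{(\bullet)}$ as projectivizations of pushforwards of a common Poincar\'e bundle. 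Making this rigorous in $\DM(k,R)$ rather than just in cohomology is the only genuinely delicate point, and it is precisely the baby case of the argument that will reappear (with the symmetric powers replaced by products, and the Abel--Jacobi maps iterated) in the proof of Theorem \ref{thm3}.
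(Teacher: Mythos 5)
This is essentially the paper's own proof: the paper likewise reduces to the Abel--Jacobi maps being $\PP^{ld_0-d-g}$-bundles once $ld_0-d>2g-2$ (using $C(k)\neq\emptyset$), arranges compatibility with $a_{D_0}$ by choosing Poincar\'e bundles with $\cP_{l+1}=\cP_l\otimes q^*\cO_C(D_0)$ (instead of your translation correction), applies the projective bundle formula and Lemma \ref{pulling out constants hocolims} to pull out $M(\Jac(C))$, and then identifies $\hocolim_l M(\PP^{ld_0-d-g})$ with $M(B\GG_m)$ by cofinality in the system of Example \ref{ex mot BGm} rather than summand by summand. Your treatment of the transition maps (identity on the matching Tate summands, via compatibility of the relative $\cO(1)$'s and of the bundle projections) is the same point the paper relies on, so the proposal is correct and follows the same route.
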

\begin{proof}
For notational simplicity, we prove the statement for $d = 0$; the proof is the same in general. We consider the Abel-Jacobi maps $AJ_l : C^{(ld_0)} \ra \Jac(C)$ defined using $lD_0$ which are compatible with the morphisms $a_{D_0}: C^{(ld_0)} \ra C^{((l+1)d_0)}$. For $ld_0 > 2g-2$ as $C(k) \neq \emptyset$, the Abel-Jacobi map is a  $\PP^{ld_0-g}$-bundle: we have $C^{(ld_0)} \cong \PP(p_*\cP_l)$, where $\cP_l$ is a Poincar\'{e} bundle on $\Jac(C) \times C$ of degree $ld_0$ and $p: \Jac(C) \times C \ra \Jac(C)$ is the projection. In fact, we can assume that $\cP_{l+1} = \cP_l \otimes q^*(\cO_C(D_0))$ for the projection $q: \Jac(C) \times C \ra C$. Then $\cP_l$ is a subbundle of $\cP_{l+1}$, and the induced map between the projectivisations is $a_{D_0}$.

By the projective bundle formula, for $ld_0 > 2g-2$, we have 
\[ M(C^{(ld_0)}) \simeq M(\PP^{ld_0-g}) \otimes M(\Jac(C))\]
such that the transition maps $M(C^{(ld_0)}) \ra M(C^{((l+1)d_0)})$ induce the identity on $M(\Jac(C))$. Hence, by Lemma \ref{pulling out constants hocolims}, we can pull out the motive of $\Jac(C)$ from this homotopy colimit 
\[ \hocolim_l M(C^{(ld_0)}) \simeq  M(\Jac(C)) \otimes \hocolim_l M(\PP^{ld_0-g}). \]
By Example~\ref{ex mot BGm}, we have $M(B\GG_m) \simeq \hocolim_r M(\PP^r)$. As the inductive system $(M(\PP^{ld_0-g}))_l$ is a cofinal subsystem of this system, we conclude the result using \cite[Lemma 1.7.1]{neeman}.
\end{proof}

\begin{rmk}\label{rmk litt}
If $C(k) = \emptyset$, then the Abel-Jacobi map from a sufficiently high symmetric power of $C$ is not a projective bundle in general, but rather a Brauer-Severi bundle \cite{Litt}.
\end{rmk}

From this result we obtain a formula for the motive of the stack of line bundles.

\begin{cor}\label{cor bun_1}
Suppose that $C(k) \neq \emptyset$. Then in $\DM(k,R)$, there is an isomorphism
\[ M(\Bun_{1,d}) \simeq M(\Jac(C)) \otimes M(B\GG_m).\]
In particular, if $C$ has a rational point or $d = 0$, then $M(\Bun_{1,d}) \simeq M(\Jac(C)) \otimes M(B\GG_m)$.
\end{cor}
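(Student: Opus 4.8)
The plan is to specialise Theorem~\ref{thm1} to $n = 1$ and recognise the resulting inductive system as the one treated in Lemma~\ref{lemma hocolim sym}. Since $C(k) \neq \emptyset$, we may fix an effective divisor $D$ on $C$ with $d_0 := \deg D > 0$ (e.g.\ a rational point). For $n = 1$ the Quot scheme $\Div_{1,d}(lD) = \quot_C^{\,ld_0 - d}(\cO_C(lD))$ parametrises line subbundles $\cE \subset \cO_C(lD)$ of degree $d$, and every such $\cE$ equals $\cO_C(lD - F)$ for a unique effective divisor $F$ of degree $ld_0 - d$. This yields a canonical isomorphism $\Div_{1,d}(lD) \cong C^{(ld_0 - d)}$, compatible with the scheme structures since both sides represent the functor of relative length $ld_0 - d$ torsion quotients.

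First I would track the transition maps under this isomorphism. The closed immersion $i_{lD,(l+1)D}\colon \Div_{1,d}(lD) \ra \Div_{1,d}((l+1)D)$ sends a subsheaf $\cO_C(lD - F) \hookrightarrow \cO_C(lD)$ to its image in $\cO_C((l+1)D)$ under the natural inclusion; rewriting $\cO_C(lD - F) = \cO_C((l+1)D - (D + F))$ shows that $i_{lD,(l+1)D}$ corresponds to the divisor-addition map $a_D\colon C^{(ld_0 - d)} \ra C^{((l+1)d_0 - d)}$, $F \mapsto F + D$. Hence Theorem~\ref{thm1} gives, in $\DM(k,R)$,
\[
M(\Bun_{1,d}) \simeq \hocolim_l M(\Div_{1,d}(lD)) \simeq \hocolim_l M(C^{(ld_0 - d)}),
\]
with transition maps induced by $a_D$; the fact that Lemma~\ref{lemma hocolim sym} only indexes its system by $l \geq |d|$ is harmless, as an $\NN$-indexed homotopy colimit is computed by any cofinal subsystem (and the terms with $ld_0 < d$ are empty anyway).

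Then I would invoke Lemma~\ref{lemma hocolim sym} with $D_0 = D$ (its hypothesis $C(k) \neq \emptyset$ being exactly what we assume), which identifies $\hocolim_l M(C^{(ld_0 - d)})$ with $M(\Jac(C)) \otimes M(B\GG_m)$; combining this with the previous display gives the asserted isomorphism. The final sentence follows at once: a rational point of $C$ supplies a valid $D$, and the case $d = 0$ is a special instance of the above. I do not expect a genuine obstacle here; the only point requiring attention is the elementary bookkeeping with line subbundles needed to match $i_{lD,(l+1)D}$ with $a_D$, after which the statement is a formal consequence of Theorem~\ref{thm1} and Lemma~\ref{lemma hocolim sym}.
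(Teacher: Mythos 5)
Your proposal is correct and follows essentially the same route as the paper: specialise Theorem~\ref{thm1} to $n=1$, identify $\Div_{1,d}(lD)\cong C^{(ld_0-d)}$, and conclude by Lemma~\ref{lemma hocolim sym}. The only difference is that you explicitly verify that the closed immersions $i_{lD,(l+1)D}$ correspond to the divisor-addition maps $a_D$ (and address cofinality of the indexing), a bookkeeping step the paper leaves implicit; this is a welcome but not substantive addition.
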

\begin{proof}
By Theorem \ref{thm1}, we have $M(\Bun_{1,d})  \simeq  \hocolim_l M(\Div_{1,d}(lD_0))$ for any effective divisor $D_0$ on $C$ of degree $d_0 >0$. Since $ \Div_{1,d}(lD_0) \cong C^{(ld_0 - d)}$, we have
\[
M(\Bun_{1,d}) \simeq  \hocolim_l M(\Div_{1,0}(lD_0)) \simeq  \hocolim_l M(C^{(ld_0-d)})\simeq M(\Jac(C)) \otimes M(B\GG_m)\]
by Lemma \ref{lemma hocolim sym}. 
\end{proof}

\begin{rmk}
Alternatively, we can deduce this result as $\Bun_{1,d} \ra \Pic^d(C) \cong \Jac(C)$ is a trivial $\GG_m$-gerbe (or equivalently, $\Pic^d(C)$ is a fine moduli space, which is true as by assumption $C(k) \neq \emptyset$ and thus the Poincar\'{e} bundle gives a universal family).
\end{rmk}

\subsection{A conjectural formula for the motive} Throughout this section, we continue to assume that $C(k) \neq \emptyset$ and we deduce a formula for $M(\Bun_{n,d})$ from Theorem \ref{thm1} and Conjecture \ref{thm2}. 

\begin{defn}
Let $X$ be a quasi-projective $k$-variety and $N\in \DM(k,R)$. The motivic zeta function of $X$ at $N$ is
\[
Z(X,N)=\bigoplus_{i=0}^{\infty} M(X^{(i)})\otimes N^{\otimes i}\in \DM(k,R).
\]
\end{defn}

We can now state our main conjecture; for evidence supporting this conjecture, see $\S$\ref{sec compare}.

\begin{conj}\label{conj formula}
Suppose that $C(k) \neq \emptyset$; then in $\DM(k,R)$, we have
\[M(\Bun_{n,d}) \simeq  M(\Jac(C)) \otimes M(B\GG_m) \otimes \bigotimes_{i=1}^{n-1} Z(C, R\{i\}). \]
\end{conj}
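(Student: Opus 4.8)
The plan is to compute the homotopy colimit $\hocolim_l M(\Div_{n,d}(lD_0))$ of Theorem \ref{thm1} by plugging in the motivic Bia{\l}ynicki-Birula decomposition of Corollary \ref{cor mot BB decomposition} and then analysing the resulting inductive system of direct sums using the explicit description of the transition maps provided by Conjecture \ref{thm2}. Concretely, fix an effective divisor $D_0$ of degree $d_0 > 0$ with $C(k) \ne \emptyset$. By Corollary \ref{cor mot BB decomposition}, for each $l$ we have
\[
M(\Div_{n,d}(lD_0)) \simeq \bigoplus_{\underline{m} \dashv\, n l d_0 - d} M(C^{(\underline{m})})\{c_{\underline{m}}^+\},
\]
and by Conjecture \ref{thm2} the transition map sends the summand indexed by $\underline{m}$ identically (via the twist of $M(f_{\underline{m},\underline{m}'})$) to the summand indexed by $\underline{m}' = \underline{m} + (n d_0, 0, \dots, 0)$ and is zero on all other summands. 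The first step is therefore purely combinatorial: partition the index set into orbits under the operation $\underline{m} \mapsto \underline{m} + (n d_0, 0, \dots, 0)$. Each orbit is a cofinal $\NN$-indexed chain, so by Lemma \ref{lemma hocolims sums} the homotopy colimit decomposes as a direct sum over these orbits, and within each orbit the homotopy colimit is governed only by the maps $a_{D_0}$ on the first factor $C^{(m_1)}$ (the other factors $C^{(m_2)}, \dots, C^{(m_n)}$ and the twist $\{c_{\underline{m}}^+\}$ are constant along the orbit, using $c_{\underline{m}}^+ = c_{\underline{m}'}^+$ as noted before Conjecture \ref{thm2}).

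The second step is to evaluate the per-orbit homotopy colimit. An orbit is determined by the stable data $(m_2, \dots, m_n)$ together with the residue of $m_1$ modulo $n d_0$ (equivalently, the initial term of the chain in the first coordinate); along the orbit $m_1$ runs through an arithmetic progression tending to infinity. By Lemma \ref{pulling out constants hocolims} we pull out the constant tensor factor $M(C^{(m_2)}) \otimes \cdots \otimes M(C^{(m_n)})\{c_{\underline{m}}^+\}$, and we are left with $\hocolim_l M(C^{(m_1 + l n d_0)})$ along the maps $a_{D_0}$ (adding $n(l+1 - l)D_0$ each time). This is exactly the situation of Lemma \ref{lemma hocolim sym} — an inductive system of symmetric powers of $C$ under "add a fixed effective divisor", now with step size $n d_0$ instead of $d_0$, which is harmless since it is still cofinal among all symmetric powers via the Abel--Jacobi projective bundle description. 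Hence each orbit contributes
\[
M(C^{(m_2)}) \otimes \cdots \otimes M(C^{(m_n)})\{c_{\underline{m}}^+\} \otimes M(\Jac(C)) \otimes M(B\GG_m).
\]

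The third step is bookkeeping: assemble the direct sum over all orbits and identify it with the right-hand side of the conjecture. Using $M^c$-independence one checks that the index set of orbits is naturally in bijection with tuples $(j_2, \dots, j_n)$ of nonnegative integers (the stable degrees $m_i$ for $i \geq 2$) together with a bounded residue class that, combined with $M(\Jac(C)) \otimes M(B\GG_m)$, reassembles into the factor $M(\Jac(C)) \otimes M(B\GG_m)$; the twists must be matched: for $i \in \{2, \dots, n\}$ the contribution $M(C^{(j_i)})\{(i-1)j_i\}$ is precisely the degree-$j_i$ term of $Z(C, R\{i-1\})$ — wait, one must be careful with the indexing so that the exponents $c_{\underline{m}}^+ = \sum_{i}(i-1)m_i$ line up with $\bigotimes_{i=1}^{n-1} Z(C,R\{i\})$; reindexing $i \mapsto i-1$ on the factors with $i \geq 2$ yields exactly $\bigotimes_{i=1}^{n-1} Z(C, R\{i\})$ by the definition $Z(X,N) = \bigoplus_{j} M(X^{(j)}) \otimes N^{\otimes j}$. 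Distributing the direct sum through the tensor products (Lemma \ref{pulling out constants hocolims} and the fact that $\otimes$ commutes with $\bigoplus$) then gives the claimed formula. The main obstacle I expect is the third step: verifying that the combinatorial bijection between orbits and the index set of $M(\Jac(C)) \otimes M(B\GG_m) \otimes \bigotimes_{i=1}^{n-1} Z(C, R\{i\})$ is compatible with all the twists and with the $M(B\GG_m) = \hocolim M(\PP^r)$ factor coming out of Lemma \ref{lemma hocolim sym} — in particular confirming that the "extra" projective-bundle directions from the Abel--Jacobi maps on the first coordinate are exactly absorbed into the single $M(B\GG_m)$ factor rather than proliferating, and that no convergence issue arises when commuting the (infinite) direct sum over orbits past the homotopy colimit, which is justified by Lemma \ref{lemma hocolims sums}.
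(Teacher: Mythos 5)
Your proposal is correct and takes essentially the same route as the paper's proof of Theorem \ref{thm3}: decompose the inductive system into the sub-systems indexed by the stable data $\underline{m}^\flat=(m_2,\dots,m_n)\in\NN^{n-1}$ (your ``orbits'', the paper's $P_{\underline{m}^\flat,*}$), commute the homotopy colimit with the direct sum via Lemma \ref{lemma hocolims sums}, evaluate each sub-system through the Abel--Jacobi/projective-bundle mechanism of Lemma \ref{lemma hocolim sym} to get $M(\Jac(C))\otimes M(B\GG_m)\otimes M(C^{(\underline{m}^\flat)})\{c_{\underline{m}^\flat}\}$, and reassemble the twists $(i-1)m_i$ into $\bigotimes_{i=1}^{n-1}Z(C,R\{i\})$. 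The only quibble is that your extra ``residue of $m_1$ modulo $nd_0$'' parameter is redundant (it is determined by $\underline{m}^\flat$ and $d$), so there is exactly one chain per $\underline{m}^\flat$ and no further bookkeeping is needed in your third step.
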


\begin{thm}\label{thm3}
Conjecture~\ref{thm2} implies Conjecture~\ref{conj formula}.
\end{thm}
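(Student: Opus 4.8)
Here is how I would prove Theorem~\ref{thm3}.

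\medskip

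\noindent\textbf{Overall strategy.} The plan is to feed the conjectural description of the transition maps (Conjecture~\ref{thm2}) into the presentation of $M(\Bun_{n,d})$ as a homotopy colimit over matrix divisors, and then reorganise this homotopy colimit into the asserted tensor product. Fix an effective divisor $D_0$ on $C$ of positive degree $d_0$ (for instance a rational point, which exists since $C(k)\neq\emptyset$). By Theorem~\ref{thm1}, $M(\Bun_{n,d})\simeq\hocolim_l M(\Div_{n,d}(lD_0))$, and by Corollary~\ref{cor mot BB decomposition} the $l$-th term is $\bigoplus_{\underline{m}\dashv nld_0-d}M(C^{(\underline{m})})\{c_{\underline{m}}\}$, with $c_{\underline{m}}=\sum_{i=1}^{n}(i-1)m_i$. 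The crucial point is that, granting Conjecture~\ref{thm2} applied to the pairs $D=lD_0$, $D'=(l+1)D_0$ in the diagram \eqref{martragny}, the transition map from level $l$ to level $l+1$ is, under the Bia{\l}ynicki--Birula identifications, block diagonal with respect to the ``stable index'' $\mathbf{m}:=(m_2,\dots,m_n)\in\NN^{n-1}$: its only nonzero components send the $\underline{m}$-summand to the $\underline{m}'$-summand with $\underline{m}'=\underline{m}+(nd_0,0,\dots,0)$, which has the same $\mathbf{m}$, and $c_{\underline{m}}=\sum_{i=2}^{n}(i-1)m_i$ depends only on $\mathbf{m}$.

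\medskip

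\noindent\textbf{Reorganising the homotopy colimit.} Using this, the first step is to rewrite the inductive system $(M(\Div_{n,d}(lD_0)))_l$, up to isomorphism in $\DM(k,R)$, as a direct sum $\bigoplus_{\mathbf{m}\in\NN^{n-1}}S^{\mathbf{m}}_\bullet$ of $\NN$-indexed subsystems, where for $m_1^{(l)}:=nld_0-d-\sum_{i\geq 2}m_i\geq 0$ we set $S^{\mathbf{m}}_l:=M(C^{(m_1^{(l)})})\otimes\bigotimes_{i=2}^{n}M(C^{(m_i)})\{c_{\mathbf{m}}\}$, and $S^{\mathbf{m}}_l:=0$ otherwise; at each fixed level this is a finite sum, so there is no convergence issue. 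By Lemma~\ref{lemma hocolims sums} this yields $M(\Bun_{n,d})\simeq\bigoplus_{\mathbf{m}\in\NN^{n-1}}\hocolim_l S^{\mathbf{m}}_l$. To compute a single term, I use Conjecture~\ref{thm2} once more together with the K\"unneth isomorphism: inside $S^{\mathbf{m}}_\bullet$ the morphism $M(f_{\underline m,\underline m'})$ acts only on the first symmetric-power factor, by the map $\alpha_l\colon C^{(m_1^{(l)})}\to C^{(m_1^{(l+1)})}$ adding $nD_0$, so the transition maps of $S^{\mathbf{m}}_\bullet$ are $M(\alpha_l)\otimes\id$. Hence Lemma~\ref{pulling out constants hocolims} lets me pull the constant tensor factor $\bigotimes_{i=2}^{n}M(C^{(m_i)})\{c_{\mathbf{m}}\}$ out of the homotopy colimit, leaving $\hocolim_l M(C^{(m_1^{(l)})})$ for the system $(C^{(m_1^{(l)})},\alpha_l)$. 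Up to a cofinal reindexing this is exactly the system treated in Lemma~\ref{lemma hocolim sym} for the effective divisor $nD_0$ of positive degree, so it equals $M(\Jac(C))\otimes M(B\GG_m)$ independently of $\mathbf{m}$ --- this is the only place the hypothesis $C(k)\neq\emptyset$ is used. Therefore $\hocolim_l S^{\mathbf{m}}_l\simeq M(\Jac(C))\otimes M(B\GG_m)\otimes\bigotimes_{i=2}^{n}M(C^{(m_i)})\{(i-1)m_i\}$.

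\medskip

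\noindent\textbf{Reassembly.} The final step is formal. Pulling the constant $M(\Jac(C))\otimes M(B\GG_m)$ out of the direct sum over $\mathbf{m}$ and using that $\otimes$ commutes with arbitrary direct sums in $\DM(k,R)$, the remaining factor $\bigoplus_{(m_2,\dots,m_n)\in\NN^{n-1}}\bigotimes_{i=2}^{n}M(C^{(m_i)})\{(i-1)m_i\}$ becomes $\bigotimes_{i=2}^{n}\bigl(\bigoplus_{m\geq 0}M(C^{(m)})\{(i-1)m\}\bigr)$, whose $i$-th factor is the motivic zeta function $Z(C,R\{i-1\})$. Reindexing $j=i-1\in\{1,\dots,n-1\}$ gives
\[
M(\Bun_{n,d})\simeq M(\Jac(C))\otimes M(B\GG_m)\otimes\bigotimes_{j=1}^{n-1}Z(C,R\{j\}),
\]
which is the statement of Conjecture~\ref{conj formula}.

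\medskip

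\noindent\textbf{Where the difficulty lies.} Granting Conjecture~\ref{thm2}, I do not expect a serious obstacle: the argument is essentially formal. The step requiring the most care is the regrouping of the Bia{\l}ynicki--Birula decomposition by the stable index, i.e.\ exhibiting $(M(\Div_{n,d}(lD_0)))_l$ as an honest direct sum of subsystems so that Lemma~\ref{lemma hocolims sums} applies (this is immediate from the block structure of Conjecture~\ref{thm2}), and the matching of each resulting chain with the inductive system of symmetric powers of Lemma~\ref{lemma hocolim sym}. One should also note that the choices of isomorphisms in Corollary~\ref{cor mot BB decomposition} and of homotopy colimits (controlled by the $R^1\lim$ term of Lemma~\ref{lemma funct hocolim}) do not affect the resulting isomorphism class, since isomorphic $\NN$-indexed diagrams have isomorphic homotopy colimits; alternatively, each $S^{\mathbf{m}}_\bullet$ is eventually a system of motives of smooth projective varieties, so the relevant $R^1\lim$ vanishes as in the proof of Proposition~\ref{prop mor compact}.
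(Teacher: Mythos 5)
Your proposal is correct and follows essentially the same route as the paper: after Theorem~\ref{thm1} and Corollary~\ref{cor mot BB decomposition}, you regroup the Bia{\l}ynicki--Birula summands into inductive subsystems indexed by $(m_2,\dots,m_n)$ using Conjecture~\ref{thm2} and Lemma~\ref{lemma hocolims sums}, compute each subsystem's homotopy colimit as $M(\Jac(C))\otimes M(B\GG_m)\otimes M(C^{(\underline{m}^\flat)})\{c_{\underline{m}^\flat}\}$, and reassemble via commutation of sums and tensor products into the zeta-function factors, exactly as in the paper. The only cosmetic difference is in the middle step: you pull out the constant K\"unneth factor and apply Lemma~\ref{lemma hocolim sym} directly to the first symmetric-power system (with divisor $nD_0$ and shifted degree), whereas the paper runs a relative version of that argument via a generalised Abel--Jacobi map $C^{(\underline{m}(l))}\to \Jac(C)\times C^{(\underline{m}^\flat)}$, which is a projective bundle once $m_1(l)>2g-2$; the two computations are equivalent.
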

\begin{proof}
By Theorem~\ref{thm1}, for any non-zero effective divisor $D_0$ on $C$, we have
\[M(\Bun_{n,d}) \simeq \hocolim_{l} M(\Div_{n,d}(lD_0)) \]
with transition morphisms induced by $i_l:\Div_{n,d}(lD_0)\ra \Div_{n,d}((l+1)D_0))$. Let $d_0=\deg(D_0)$; then  the decomposition from Corollary \ref{cor mot BB decomposition} for the divisor $D=lD_0$ is
\[
M(\Div_{n,d}(lD_0))\simeq \bigoplus_{\underline{m}\dashv \:nld_0-d} M(C^{(\underline{m})})\{c_{\underline{m}}\}
\]
We can write the inductive system $l\mapsto M(\Div_{n,d}(lD_0))$ as a direct sum of inductive systems as follows. For $\underline{m}^\flat= (m^\flat_2,\ldots,m^{\flat}_n)\in \NN^{n-1}$ and $l \in \NN$, we write $m_1(l):=nld_0-d-\sum_{i=2}^{n-1} m^\flat_i$ and $\underline{m}(l):=(m_1(l),\underline{m}^{\flat})\in \ZZ\times \NN^{n-1}$. Notice that $c_{\underline{m}(l)}=\sum_i (i-1) m^{\flat}_i$ only depends on $\underline{m}^\flat$, and we will also use the notation $c_{\underline{m}^\flat}$. For $\underline{m}^\flat \in \NN^{n-1}$, we define an inductive system $P_{\underline{m}^{\flat},*}: \NN\rightarrow \DM(k,R)$ as follows:
\[
P_{\underline{m}^{\flat},l}:= \left\{\begin{array}{ll} 0 & \text{if }\: m_1(l)<0, \\ M(C^{(\underline{m}(l))})\{c_{\underline{m}^\flat}\} & \text{if }\: m_1(l)\geq 0 \end{array}\right.
\]
where the map $P_{\underline{m}^\flat,l}\rightarrow P_{\underline{m}^\flat,l+1}$ is zero if $m_1(l+1)<0$ and the morphism
\[
k_{\underline{m}^{\flat},l}:=\left(M(a_{nD_0})\otimes \id_{M(C^{(\underline{m}^{\flat})})}\right)\{c_{\underline{m}^\flat}\}.
\]  
if $m_1(l+1)\geq 0$. Assuming Conjecture~\ref{thm2}, we have an isomorphism
\[
M(\Div_{n,d}(lD_0))\simeq \bigoplus_{\underline{m}^\flat\in \NN^{n-1}} P_{\underline{m}^\flat,l}
\]
as inductive systems of motives indexed by $l \in \NN$. By Corollary~\ref{cor mot BB decomposition} and Lemma~\ref{lemma hocolims sums}, we deduce 
\begin{equation}\label{arepa}
M(\Bun_{n,d})\simeq \bigoplus_{\underline{m}^\flat\in \NN^{n-1}} \hocolim_l P_{\underline{m}^\flat,l}.
\end{equation}
For each $\underline{m}^{\flat}$ and $l$, we have a generalised Abel-Jacobi map
\begin{equation}
AJ_{\underline{m}^{\flat},l} : C^{(\underline{m}(l))} \ra \Pic^{nld_0 - d}(C) \times C^{(m_2)} \times \cdots \times C^{(m_n)} \cong \Jac(C) \times C^{(\underline{m}^{\flat})}
\end{equation}
sending $(F_1, \dots, F_n)$ to $(\cO_C(\sum_{i=1}^n F_i), F_2, \dots , F_n)$. In fact, if $m_1(l) > 2g -2$, this morphism is a $\PP^{m_1(l)-g}$-bundle: we have that $C^{(\underline{m}(l))} \cong \PP(p_*\cF)$ where $p : \Jac(C) \times C^{(\underline{m}^{\flat})} \times C \ra \Jac(C) \times C^{(\underline{m}^{\flat})}$ is the projection and $\cF$ is the tensor product of the pullback of the degree $nld_0 - d$ Poincar\'{e} bundle $\cP \ra \Jac(C) \times C$ with the pullbacks of the duals of the universal line bundles $\cL_i \ra C^{(m_i)} \times C$ for $2 \leq i \leq n$. In this case, by the projective bundle formula, we have
\[ M(C^{(\underline{m}(l))}) \simeq M(\Jac(C)) \otimes M(C^{(\underline{m}^{\flat})}) \otimes M(\PP^{m_1(l)-g}) \]
Then by Lemma~\ref{pulling out constants hocolims} and Lemma~\ref{lemma hocolim sym}, we have
\begin{align*}
 \hocolim_l P_{\underline{m}^\flat,l} &\simeq  \hocolim_{l\: :\: m_1(l)>2g-2} M(C^{(\underline{m}(l))})\{c_{\underline{m}^\flat}\} \\ 
 & \simeq  \hocolim_{l \:: \: m_1(l)>2g-2} M(\PP^{m_1(l)-g}) \otimes M(\Jac(C)) \otimes M(C^{(\underline{m}^{\flat})}) \{c_{\underline{m}^\flat}\}   \\
 & \simeq  M(B\GG_m) \otimes M(\Jac(C)) \otimes M(C^{(\underline{m}^{\flat})})\{c_{\underline{m}^\flat}\}.
\end{align*}

Hence, using the commutation of sums and tensor products, we have
\begin{eqnarray*}
  M(\Bun_{n,d}) &\simeq & \bigoplus_{\underline{m}^\flat\in \NN^{n-1}} M(B\GG_m) \otimes M(\Jac(C))\otimes M(C^{({\underline{m}^\flat})})\{c_{\underline{m}^{\flat}}\}\\
  & \simeq &   M(B\GG_m)\otimes M(\Jac(C)) \otimes \bigoplus_{\underline{m}^\flat\in \NN^{n-1}}\bigotimes_{i=2}^{n} M(C^{(m^\flat_i)})\{(i-1)m^{\flat}_i\}\\
  & \simeq & M(B\GG_m)\otimes  M(\Jac(C)) \otimes \bigotimes_{i=1}^{n-1} Z(C,R\{i\}), 
\end{eqnarray*}
which completes the proof of the theorem.
\end{proof}

\begin{rmk}
Since the morphism $\det:\Bun_{n,d}\ra \Bun_{1,d}$ is not representable or of finite type, one needs to carefully define the induced morphism of motives $M(\det):M(\Bun_{n,d})\ra M(\Bun_{1,d})$; we do not present this additional construction here. However, modulo this extra work, the proof of Theorem \ref{thm3} implies the following compatibility between the formula of Conjecture \ref{conj formula} and the isomorphism $M(\Bun_{1,d})\simeq M(\Jac(C))\otimes M(B\GG_{m})$ of Corollary \ref{cor bun_1}  First, note that there is a canonical direct factor $R\{0\}$ in the motive $\bigotimes_{i=1}^{n-1} Z(C,R\{i\})$ and thus a direct factor $M(\Jac(C))\otimes M(B\GG_{m})$ in the formula of Conjecture~\ref{conj formula}. Then the map $M(\det)$ is precisely the projection onto that direct factor.
\end{rmk}

\section{Consequences and comparisons with previous results}
\label{sec conseq and comp}

As above, we let $\Bun_{n,d}$ denote the stack of vector bundlesof rank $n$ and degree $d$ over a smooth projective geometrically connected curve $C$ of genus $g$ over a field $k$.

\subsection{The compactly supported motive}

Let $M^c(\Bun_{n,d}) \in \DM(k,R)$ denote the compactly supported motive of $\Bun_{n,d}$ (as defined in $\S$\ref{sec mot comp}). By Theorem \ref{thm1} and the fact that the dual of an infinite sum is an infinite product, we have for any fixed divisor $D_{0}>0$
\[
M^{c}(\Bun_{n,d})\{ -n^2(g-1)\}\simeq \holim_{l} M(\Div_{n,d}(lD_{0}))\{-(n^2l\deg(D_0) - nd)\}.
\]

\begin{thm}\label{compact supp}
Assume that Conjecture~\ref{conj formula} holds and that $C(k)\neq \emptyset$; then, in $\DM(k,R)$, we have
\[M^c(\Bun_{n,d})  \simeq M^c(B\GG_m)\{(n^2 -1 )(g-1) \} \otimes M^c(\Jac C)  \otimes \bigotimes_{i=2}^n Z(C, R\{-i\}). \]
\end{thm}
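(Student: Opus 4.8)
The plan is to derive the formula for $M^c(\Bun_{n,d})$ from the formula for $M(\Bun_{n,d})$ in Conjecture~\ref{conj formula} by applying Poincar\'e duality for exhaustive stacks (Proposition~\ref{prop PD for stacks}), which gives $M^c(\Bun_{n,d}) \simeq M(\Bun_{n,d})^\vee\{n^2(g-1)\}$ since $\Bun_{n,d}$ has dimension $n^2(g-1)$. So the task reduces to dualising the right-hand side of Conjecture~\ref{conj formula}, namely $M(\Jac(C)) \otimes M(B\GG_m) \otimes \bigotimes_{i=1}^{n-1} Z(C, R\{i\})$, and then carefully tracking the Tate twist.

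First I would recall that the dual is a monoidal functor on the subcategory of compact (dualisable) objects, but here the factors $M(B\GG_m)$ and $Z(C,R\{i\})$ are infinite direct sums, hence not compact, so one cannot naively commute $(-)^\vee$ past the tensor product and the infinite sums. The right move is to push the duality back through the homotopy colimit presentation: by Theorem~\ref{thm1} and Proposition~\ref{prop PD for stacks} (as already noted in the displayed equation preceding the theorem), $M^c(\Bun_{n,d})\{-n^2(g-1)\} \simeq \holim_l M(\Div_{n,d}(lD_0))\{-(n^2 l\deg D_0 - nd)\}$, where now each $\Div_{n,d}(lD_0)$ is a \emph{smooth projective} variety, so its motive is dualisable and $M(\Div_{n,d}(lD_0))^\vee \simeq M^c(\Div_{n,d}(lD_0))\{-\dim\} = M(\Div_{n,d}(lD_0))\{-(n^2 l\deg D_0 - nd)\}$ by Poincar\'e duality for schemes. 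I would then dualise the BB decomposition of Corollary~\ref{cor mot BB decomposition} termwise: $M(C^{(\underline m)})\{c_{\underline m}^+\}$ has dual $M(C^{(\underline m)})^\vee\{-c_{\underline m}^+\} \simeq M^c(C^{(\underline m)})\{-\dim C^{(\underline m)} - c_{\underline m}^+\}$, and since $C^{(\underline m)}$ is smooth projective this is again $M(C^{(\underline m)})\{-\dim C^{(\underline m)} - c_{\underline m}^+\}$. Running the argument of Theorem~\ref{thm3} with these dualised systems — and using that the transition maps $f_{\underline m, \underline m'}^\vee$ are, by Conjecture~\ref{thm2}, the duals of $M(a_{nD_0})$ and hence (via Poincar\'e duality for the symmetric powers) pushforward maps inducing the $\holim$ computations — produces the homotopy limit over the dualised projective-bundle systems. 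Concretely, each Abel--Jacobi $\PP^{m_1(l)-g}$-bundle contributes, after dualising and passing to $\holim$, a factor $M^c(B\GG_m)$ (cf.\ the proof of Theorem~\ref{thm3} where the colimit of $M(\PP^{m_1(l)-g})$ gave $M(B\GG_m)$), while $\Jac(C)$ being an abelian variety satisfies $M(\Jac C)^\vee \simeq M^c(\Jac C)\{-g\} \simeq M(\Jac C)\{-g\}$, and each $C^{(m_i^\flat)}$ contributes $M(C^{(m_i^\flat)})\{-\dim C^{(m_i^\flat)} - (i-1)m_i^\flat\}$; bundling the twists gives the Zeta functions $Z(C, R\{-i\})$ for $i = 2,\dots,n$ after reindexing.

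Finally I would assemble the Tate twist. Accounting for the three shifts — the overall $\{n^2(g-1)\}$ from the stack's dimension, the $\{-(n^2 l\deg D_0 - nd)\}$ absorbed by Poincar\'e duality on $\Div_{n,d}(lD_0)$, the codimension shifts $c_{\underline m}^+$, and the dimensions $\dim C^{(m_i^\flat)} = m_i^\flat$ and $\dim\Jac(C) = g$ — the bookkeeping should collapse to the stated $M^c(B\GG_m)\{(n^2-1)(g-1)\} \otimes M^c(\Jac C) \otimes \bigotimes_{i=2}^n Z(C, R\{-i\})$; note $(n^2-1)(g-1) = n^2(g-1) - (g-1)$, where the $-(g-1) = -g+1$ is the twist carried into $M^c(\Jac C)$ versus $M(\Jac C)$ together with a shift from $B\GG_m$, and it is worth checking this split is the natural one. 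I expect the main obstacle to be precisely this Tate-twist accounting across the infinitely many BB strata and the interchange of duality with the homotopy colimit: one must be careful that $(\hocolim)^\vee = \holim$ is applied at the level where all objects involved are compact (which holds, since it is applied to the system of smooth projective $\Div_{n,d}(lD_0)$ before decomposing), and that the dualised transition maps still satisfy a vanishing/pushforward dichotomy compatible with Conjecture~\ref{thm2}, so that the $\holim$ of each dualised $\PP$-bundle subsystem is computable and equals $M^c(B\GG_m)$ rather than something larger. A secondary check is that $M^c(B\GG_m)$ as defined in Example~\ref{ex mot BGm} (a homotopy limit, or equivalently an infinite product $\prod_{j\geq 0} R\{-j\}$ up to twist — cf.\ \cite[Lemma 8.5]{totaro}) is exactly what emerges, and that the infinite-product versus infinite-sum subtlety flagged in Proposition~\ref{prop PD for stacks} does not obstruct the final tensor-product form.
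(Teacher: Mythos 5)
Your opening move coincides with the paper's: apply Proposition \ref{prop PD for stacks} to get $M^c(\Bun_{n,d})\simeq M(\Bun_{n,d})^\vee\{n^2(g-1)\}$ and dualise the right-hand side of Conjecture \ref{conj formula}. But at the point where you declare factorwise dualisation impossible and instead re-run the geometric argument of Theorem \ref{thm3} in dualised form, two genuine problems appear. First, a hypothesis mismatch: your dualised transition maps are identified ``by Conjecture \ref{thm2}'', so your argument establishes the implication from Conjecture \ref{thm2}, not from Conjecture \ref{conj formula} as the statement requires. Since Theorem \ref{thm3} gives Conjecture \ref{thm2} $\Rightarrow$ Conjecture \ref{conj formula} and no converse is claimed, what you prove is formally weaker than the stated theorem. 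The paper's proof uses no geometry at all: it takes the isomorphism of Conjecture \ref{conj formula} as given and dualises it factor by factor, via $M(\Jac(C))^\vee\simeq M(\Jac(C))\{-g\}$, $M(B\GG_m)^\vee\simeq M^c(B\GG_m)\{1\}$ (Proposition \ref{prop PD for stacks}, $B\GG_m$ having dimension $-1$), and $Z(C,R\{i\})^\vee\simeq Z(C,R\{-(i+1)\})$, after which the twist bookkeeping you describe gives $(n^2-1)(g-1)$ exactly as you expect.

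Second, even granting Conjecture \ref{thm2}, your route terminates in homotopy limits and infinite products (a product over $\underline{m}^\flat$ of duals of the $P_{\underline{m}^\flat,*}$-colimits, each of which is itself an infinite product coming from the internal homomorphism object out of $M(B\GG_m)$), and the passage from these products to the direct sums $Z(C,R\{-i\})$ and the tensor-product shape of the statement is precisely the crux, which you defer as a ``secondary check''. The missing ingredient, and the one the paper actually uses, is Corollary \ref{cor sum prod}: the dual terms $M(C^{(j)})\{-lj\}$ with $l\geq 2$ lie in $\DM(k,R)_{j(1-l)}$ with $j(1-l)\to-\infty$, so the natural map from the direct sum to the product is an isomorphism; this is what turns $\prod_{j}M(C^{(j)})\{-(i+1)j\}$ into $Z(C,R\{-(i+1)\})$ and lets the answer reassemble as a tensor product. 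Your caution about dualising infinite sums and tensor products of non-compact motives is legitimate, but the cure is this vanishing result (applied after expanding everything into sums of compact motives with twists tending to $-\infty$), not a return to the Quot-scheme geometry; as written, the proposal both assumes more than the theorem allows and leaves the decisive sum-versus-product step unproved.
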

\begin{proof}
We apply Poincar\'{e} duality for smooth stacks (\textit{cf.}\ Proposition \ref{prop PD for stacks}) to the formula for $M(\Bun_{n,d})$ in Conjecture \ref{conj formula}. Let us first calculate the duals of the motives of the Jacobian of $C$ and of the classifying space $B\GG_m$, and of the motivic zeta function. As $\Jac(C)$ is smooth and projective of dimension $g$, we have by Poincar\'{e} duality
\[ M(\Jac(C))^\vee \simeq M^c(\Jac(C))\{ -g \} \simeq M(\Jac(C))\{ -g \}. \]
As $B\GG_m$ is a smooth quotient stack of dimension $-1$, we have by Proposition \ref{prop PD for stacks} that
\[ M(B\GG_m)^\vee \simeq M^c(B\GG_m)\{ 1\}.\]
As the dual of an infinite sum of motives is the infinite product of the dual motives, we have
\begin{align*}
 Z(C,R\{i\})^\vee &  =\left( \bigoplus_{j=0}^\infty M(C^{(j)})\{ij\} \right)^\vee = \prod_{j=0}^\infty \left(M(C^{(j)})\{ij\}\right)^\vee \simeq \prod_{j=0}^\infty M(C^{(j)})^\vee \{-ij\} \\ & \simeq  \prod_{j=0}^\infty M(C^{(j)})\{-j\} \{-ij\} =\prod_{j=0}^\infty M(C^{(j)}) \{-(i+1)j\},
\end{align*}
as the symmetric power $C^{(j)}$ of the curve $C$ is a smooth projective variety of dimension $j$.

By Corollary \ref{cor sum prod}, we have that for $l \geq 2$, the natural morphism in $\DM(k,R)$
\[ \bigoplus_{j=0}^\infty M(C^{(j)})\{-lj\} \ra \prod_{j=0}^\infty M(C^{(j)})\{-lj\} \]
is an isomorphism. Hence, $Z(C,R\{i\})^\vee \simeq Z(C,R\{-(i+1)\})$ for $i \geq 1$.

As $\Bun_{n,d}$ is a smooth stack of dimension $n^2(g-1)$, Poincar\'{e} duality gives
\begin{align*}
M^c(\Bun_{n,d}) & \simeq M(\Bun_{n,d})^\vee\{n^2(g-1) \} \\ & \simeq  M(B\GG_m)^\vee \otimes M(\Jac(C))^\vee \otimes \bigotimes_{i=1}^{n-1} Z(C, \QQ\{i\})^\vee\{n^2(g-1) \} \\ 
& \simeq M^c(B\GG_m)\{(n^2 -1 )(g-1) \} \otimes M^c(\Jac C)  \otimes \bigotimes_{i=2}^n Z(C, \QQ\{-i\})
\end{align*} 
which gives the above formula.
\end{proof}

\subsection{Comparison with previous results}\label{sec compare}

In this section, we compare our conjectural formula for the (compactly supported) motive of $\Bun_{n,d}$ (\textit{cf.}\ Theorem \ref{compact supp}) with other results concerning topological invariants of $\Bun_{n,d}$. One of the first formulae to appear in the literature, was a computation of the stacky point count of $\Bun_{n,d}$ over a finite field $\FF_q$, which is defined as
\[ |\Bun_{n,d}(\FF_q)|_{\text{st}}:= \sum_{E \in \Bun_{n,d}(\FF_q)} \frac{1}{|\Aut(E)|}. \]

\begin{thm}[Harder]\label{thm harder}
Over a finite field $\FF_q$, we have
\[  |\Bun_{n,d}(\FF_q)|_{\text{st}}= \frac{q^{(n^2-1)(g-1)}}{q-1} |\Jac(C)(\FF_q)| \prod_{i=2}^n \zeta_C(q^{-i}) \]
where $\zeta_C$ denotes the classical Zeta function of $C$.
\end{thm}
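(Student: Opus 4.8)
The plan is to use Weil's adelic description of $\Bun_{n,d}$ together with Harder's computation of the Tamagawa number of $\SL_n$ over a global function field. Write $F = \FF_q(C)$ for the function field, $\AA = \AA_F$ for its adele ring, and $\cO_\AA = \prod_v \cO_v$ for the maximal compact subring. Weil's dictionary identifies the groupoid $\Bun_{n,d}(\FF_q)$ with the quotient groupoid attached to the double coset space $\GL_n(F)\backslash \GL_n(\AA)^{(d)}/\GL_n(\cO_\AA)$, where $\GL_n(\AA)^{(d)}$ is the fibre over $d$ of the degree-of-determinant map $\GL_n(\AA)\ra \ZZ$, and the automorphism group of the bundle attached to a class $[g]$ is $\GL_n(F)\cap g\GL_n(\cO_\AA)g^{-1}$. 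Consequently, for any Haar measure on $\GL_n(\AA)$ one has
\[ |\Bun_{n,d}(\FF_q)|_{\text{st}} = \frac{\vol\bigl(\GL_n(F)\backslash \GL_n(\AA)^{(d)}\bigr)}{\vol\bigl(\GL_n(\cO_\AA)\bigr)}, \]
and in particular the count is independent of $d$.

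I would then evaluate both volumes for the Tamagawa measure. The local volumes are elementary: $\vol(\GL_n(\cO_v)) = q_v^{-n^2}|\GL_n(\FF_{q_v})| = \prod_{i=1}^n (1 - q_v^{-i})$, so formally $\vol(\GL_n(\cO_\AA)) = \prod_{i=1}^n \prod_v (1-q_v^{-i}) = \prod_{i=1}^n \zeta_C(q^{-i})^{-1}$, the divergent $i=1$ Euler factor being reserved to interact with the pole of $\zeta_C$. For the global volume I would use the short exact sequence $1 \ra \SL_n \ra \GL_n \xrightarrow{\det} \GG_m \ra 1$ to express $\vol(\GL_n(F)\backslash\GL_n(\AA)^{(d)})$ as a product of the Tamagawa number $\tau(\SL_n)$, the volume of the norm-one idele class group $F^\times\backslash\AA^\times_1$, and a change-of-measure power of $q$ comparing the Tamagawa form on $\SL_n$ with the counting normalisation. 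This power works out to $q^{(n^2-1)(g-1)} = q^{(g-1)\dim\SL_n}$ (it records the $\SL_n$-part of $\dim\Bun_{n,d} = n^2(g-1)$ via $\deg\omega_C = 2g-2$ and Riemann--Roch). Harder's theorem gives $\tau(\SL_n)=1$, and Tate's thesis / the analytic class number formula for $C$ computes $\vol(F^\times\backslash\AA^\times_1)$ in terms of $|\Pic^0(C)(\FF_q)| = |\Jac(C)(\FF_q)|$ and $q-1$, with the divergent $i=1$ factor of $\vol(\GL_n(\cO_\AA))$ being exactly what converts the pole of $\zeta_C$ at $q^{-1}$ into a finite residue. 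Assembling the pieces and dividing yields the stated formula, with surviving Euler product $\prod_{i=2}^n \zeta_C(q^{-i})$.

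The one genuine input, which I would not attempt to reprove, is Harder's theorem that the Tamagawa number of $\SL_n$ over a function field equals $1$; everything else is bookkeeping. The main technical obstacle in that bookkeeping is keeping the normalisations straight: one must show that the divergent $i=1$ local factors, the divergent total volume of $\GL_n(F)\backslash\GL_n(\AA)$ over all degrees, and the class number formula conspire to produce the finite answer $\frac{1}{q-1}|\Jac(C)(\FF_q)|$, and one must correctly pin down the global change-of-measure constant as $q^{(n^2-1)(g-1)}$. As an alternative that connects with the rest of this paper, one can avoid adeles and instead run the matrix-divisor argument at the level of point counts: point-count the smooth projective Quot schemes $\Div_{n,d}(lD)$ using the Bia{\l}ynicki--Birula decomposition of Corollary \ref{cor mot BB decomposition} and pass to the limit, which is the $\FF_q$-point shadow of Theorem \ref{thm1} and requires controlling the limit of point counts in parallel with the vanishing results of $\S\ref{sec:vanish-results-homot}$.
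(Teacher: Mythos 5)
The paper does not actually prove Theorem \ref{thm harder}: it is quoted as a known result and attributed to Harder, so there is no internal argument to compare against. Your adelic route is the standard proof underlying that citation, and its skeleton is correct: Weil's uniformization identifies the groupoid $\Bun_{n,d}(\FF_q)$ with $\GL_n(F)\backslash\GL_n(\AA)^{(d)}/\GL_n(\cO_\AA)$ with the stated automorphism groups, the mass becomes a ratio of volumes (independent of $d$ because the degree map on ideles is surjective onto $\ZZ$), the local volumes give $\prod_{i=1}^n\zeta_C(q^{-i})^{-1}$ formally, and the exact sequence $1\ra\SL_n\ra\GL_n\ra\GG_m\ra 1$ together with $\tau(\SL_n)=1$ and the class number formula for $C$ produces $q^{(n^2-1)(g-1)}$, $\frac{|\Jac(C)(\FF_q)|}{q-1}$ and the surviving factors $\prod_{i=2}^n\zeta_C(q^{-i})$. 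Two caveats. First, what you call bookkeeping is genuinely the delicate part and is only asserted, not carried out: the $i=1$ Euler product $\prod_v(1-q_v^{-1})$ is divergent (to $0$), so one cannot literally ``reserve it to interact with the pole''; the clean way is to run the volume computation for $\SL_n$ (where no convergence factors are needed) and for the norm-one idele class group separately, and to verify the measure comparison that yields the exponent $(n^2-1)(g-1)$ from $\deg\omega_C=2g-2$. Second, be aware of the near-circularity: the mass formula of Theorem \ref{thm harder} is essentially equivalent to $\tau(\SL_n)=1$ over function fields, so your argument is only non-circular because Harder established the Tamagawa number statement independently (via reduction theory and Eisenstein series); citing it is legitimate, but the theorem you are proving and the theorem you are citing are two faces of the same result rather than input and output of a genuinely new derivation. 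Your alternative suggestion, point-counting the Quot schemes $\Div_{n,d}(lD)$ via the decomposition of Corollary \ref{cor mot BB decomposition} and passing to the limit, is the counting shadow of Theorem \ref{thm1} and of the Behrend--Dhillon formula (Theorem \ref{thm BD formula}); it would need the same care with convergence of the limit as the codimension estimates in the paper, but it is a coherent second route.
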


We note that our conjectural formula for $M_c(\Bun_{n,d})$ is a direct translation of this formula, when one replaces a variety (or stack) by its (stacky) point count, $q^k$ by $R\{k\}$, and the classical Zeta function by the motivic Zeta function. In fact, the $\ell$-adic realisation of Theorem \ref{compact supp} should imply Theorem \ref{thm harder} by using Behrend's Lefschetz trace formula for the stack $\Bun_{n,d}$; since this relies on a conjecture, we do not provide the details.

Behrend and Dhillon \cite{BD} give a conjectural description for the class of the stack of principal $G$-bundles over $C$ in a dimensional completion $\widehat{K}_0(\Var_k)$ of the Grothendieck ring of varieties when $G$ is a semisimple group and, moreover, they prove their formula for $G = \SL_{n}$ by following the geometric arguments in \cite{bgl}. In fact, in \cite{BD}, it is also implicitly assumed that $C$ has a rational point in order to use the same argument involving Abel-Jacobi maps. By a minor modification of their computation, one obtains the following formula for the class of $\Bun_{n,d}$.

\begin{thm}[Behrend--Dhillon]\label{thm BD formula}
In $\widehat{K}_0(\Var_k)$, the class of $\Bun_{n,d}$ is given by
\[ [\Bun_{n,d}] = \LL^{(n^2-1)(g-1)}[B\GG_m][\Jac(C)]\prod_{i=2}^n Z(C,\LL^{-i}) \]
where $\LL := [\AA^1]$ and $Z(C,t):=\sum_{j\geq 0} [C^{(j)}]t^j$.
\end{thm}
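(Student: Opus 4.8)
The plan is to run the matrix-divisor argument of \cite{bgl} and \cite{BD} directly in the dimensional completion $\widehat{K}_0(\Var_k)$, where $\LL$ and each $\LL^{j}-1$ (hence the classes of the groups $\GL_N$) are invertible, replacing the flawed codimension estimate \cite[Lemma 8.2]{bgl} as quoted in \cite{BD} by Lemmas~\ref{codim bgl correct} and \ref{codim brill noether}, and adjusting the (essentially degree-zero) computation of \cite{BD} to arbitrary $d$; as in \cite{BD}, one uses $C(k)\neq\emptyset$. First I would fix an effective divisor $D_0$ of degree $d_0>0$ and recall from Theorem~\ref{prop exh seq} that $U_l := \Div_{n,d}^{\leq\mu_l}(lD_0)$ is an open subvariety of the vector bundle $V_l\ra\Bun_{n,d}^{\leq\mu_l}$, of rank $\rk V_l = n^2 l d_0 - nd - n^2(g-1)$ by Riemann--Roch, whose complement has codimension tending to infinity with $l$. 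Hence $[\Bun_{n,d}^{\leq\mu_l}] = [U_l]\LL^{-\rk V_l}$ modulo classes of dimension $\to -\infty$, and by \cite[Proposition 5.2(4)]{bgl} the complement of $U_l$ inside $\Div_{n,d}(lD_0)$ also has codimension tending to infinity, so one may replace $U_l$ by the smooth projective Quot scheme $\Div_{n,d}(lD_0)$:
\[ [\Bun_{n,d}] = \lim_{l}\ [\Div_{n,d}(lD_0)]\,\LL^{-\rk V_l}\qquad\text{in } \widehat{K}_0(\Var_k). \]

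Next I would apply the class-level Bia{\l}ynicki--Birula decomposition underlying Corollary~\ref{cor mot BB decomposition}: the attracting cells $\Div_{n,d}(lD_0)_{\underline m}^{+}$ are Zariski-locally trivial affine fibrations over the fixed loci $C^{(\underline m)}$, so the scissor relations, together with $\dim\Div_{n,d}(lD_0) = n^2 l d_0 - nd = n\sum_i m_i$, $\dim C^{(\underline m)} = \sum_i m_i$ and the cell codimension $c_{\underline m}^{+} = \sum_i(i-1)m_i$, give
\[ [\Div_{n,d}(lD_0)] = \sum_{\underline m\,\dashv\, nld_0-d}[C^{(\underline m)}]\,\LL^{\sum_i(n-i)m_i}. \]
Since $\rk V_l = \dim\Div_{n,d}(lD_0) - n^2(g-1)$, this rewrites as
\[ [\Div_{n,d}(lD_0)]\,\LL^{-\rk V_l} = \LL^{n^2(g-1)}\sum_{\underline m\,\dashv\, nld_0-d}\Big(\prod_i[C^{(m_i)}]\Big)\LL^{-\sum_i i\,m_i}, \]
which, writing $\underline m^{\flat} := (m_2,\dots,m_n)$ and $m_1 := (nld_0-d)-\sum_{i\geq2}m_i$, equals $\LL^{n^2(g-1)}$ times $\sum_{\underline m^{\flat}}\big([C^{(m_1)}]\LL^{-m_1}\big)\prod_{i=2}^{n}[C^{(m_i)}]\LL^{-im_i}$.

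To pass to the limit, I would fix $\underline m^{\flat}$; then $m_1\ra\infty$, so for $l\gg0$ we have $m_1>2g-2$, and since $C(k)\neq\emptyset$ the Abel--Jacobi map $C^{(m_1)}\ra\Jac(C)$ is a $\PP^{m_1-g}$-bundle (\textit{cf.}\ the proof of Lemma~\ref{lemma hocolim sym} and Remark~\ref{rmk litt}). Hence $[C^{(m_1)}]\LL^{-m_1} = [\Jac(C)]\sum_{j=g}^{m_1}\LL^{-j}$, which converges in $\widehat{K}_0(\Var_k)$ to $[\Jac(C)]\,\LL^{1-g}/(\LL-1) = \LL^{1-g}[B\GG_m][\Jac(C)]$, using $[B\GG_m] = 1/(\LL-1)$. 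Because $[C^{(\underline m^{\flat})}]\LL^{-\sum_{i\geq2}im_i}$ has dimension $-\sum_{i\geq2}(i-1)m_i\ra-\infty$, the sum over $\underline m^{\flat}\in\NN^{n-1}$ converges and commutes with $\lim_l$, so
\[ [\Bun_{n,d}] = \LL^{n^2(g-1)}\cdot\LL^{1-g}[B\GG_m][\Jac(C)]\cdot\prod_{i=2}^{n}\Big(\sum_{m\geq0}[C^{(m)}]\LL^{-im}\Big) = \LL^{(n^2-1)(g-1)}[B\GG_m][\Jac(C)]\prod_{i=2}^{n}Z(C,\LL^{-i}), \]
using $\LL^{n^2(g-1)+1-g} = \LL^{(n^2-1)(g-1)}$ and $\sum_{m\geq0}[C^{(m)}]\LL^{-im} = Z(C,\LL^{-i})$.

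The one genuinely delicate input is the codimension bound behind the first display: \cite[Lemma 8.2]{bgl} as used in \cite{BD} does not apply in the relevant range, and must be replaced by Lemmas~\ref{codim bgl correct} and \ref{codim brill noether} combined with the argument in the proof of Theorem~\ref{prop exh seq}. Every other step is the $\widehat{K}_0(\Var_k)$-shadow of the proofs of Theorems~\ref{thm1} and \ref{thm3}, with one essential simplification: no analogue of Conjecture~\ref{thm2} is needed, since in the Grothendieck ring one only tracks the classes of the Bia{\l}ynicki--Birula cells, never the transition morphisms between their decompositions. Accordingly, this unconditional identity is exactly the expected $K_0(\Var_k)$-realisation of the conjectural formula of Theorem~\ref{compact supp} for $M^c(\Bun_{n,d})$.
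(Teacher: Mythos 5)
Your argument is correct and is essentially the proof the paper points to: Theorem \ref{thm BD formula} is quoted from Behrend--Dhillon with ``a minor modification of their computation'', and that computation is exactly your matrix-divisor plus Bia{\l}ynicki--Birula cell count in $\widehat{K}_0(\Var_k)$, with the codimension estimate repaired via Lemmas \ref{codim bgl correct} and \ref{codim brill noether} just as the paper does for the motivic statement in Theorem \ref{prop exh seq}, and with the rational point entering through the Abel--Jacobi projective bundles as in Lemma \ref{lemma hocolim sym} and Remark \ref{rmk litt}. Your closing remark that no analogue of Conjecture \ref{thm2} is needed, since in $K_0$ one only records the classes of the cells and never the transition morphisms, is precisely the point the paper makes when comparing the conjectural formula for $M^c(\Bun_{n,d})$ with the Behrend--Dhillon class in $\S$\ref{sec compare}.
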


It is possible to compare their formula with our conjectural formula for $M^c(\Bun_{n,d})$ if one passes to the Grothendieck ring of a certain dimensional completion of $\DM(k,R)$. We sketch this comparison here. First note that it does not make sense to work with the Grothendieck ring of $\DM(k,R)$, as this category is cocomplete and so by the Eilenberg swindle, $K_0(\DM(k,R)) \simeq 0$. Instead, we will follow the lines of Zargar \cite[$\S$3]{Zargar} and work with a completion of $\DM(k,R)$; however, note that Zargar works with effective motives and completes with respect to the slice filtration and we will instead complete with respect to the dimensional filtration.

For this completion process, one would like to take limits of projective systems of triangulated categories, but the category of triangulated categories is unsuitable for this task; hence, in the rest of this section, we let $\DM(k,R)$ denote the symmetric monoidal stable $\infty$-category underlying Voevodsky's category (associated to the model category of motivic complexes discussed in Section 2) . As in Definition \ref{defn dim filtr}, given $m\in\ZZ$, we write $\DM_{\gm}(k,R)_{r}$ (resp.\ $\DM(k,R)_{r}$) for the full sub-$\infty$-category (resp.\ presentable sub-$\infty$-category) of $\DM(k,R)$ generated by motives of the form $M^{c}(X)(r)$ with $X$ being a separated finite type $k$-scheme with $\dim(X)+r\leq n$. The localisations $\DM_{(\gm)}(k,R)/\DM_{(\gm)}(k,R)_{r}$ are symmetric monoidal $\infty$-categories (presentable in the non-geometric case), as this filtration is symmetric monoidal. One can then define
\[
\DM^{\wedge}_{(\gm)}(k,R) := \lim_{r\in \NN} \DM_{(\gm)}(k,R)/\DM_{(\gm)}(k,R)_{-r}
  \]
  as symmetric monoidal $\infty$-categories. By localisation, the functor $M^{c}:\Var_{k}\ra\DM_{\gm}(k,R)$ induces a ring morphism
  \[\chi_{c}:\widehat{K}_{0}(\Var_{k})\ra K_{0}(\DM^{\wedge}_{\gm}(k,R)).\]

One can then adapt the argument in \cite[Lemma 3.2]{Zargar} to show that the two functors
\[
\DM_{\gm}(k,R)\ra \DM^{\wedge}_{\gm}(k,R) \ra \DM^{\wedge}(k,R)
\]
are fully faithful. There is also a functor $(-)^{\wedge}:\DM(k,R)\ra \DM^{\wedge}(k,R)$. The following result is then clear from comparing the two formulas and using that $M(C^{(j)})\{-ij\}$ lies in $\DM_{\gm}(k,R)_{j(1-i)}\subset \DM_{\gm}(k,R)_{-j}$ for $i\geq 2$.

\begin{lemma}\label{lemma comp BD}
  Assume that $C(k)\neq\emptyset$ and that Conjecture \ref{conj formula} holds. Then $M^{c}(\Bun_{n,d})^{\wedge}$ lies in $\DM^{\wedge}_{\gm}(k,R)$ and we have $\chi_{c}[\Bun_{n,d}]=[M^{c}(\Bun_{n,d})^{\wedge}]$ in $K_{0}(\DM^{\wedge}_{\gm}(k,R))$.
\end{lemma}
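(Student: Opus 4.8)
The plan is to combine the tensor factorisation of $M^{c}(\Bun_{n,d})$ coming from Conjecture~\ref{conj formula} via Theorem~\ref{compact supp} with the Behrend--Dhillon identity of Theorem~\ref{thm BD formula}, matching the two factor by factor after applying $\chi_{c}$. First I would verify that $M^{c}(\Bun_{n,d})^{\wedge}$ lands in the geometric subcategory. By Theorem~\ref{compact supp}, $M^{c}(\Bun_{n,d})$ is a tensor product of a Tate twist of $M^{c}(B\GG_m)$, the motive $M^{c}(\Jac C)$, and the zeta motives $Z(C,R\{-i\})$ for $2\le i\le n$. The factor $M^{c}(\Jac C)$ is already a geometric motive and $\DM_{\gm}(k,R)\to\DM^{\wedge}_{\gm}(k,R)$ is fully faithful, so it remains to treat the other two. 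Using Poincaré duality for stacks (Proposition~\ref{prop PD for stacks}) applied to $M(B\GG_m)\simeq\bigoplus_{j\ge 0}R\{j\}$ together with Corollary~\ref{cor sum prod}, one has $M^{c}(B\GG_m)\simeq\bigoplus_{j\ge 1}R\{-j\}$; since $R\{-j\}\in\DM_{\gm}(k,R)_{-j}$ with $-j\to-\infty$, the image of this sum in each localisation $\DM_{\gm}(k,R)/\DM_{\gm}(k,R)_{-r}$ is the \emph{finite} sum $\bigoplus_{1\le j<r}R\{-j\}$, hence geometric, so $M^{c}(B\GG_m)^{\wedge}\in\DM^{\wedge}_{\gm}(k,R)$. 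The same argument applies to $Z(C,R\{-i\})=\bigoplus_{j\ge 0}M(C^{(j)})\{-ij\}$ using the estimate $M(C^{(j)})\{-ij\}\in\DM_{\gm}(k,R)_{j(1-i)}$ (valid because $C^{(j)}$ is smooth projective of dimension $j$), whose filtration degrees tend to $-\infty$ precisely when $i\ge 2$; hence $Z(C,R\{-i\})^{\wedge}\in\DM^{\wedge}_{\gm}(k,R)$. As $\DM^{\wedge}_{\gm}(k,R)$ is monoidal, $M^{c}(\Bun_{n,d})^{\wedge}\in\DM^{\wedge}_{\gm}(k,R)$.

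For the equality of classes I would apply the ring homomorphism $\chi_{c}$ to the identity of Theorem~\ref{thm BD formula}. Using $\chi_{c}(\LL)=[M^{c}(\AA^{1})^{\wedge}]=[R\{1\}]$ and $\chi_{c}([X])=[M^{c}(X)^{\wedge}]$ on varieties, this yields
\[
\chi_{c}[\Bun_{n,d}]=[R\{1\}]^{(n^2-1)(g-1)}\cdot\chi_{c}[B\GG_m]\cdot[M^{c}(\Jac C)^{\wedge}]\cdot\prod_{i=2}^{n}\chi_{c}\bigl(Z(C,\LL^{-i})\bigr).
\]
Expanding $[B\GG_m]=\sum_{j\ge 1}\LL^{-j}$ in $\widehat K_{0}(\Var_k)$ gives $\chi_{c}[B\GG_m]=[M^{c}(B\GG_m)^{\wedge}]$, by comparing the images in each truncation of the completion with the finite sums identified above; expanding $Z(C,\LL^{-i})=\sum_{j\ge 0}[C^{(j)}]\LL^{-ij}$ and using that $C^{(j)}$ is proper, so $M^{c}(C^{(j)})=M(C^{(j)})$, gives $\chi_{c}(Z(C,\LL^{-i}))=[Z(C,R\{-i\})^{\wedge}]$ in the same way. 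Since $[M\otimes N]=[M]\cdot[N]$ and $[M\{k\}]=[R\{1\}]^{k}[M]$ in $K_{0}$, the right-hand side above is the class of $M^{c}(B\GG_m)\{(n^2-1)(g-1)\}\otimes M^{c}(\Jac C)\otimes\bigotimes_{i=2}^{n}Z(C,R\{-i\})$, which by Theorem~\ref{compact supp} equals $M^{c}(\Bun_{n,d})$; applying $(-)^{\wedge}$ we conclude $\chi_{c}[\Bun_{n,d}]=[M^{c}(\Bun_{n,d})^{\wedge}]$.

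The only real work is the bookkeeping around the completion: one must check that $(-)^{\wedge}$ carries the countable direct sums above (whose summands eventually leave every bounded range of the dimensional filtration) to the objects represented level-wise by the corresponding finite sums, and that $\chi_{c}$ evaluated on the convergent series $\sum_{j}\LL^{-j}$ and $\sum_{j}[C^{(j)}]\LL^{-ij}$ is compatible with these identifications in $K_{0}(\DM^{\wedge}_{\gm}(k,R))$. This is the kind of compatibility established in \cite{Zargar}, and here it is routine given that the truncation functors $\DM_{\gm}(k,R)\to\DM_{\gm}(k,R)/\DM_{\gm}(k,R)_{-r}$ are monoidal localisations; no geometric input beyond Theorems~\ref{compact supp} and~\ref{thm BD formula} is required.
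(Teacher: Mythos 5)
Your proposal is correct and follows essentially the same route as the paper: the paper's proof consists precisely of comparing the formula of Theorem~\ref{compact supp} with the Behrend--Dhillon formula of Theorem~\ref{thm BD formula}, using the same estimate $M(C^{(j)})\{-ij\}\in\DM_{\gm}(k,R)_{j(1-i)}\subset\DM_{\gm}(k,R)_{-j}$ for $i\geq 2$ to control convergence in the dimensional completion. You simply spell out the factor-by-factor bookkeeping (the $M^c(B\GG_m)$ and zeta factors, properness of $C^{(j)}$, truncation-level finiteness) that the paper leaves as ``clear''.
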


Note that, as in \cite{Zargar}, it is not clear how small $K_{0}(\DM^{\wedge}_{\gm}(k,R))$ is, and in particular if the natural map $K_{0}(\DM_{\gm}(k,R))\ra K_{0}(\DM^{\wedge}_{\gm}(k,R))$ is injective, which limits the interest of such a comparison. To go further, one could try to introduce an analogue of the ring $\cM(k,R)$ in \cite{Zargar}; however, we do not pursue this here.

\subsection{Vector bundles with fixed determinant and $\SL_n$-bundles}
\label{sec fixed det}
For a line bundle $L$ on $C$ of degree $d$, we let $\Bun_{n,d}^L$ denote the stack of rank $n$ degree $d$ vector bundles over $C$ with determinant isomorphic to $L$, and we let $\Bun_{n,d}^{\simeq L}$ be the stack of pairs $(E,\phi)$ with $E$ a rank $n$ degree $d$ vector bundle and $\phi:\det(E)\simeq L$. We have the following diagram of algebraic stacks with cartesian squares
\[
  \xymatrix{
\Bun^{\simeq L}_{n,d}  \ar[d] \ar[r] & \Spec(k) \ar[d] & \\
\Bun^L_{n,d} \ar[r] \ar[d] & B\GG_m \ar[r] \ar[d] & \Spec(k) \ar[d]_{L} \\
\Bun_{n,d} \ar[r]^{\det} & \Bun_{1,d} \ar[r] & \Pic^{d}(C)
  }
  \]
where the morphism $\det$ is smooth and surjective. In fact, the bottom left square is cartesian by the See-saw Theorem. We see that $\Bun^{L}_{n,d}$ is thus a closed smooth substack of $\Bun_{n,d}$ of codimension $g$, and that $\Bun^{\simeq L}_{n,d}\ra \Bun^{L}_{n,d}$ is a $\GG_m$-torsor. Moreover, when $d=0$ and $L=\cO_{C}$, the stack $\Bun_{n,0}^{\simeq \cO_C}$ is actually isomorphic to the stack $\Bun_{\SL_n}$ of principal $\SL_n$-bundles; indeed, the vector bundle associated to an $\SL_n$-bundle via the standard representation has its determinant bundle canonically trivialised. For more details on this picture, in the more general case of principal bundles, see \cite[\S 1-2]{BLS}.

We will use these facts to compute the motive of $\Bun_{\SL_n}$ assuming Conjecture~\ref{thm2}. First, we claim that the smooth stacks $\Bun^{L}_{n,d}$ and $\Bun^{\simeq L}_{n,d}$ are exhaustive; the claim for the latter follows from the claim for the former by Proposition \ref{funct rep quot stacks}, as the $\GG_m$-torsor $\Bun^{\simeq L}_{n,d}\ra \Bun^{L}_{n,d}$ is representable and flat of finite type. To prove that $\Bun^{L}_{n,d}$ is exhaustive, we use matrix divisors with fixed determinant $L$. For every effective divisor $D$ on $C$, let $\Div_{n,d}^L(D)$ be the subvariety of $\Div_{n,d}(D)$ parametrising matrix divisors with determinant $L$; this is a smooth closed subscheme of codimension $g$ (see \cite[\S 6]{Dhillon_tamagawa}). Then one can prove that $\Bun_{n,d}^L$ is exhaustive analogously to Proposition \ref{prop exh seq} by using matrix divisors with fixed determinant $L$; for the relevant codimension estimates, see \cite[\S 6]{BD}. 

\begin{prop}\label{conj for fixed det}
For an effective divisor $D$ on $C$, the closed immersion $j_D : \Div_{n,d}^L(D) \ra \Div_{n,d}(D)$ is transverse to the BB strata in $\Div_{n,d}(D)$. Hence, Conjecture~\ref{thm2} implies the analogous statement for the Quot schemes of matrix divisors with fixed determinant.
\end{prop}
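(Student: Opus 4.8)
The plan is to prove the transversality statement by a weight-space computation at the $\GG_m$-fixed points, and then to obtain the fixed-determinant analogue of Conjecture~\ref{thm2} by exhibiting the whole Bia{\l}ynicki-Birula package of $\Div^L_{n,d}(D)$ as the transverse base change along $j_D$ of that of $\Div_{n,d}(D)$. The starting point for both parts is the observation that the determinant morphism $\det\colon\Div_{n,d}(D)\ra\Pic^{d}(C)$, sending a matrix divisor $E\hookrightarrow\cO_C(D)^{\oplus n}$ to the class of $\det(E)$, is $\GL_n$-invariant: precomposing the inclusion with an automorphism of $\cO_C(D)^{\oplus n}$ does not alter $E$ as an abstract sheaf. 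In particular $\det$ is invariant for the chosen one-parameter subgroup $\GG_m\hookrightarrow\GL_n$, so $\Div^L_{n,d}(D)=\det^{-1}([L])$ is a $\GG_m$-stable (smooth, codimension $g$) closed subscheme. Transversality of $j_D$ to a BB stratum $\Div_{n,d}(D)^{+}_{\underline m}$ is an open condition on $\Div^L_{n,d}(D)\cap\Div_{n,d}(D)^{+}_{\underline m}$ (all three schemes being smooth) and is $\GG_m$-stable; since every point of $\Div_{n,d}(D)^{+}_{\underline m}$ flows as $t\ra 0$ to a point of $C^{(\underline m)}$, and $\Div^L_{n,d}(D)$ is closed and $\GG_m$-stable, it suffices to check transversality at the fixed points $y_0\in C^{(\underline m)}\cap\Div^L_{n,d}(D)$. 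There I would decompose $T_{y_0}\Div_{n,d}(D)$ into $\GG_m$-weight spaces: by \cite{BB_original}, $T_{y_0}\Div_{n,d}(D)^{+}_{\underline m}$ is the sum of the weight spaces of non-negative weight, while $T_{y_0}\Div^L_{n,d}(D)=\ker\big((d\det)_{y_0}\colon T_{y_0}\Div_{n,d}(D)\ra T_{[L]}\Pic^{d}(C)\big)$ contains the sum of the weight spaces of non-zero weight, because $(d\det)_{y_0}$ is $\GG_m$-equivariant for the trivial action on the target. These two subspaces span $T_{y_0}\Div_{n,d}(D)$, which gives transversality. (The same computation shows $j_D$ is transverse to each fixed-point component $C^{(\underline m)}$ as well.)

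For the second assertion, I would consider the square whose vertical arrows are the transition closed immersions $i_{D,D'}\colon\Div_{n,d}(D)\ra\Div_{n,d}(D')$ and $i^L_{D,D'}\colon\Div^L_{n,d}(D)\ra\Div^L_{n,d}(D')$ and whose horizontal arrows are $j_D$ and $j_{D'}$. It is cartesian: a matrix divisor over $D$ whose image in $\Div_{n,d}(D')$ has determinant $L$ already has determinant $L$. All four maps are closed immersions of smooth $\GG_m$-varieties, and $j_D$, $j_{D'}$ are transverse to the BB strata by the first part. Transversality together with the cartesian property identifies each BB stratum $\Div_{n,d}(D)^{+}_{\underline m}\cap\Div^L_{n,d}(D)$ as a smooth subvariety of the same codimension $c^{+}_{\underline m}$, with fixed locus $C^{(\underline m)}_L:=C^{(\underline m)}\cap\Div^L_{n,d}(D)$ and attracting affine fibration pulled back from that of $\Div_{n,d}(D)^{+}_{\underline m}$. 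Since the motivic BB decomposition of Corollary~\ref{cor mot BB decomposition} is assembled from the Gysin/localisation triangles for the filtration by closures of strata, and these are functorial for transverse cartesian squares (as already exploited in the proof of Proposition~\ref{kunneth}(iii), via \cite{Deglise_Gysin_II}), the decomposition isomorphism for $\Div^L_{n,d}(D)$ and the induced map $M(i^L_{D,D'})$ on these decompositions are obtained by base change along $j_D$, $j_{D'}$ from the corresponding data for $\Div_{n,d}(D)$. Plugging in the description of the $k_{\underline m,\underline m'}$ furnished by Conjecture~\ref{thm2} and base changing it then yields the analogue for $\Div^L_{n,d}(D)$: on the BB decompositions, $M(i^L_{D,D'})$ is the sum of $M(f^{L}_{\underline m,\underline m'})\{c_{\underline m}\}$ for $\underline m'=\underline m+(n\deg(D'-D),0,\dots,0)$ and zero otherwise, where $f^{L}_{\underline m,\underline m'}$ is the restriction of $f_{\underline m,\underline m'}$ to $C^{(\underline m)}_L$, which does land in $C^{(\underline m')}_L$ since adding $n(D'-D)$ to the first divisor shifts $\det$ by $\cO_C(n(D'-D))$ and hence carries the determinant-$L$ fibre for $D$ to that for $D'$. (One may split the $C^{(\underline m)}_L$ into connected components if desired; these are fibres of generalised Abel--Jacobi maps.) Feeding this into the argument of Theorem~\ref{thm3} then gives the promised formulae for $M(\Bun^L_{n,d})$ and $M(\Bun_{\SL_n})$.

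The first part is entirely clean; the main obstacle is the step in the second part that upgrades the transversality into the statement that the whole motivic BB package of $\Div^L_{n,d}(D)$ — the decomposition isomorphism, and especially the transition morphism $M(i^L_{D,D'})$ — is literally the transverse base change of that of $\Div_{n,d}(D)$, which requires carefully tracking the Gysin triangles and the splitting isomorphisms through the cartesian square. A coarser alternative is to descend to Chow groups via Remark~\ref{rmk conj int thy implies conj motives}, where the transverse pullback $j_D^{*}$ is tautologically compatible with the BB decompositions; but there one needs the extra input that $j_{D'}^{*}$ is surjective in order to pin down $i^{L*}_{D,D'}$, which holds because the $C^{(\underline m')}_L$ are projective bundles over products of symmetric powers of $C$ for the partitions occurring in the inductive system — so the cartesian base-change formulation is the one I would pursue.
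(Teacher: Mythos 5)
Your overall strategy matches the paper's: establish transversality of $j_D$ to the BB strata, then use it to transport Conjecture~\ref{thm2} to the fixed-determinant Quot schemes. For the transversality itself you take a genuinely different route: the paper simply quotes \cite[\S 6]{BD} for the equality of codimensions $c^+_{\underline m}$ of the strata in $\Div^L_{n,d}(D)$ and $\Div_{n,d}(D)$ and for the identification of the relevant normal bundles, whereas you give a self-contained equivariant argument, computing weights at the fixed points and propagating transversality over the whole stratum using that the non-transverse locus is closed and $\GG_m$-stable and that every point flows to a fixed point inside the closed $\GG_m$-stable subvariety $\Div^L_{n,d}(D)$. The propagation step and the identification of $T_{y_0}\Div_{n,d}(D)^+_{\underline m}$ with the non-negative weight spaces are fine, but there is one unjustified link: you need every nonzero-weight subspace of $T_{y_0}\Div_{n,d}(D)$ to lie in $T_{y_0}\Div^L_{n,d}(D)$, and you deduce this from the asserted equality $T_{y_0}\Div^L_{n,d}(D)=\ker\bigl((d\det)_{y_0}\bigr)$. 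Smoothness and codimension $g$ of the single fibre $\Div^L_{n,d}(D)=\det^{-1}([L])$ only give the inclusion $T_{y_0}\Div^L_{n,d}(D)\subseteq\ker\bigl((d\det)_{y_0}\bigr)$; the equality needs $(d\det)_{y_0}$ to be surjective, i.e.\ smoothness of the determinant morphism at the fixed points (equivalently, that $\GG_m$ acts with weight $0$ on the normal space of $\Div^L_{n,d}(D)$ at $y_0$). This is true --- for instance because every fibre of $\det$ is smooth of codimension $g$, so $\det$ is smooth by miracle flatness --- but it is exactly the point where the argument touches the geometry of the determinant map, so it has to be argued rather than asserted.

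For the second assertion, your formulation (base change of the whole motivic BB package along the transverse cartesian square formed by $j_D$, $j_{D'}$ and the transition maps) is in substance the paper's one-line argument, which phrases it as intersecting the correspondences realising the decompositions with the transverse classes $\Div^L_{n,d}(D)\times C^{(\underline m)}_L\subset \Div_{n,d}(D)\times C^{(\underline m)}$; the remaining bookkeeping of Gysin triangles and splittings that you flag as the main obstacle is not carried out in the paper either, so I do not count it against you. One aside, however, is wrong as stated: in your Chow-theoretic fallback you claim the $C^{(\underline m')}_L$ are projective bundles over products of symmetric powers ``for the partitions occurring in the inductive system'', but \emph{all} partitions $\underline m'\dashv n\deg(D')-d$ occur in the BB decomposition, and the projection $C^{(\underline m')}_L\ra C^{(m'_2)}\times\cdots\times C^{(m'_n)}$ is a projective bundle only when $m'_1>2g-2$ (precisely the range used in the proof of Theorem~\ref{fixed det}); so surjectivity of $j^*_{D'}$ would need a separate argument for small $m'_1$. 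Since you designate the cartesian/transverse route as the one to pursue, this does not affect your main line, but the remark should be corrected or dropped.
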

\begin{proof}
The fixed loci for the $\GG_m$-action on $\Div_{n,d}^L(D)$ are smooth closed subvarieties of $C^{(\underline{m})}$, which we denote by $C^{(\underline{m})}_L$ and consist of $(D_1, \dots , D_n) \in C^{(\underline{m})}$ such that $nD - \sum_{i=1}^n D_i \in  |L|$. It follows from \cite[\S 6]{BD} that
\[ \codim_{\Div_{n,d}^L(D)}(\Div_{n,d}^L(D)_{\underline{m}}^+) =  \codim_{\Div_{n,d}(D)}(\Div_{n,d}(D)_{\underline{m}}^+) =: c_{\underline{m}}^+. \]
Hence, it suffices to verify that the pullbacks of the normal bundles of $\Div^L_{n,d}(D) \hookrightarrow \Div_{n,d}(D)$ and $\Div_{n,d}(D)_{\underline{m}}^+ \hookrightarrow \Div_{n,d}(D)$ to $\Div^L_{n,d}(D)_{\underline{m}}^+$ are isomorphic; this follows from the description of the normal bundle in \cite[$\S$6]{BD}.

Since $j_D$ is transverse to the BB strata, Conjecture \ref{thm2} for the transition maps in the motivic BB decompositions of $\Div$ implies the analogous statement for $\Div^L$, as we obtain the analogous morphisms by intersecting with the classes of $\Div^L_{n,d}(D) \times C^{(\underline{m})}_L \hookrightarrow \Div_{n,d}(D) \times C^{(\underline{m})}$. 
\end{proof}

\begin{thm}\label{fixed det}
Assume that Conjecture~\ref{thm2} holds and $C(k) \neq \emptyset$. In $\DM(k,R)$, we have
\[M(\Bun_{n,d}^L)  \simeq  M(B\GG_m) \otimes \bigotimes_{i=1}^{n-1} Z(C, R\{i\}) \]
and
\[M^c(\Bun_{n,d}^L)  \simeq M^c(B\GG_m)\{(n^2 -1 )(g-1) \}  \otimes \bigotimes_{i=2}^n Z(C, R\{-i\}). \]
\end{thm}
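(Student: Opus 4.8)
Following the proof of Theorem~\ref{thm3}, the plan is to run the same argument with the Quot schemes $\Div_{n,d}(D)$ replaced everywhere by the fixed-determinant Quot schemes $\Div_{n,d}^L(D)$ and with the Jacobian factor deleted. We have already observed that $\Bun_{n,d}^L$ is exhaustive, via an exhaustive sequence of vector bundles built from the $\Div_{n,d}^L(lD_0)$ using the codimension estimates of \cite[\S 6]{BD}. Running the arguments of Proposition~\ref{prop exh seq} and Theorem~\ref{thm1} (applying Proposition~\ref{prop hocolim vanishes} to pass from the open subvariety where $\mu_{\max}\leq\mu_l$ to all of $\Div_{n,d}^L(lD_0)$, again using \cite[\S 6]{BD} for the relevant codimension bound) then yields, for any non-zero effective divisor $D_0$ on $C$,
\[ M(\Bun_{n,d}^L)\simeq\hocolim_l M(\Div_{n,d}^L(lD_0)). \]

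Next I would invoke the motivic Bia{\l}ynicki-Birula decomposition of $\Div_{n,d}^L(D)$. By Proposition~\ref{conj for fixed det} the closed immersion $j_D:\Div_{n,d}^L(D)\ra\Div_{n,d}(D)$ is transverse to the BB strata, so the $\GG_m$-fixed loci of $\Div_{n,d}^L(D)$ are the smooth subvarieties $C^{(\underline{m})}_L\subset C^{(\underline{m})}$ and the attracting strata keep the codimensions $c_{\underline{m}}^+$; hence $M(\Div_{n,d}^L(D))\simeq\bigoplus_{\underline{m}\dashv n\deg(D)-d}M(C^{(\underline{m})}_L)\{c_{\underline{m}}^+\}$, the analogue of Corollary~\ref{cor mot BB decomposition}. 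The same proposition shows that Conjecture~\ref{thm2} forces the analogous description of the transition maps in the system $l\mapsto M(\Div_{n,d}^L(lD_0))$: in these coordinates the component from $\underline{m}$ to $\underline{m}'$ vanishes unless $\underline{m}'=\underline{m}+(n\deg(D'-D),0,\dots,0)$, in which case it is $M(f_{\underline{m},\underline{m}'})\{c_{\underline{m}}\}$ for the ``add $n(D'-D)$ to the first factor'' map. Exactly as in the proof of Theorem~\ref{thm3}, this decomposes the inductive system as a direct sum over $\underline{m}^\flat\in\NN^{n-1}$ of systems $P^L_{\underline{m}^\flat,*}$ with $P^L_{\underline{m}^\flat,l}=M(C^{(\underline{m}(l))}_L)\{c_{\underline{m}^\flat}\}$ when $m_1(l)\geq 0$ (and $0$ otherwise), transition maps given by adding $nD_0$ to the first factor.

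It then remains to compute $\hocolim_l P^L_{\underline{m}^\flat,l}$, in place of Lemma~\ref{lemma hocolim sym}. Here I would use that, for $m_1(l)>2g-2$, the forgetful map $C^{(\underline{m}(l))}_L\ra C^{(\underline{m}^\flat)}$, $(F_1,\dots,F_n)\mapsto(F_2,\dots,F_n)$, is a $\PP^{m_1(l)-g}$-bundle: its fibre over $(F_2,\dots,F_n)$ is the linear system of effective divisors linearly equivalent to the degree-$m_1(l)$ class $n l D_0-L-\sum_{i\geq 2}F_i$, and this globalises to $\PP(p_*\cM_l)$ for the line bundle $\cM_l=q^*\cO_C(nlD_0)\otimes q^*L^{-1}\otimes\bigotimes_{i=2}^n\cL_i^{-1}$ on $C^{(\underline{m}^\flat)}\times_k C$, where $p,q$ denote the two projections and $\cL_i$ is the universal divisor on $C^{(m_i)}\times C$ pulled back to $C^{(\underline{m}^\flat)}\times C$; the role of the hypothesis $C(k)\neq\emptyset$ is the same as in Lemma~\ref{lemma hocolim sym} and Theorem~\ref{thm3}, guaranteeing that this Abel--Jacobi-type fibration is an honest projective bundle. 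Since $\cM_{l+1}=\cM_l\otimes q^*\cO_C(nD_0)$, the transition maps induce the identity on $M(C^{(\underline{m}^\flat)})$ and the linear inclusion $\PP^{m_1(l)-g}\hookrightarrow\PP^{m_1(l+1)-g}$ on the fibres. By the projective bundle formula, Lemma~\ref{pulling out constants hocolims}, and Example~\ref{ex mot BGm} (the subsystem $(\PP^{m_1(l)-g})_l$ being cofinal, so that \cite[Lemma 1.7.1]{neeman} applies), we obtain $\hocolim_l P^L_{\underline{m}^\flat,l}\simeq M(B\GG_m)\otimes M(C^{(\underline{m}^\flat)})\{c_{\underline{m}^\flat}\}$. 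Summing over $\underline{m}^\flat$ and reorganising the factors as at the end of the proof of Theorem~\ref{thm3} gives $M(\Bun_{n,d}^L)\simeq M(B\GG_m)\otimes\bigotimes_{i=1}^{n-1}Z(C,R\{i\})$.

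For the compactly supported motive I would apply Poincar\'e duality for smooth stacks (Proposition~\ref{prop PD for stacks}): $\Bun_{n,d}^L$ is smooth of dimension $n^2(g-1)-g$, and feeding in the dual computations already made in the proof of Theorem~\ref{compact supp}, namely $M(B\GG_m)^\vee\simeq M^c(B\GG_m)\{1\}$ and $Z(C,R\{i\})^\vee\simeq Z(C,R\{-(i+1)\})$ for $i\geq 1$ (via Corollary~\ref{cor sum prod}), yields
\[ M^c(\Bun_{n,d}^L)\simeq M(\Bun_{n,d}^L)^\vee\{n^2(g-1)-g\}\simeq M^c(B\GG_m)\{(n^2-1)(g-1)\}\otimes\bigotimes_{i=2}^n Z(C,R\{-i\}). \]
The main obstacle is not the homotopy-colimit bookkeeping but the input from Proposition~\ref{conj for fixed det}: transporting Conjecture~\ref{thm2} to the fixed-determinant Quot schemes relies on the transversality of $j_D$ to the BB strata and the normal-bundle comparison of \cite[\S 6]{BD}, and one must check that the fixed loci $C^{(\underline{m})}_L$ and the codimensions $c^+_{\underline{m}}$ are identified uniformly in $\underline{m}$ and compatibly with the transition maps, so that the splitting of the inductive system into the $P^L_{\underline{m}^\flat,*}$ is legitimate.
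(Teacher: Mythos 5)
Your proposal is correct and follows essentially the same route as the paper: exhaustiveness of $\Bun_{n,d}^L$ via the fixed-determinant Quot schemes, the Bia{\l}ynicki-Birula decomposition with fixed loci $C^{(\underline{m})}_L$ and transversality from Proposition~\ref{conj for fixed det} to transport Conjecture~\ref{thm2}, the $\PP^{m_1(l)-g}$-bundle structure of $C^{(\underline{m}(l))}_L \ra C^{(\underline{m}^\flat)}$ replacing Lemma~\ref{lemma hocolim sym}, and then the argument of Theorem~\ref{thm3} plus Poincar\'e duality in codimension $g$ for the compactly supported statement. Your explicit identification of the projective bundle as $\PP(p_*\cM_l)$ just fills in a detail the paper leaves implicit.
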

\begin{proof}
First one restricts $\Div_{n,d} \ra \Bun_{n,d}$ to $\Div_{n,d}^L \ra \Bun_{n,d}^L$ and, analogously to Theorem \ref{thm1}, one proves that for any non-zero effective divisor $D_0$ on $C$, there is an isomorphism
\[M(\Bun_{n,d}^L) \simeq \hocolim_{l} M(\Div_{n,d}^L(lD_0)) \]
in $\DM(k,R)$; for the relevant codimension estimates, one can use \cite[\S 6]{BD} together with Lemma \ref{codim bgl correct}. We then consider the Bia{\l}ynicki-Birula decompositions for the smooth closed $\GG_m$-invariant subvarieties $\Div_{n,d}^L(lD_0) \subset \Div_{n,d}(lD_0)$, whose fixed loci is the disjoint union of $C^{(\underline{m})}_L \subset C^{(\underline{m})}$ for partitions $\underline{m}$ on $n\deg(D_0)l - d$. By Proposition \ref{conj for fixed det}, the codimensions of the BB strata in $\Div_{n,d}^L(lD_0)$ equal the codimensions of the BB strata in $\Div_{n,d}(lD_0)$ as $j_{lD_0}$ is transverse to the BB strata. 

By the final statement of Proposition \ref{conj for fixed det}, we can follow the argument of the proof of Theorem \ref{thm3} and define $P^L_{\underline{m}^{\flat},l}$ as in the proof of Theorem \ref{thm3}, but by replacing $C^{(\underline{m}(l))}$ by $C^{(\underline{m}(l))}_L$; then analogously to \eqref{arepa} we have a corresponding isomorphism with superscript $L$ inserted on both sides. For $\underline{m}_1(l)>2g-2$, the projection morphism
\[ C^{(\underline{m})}_L \ra C^{(m_2)} \times \cdots \times C^{(m_n)} \]
is a $\PP^{m_1-g}$-bundle (as we have fixed the determinant and $C(k)\neq \emptyset$). Hence
\begin{align*}
  \hocolim_l P^L_{\underline{m}^{\flat},l} &\simeq \hocolim_{l:\ m_1(l)>2g-2}P^L_{\underline{m}^{\flat},l}\simeq \hocolim_{l:\ m_1(l)>2g-2} M(\PP^{m_1(l)-g})\otimes M(C^{(\underline{m}^\flat)}\{c_{\underline{m}^\flat}\} \\ & \simeq M(B\GG_m)\otimes M(C^{(\underline{m}^\flat)})\{c_{\underline{m}^\flat}\}
  \end{align*}
where we have used Lemma \ref{pulling out constants hocolims} and Example \ref{ex mot BGm}. Then the remainder of the proof for the formula for $M(\Bun_{n,d}^L)$ follows that of Theorem \ref{thm3} verbatim, and the formula for $M^c(\Bun_{n,d}^L)$ follows by Poincar\'{e} duality, as the codimension of $\Bun_{n,d}^L$ in $\Bun_{n,d}$ is $g$.
\end{proof}

\begin{thm}\label{SL-bundles}
Assume that Conjecture~\ref{thm2} holds and $C(k) \neq \emptyset$. Then, in $\DM(k,R)$, we have
\[M(\Bun_{\SL_n})  \simeq  \bigotimes_{i=1}^{n-1} Z(C, R\{i\})\]
and
\[M^c(\Bun_{\SL_n})  \simeq  \bigotimes_{i=2}^n Z(C, R\{-i\}) \otimes R \{(n^2 -1 )(g-1) \} . \]
\end{thm}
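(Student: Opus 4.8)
The plan is to realise $\Bun_{\SL_n}$ as a $\GG_m$-torsor over the stack of fixed-determinant bundles and feed Theorem~\ref{fixed det} into the torsor triangle of Proposition~\ref{motives of mult bundles}. By the diagram in $\S$\ref{sec fixed det}, $\Bun_{\SL_n}\cong\Bun_{n,0}^{\simeq \cO_C}$ and $\Bun_{n,0}^{\simeq \cO_C}\to\Bun_{n,0}^{\cO_C}$ is a $\GG_m$-torsor; $\Bun_{n,0}^{\cO_C}$ is smooth and exhaustive by $\S$\ref{sec fixed det}, and therefore so is $\Bun_{n,0}^{\simeq \cO_C}$ by Lemma~\ref{funct rep quot stacks}. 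Writing $\cL$ for the line bundle on $\Bun_{n,0}^{\cO_C}$ associated to this torsor, Proposition~\ref{motives of mult bundles} provides a distinguished triangle
\[ M(\Bun_{\SL_n})\ra M(\Bun_{n,0}^{\cO_C})\stackrel{\varphi_{\cL}}{\ra}M(\Bun_{n,0}^{\cO_C})\{1\}\stackrel{+}{\ra}, \]
so it suffices to compute $\mathrm{fib}(\varphi_{\cL})$. By Theorem~\ref{fixed det}, $M(\Bun_{n,0}^{\cO_C})\simeq M(B\GG_m)\otimes T$ with $T:=\bigotimes_{i=1}^{n-1}Z(C,R\{i\})$; moreover, inspecting its proof, the factor $M(B\GG_m)$ arises uniformly, namely as $\hocolim_l M(\PP^{m_1(l)-g})$ in each summand $\hocolim_l P^L_{\underline{m}^\flat,l}\simeq M(B\GG_m)\otimes M(C^{(\underline{m}^\flat)})\{c_{\underline{m}^\flat}\}$, coming from the projective bundles $C^{(\underline{m}(l))}_{\cO_C}\to C^{(\underline{m}^\flat)}$.

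The main step is to show that $\varphi_{\cL}$ becomes $\varphi_{\cL_{\mathrm{univ}}}\otimes\id_T$ under this isomorphism, where $\cL_{\mathrm{univ}}=[\AA^1/\GG_m]$ is the tautological line bundle on $B\GG_m$ and $\varphi_{\cL_{\mathrm{univ}}}\colon M(B\GG_m)\to M(B\GG_m)\{1\}$ is the morphism of Example~\ref{ex univ Gm torsor}. Given this, exactness of $-\otimes T$ and the identity $\mathrm{fib}(\varphi_{\cL_{\mathrm{univ}}})\simeq M(\spec k)\simeq R$ — which is Proposition~\ref{motives of mult bundles} for the universal torsor $\spec k\to B\GG_m$ together with Example~\ref{ex univ Gm torsor} — yield $M(\Bun_{\SL_n})\simeq R\otimes T=\bigotimes_{i=1}^{n-1}Z(C,R\{i\})$, which is the first formula. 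To establish the identification one works with the approximating Quot schemes: using that $\cL$ restricts to $\cL_l$ on $\Div_{n,0}^{\cO_C}(lD_0)$ and that Gysin triangles are functorial for the transverse squares $\Div_{n,0}^{\cO_C}(lD_0)\hookrightarrow\Div_{n,0}^{\cO_C}((l+1)D_0)$ (as in the proof of Proposition~\ref{kunneth}(iii)), one gets $\varphi_{\cL}\simeq\hocolim_l\bigl(c_1(\cL_l)\cup-\bigr)$. A computation of the relevant determinant-of-cohomology line bundle — or the description of the normal bundles of the Bia{\l}ynicki-Birula strata in \cite[$\S$6]{BD} — then shows that $\cL_l$ restricts on each stratum $C^{(\underline{m})}_{\cO_C}$ to the relative $\cO(-1)$ of $C^{(\underline{m})}_{\cO_C}\to C^{(\underline{m}^\flat)}$, up to a twist by a line bundle pulled back from $C^{(\underline{m}^\flat)}$. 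One then checks that $c_1(\cL_l)\cup-$ is compatible with the Bia{\l}ynicki-Birula decomposition of $M(\Div_{n,0}^{\cO_C}(lD_0))$, with $\underline{m}$-graded piece equal to $c_1(\cL_l|_{C^{(\underline{m})}_{\cO_C}})\cup-$; absorbing the twist into the projective bundle, the fibre of this piece is the motive of the complement of the zero section in a vector bundle of rank $m_1(l)-g+1$ over $C^{(\underline{m}^\flat)}$, so by Proposition~\ref{prop hocolim vanishes} (the zero section has codimension $m_1(l)-g+1\to\infty$) and $\AA^1$-invariance its homotopy colimit over $l$ is $M(C^{(\underline{m}^\flat)})$. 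Reassembling over $\underline{m}^\flat$ with the twists $\{c_{\underline{m}^\flat}\}$ recovers $\bigoplus_{\underline{m}^\flat\in\NN^{n-1}}M(C^{(\underline{m}^\flat)})\{c_{\underline{m}^\flat}\}=\bigotimes_{i=1}^{n-1}Z(C,R\{i\})$, as in the proof of Theorem~\ref{thm3}; the needed compatibility of the transition maps with the Bia{\l}ynicki-Birula decompositions is provided by Conjecture~\ref{thm2} via Proposition~\ref{conj for fixed det}.

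I expect the main obstacle to be this last step: pinning down the restriction of $\cL_l$ to the strata, establishing compatibility of cup product with $c_1(\cL_l)$ with the Bia{\l}ynicki-Birula decomposition (so that $\mathrm{fib}(\varphi_{\cL_l})$ can be analysed stratum by stratum), and ensuring the twists do not contribute in the colimit; a clean treatment may require passing to the projective completion $\PP(\cL_l\oplus\cO)$ and combining its own Bia{\l}ynicki-Birula decomposition with a localisation triangle. Finally, the formula for $M^c(\Bun_{\SL_n})$ follows from Poincar\'e duality for smooth exhaustive stacks (Proposition~\ref{prop PD for stacks}): the stack $\Bun_{\SL_n}$ is smooth of dimension $(n^2-1)(g-1)$ — indeed $\Bun_{n,0}^{\cO_C}$ has codimension $g$ in the $n^{2}(g-1)$-dimensional stack $\Bun_{n,0}$, and the $\GG_m$-torsor raises the dimension by one — so $M^c(\Bun_{\SL_n})\simeq M(\Bun_{\SL_n})^\vee\{(n^2-1)(g-1)\}$. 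Commuting the dual past the finite tensor product and using $Z(C,R\{i\})^\vee\simeq Z(C,R\{-(i+1)\})$ for $i\geq1$, established in the proof of Theorem~\ref{compact supp} via Corollary~\ref{cor sum prod} and Poincar\'e duality for the smooth projective varieties $C^{(j)}$, this equals $\bigotimes_{j=2}^{n}Z(C,R\{-j\})\otimes R\{(n^2-1)(g-1)\}$, as claimed.
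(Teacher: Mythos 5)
Your overall architecture is the same as the paper's: realise $\Bun_{\SL_n}$ as the $\GG_m$-torsor $\Bun_{n,0}^{\simeq\cO_C}\to\Bun_{n,0}^{\cO_C}$, invoke the triangle of Proposition~\ref{motives of mult bundles}, feed in $M(\Bun_{n,0}^{\cO_C})\simeq M(B\GG_m)\otimes T$ from Theorem~\ref{fixed det}, reduce to identifying $\varphi_{\cL}$ with $\widetilde{\varphi}\otimes\id_T$ so that the fibre is $R\otimes T$, and get $M^c$ by Poincar\'e duality together with $Z(C,R\{i\})^\vee\simeq Z(C,R\{-(i+1)\})$; that last part and the dimension count are fine. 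The genuine gap is that the one step carrying the content of the theorem --- the identification of $\varphi_{\cL}$, equivalently the claim that $c_1(\cL_n):M(\Bun_{n,0}^{\cO_C})\to R\{1\}$ equals the projection onto the $M(B\GG_m)$ factor followed by $c_1(\cO(-1))$ --- is not established. Your proposed route (restrict $\cL_l$ to the Bia{\l}ynicki-Birula fixed components, claim that cup product with $c_1(\cL_l)$ is compatible with the BB decomposition, compute the fibre stratum by stratum, pass to the colimit) is precisely what you flag as an ``obstacle'', and the difficulty is not merely technical: cup product with a Chern class respects the BB filtration but not in general the chosen splitting, so $\varphi_{\cL_l}$ may have off-diagonal components and its fibre need not be the sum of the fibres of the diagonal blocks; you would further need compatibility of these stratumwise identifications with the transition maps over $l$ (which is more than Conjecture~\ref{thm2} gives, since that conjecture describes the transition maps themselves, not the action of $c_1(\cL_l)$ on the decomposition), and the twist by a line bundle pulled back from $C^{(\underline{m}^\flat)}$ does change $c_1$ at every finite level, so ``it does not contribute in the colimit'' needs an argument.

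The idea you are missing, and which the paper uses to bypass all of this, is that the torsor is pulled back from the universal one along the determinant: at the level of the approximating Quot schemes there is a determinant morphism $\det:\Div_{n,0}^{\cO_C}(lD_0)\to\Div_{1,0}^{\cO_C}(nlD_0)$ with $\cL_{n,l}\cong\det^*\cL_{1,nl}$ (by the See-saw theorem applied twice), and $\Div_{1,0}^{\cO_C}(nlD_0)\cong\PP^{nld_0-g}$ with $\cL_{1,nl}$ the tautological bundle $\cO(-1)$. Since the proof of Theorem~\ref{fixed det} identifies the $M(B\GG_m)$ factor exactly as $\hocolim_l M(\PP^{nld_0-g})$ compatibly with these determinant maps, the required Chern class identity holds globally on each $\Div_{n,0}^{\cO_C}(lD_0)$, with no stratumwise analysis, no off-diagonal terms and no twists to control; the commuting diagram \eqref{tea} then forces $s$ to be an isomorphism. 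As written, your proof of the first formula is therefore incomplete at its crucial step (the $M^c$ statement, which only uses Proposition~\ref{prop PD for stacks} and the dualisation of the zeta factors as in Theorem~\ref{compact supp}, is fine); either import the determinant trick or supply the diagonality, twist and transition-compatibility arguments yourself.
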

\begin{proof} 
By Poincar\'e duality and the same argument as in the proof of Theorem \ref{compact supp}, the formula for $M^c$ follows from the formula for $M$.

As $\Bun_{\SL_n} \ra \Bun_{n,0}^{\cO_C}$ is a $\GG_m$-torsor equal to the pullback of the universal $\GG_m$-torsor on $B\GG_m$ via the morphisms $\det : \Bun_{n,0}^{\cO_C} \ra B\GG_m$, the idea is to use Proposition \ref{motives of mult bundles} and Example \ref{ex univ Gm torsor}. Indeed, by Proposition \ref{motives of mult bundles}, we have a distinguished triangle
\begin{equation}\label{bike}
 M(\Bun_{\SL_n}) \longrightarrow M(\Bun_{n,0}^{\cO_C}) \stackrel{\varphi}{\longrightarrow} M(\Bun_{n,0}^{\cO_C})\{1\} \stackrel{+}{\ra}
\end{equation}
where $\varphi:=\varphi_{\cL_n}$ is defined using the line bundle $\cL_n \ra  \Bun_{n,0}^{\cO_C}$ associated to the above $\GG_m$-torsor. In fact, if $\cU_n \ra \Bun_{n,0}^{\cO_C}\times C$ denotes the universal bundle and $\pi_1 : \Bun_{n,0}^{\cO_C} \times C \ra \Bun_{n,0}^{\cO_C}$ denotes the projection, then $\det(\cU_n) \cong \pi_1^*(\cL_n)$ by the See-saw Theorem, as $C$ is compact, and we are working on the stack of bundles with fixed determinant. 

By Theorem \ref{fixed det}, we have $M(\Bun_{n,0}^{\cO_C})  \simeq Z \otimes M(B\GG_m)$, where $Z := \bigotimes_{i=1}^{n-1} Z(C, R\{i\})$. By using the projection $M(B\GG_m) \ra R\{ 0 \}$, we can construct a morphism
\[ s: M(\Bun_{\SL_n} ) \ra M(\Bun_{n,0}^{\cO_C}) \simeq  Z \otimes M(B\GG_m)  \ra Z \otimes R\{ 0 \} \simeq Z, \]
which we claim is an isomorphism. Let $\widetilde{\varphi} : M(B\GG_m) \ra M(B\GG_m)\{1\}$ be the morphism induced by the universal line bundle $\cO(-1)=[\AA^{1}/\GG_{m}]$ on $B\GG_m$ in the sense of Proposition \ref{motives of mult bundles}; this morphism was computed in Example \ref{ex univ Gm torsor}. Then we have the following diagram
\begin{equation}\label{tea}
\xymatrix{ M(\Bun_{\SL_n})  \ar[r] \ar[d]_{s} & M(\Bun_{n,0}^{\cO_C}) \ar[r]^{\varphi}  \ar[d]_{\wr} & M(\Bun_{n,0}^{\cO_C})\{ 1\}  \ar[d]_{\wr}  \ar[r] &\\
Z \ar[r] & Z \otimes M(B\GG_m) \ar[r]^{\id \otimes \widetilde{\varphi}} & Z \otimes M(B\GG_m)\{ 1\}  \ar[r] & }
\end{equation}
where both rows are distinguished triangles and the left hand square commutes by definition of $s$. We claim that the right hand square also commutes, from which it follows that $s$ is an isomorphism.

To prove the claim, by definition of $\varphi$ and $\widetilde{\varphi}$ in terms of first Chern classes in Proposition \ref{motives of mult bundles}, it is enough to show that the map $c_{1}(\cL_{n}):M(\Bun_{n,0}^{\cO_{C}})\ra R\{1\}$ is equal to the composition \[M(\Bun_{n,0}^{\cO_{C}})\simeq Z\otimes M(B\GG_{m})\lra M(B\GG_{m})\stackrel{c_{1}(\cO(-1))}{\longrightarrow}R\{1\}.\]
We will show this via the presentation of $\Bun_{n,0}^{\cO_C}$ in terms of matrix divisors with determinant $\cO_C$ which was used to construct the isomorphism in Theorem \ref{fixed det}.

Fix an effective divisor $D_{0}$ of degree $d_{0}>0$ and let $\cL_{n,l}$ be the pullback of $\cL_{n}$ along $\Div_{n,0}^{\cO_C}(lD_0) \ra \Bun_{n,0}^{\cO_C}$. By definition of Chern classes of exhaustive stacks, we have to prove that, for all $l\in \NN$ large enough, the map $c_{1}(\cL_{n,l}):M(\Div_{n,0}^{\cO_C}(lD_0))\ra R\{1\}$
is equal to the composition
\begin{equation}\label{panda}
M(\Div_{n,0}^{\cO_C}(lD_0))\lra M(\Bun_{n,0}^{\cO_{C}})\simeq Z\otimes M(B\GG_{m})\ra M(B\GG_{m})\stackrel{c_{1}(\cO(-1))}{\longrightarrow}R\{1\}.
\end{equation}
There is a natural determinant morphism $\det : \Div_{n,0}^{\cO_C}(lD_0) \ra  \Div_{1,0}^{\cO_C}(nlD_0)$. Again by the See-saw theorem, we have $\pi_1^*(\cL_{n,l}) \cong \det(\cE_{n,l})$, where $\cE_{n,l} \ra \Div_{n,0}^{\cO_C}(lD_0) \times C$ is the universal bundle on this Quot scheme. We have $(\det \times {\id_C})^*(\cE_{1,nl}) \cong \det (\cE_{n,l})$ and again by the See-saw Theorem, there is a line bundle $\cL_{1,nl} \ra \Div_{1,0}^{\cO_C}(nlD_0)$ such that $\pi_1^*\cL_{1,nl} \cong \cE_{1,nl}$.

We now have to delve into the proof of Theorem \ref{fixed det}. In particular, for $nld_{0}>2g-2$, we see that $\Div_{1,0}^{\cO_C}(nlD_0)$ is isomorphic to $\PP^{nld_{0}-g}$. Moreover, via this isomorphism, $\cL_{1,nl}$ is identified with the tautological bundle $\cO(-1)$ on this projective space. By the proof of Theorem \ref{fixed det}, we have a commutative diagram
\[
  \xymatrix{
    M(\Div_{n,0}^{\cO_C}(lD_0)) \ar[d]_{M(\det)} \ar[r] & M(\Bun_{n,0}^{\cO_{C}}) \ar[r]^{\sim \quad} & Z\otimes M(B\GG_{m}) \ar[d] \\
    M(\Div_{1,0}^{\cO_C}(nlD_0)) \ar[r]^{\quad \: \sim} & M(\PP^{nld_{0}-g})\ar[r] & M(B\GG_{m}).
  }
\]
Hence the composition \eqref{panda} is equal to
\[M(\Div_{n,0}^{\cO_C}(lD_0))\stackrel{M(\det)}{\lra}M(\Div^{\cO_C}_{1,0}(lD_0))\lra M(B\GG_{m})\stackrel{c_{1}(\cO(-1))}{\lra}R\{1\}\]
or in other words to the first Chern class of the pullback of $\cO(-1)$ along $\Div_{n,0}^{\cO_{C}}(lD_{0})\ra B\GG_{m}$. As observed above, this pullback is equal to $\det^{*}(\cL_{1,nl})\simeq \cL_{n,l}$, which concludes the proof.
\end{proof}

\appendix

\section{\'{E}tale motives of stacks}\label{app mot stacks}

Let us explain and compare some definitions of motives of stacks (see also \cite{choudhury_phd,totaro,hoyois}). Let $\DA^{\et}(k,R)$ denote the triangulated category of \'{e}tale motives without transfers over $k$ with coefficients in $R$, a close variant of $\DM(k,R)$ \cite[\S 3]{ayoub_etale}, which is related to $\DM(k,R)$ through two adjunctions
\[
\LL a_{\tr}:\DA^{\et}(k,R)\leftrightarrows \DM^{\et}(k,R):\RR o_{\tr}
\]
and
\[
\LL a_{\et} : \DM^{\Nis}(k,R) \leftrightarrows \DM^{\et}(k,R): \RR o_{\et}.
\]
With our standing hypotheses on $(k,R)$, the functor $\LL a_{\tr}$ is always an equivalence \cite[Theorem B.1]{ayoub_etale}, while the functor $\LL a_{\et}$ is an equivalence if $R$ is a $\QQ$-algebra \cite[Theorem 14.30]{MVW}.

In $\DA^{\et}(k,R)$, there are two natural ways to define the motive of an algebraic stack $\fX$: first using the \v{C}ech hypercover associated to an smooth atlas and second via a nerve construction.

\begin{defn}\label{def atlas nerve}
Let $\fX$ be an algebraic stack. We define the following motives.
\begin{enumerate}[label={\upshape(\roman*)}]
\item For a smooth atlas $\cA \ra \fX$, we let $\cA^{[n]}:=\cA \times_{\fX}  \cdots \times_{\fX} \cA$ be the $n+1$-fold self-fibre product of $\cA$ in $\fX$ or equivalently the $n$th \v{C}ech simplicial scheme associated to this atlas\footnote{$\cA^{[n]}$ is a scheme itself, as $\cA$ is assumed to be a scheme and the morphism $\cA \ra \fX$ is representable}. We define the \'{e}tale motive of $\fX$ with respect to this atlas as
  \[ M^{\et}_{\atlas,\cA}(\fX):= \Sigma^{\infty}_{T}R^{\eff}(\cA^{[\bullet]}). \]
where $R^{\eff}(\cA^{[\bullet]})$ denotes the complex of \'{e}tale sheaves associated to the simplicial sheaf of $R$-modules $\cA^{[\bullet]}$.

\item Let $f : \fX \ra \cY$ be a representable morphism of stacks; then for any atlas $\cA_\cY \ra \cY$, the map $\cA_\fX := \cA_\cY \times_{\cY} \fX$ is an atlas for $\fX$ and thus there is a morphism
\[ M^{\et}_{\atlas}(f) : M^{\et}_{\atlas, \cA_\fX}(\fX) \ra  M^{\et}_{\atlas, \cA_\cY}(\cY).\]  
\item We define the nerve-theoretic motive of $\fX$ by taking the functor $\fX : \Sm_k \ra \Gpds$ valued in the category of groupoids and composing with the nerve $N: \Gpds \ra \sSets$ to the category of simplicial sets and then finally composing with the singular complex $\sing : \sSets \ra C_*(\abgp)$ which takes values in the category of complexes of abelian groups; this gives a complex of \'{e}tale sheaves of abelian groups $a_{\et}\sing \circ N\circ \fX$ on $\Sm_k$, which we tensor with our coefficient ring $R$ and define
\[ M^{\et}_{\nerve}(\fX):= \Sigma^{\infty}_{T}(a_{\et}(\sing \circ N\circ \fX) \otimes R).\]
\end{enumerate}
\end{defn} 

\begin{rmk}\
\begin{enumerate}[label={\upshape(\roman*)}]
\item By definition, the atlas and nerve theoretic definitions of the motive of an algebraic stack are both effective motives, as both definitions take place in $\DA^{\eff,\et}(k,R)$. 
\item If $\fX$ is a scheme $X$, then clearly we have $M(X) \simeq M_{\text{atlas},X}(X)$ and in fact this also agrees with the nerve-theoretic definition by Lemma \ref{lemma atlas and nerve} below.
\end{enumerate}
\end{rmk}

\begin{lemma}\label{lemma atlas and nerve}
In $\DA^{\et}(k,R)$ for an algebraic stack $\fX$ and for any atlas $\cA \ra \fX$, we have
\[M^{\et}_{\atlas,\cA}(\fX) \simeq M^{\et}_{\nerve}(\fX).\]
In particular, the isomorphism class of $M_{\atlas,\cA}(\fX)$ is independent of the atlas.
\end{lemma}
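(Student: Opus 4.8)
The plan is to exhibit both $M^{\et}_{\atlas,\cA}(\fX)$ and $M^{\et}_{\nerve}(\fX)$ as the image in $\DA^{\et}(k,R)$ of the same simplicial presheaf up to objectwise weak equivalence, so that after $\et$-sheafification, $R$-linearisation and $T$-stabilisation they become canonically isomorphic. First I would fix a smooth atlas $p:\cA\ra\fX$ and consider the \v{C}ech nerve $\cA^{[\bullet]}$, a simplicial object of $\Sm_k$. The essential point is that for each $U\in\Sm_k$ the simplicial set $\cA^{[\bullet]}(U)$ is weakly equivalent to the nerve $N(\fX(U))$ of the groupoid of $U$-points of $\fX$. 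This is the standard fact that the \v{C}ech nerve of an effective epimorphism of groupoids (or of stacks, applied sectionwise over each $U$) computes the quotient: since $p$ is a smooth surjection, $\cA(U)\ra\fX(U)$ need not be surjective on objects for a fixed $U$, but it is so locally in the \'etale topology, and smooth morphisms admit sections \'etale-locally; this is exactly the reason one sheafifies. So the more careful statement is that the map of simplicial presheaves $\cA^{[\bullet]}\ra N(\fX(-))$ becomes a local weak equivalence after $\et$-sheafification of each simplicial degree (equivalently, of the associated bisimplicial object), using that $\cA\ra\fX$ is an $\et$-local epimorphism of presheaves of groupoids.

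The key steps, in order: (1) Recall that $\fX(-)$, being an algebraic stack, is an \'etale sheaf of groupoids, and that a representable smooth surjection $\cA\ra\fX$ induces an epimorphism of \'etale sheaves on objects; hence the augmented simplicial diagram $\cA^{[\bullet]}\ra\fX(-)$ is a \v{C}ech resolution, i.e. $\et$-locally a split hypercover. (2) Apply the nerve degreewise: because $\fX(-)$ is a sheaf of groupoids and the \v{C}ech nerve of an epimorphism of sheaves of groupoids is a hypercover of the nerve, the map of bisimplicial \'etale sheaves $\cA^{[\bullet]}\ra N(\fX(-))$ is a local weak equivalence; taking the diagonal (or homotopy colimit over $\Delta^{\op}$) this identifies $a_{\et}R^{\eff}(\cA^{[\bullet]})$ with $a_{\et}(\sing\circ N\circ\fX)\otimes R$ in $\DA^{\eff,\et}(k,R)$, using that the free-$R$-module functor and $\et$-sheafification are left Quillen and commute with these homotopy colimits. (3) Apply $\Sigma^\infty_T$ to conclude the isomorphism in $\DA^{\et}(k,R)$. (4) For the "in particular" clause, note the right-hand side $M^{\et}_{\nerve}(\fX)$ manifestly does not mention $\cA$, so the isomorphism class of $M^{\et}_{\atlas,\cA}(\fX)$ is independent of the chosen atlas.

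The main obstacle I expect is step (2): making precise, and citing the right form of, the statement that the \v{C}ech nerve of a smooth atlas is an \'etale hypercover computing the nerve of the stack. One has to be careful that $\cA\ra\fX$ is not a local equivalence of presheaves of \emph{spaces} but only an epimorphism onto the sheaf of groupoids, so the comparison is really between the homotopy colimit of the \v{C}ech diagram and the \'etale-sheafified nerve; the cleanest route is to invoke the theory of hypercovers in the \'etale $\infty$-topos (or the descent spectral sequence for simplicial schemes in $\DA^{\et}$, cf.\ \cite{ayoub_etale}) and the fact that $\DA^{\et}(k,R)$ satisfies \'etale descent by construction, so that $M^{\et}_{\atlas,\cA}(\fX)$ is insensitive to replacing the \v{C}ech nerve by any cofinal hypercover. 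Once descent is in hand, both objects are computed by the same homotopy colimit and the identification is formal.
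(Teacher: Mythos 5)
Your proposal is correct and takes essentially the same route as the paper: the paper's proof consists of invoking cohomological descent in $\DA^{\et}(k,R)$ for $h$-hypercoverings \cite[Theorem 14.3.4.(2)]{cd} (hence in particular for the \v{C}ech hypercovering attached to a smooth surjective atlas), referring to \cite{choudhury_phd} for the detailed comparison with the nerve. Your steps --- \'etale-local sections of the smooth atlas, the identification of the \v{C}ech nerve of an effective epimorphism of \'etale sheaves of groupoids with $N(\fX(-))$ after sheafification, linearisation and $T$-stabilisation --- are exactly the content that these citations package, so your argument is an expanded version of the paper's.
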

\begin{proof}
This holds as $\DA^{\et}(-,R)$ satisfies cohomological descent with respect to $h$-hypercoverings \cite[Theorem 14.3.4.(2)]{cd}, thus in particular with respect to \v{C}ech hypercoverings induced by smooth surjective maps. For a more detailed argument, see \cite{choudhury_phd} which treats the case of a Deligne-Mumford stack $\fX$ in $\DM^{\et}(k,\QQ)$ (replacing \'etale descent by smooth descent).
\end{proof}

For an exhaustive stack $\fX$, one can define the motive of $\fX$ in $\DA^{\et}(k,R)$ (or in fact, for any choice of topology) exactly as in Definition \ref{def mot stack} by using an exhaustive sequence of vector bundles with respect to a filtration of $\fX$. In this appendix, to distinguish the motive in Definition \ref{def mot stack} (resp.\ its \'{e}tale, without transfers analogue) from other definitions, we denote it by $M_{\mono}(\fX)$ (resp.\ $M^{\et}_{\mono}(\fX)$) and say $\fX$ is mono-exhaustive (rather than just exhaustive).

In fact, dual to Definition \ref{def exh seq} of an (injective) exhaustive sequence is the following notion of a surjective exhaustive sequence as in \cite[$\S$8]{totaro}.

\begin{defn}\label{def surj exh seq}
For a smooth stack $\fX$, a surjective exhaustive sequence of vector bundles on $\fX$ with respect to a filtration $\fX_0\stackrel{i_0}{\ra} \fX_1\stackrel{i_1}{\ra}\ldots\subset \fX$ is a pair $(V_\bullet, W_\bullet)$ given by a sequence of vector bundles $V_m$ over $\fX_m$ together with surjective maps of vector bundles $f_m: V_{m+1}\times_{\fX_{m+1}}\fX_m  \ra V_m$ and closed substacks $W_m\subset V_m$ such that
\begin{enumerate}[label={\upshape(\roman*)}]
\item  the codimension of $W_m$  in $V_m$ tend towards infinity,
\item the complement $U_m:= V_m - W_m$ is a separated finite type $k$-scheme, and
\item we have $W_{m+1}\times_{\fX_{m+1}}\fX_m  \subset f_m^{-1}(W_{m})$.
\end{enumerate}
If $\fX$ admits such a surjective sequence, we say $\fX$ is a epi-exhaustive stack. Then we define the motive of $\fX$ (resp.\ the \'{e}tale motive of $\fX$) with respect to such a surjective exhaustive sequence $(V_\bullet, W_\bullet)$ of vector bundles on $\fX$ by
\[ M_{\epi}(\fX):= \hocolim_m M(U_m)\in \DM(k,R)\quad\quad \text{(resp.}\ M^{\et}_{\epi}(\fX):= \hocolim_m M(U_m)\in \DM^{\et}(k,R)))\]
with transition maps given by the composition
\[ M(U_m) \cong M((V_{m+1}\times_{\fX_{m+1}}\fX_m)  - f_m^{-1}(W_m)) \ra M(U_{m+1}\times_{\fX_{m+1}}\fX_m ) \ra M(U_{m+1}) \]
where the first isomorphism follows as $(V_{m+1}\times_{\fX_{m+1}}\fX_m)  - f_m^{-1}(W_m) \ra (V_m - W_m) = U_m$ is a Zariski locally trivial affine space fibration, and the final maps come from the open immersions $V_{m+1}\times_{\fX_{m+1}}\fX_m  - f_m^{-1}(W_m) \hookrightarrow U_{m+1}\times_{\fX_{m+1}}\fX_m \hookrightarrow U_{m+1}$ (resp.\ the analoguous maps for \'etale motives). Similarly, we define the compactly supported motive of $\fX$ with respect to this sequence by
\[ M^c_{\epi}(\fX):= \holim_m M(U_m)\{ - r_m\} \]
where $r_m := \rk(V_m)$, with transition maps given by the composition
\[\xymatrix@R=0.5cm{
M^c(U_{m+1})\{ -r_{m+1}\} \ar[r] &  M^c(U_{m+1}\times_{\fX_{m+1}}\fX_m )\{ -r_{m+1}\} \ar[d] \\
 & M^c(V_{m+1}\times_{\fX_{m+1}}\fX_m  - f_m^{-1}(W_m))\{ -r_{m+1}\} & \ar[l]_{\quad \quad\quad \quad \quad \quad \quad\simeq} M^c(U_m) \{ - r_m\}
 }\]
where the first two morphisms are flat pullbacks associated to open immersions and the final isomorphism follows from $\AA^1$-homotopy invariance. 
\end{defn}

\begin{rmk}
Unlike in Definition \ref{def mot stack}, since we have homotopy (co)limits, Definition \ref{def surj exh seq} depends on the choice of a cone, hence is only defined up to a non-unique isomorphism.
\end{rmk}

Totaro proves his definition of the compactly supported motive of a quotient stack is independent of the choice of such sequence \cite[Theorem 8.5]{totaro}; in fact, his proof can also be adapted following the lines of Lemma \ref{lem stack def indept} to show the above definitions for the motive of a epi-exhaustive smooth stack are independent of the above choices. 

\begin{rmk}\label{rem tr et}
For a mono-exhaustive (resp.\ epi-exhaustive) stack $\fX$, this construction immediately implies the following isomorphisms in $\DM^{\et}(k,R)$
  \[\LL a_{\tr} M^{\et}_{\mono}(\fX)\simeq \LL a_{\et} M_{\mono}(\fX)\quad\quad\text{(resp.}\ \LL a_{\tr} M^{\et}_{\epi}(\fX)\simeq \LL a_{\et} M_{\epi}(\fX)).\]
\end{rmk}

\begin{prop}\label{prop atlas and our defn}
Let $\fX$ be a mono-exhaustive (resp.\ epi-exhaustive) smooth stack; then we have
\[  \LL a_{\tr} M^{\et}_{\atlas,\cA}(\fX) \simeq \LL a_{\et} M_{\mono}(\fX)\quad\quad\text{(resp.}\ \LL a_{\tr} M^{\et}_{\atlas,\cA}(\fX) \simeq \LL a_{\et} M_{\epi}(\fX))\]
in $\DM^{\et}(k,R)$ for any choice of atlas $\cA \ra \fX$. In particular, when $R$ is a $\QQ$-algebra, the motive $M_{\mono}(\fX)$ (resp.\ $M_{\epi}(\fX)$) can be computed from the \'{e}tale motive.
\end{prop}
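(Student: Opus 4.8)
The strategy is to reduce everything to the scheme-level statement, which is already available. Concretely, I will show that both sides of the claimed isomorphism are computed as (homotopy) colimits over the \v{C}ech/exhaustive systems of the \'etale motives of the schemes $U_m$, and then invoke the descent property of $\DA^{\et}(-,R)$ to identify these with the corresponding colimits built from the atlas. The key input is that for a \emph{scheme} $X$ one has $\LL a_{\tr} M^{\et}_{\atlas,X}(X)\simeq \LL a_{\et}M(X)$ (indeed $M^{\et}_{\atlas,X}(X)$ is just the \'etale motive of $X$ without transfers, and $\LL a_{\tr}$ is an equivalence under our hypotheses by \cite[Theorem B.1]{ayoub_etale}). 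Everything else is bookkeeping about compatibility of colimits with $\LL a_{\tr}$ and $\LL a_{\et}$.

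\textbf{Step 1: identify the atlas-theoretic motive of $\fX$ with a colimit over the atlases of the $U_m$.} Fix an exhaustive sequence $(V_\bullet, W_\bullet)$ with respect to a filtration $\fX=\cup_m\fX_m$, so $M_{\mono}(\fX)=\colim_m R_{\tr}(U_m)$ (resp. $M_{\epi}(\fX)$ similarly), where $U_m\hookrightarrow V_m\twoheadrightarrow \fX_m$. For a fixed atlas $\cA\to\fX$, the restriction $\cA_m:=\cA\times_\fX\fX_m\to\fX_m$ is an atlas, and pulling back along $V_m\to\fX_m$ and then restricting to the open subscheme $U_m$ gives an atlas $\cA_{U_m}\to U_m$ (note $\cA_{U_m}$ is a scheme since $U_m$ is). By Lemma \ref{lemma atlas and nerve}, the isomorphism class of $M^{\et}_{\atlas,\cA}(\fX)$ is independent of the atlas, and in $\DA^{\et}(k,R)$ this motive is computed by \v{C}ech descent; the point is that one may choose the atlas compatibly with the exhaustive sequence, so that the \v{C}ech simplicial scheme $\cA^{[\bullet]}$ of $\fX$ is a filtered colimit (over $m$) of the \v{C}ech simplicial schemes $\cA_{U_m}^{[\bullet]}$ of the schemes $U_m$. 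Since suspension spectra and \'etale sheafification commute with filtered colimits, we obtain
\[
M^{\et}_{\atlas,\cA}(\fX)\simeq \colim_m M^{\et}_{\atlas,\cA_{U_m}}(U_m)\simeq \colim_m M^{\et}(U_m)
\]
in $\DA^{\et}(k,R)$, where $M^{\et}(U_m)$ is the \'etale motive of the scheme $U_m$ without transfers. The epi-exhaustive case is identical, with the colimit replaced by the appropriate homotopy colimit over the system $U_m$.

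\textbf{Step 2: transport along $\LL a_{\tr}$ and $\LL a_{\et}$ and compare.} Apply $\LL a_{\tr}$ to the display of Step 1. Since $\LL a_{\tr}$ is a left adjoint it commutes with (homotopy) colimits, so $\LL a_{\tr} M^{\et}_{\atlas,\cA}(\fX)\simeq \colim_m \LL a_{\tr} M^{\et}(U_m)$. On the other hand, for each scheme $U_m$ we have the elementary identification $\LL a_{\tr} M^{\et}(U_m)\simeq \LL a_{\et} M(U_m)$ in $\DM^{\et}(k,R)$ (both are the \'etale motive with transfers of the scheme $U_m$; one can see this directly from the construction, or use that $\LL a_{\tr}$ is an equivalence). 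Again because $\LL a_{\et}$ is a left adjoint, $\colim_m \LL a_{\et} M(U_m)\simeq \LL a_{\et}(\colim_m M(U_m))=\LL a_{\et} M_{\mono}(\fX)$, using Lemma \ref{colim_hocolim} / Definition \ref{def mot stack} to identify $\colim_m R_{\tr}(U_m)$ with $M_{\mono}(\fX)$ inside $\DM^{\Nis}(k,R)$ (and Remark \ref{rem tr et} packages exactly this compatibility). Chaining these isomorphisms gives $\LL a_{\tr} M^{\et}_{\atlas,\cA}(\fX)\simeq \LL a_{\et} M_{\mono}(\fX)$, and the epi-exhaustive case follows verbatim with homotopy colimits; for this one must note that $\LL a_{\tr}$ and $\LL a_{\et}$, being triangulated left adjoints preserving arbitrary direct sums, preserve the $\cone(\id-\sigma)$ construction of homotopy colimits in Definition \ref{defn_hocolim}. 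The final sentence of the proposition is then immediate: when $R$ is a $\QQ$-algebra, $\LL a_{\et}$ is an equivalence \cite[Theorem 14.30]{MVW}, so $M_{\mono}(\fX)\simeq \RR o_{\et}\LL a_{\tr} M^{\et}_{\atlas,\cA}(\fX)$ is recovered from the \'etale atlas motive.

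\textbf{Main obstacle.} The genuinely delicate point is Step 1: making the choice of atlas compatible with the exhaustive sequence so that the \v{C}ech nerve of $\fX$ genuinely appears as a filtered colimit of \v{C}ech nerves of the schemes $U_m$, \emph{and} verifying that \'etale cohomological descent (\cite[Theorem 14.3.4(2)]{cd}) lets one compute $M^{\et}_{\atlas,\cA}(\fX)$ this way even though the $U_m$ are only open in vector bundles over the \emph{substacks} $\fX_m$, not over $\fX$ itself. One has to be a little careful that the transition maps $U_m\to U_{m+1}$ in the exhaustive system are compatible, at the level of \v{C}ech nerves, with the maps induced by the $f_m$; this is where smoothness of $\fX$ (ensuring the open immersions and affine-bundle projections behave well, as in Lemma \ref{lem stack def indept}) is used. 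Once the colimit presentation in Step 1 is in place, Step 2 is formal.
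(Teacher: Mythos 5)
Your Step 2 is fine and agrees with the paper's formal reduction: since $\LL a_{\tr}$ is an equivalence and $\LL a_{\tr}M^{\et}_{\mono}(\fX)\simeq \LL a_{\et}M_{\mono}(\fX)$ (Remark \ref{rem tr et}), everything comes down to producing an isomorphism $M^{\et}_{\atlas,\cA}(\fX)\simeq M^{\et}_{\mono}(\fX)$ in $\DA^{\et}(k,R)$, i.e.\ to your Step 1. But Step 1, which is the entire substance of the proposition, is not established: the assertion that one can choose the atlas so that ``the \v{C}ech simplicial scheme $\cA^{[\bullet]}$ of $\fX$ is a filtered colimit over $m$ of the \v{C}ech simplicial schemes $\cA_{U_m}^{[\bullet]}$'' is false as a statement about simplicial schemes or sheaves, for any choice of atlas. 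Indeed, with $\cA_{U_m}:=\cA\times_{\fX}U_m$ one computes levelwise $\cA_{U_m}^{[n]}\cong \cA^{[n]}\times_{\fX}U_m$, so the colimit of these nerves is (levelwise) $\cA^{[n]}$ crossed over $\fX$ with the system of the $U_m$ — and $U_m$ is not an open substack of $\fX$, it is an open subscheme of a vector bundle $V_m$ over $\fX_m$ with a complement $W_m$ removed. No atlas choice eliminates the extra fibre directions of $V_m$ or the removed loci $W_m$; the identification you want holds only up to $\AA^1$-weak equivalence after \'etale sheafification and passage to colimits, and proving that is precisely where the work lies. You flag this yourself as the ``main obstacle'' but never resolve it, so the proof is incomplete at its central point.

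The paper's argument fills exactly this gap, in four steps that your sketch would need: (i) using quasi-compactness of the $\fX_m$, replace $\cA$ by an increasing family of finite type atlases $\cA_m\to\fX_m$ and show by a truncation/double-colimit argument that $R(\cA^{[\bullet]})\simeq\colim_m R(\cA_m^{[\bullet]})$; (ii) use $\AA^1$-homotopy invariance for the vector bundle projections $V_m\times_{\fX}\cA_m^{[n]}\to\cA_m^{[n]}$ to replace $\cA_m^{[\bullet]}$ by $V_m\times_{\fX}\cA_m^{[\bullet]}$; (iii) use the codimension hypothesis $\codim_{V_m}W_m\to\infty$ together with Proposition \ref{prop hocolim vanishes} (again with a truncation argument, uniformly in the simplicial degree, which is why one needs each $V_m\times_{\fX}\cA_m^{[n]}$ to be a smooth finite type scheme) to pass from $V_m\times_{\fX}\cA_m^{[\bullet]}$ to $U_m\times_{\fX}\cA_m^{[\bullet]}$; and only then (iv) apply \'etale cohomological descent, for each fixed $m$, to the smooth cover $U_m\times_{\fX}\cA_m\to U_m$ of the scheme $U_m$ to land on $\colim_m R(U_m)=M^{\et}_{\mono}(\fX)$. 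Steps (i)–(iii) are not bookkeeping: they are where the smoothness of $\fX$, the finiteness of the $\cA_m$, and the defining codimension condition of an exhaustive sequence are actually used, and without them the displayed isomorphism $M^{\et}_{\atlas,\cA}(\fX)\simeq\colim_m M^{\et}(U_m)$ in your Step 1 is exactly the statement to be proved.
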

\begin{proof}
We present the proof in the mono-exhaustive case, the epi-exhaustive case is similar. By the first isomorphism in Remark~\ref{rem tr et} and the fact that $\LL a_{\tr}$ is an equivalence of categories, we see that it suffices to construct an isomorphism $M^{\et}_{\atlas,\cA}(\fX) \simeq M^{\et}_{\mono}(\fX)$.

Let $\cA\ra \fX$ be any atlas of $\fX$. In particular, $\cA$ is a smooth k-scheme, not of finite type in general. Recall that by hypothesis, there is a filtration $\fX= \bigcup_{m\in \NN} \fX_m$ by increasing quasi-compact open substacks and an exhaustive sequence of vector bundles $(V_\bullet,W_\bullet)$ with respect to this filtration.
For any $m\in \NN$, because $\fX_{m}$ is quasi-compact there exists a union $\cA_{m}$ of finitely many connected components of $\cA\times_{\fX}\fX_{m}$  (so that $\cA_{m}$ is a smooth finite type $k$-scheme) such that $\cA_{m}\ra \fX_{m}$ is an atlas; we can assume furthermore that $\cA_{m}\subset \cA_{m+1}$ and that $\cA=\cup_{m}\cA_{m}$. This implies that the \'{e}tale sheaf $R^{\eff}(\cA)$ is the colimit of $(R^{\eff}(\cA_{m}))_{m\in \NN}$. For $m,k\geq 0$, we let 
\[ \tau_{\leq k}(R(\cA_m^{[\bullet]})):= \Sigma^{\infty}_{T}\left( \quad  \cdots\ra 0 \ra  R(\cA_m^{[k-1]}) \ra  \cdots \ra R(\cA_m)\ra 0 \ra \cdots \quad \right). \]
In the category of $T$-spectra of complexes of \'{e}tale sheaves, we have
\begin{align*}
R(\cA^{[\bullet]}) \simeq &  \colim_k  \tau_{\leq k}(R(\cA^{[\bullet]}))  \simeq \colim_k \colim_m \tau_{\leq k}(R(\cA_m^{[\bullet]})) \\
\simeq & \colim_m \colim_k  \tau_{\leq k}(R(\cA_m^{[\bullet]})) \simeq \colim_m R(\cA_{m}^{[\bullet]}).
\end{align*}
For $m\in\NN$, let $\widetilde{V}_{m}:=V_{m}\times_{\fX_{m}} \cA_{m}$ and define $\widetilde{W}_{m}$ and $\widetilde{U}_{m}$ analogously; as $\cA\ra \fX$ is representable, these are (smooth, finite type) $k$-schemes. The maps $V_{m}\ra V_{m+1}$ lift to maps $\widetilde{V}_{m}\ra \widetilde{V}_{m+1}$. We have
\[
\colim_m R(V_{m}\times_{\fX}\cA_{m}^{[\bullet]})\simeq\colim_{m} R(\cA_{m}^{[\bullet]})
  \]
  which is a filtered colimit of $\AA^{1}$-equivalences, thus an $\AA^{1}$-equivalence. We have $V_{m}\times_{\fX}\cA_{m}^{[n]}\simeq \widetilde{V}_{m}\times_{k}\cA_{m}^{[n-1]}$ for all $n\geq 1$, thus each term is a smooth finite type $k$-scheme and
  \[
\codim_{(V_{m}\times_{\fX}\cA_{m}^{[n]})}(W_{m}\times_{\fX}\cA_{m}^{[n]})\geq \codim_{V_m}W_m
\]
tends to $\infty$ uniformly in $n$. Proposition \ref{prop hocolim vanishes} and a truncation argument as above imply that
  \[
\colim_m R(U_{m}\times_{\fX}\cA_{m}^{[\bullet]}) \ra \colim_m R(V_{m}\times_{\fX}\cA_{m}^{[\bullet]})
    \]
is an $\AA^{1}$-equivalence. For every $m\geq 1$, the morphism $R(U_{m}\times_{\fX} \cA^{[\bullet]}_{m})\ra R(U_{m})$ is an $\AA^{1}$-equivalence by cohomological descent with respect to $h$-hypercoverings \cite[Theorem 14.3.4.(2)]{cd}. We finally conclude that the map
\[
\colim_m R(V_{m}\times_{\fX}\cA_{m}^{[\bullet]})\ra \colim_{m}R(U_{m})
\]
is an $\AA^{1}$-equivalence since it is a filtered colimit of $\AA^{1}$-equivalences. Putting everything together, we get an isomorphism in $\DA^{\et}(k,R)$
\[
M^{\et}_{\atlas,\cA}(\fX)\simeq \colim_{m}R(U_{m})=:M^{\et}_{\mono}(\fX)
  \]
which concludes the proof.\end{proof}

\bibliographystyle{amsplain}
\bibliography{references}

\medskip \medskip

\noindent{Freie Universit\"{a}t Berlin, Arnimallee 3, 14195 Berlin, Germany} 

\medskip \noindent{\texttt{hoskins@math.fu-berlin.de, simon.pepin.lehalleur@gmail.com}}

\end{document}